\newcommand{\rarrow}[1]{{\buildrel #1 \over \longrightarrow}}
\newcommand{\Z}{\mathbb{Z}}
\newcommand{\Ker}{\operatorname{Ker}}
\renewcommand{\Im}{\operatorname{Im}}
\newcommand{\mtoda}[5]{\left\{
\begin{array}{c}%
{#1} \\
{#4}
\end{array}\hspace{-1.5mm},\hspace{0.5mm}%
\begin{array}{c}%
{#2} \\
{#5}
\end{array}\hspace{-1.5mm},\hspace{0.5mm}%
{#3}
\right\}}
\newtheorem{thm}{Theorem}[section]
\newtheorem{prop}[thm]{Proposition}
\newtheorem{lem}[thm]{Lemma}
\newtheorem{conj0}{Conjecture}
\newtheorem{rem}[thm]{Remark}
\newtheorem{exam}[thm]{Example}
\newenvironment{proof}{\noindent{\bf Proof.}}
{\noindent \ \hfill$\Box$\par}
\begin{document}

\title{The $23$-rd and $24$-th homotopy groups of the $n$-th rotation group
\\
}

\author{Y. Hirato, J. Lee, 
T. Miyauchi, J. Mukai, M. Ohara\\}
\date{}
\maketitle
\noindent
{\bf Abstract}
We denote by $\pi_k(R_n)$ the $k$-th homotopy group of the $n$-th rotation group $R_n$
and  $\pi_k(R_n:2)$ the 2-primary components of it.
We determine the group structures of
$\pi_k(R_n:2)$ for $k = 23$ and $24$ by use of the fibration $R_{n+1}\rarrow{R_n}S^n$.
The method is based on Toda's composition methods. 
\footnote{2010 Mathematics Subject
Classification. Primary 55Q52; Secondary 55Q40, 55Q50, 55R10.\\
Key words and phrases. Rotation group, fibration, J-homomorphism, Toda bracket, lift.}
\section{Introduction and statements of results}
\hspace{2ex}

Steenrod \cite{St}, Sugawara \cite{Su}, Kervaire \cite{Ke} and Mimura \cite{Mi3}
had studied the $k$-th homotopy groups $\pi_k(R_n)$ of the $n$-th rotation group $R_n$.
The group structures
of $\pi_{k}(R_n)$ for $k \le 22$ had been determined in this period ~\cite{St}, \cite{Su}, \cite{Ke}, \cite{Enc}, \cite{Lu}, \cite{Ka}, 
\cite{KM1}, \cite{KM2} and \cite{HKM}.

According to \cite{MT2} and \cite{Mi3}, the group structures of 
$\pi_{23}(R_n)$ for
$n \leq 9$ are known up to extensions in $7 \le n \le 9$ \cite{Mi3}.
By inspecting \cite{Lu}, the group structures of $\pi_{24}(R_n)$ 
for $3\leq n\leq 6$ are obtained. 

In this paper, we determine the $23, 24$-th homotopy groups $\pi_{23}(R_n)$ and $\pi_{24}(R_n)$, respectively. 

Let $\pi_{n+k}(S^n)$ stand for the $(n+k)$-th the homotopy groups of \ 
$S^n$. Our method is the composition methods \cite{T}.
We freely use generators and relations in the $2$-primary components 
$\pi^n_{n+k}$ of $\pi_{n+k}(S^n)$ for $k \leq 24$ \cite{T}, \cite{MT1}, \cite{Mi1}, \cite{Od1}. 

We denote by $\iota_n$ the homotopy class of the the identity map of $S^n$ and by $\iota_{R_n}$ the homotopy class of the identity map of $R_n$. For elements $\alpha_i\in\pi_k(R_n)$\ for\ $i=1, 2$\ and\ $\beta\in\pi_h(S^k)$, we have 
$$
(\alpha_1+\alpha_2)\beta = \alpha_1\beta + \alpha_2\beta.
$$

In particular, for an integer $t$, we have 
$$
t\iota_{R_n}\circ\alpha = t\alpha = \alpha\circ t\iota_k\ \mbox{for} \ \alpha\in\pi_k(R_n).
$$

We denote by $R^n_k$ 
the $2$-primary components $\pi_k(R_n :2)$. 
Our main tool is to use the following exact sequence induced from the fibration $p_{n+1} : R_{n+1} \to S^{n+1}$ : 
\begin{equation}\label{seqk_n}\tag*{$(k)_{n}$}
\pi^n_{k+1}\rarrow{\Delta}R^n_k
\rarrow{i_*}R^{n+1}_k\rarrow{p_*}\pi^n_k
\rarrow{\Delta}R^n_{k-1},
\end{equation}
where $i=i_{n+1}: R_n \to R_{n+1}$ is the inclusion, $p=p_{n+1}: R_{n+1} \to S^n$ is the projection
and $\Delta$ is the connecting map.

For an element $\alpha \in \pi^n_k$, we denote by
$[\alpha] \in R^{n+1}_k$ an element satisfying $p_*[\alpha] =\alpha$.
Though $[\alpha]$ is only determined modulo $\Im\ i_* ={i_{n+1}}_*(R^n_{k})$, we will sometimes give restrictions on $[\alpha]$ to fix it more concretely.
For the inclusion $i_{n+1,m}:R_{n+1} \to R_m$ for $n+1 \leq m$, we set $[\alpha]_m={i_{n+1,m}}_*[\alpha] \in R^m_k$. 

In this paper, we will be able to improve some relations in $\pi^n_{k+n}$ for $k = 23$  and $24$. For example, we obtain the relation 
$\Delta(\nu^2_{19}) = [\sigma^2_9]_{19}\eta_{23}$ (Proposition 6.14).
This implies $P(\nu^2_{39}) = \psi_{19}\eta_{42} 
\equiv \bar{\sigma}'_{19} + \delta_{19}\ \bmod\ \sigma_{19}\bar{\mu}_{26}$ which improves \cite [(5.39)]{MMO}.

To see that calculation is affected by the choice of Toda brackets where the generator lives, let us raise a generator $[\varepsilon_{15}]$ as an example. 
We recall $\varepsilon_{15}\in\{\eta_{15},2\iota_{16},\nu^2_{16}\}_1\ 
\bmod\ \eta_{15}\sigma_{16}$ \cite[(6.1)]{T}. 
We can take $[\varepsilon_{15}]$ in $\{[\eta_{15}], 2\iota_{16}, \nu^2_{16}\}_1$. By using this choice, we have $\Delta(\nu^2_{18}) 
= [\varepsilon_{15}]_{18}$ (Proposition \ref{R1923}).

For any element $[\varepsilon_{15}]\in R^{16}_{23}$, we have 
$\Delta(\nu^2_{18})\equiv [\varepsilon_{15}]_{18} \bmod\ 
[\sigma^2_9]_{18}$. 
At best we will be able to obtain the relation $[\varepsilon_{15}]_{22} = 0$

We also reconsider the choice of $[\bar\varepsilon_9]\in R^{10}_{24}$. We take this element in a certain matrix Toda bracket as in the proof of Proposition \ref{10}, 
which has also been developed responding to these calculations.  

\bigskip

\bigskip

The group structures of $\pi_{23+k}(R_n)$ for $k+19 \le n \le k+24$ with $0\leq k\leq 1$ are obtained by \cite{Ke}.

We recall the splitting \cite{BM} for $k\leq 2n - 3$ and $n\geq 13$:
\[
\pi_k(R_n)\cong \pi_k(R_{\infty})\oplus\pi_{k+1}(V_{2n, n}),
\]
where $V_{m,r} = R_m/R_{m-r}$ for $m\geq r$ is the Stiefel manifold. By use of
this splitting, \cite{Bo} and the group structures of
$\pi_{k+1}(V_{2n, n})$ \cite{HoM}, we obtain the group
structures of $\pi_{k+23}(R_n)$ for $n\geq k+13$ with $k=0$ or $1$.

For the element $\kappa_7$, we adopt the renamed one %, that is,
which satisfies the equation\ $2\kappa_7=\bar{\nu}_7\nu^2_{15}$ \cite[(15.5)]{MT1}. Our result is stated as follows.

\begin{thm} \label{thm1}
\begin{enumerate}
\item $\pi_{23}(R_3:2)=\Z_2\{[\eta_2]\nu'\bar{\mu}_6\}
\oplus\Z_2\{[\eta_2]\nu'\eta_6\mu_7\sigma_{16}\}$;

$\pi_{23}(R_4:2)=\Z_2\{[\eta_2]_4\nu'\bar{\mu}_6\}
\oplus\Z_2\{[\eta_2]_4\nu'\eta_6\mu_7\sigma_{16}\}
\oplus\Z_2\{[\iota_3]\nu'\bar{\mu}_6\}\\ 
\oplus\Z_2\{[\iota_3]\nu'\eta_6\mu_7\sigma_{16}\}$;

$\pi_{23}(R_5:2)=\Z_2\{[\nu_4\sigma'\mu_{14}]\}
\oplus\Z_2\{[\nu_4(E\zeta')]\}
\oplus\Z_2\{[\nu_4\eta_7\bar{\varepsilon}_8]\}$;

$\pi_{23}(R_6:2)=\Z_8\{[\zeta_5]\sigma_{16}\}
\oplus\Z_2\{[\nu_5]\bar{\varepsilon}_8\}
\oplus\Z_2\{[\nu_4\sigma'\mu_{14}]_6\}
\oplus\Z_2\{[\nu_4(E\zeta')]_6\}\\ 
\oplus\Z_2\{[\nu_4\eta_7\bar{\varepsilon}_8]_6\}$;

\item$\pi_{23}(R_7:2)=\Z_2\{[\zeta_5]_7\sigma_{16}\}
\oplus\Z_2\{[\nu_4\sigma'\mu_{14}]_7\}
\oplus\Z_2\{[\nu_4(E\zeta')]_7\}
\oplus\Z_2\{[\nu_4\eta_7\bar{\varepsilon}_8]_7\}\\
\oplus\Z_2\{[\eta_6]\mu_7\sigma_{16}\}
\oplus\Z_4\{[P(E\theta)+\nu_6\kappa_9]\}$,
where $2[P(E\theta)+\nu_6\kappa_9]\equiv[\nu_5]_7\bar{\varepsilon}_8\ 
\bmod\ 
[\nu_4(E\zeta')]_7, [\nu_4\eta_7\bar{\varepsilon}_8]_7$;

$\pi_{23}(R_8:2)=\Z_2\{[\zeta_5]_8\sigma_{16}\}
\oplus\Z_2\{[\nu_4\sigma'\mu_{14}]_8\}\oplus\Z_2\{[\nu_4(E\zeta')]_8\}
\oplus\Z_2\{[\nu_4\eta_7\bar{\varepsilon}_8]_8\}\\
\oplus\Z_2\{[\eta_6]_8\mu_7\sigma_{16}\}
\oplus \Z_4\{[P(E\theta)+\nu_6\kappa_9]_8\}
\oplus\Z_2\{[\iota_7]\sigma'\mu_{14}\}\\
\oplus\Z_2\{[\iota_7](E\zeta')\}
\oplus\Z_2\{[\iota_7]\eta_7\bar{\varepsilon}_8\}
\oplus\Z_2\{[\iota_7]\mu_7\sigma_{16}\}$;

$\pi_{23}(R_9:2)=
\Z_2\{[\zeta_5]_9\sigma_{16}\}
\oplus\Z_2\{[\iota_7]_9\mu_7\sigma_{16}\}
\oplus\Z_4\{[P(E\theta)+\nu_6\kappa_9]_9\}$, where 
$2[P(E\theta)+\nu_6\kappa_9]_9=[\nu_5]_9\bar{\varepsilon}_8$\ and\ 
$\Delta(\rho')\equiv[\zeta_5]_9\sigma_{16}\ \bmod\ [\nu_5]_9\bar{\varepsilon}_8$.

\item
$\pi_{23}(R_{10}:2)=\Z_{32}\{[\sigma^2_9]\}
\oplus\Z_2\{[\bar{\nu}_9\nu^2_{17}]\}
\oplus\Z_2\{[P(E\theta)+\nu_6\kappa_9]_{10}\}$,\ where\ 
$\Delta(\sigma^2_{10}) = 2x[\sigma^2_9]$ for odd $x$, 
$8\Delta(\sigma^2_{10})=16[\sigma^2_9]=[\iota_7]_{10}\mu_7\sigma_{16}$\ 
and\ 
$\Delta\kappa_{10}
\equiv [\bar{\nu}_9\nu^2_{17}] + [P(E\theta)+\nu_6\kappa_9]_{10}
\bmod\ 16[\sigma^2_9]$;

$\pi_{23}(R_{11}:2)=\Z_2\{[\sigma^2_9]_{11}\}
\oplus\Z_2\{[P(E\theta)+\nu_6\kappa_9]_{11}\}$ 
where $[\bar{\nu}_9\nu^2_{17}]_{11}=[P(E\theta)+\nu_6\kappa_9]_{11}
=[[\iota_{10},\nu_{10}]]\eta_{22}$;

$\pi_{23}(R_{12}:2)=\Z_2\{[\theta']\}
\oplus\Z_2\{[\sigma^2_9]_{12}\}
\oplus\Z_2\{[P(E\theta)+\nu_6\kappa_9]_{12}\}$, where $\Delta(E\theta')
=[P(E\theta)+\nu_6\kappa_9]_{12}$;

$\pi_{23}(R_{13}:2)=\Z\{[32[\iota_{12}, \iota_{12}]]\}
\oplus\Z_2\{[\sigma^2_9]_{13}\}$;

$\pi_{23}(R_{14}:2)=\Z\{[32[\iota_{12}, \iota_{12}]]_{14}\}
\oplus\Z_2\{[\sigma^2_9]_{14}\}$;

\item
$\pi_{23}(R_{15}:2)=\Z\{[32[\iota_{12}, \iota_{12}]]_{15}\}
\oplus\Z_2\{[\sigma^2_9]_{15}\}\oplus\Z_2\{[\eta_{14}\varepsilon_{15}]\}
\oplus\Z_2\{[\eta_{14}\sigma_{15}]\eta_{22}\}$, where\  
$[\eta_{14}\varepsilon_{15}]\in\{[\eta^2_{14}],2\iota_{16},\nu^2_{16}\}_{11}$;

$\pi_{23}(R_{16}:2)=\Z\{[32[\iota_{12}, \iota_{12}]]_{16}\}
\oplus\Z_2\{[\sigma^2_9]_{16}\}\oplus\Z_2\{[\eta_{14}\varepsilon_{15}]_{16}\}\\ 
\oplus\Z_2\{[\eta_{14}\sigma_{15}]_{16}\eta_{22}\}
\oplus\Z_2\{[\eta_{15}]\sigma_{16}\}
\oplus\Z_2\{[\varepsilon_{15}]\}$, where $[\varepsilon_{15}]\in\{[\eta_{15}],2\iota_{16},\nu^2_{16}\}_1$,\ $\Delta(\varepsilon_{16})
= [\eta_{14}\varepsilon_{15}]_{16}$\ and\ $\Delta(\eta_{16}\sigma_{17}) = [\eta_{14}\sigma_{15}]_{16}\eta_{22}$;

$\pi_{23}(R_{17}:2)=\Z\{[32[\iota_{12}, \iota_{12}]]_{17}\}
\oplus\Z_2\{[\sigma^2_9]_{17}\}\oplus\Z_2\{[\eta_{15}]_{17}\sigma_{16}\}
\oplus\Z_2\{[\varepsilon_{15}]_{17}\}$;

$\pi_{23}(R_{18}:2)=\Z\{[32[\iota_{12}, \iota_{12}]]_{18}\}
\oplus\Z_2\{[\sigma^2_9]_{18}\}\oplus\Z_2\{[\varepsilon_{15}]_{18}\}$, where 
$\Delta(\nu^2_{18}) = [\varepsilon_{15}]_{18}$;

$\pi_{23}(R_n:2)=\Z\{[32[\iota_{12}, \iota_{12}]]_n\}
\oplus\Z_2\{[\sigma^2_9]_n\}$ for $n = 19, 20, 21$, where $\Delta\nu_{21}=[\sigma^2_9]_{21}$;

$\pi_{23}(R_{24}:2)=\Z\{\Delta\iota_{24}\}
\oplus\Z\{[32[\iota_{12},\iota_{12}]]_{24}\}$;

$\pi_{23}(R_{22+n}:2)=\Z\{[32[\iota_{12}, \iota_{12}]]_{22+n}\}$\ for 
$n=0, 1$ and $n\geq 3$.
\end{enumerate}
\end{thm}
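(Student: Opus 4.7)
The plan is a joint induction on $n$ driven by the exact sequence $(23)_n$, anchored at the stable end and propagated downward and upward. I would first fix the stable range. For $n\ge 13$ the splitting
$$\pi_k(R_n)\cong\pi_k(R_\infty)\oplus\pi_{k+1}(V_{2n,n})\qquad(k\le 2n-3)$$
from \cite{BM}, combined with $\pi_{23}(R_\infty)$ from Bott periodicity and $\pi_{24}(V_{2n,n})$ from \cite{HoM}, handles $n\ge 25$ and the high cases via \cite{Bo,Ke}. This provides the $\Z$-generator $[32[\iota_{12},\iota_{12}]]_n$ and its propagation across the stable range, plus the extra $\Z\{\Delta\iota_{24}\}$ summand at $n=24$.

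For the dimensions below the stable range I would read off the five-term piece
$$\pi^n_{24}\rarrow{\Delta}R^n_{23}\rarrow{i_*}R^{n+1}_{23}\rarrow{p_*}\pi^n_{23}\rarrow{\Delta}R^n_{22}$$
for each $n$, with $\pi^n_{24}$ and $\pi^n_{23}$ tabulated in \cite{T,MT1,Mi1,Od1} and $R^n_{22}$ supplied by \cite{HKM,KM1,KM2}. The task reduces to computing the two outer $\Delta$'s on generators; $R^{n+1}_{23}$ then appears as an extension of $\ker(\Delta\colon\pi^n_{23}\to R^n_{22})$ by the cokernel $R^n_{23}/\Delta(\pi^n_{24})$.

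Computing $\Delta$ on indecomposables is the core of the argument. For decomposable inputs I would use $\Delta(\alpha\beta)=\Delta(\alpha)\beta$ to reduce to a short list of indecomposable classes. For the indecomposables housed in Toda brackets---such as $\varepsilon_{15}$, $\bar{\varepsilon}_8$, $\zeta'$, and $\theta'$---I would choose each lift $[\alpha]$ inside a matching matrix Toda bracket (as the introduction previews for $[\varepsilon_{15}]\in\{[\eta_{15}],2\iota_{16},\nu^2_{16}\}_1$ and for $[\bar\varepsilon_9]$), then invoke $\Delta\{\beta,\gamma,\delta\}\subset\{[\beta],\gamma,\delta\}$ modulo controlled indeterminacy. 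The flagship identities advertised in the statement---$\Delta\nu_{21}=[\sigma^2_9]_{21}$, $\Delta(\nu^2_{18})=[\varepsilon_{15}]_{18}$, $\Delta(\varepsilon_{16})=[\eta_{14}\varepsilon_{15}]_{16}$, $\Delta(E\theta')=[P(E\theta)+\nu_6\kappa_9]_{12}$, $\Delta(\rho')\equiv[\zeta_5]_9\sigma_{16}$, $\Delta\sigma^2_{10}=2x[\sigma^2_9]$ with $x$ odd, and $\Delta\kappa_{10}\equiv[\bar\nu_9\nu^2_{17}]+[P(E\theta)+\nu_6\kappa_9]_{10}$---all fall out of these two mechanisms together with the James relation between $\Delta$ and the Whitehead product $[\iota_n,\iota_n]$ in the metastable range, and the renaming $2\kappa_7=\bar\nu_7\nu^2_{15}$ adopted above.

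Once the $\Delta$'s are computed, what remains is to resolve extensions inside each $R^n_{23}$. Most are forced by the order of the sphere factor and by compatibility with $i_*$, but several genuine extension problems deserve care: the $\Z_4$-extension at $n=7,8,9$ realizing $2[P(E\theta)+\nu_6\kappa_9]\equiv[\nu_5]_7\bar\varepsilon_8$; the $\Z_{32}$-structure at $n=10$ where the chain $8\Delta\sigma^2_{10}=16[\sigma^2_9]=[\iota_7]_{10}\mu_7\sigma_{16}$ has to be verified; and the handoff of $[32[\iota_{12},\iota_{12}]]_n$ through the boundary of the stable range. I expect the combined bookkeeping at $n=10$---pinning the odd multiplier $x$ in $\Delta\sigma^2_{10}=2x[\sigma^2_9]$ and separating $[P(E\theta)+\nu_6\kappa_9]_{10}$ from $[\bar\nu_9\nu^2_{17}]$ inside $\Delta\kappa_{10}$---to be the main technical obstacle, with the propagation steps above the stable range the cleanest.
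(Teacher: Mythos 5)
Your skeleton --- induction on the fibration exact sequence $(23)_n$, anchored in the stable range by the Barratt--Mahowald splitting and Bott periodicity, with $\Delta$ computed on generators via $\Delta(\alpha\circ E\beta)=\Delta\alpha\circ\beta$ and Toda-bracket choices of lifts --- is exactly the paper's framework. But two essential inputs are missing, and neither is resolvable by the mechanisms you list.

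First, the $\Z_4$ summand at $n=7,8,9$. You say the extensions are ``forced by the order of the sphere factor and by compatibility with $i_*$,'' but the extension of $\Ker\{\Delta:\pi^6_{23}\to R^6_{22}\}\cong(\Z_2)^2$ by $R^6_{23}/\Delta\pi^6_{24}$ in $(23)_6$ is genuinely ambiguous: nothing in that sequence decides whether a lift of $P(E\theta)+\nu_6\kappa_9$ has order $2$ or $4$. The paper imports the answer from outside the rotation-group tower: it compares the fibrations $SU(3)\to G_2\to S^6$ and $R_6\to R_7\to S^6$ through Yokota's map $h:G_2\to R_7$, uses the splitting $R^7_k\cong h_*\pi_k(G_2:2)\oplus[\eta_6]\circ\pi^7_k$ obtained from the $2$-primary decomposition of $\pi_k(Spin(7))$, and quotes $\pi_{23}(G_2:2)\cong\Z_4\oplus\Z_2$ with $2\langle P(E\theta)+\nu_6\kappa_9\rangle={i_G}_*[\nu_5\bar{\varepsilon}_8]$ from the Mimura extension-problem paper. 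Without this exceptional-Lie-group input the relation $2[P(E\theta)+\nu_6\kappa_9]\equiv[\nu_5]_7\bar{\varepsilon}_8$ cannot be derived, and the group structures at $n=7,8,9$ remain undetermined.

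Second, separating lifts at $n=10$ and beyond. To pin down $\Delta(\sigma^2_{10})\equiv 2[\sigma^2_9]$, to prove $\Delta\kappa_{10}\equiv[\bar{\nu}_9\nu^2_{17}]+[P(E\theta)+\nu_6\kappa_9]_{10}$, and to show $16[\sigma^2_9]=[\iota_7]_{10}\mu_7\sigma_{16}\neq0$, the paper does not argue inside $(23)_9$ alone; it maps everything by the $J$-homomorphism into the EHP sequence and detects images in $\pi^n_{n+23}$ using the $23$- and $24$-stem computations of Mimura--Mori--Oda, e.g.\ $J[\sigma^2_9]=\psi_{10}$, $J[P(E\theta)+\nu_6\kappa_9]=\bar{\kappa}_7\nu_{27}-\nu_7\bar{\kappa}_{10}$, $J[\bar{\nu}_9\nu^2_{17}]=0$, and $J([\iota_7]_9\mu_7\sigma_{16})=\sigma^2_9\mu_{23}\neq0$. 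You mention the James relation $J\Delta(\gamma)=[\iota_n,\gamma]$ only in passing; in the paper this, together with the $J$-detections just listed, is the engine that resolves every ambiguity you flag as ``the main technical obstacle,'' and your proposal offers no substitute for it. The same applies lower down, where many $\Delta$-computations in $(23)_8$ rest on specific composition relations (e.g.\ $\Delta\sigma_8\equiv x[\iota_7]\sigma'+y[\bar{\nu}_6+\varepsilon_6]_8$, Lemmas of type $[\eta_5\varepsilon_6]\nu^3_{14}=[\nu_4\eta_7\bar{\varepsilon}_8]_6$) that are not consequences of the general formalism.
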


\begin{thm} \label{thm2}
\begin{enumerate}
\item $\pi_{24}(R_3:2)=\Z_2\{[\eta_2]\nu'\eta_6\bar{\mu}_7\}$;

$\pi_{24}(R_4:2)=\Z_2\{[\eta_2]_4\nu'\eta_6\bar{\mu}_7\}
\oplus\Z_2\{[\iota_3]\nu'\eta_6\bar{\mu}_7\}$;

$\pi_{24}(R_5:2)=\Z_2\{[\nu^2_4]\kappa_{10}\}
\oplus\Z_2\{[\nu_4\sigma'\eta_{14}]\mu_{15}\}\}$, where $[\nu_4\sigma'\eta_{14}]\mu_{15}=[\nu_4\sigma'\mu_{14}]\eta_{23}$;

$\pi_{24}(R_6:2)=\Z_8\{[\bar{\zeta}_5]\}
\oplus\Z_2\{[\nu_5]\mu_8\sigma_{17}\}
\oplus\Z_2\{[\nu^2_4]_6\kappa_{10}\}
\oplus\Z_2\{[\nu_4\sigma'\eta_{14}]_6\mu_{15}\}$, where 
$\Delta(\bar{\zeta}_6)=2[\bar{\zeta}_5]$; 

\item $\pi_{24}(R_7:2)=\Z_2\{[\bar{\zeta}_5]_7\}
\oplus\Z_2\{[\nu_5]_7\mu_8\sigma_{17}\}\oplus\Z_2\{[\eta_6]\bar{\mu}_7\}
\oplus\Z_2\{[\eta_6]\eta_7\mu_8\sigma_{16}\}
\oplus\Z_2\{[P(E\theta)+\nu_6\kappa_9]\eta_{23}\}
\oplus\Z_2\{[\nu^2_4]_7\kappa_{10}\}
\oplus\Z_2\{[\nu_4\sigma'\eta_{14}]_7\mu_{15}\}$;

$\pi_{24}(R_8:2)=\Z_2\{[\bar{\zeta}_5]_8\}
\oplus\Z_2\{[\eta_6]_8\bar{\mu}_7\}
\oplus\Z_2\{[\eta_6]_8\eta_7\mu_8\sigma_{16}\}
\oplus\Z_2\{[P(E\theta)+\nu_6\kappa_9]_8\eta_{23}\}
\oplus\Z_2\{[\nu_5]_8\mu_8\sigma_{17}\}
\oplus\Z_2\{[\nu^2_4]_8\kappa_{10}\}
\oplus\Z_2\{[\nu_4\sigma'\eta_{14}]_8\mu_{15}\}\\ 
\oplus\Z_2\{[\iota_7]\sigma'\eta_{14}\mu_{15}\}
\oplus\Z_2\{[\iota_7]\nu_7\kappa_{10}\}
\oplus\Z_2\{[\iota_7]\bar{\mu}_7\}
\oplus\Z_2\{[\iota_7]\eta_7\mu_8\sigma_{17}\}$, 
where $\Delta(\nu_8\kappa_{11})=[\nu^2_4]_8\kappa_{10}$;

$\pi_{24}(R_9:2)=\Z_2\{[\bar{\zeta}_5]_9\}
\oplus\Z_2\{[P(E\theta)+\nu_6\kappa_9]_9\eta_{23}\}
\oplus\Z_2\{[\nu_5]_9\mu_8\sigma_{17}\}\\ 
\oplus\Z_2\{[\iota_7]_9\nu_7\kappa_{10}\}
\oplus\Z_2\{[\iota_7]_9\bar{\mu}_7\}
\oplus\Z_2\{[\iota_7]_9\eta_7\mu_8\sigma_{17}\}$, 
where $\Delta(\mu_9\sigma_{18})=[\nu_5]_9\mu_8\sigma_{17}
+[\iota_7]_9\eta_7\mu_8\sigma_{17}$.

\item
$\pi_{24}(R_{10}:2)=\Z_{16}\{\Delta(E\rho')\}
\oplus\Z_2\{[\sigma^2_9]\eta_{23}\}
\oplus\Z_4\{[\bar{\varepsilon}_9]\}
\oplus\Z_2\{[P(E\theta)+\nu_6\kappa_9]_{10}\eta_{23}\}\ 
\oplus\Z_2\{[\iota_7]_{10}\bar{\mu}_7\}$, where $8\Delta(E\rho')=[\bar{\zeta}_5]_{10}+[\iota_7]_{10}\bar{\mu}_7$,
$[P(E\theta)+\nu_6\kappa_9]_{10}\eta_{23}
= [\bar{\nu}_9\nu^2_{17}]\eta_{23}$
 \ and \ 
$2[\bar{\varepsilon}_9]=[\iota_7]_{10}\nu_7\kappa_{10}$;

$\pi_{24}(R_{11}:2)=
\Z_2\{[\sigma^2_9]_{11}\eta_{23}\}
\oplus\Z_4\{[\bar{\varepsilon}_9]_{11}\}
\oplus\Z_2\{[P(E\theta)+\nu_6\kappa_9]_{11}\eta_{23}\}\oplus\Z_2\{[\iota_7]_{11}\bar{\mu}_7\}$;

$\pi_{24}(R_{12}:2)=\Z_2\{[\theta']\eta_{23}\}
\oplus\Z_2\{[\eta_{11}]\theta\}
\oplus\Z_2\{[\sigma^2_9]_{12}\eta_{23}\}
\oplus\Z_2\{[\bar{\varepsilon}_9]_{12}\}\\
\oplus\Z_2\{[P(E\theta)+\nu_6\kappa_9]_{12}\eta_{23}\}
\oplus\Z_2\{[\iota_7]_{12}\bar{\mu}_7\}$,\ where $\Delta((E\theta')\eta_{24})=[P(E\theta)+\nu_6\kappa_9]_{12}\eta_{23}$;

$\pi_{24}(R_{13}:2)=\Z_2\{[\eta_{11}]_{13}\theta\}
\oplus\Z_2\{[\sigma^2_9]_{13}\eta_{23}\}
\oplus\Z_2\{[\bar{\varepsilon}_9]_{1 3}\}
\oplus\Z_2\{[\iota_7]_{13}\bar{\mu}_7\}$, 
where $\Delta(E\theta)=[\eta_{11}]_{13}\theta$;

$\pi_{24}(R_{14}:2)=\Z_8\{[\zeta_{13}]\}
\oplus\Z_2\{[\sigma^2_9]_{14}\eta_{23}\}
\oplus\Z_2\{[\bar{\varepsilon}_9]_{14}\}
\oplus\Z_2\{[\iota_7]_{14}\bar{\mu}_7\}$;

\item
$\pi_{24}(R_{15}:2)=
\Z_2\{[\eta_{14}\mu_{15}]\}
\oplus\Z_2\{[\zeta_{13}]_{15}\}
\oplus\Z_2\{[\sigma^2_9]_{15}\eta_{23}\}
\oplus\Z_2\{[\bar{\varepsilon}_9]_{15}\}
\oplus\Z_2\{[\iota_7]_{15}\bar{\mu}_7\}$;

$\pi_{24}(R_{16}:2)=
\Z_2\{[\mu_{15}]\}
\oplus\Z_2\{[\eta_{15}]\varepsilon_{26}\}
\oplus\Z_2\{[\eta_{15}]\bar{\nu}_{16}\}
\oplus\Z_2\{[\eta_{14}\mu_{15}]_{16}\}
\oplus\Z_2\{[\zeta_{13}]_{16}\}
\oplus\Z_2\{[\sigma^2_9]_{16}\eta_{23}\}
\oplus\Z_2\{[\bar{\varepsilon}_9]_{16}\}
\oplus\Z_2\{[\iota_7]_{16}\bar{\mu}_7\}$, 
where $\Delta(\eta_{16}\varepsilon_{17})=[\zeta_{13}]_{16}$,\ 
$\Delta(\nu^3_{16})\equiv[\bar{\varepsilon}_9]_{16}\ \bmod\ [\iota_7]_{16}\bar{\mu}_7$ and $\Delta\mu_{16}\equiv[\eta_{14}\mu_{15}]_{16}\ \bmod\ 
\Delta(\eta_{16}\varepsilon_{17}), \Delta(\nu^3_{16})$;

$\pi_{24}(R_{17}:2)=
\Z_2\{[\mu_{15}]_{17}\}
\oplus\Z_2\{[\eta_{15}]_{17}\varepsilon_{16}\}
\oplus\Z_2\{[\eta_{15}]_{17}\bar{\nu}_{17}\}
\oplus\Z_2\{[\sigma^2_9]_{17}\eta_{23}\}
\oplus\Z_2\{[\iota_7]_{17}\bar{\mu}_7\}$, where 
$\Delta(\varepsilon_{17}) = [\eta_{15}]_{17}\varepsilon_{16}$ and 
$\Delta(\bar{\nu}_{17}) = [\eta_{15}]_{17}\bar{\nu}_{16}$;

$\pi_{24}(R_{18}:2)=\Z_{16}\{\Delta\sigma_{18}\}
\oplus\Z_2\{[\sigma^2_9]_{18}\eta_{23}\}
\oplus\Z_2\{[\iota_7]_{18}\bar{\mu}_7\}$, where $8\Delta\sigma_{18}=[\mu_{15}]_{18}$;

$\pi_{24}(R_{19}:2)=
\Z_2\{[\sigma^2_9]_{19}\eta_{23}\}
\oplus\Z_2\{[\iota_7]_{19}\bar{\mu}_7\}$, where 
$\Delta(\nu^2_{19})=[\sigma^2_9]_{19}\eta_{23}$;

$\pi_{24}(R_{22}:2)=\Z_4\{\Delta\nu_{22}\}
\oplus\Z_2\{[\iota_7]_{22}\bar{\mu}_7\}$;

$\pi_{24}(R_{23}:2)=\Z_2\{[\eta^2_{22}]\}
\oplus\Z_2\{[\iota_7]_{23}\bar{\mu}_7\}$;

$\pi_{24}(R_{24}:2)=\Z_2\{[\eta_{23}]\}\oplus
\Z_2\{[\eta^2_{22}]_{24}\}
\oplus\Z_2\{[\iota_7]_{24}\bar{\mu}_7\}$, where
$\Delta\eta_{24}=[\eta^2_{22}]_{24}$;

$\pi_{24}(R_{25}:2)=\Z_2\{[\eta_{23}]_{25}\}
\oplus\Z_2\{[\iota_7]_{25}\bar{\mu}_7\}$, where $\Delta\iota_{25}=[\eta_{23}]_{25}$;

$\pi_{24}(R_{20+n}:2)=\Z_2\{[\iota_7]_{20+n}\bar{\mu}_7\}$ for $n=0, 1$ and 
$n\geq 6$.
\end{enumerate}
\end{thm}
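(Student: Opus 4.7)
The plan is to proceed by induction on $n$ using the fibration exact sequence $(24)_n$:
\begin{equation*}
\pi^n_{25}\rarrow{\Delta}R^n_{24}\rarrow{i_*}R^{n+1}_{24}\rarrow{p_*}\pi^n_{24}\rarrow{\Delta}R^n_{23}.
\end{equation*}
At each step $\pi^n_{24}$ and $\pi^n_{25}$ are read from Toda's tables and the subsequent refinements cited in the introduction, $R^n_{23}$ is supplied by Theorem \ref{thm1}, and the base case $n=3$ is handled via $R_3\simeq \R\P^3$: its universal cover identifies $R^3_{24}$ with $\pi^3_{24}\cong\Z_2\{\nu'\eta_6\bar\mu_7\}$, whose generator we take as $[\eta_2]\nu'\eta_6\bar\mu_7$ in the chosen convention for lifts. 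To promote $R^n_{24}$ to $R^{n+1}_{24}$ I compute $\Im(\Delta\colon\pi^n_{25}\to R^n_{24})$ and $\Ker(\Delta\colon\pi^n_{24}\to R^n_{23})$, and then resolve the extension
\begin{equation*}
0\to R^n_{24}/\Im\Delta \rarrow{i_*} R^{n+1}_{24} \rarrow{p_*} \Ker\Delta \to 0.
\end{equation*}

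The heart of the argument is the list of $\Delta$-values appearing in the theorem: $\Delta(\bar\zeta_6)=2[\bar\zeta_5]$, $\Delta(\nu_8\kappa_{11})=[\nu^2_4]_8\kappa_{10}$ and $\Delta(\mu_9\sigma_{18})=[\nu_5]_9\mu_8\sigma_{17}+[\iota_7]_9\eta_7\mu_8\sigma_{17}$ in the low range; $\Delta(E\rho')$ (of order $16$), $\Delta((E\theta')\eta_{24})=[P(E\theta)+\nu_6\kappa_9]_{12}\eta_{23}$ and $\Delta(E\theta)=[\eta_{11}]_{13}\theta$ in the middle; $\Delta(\eta_{16}\varepsilon_{17})=[\zeta_{13}]_{16}$, $\Delta\mu_{16}\equiv[\eta_{14}\mu_{15}]_{16}$, $\Delta(\nu^3_{16})\equiv[\bar\varepsilon_9]_{16}$, $\Delta(\varepsilon_{17})=[\eta_{15}]_{17}\varepsilon_{16}$, $\Delta(\bar\nu_{17})=[\eta_{15}]_{17}\bar\nu_{16}$ and $8\Delta\sigma_{18}=[\mu_{15}]_{18}$ in the upper middle; and $\Delta(\nu^2_{19})=[\sigma^2_9]_{19}\eta_{23}$, $\Delta\eta_{24}=[\eta^2_{22}]_{24}$, $\Delta\iota_{25}=[\eta_{23}]_{25}$ toward the stable range. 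Several of these, notably $\Delta(\nu^2_{19})=[\sigma^2_9]_{19}\eta_{23}$, follow from the companion $k=23$ relations of Theorem \ref{thm1} by post-composition with $\eta_{23}$. Others come from the standard formulas for $\Delta$ via the $J$-homomorphism and from naturality of $\Delta$ under composition. The trickier relations involving $[\bar\varepsilon_9]$ rest on choosing this representative inside a suitable matrix Toda bracket, as signalled in the introduction.

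With all the $\Delta$-data in hand, each case $R^{n+1}_{24}$ reduces to the extension problem displayed above. In the metastable range the splitting $\pi_k(R_n)\cong\pi_k(R_\infty)\oplus\pi_{k+1}(V_{2n,n})$ recalled in the introduction, together with the known $\pi_{25}(V_{2n,n})$, gives the stable value $R^\infty_{24}\cong\Z_2\{[\iota_7]\bar\mu_7\}$ and pins down $R^n_{24}$ for $n\geq 26$; the cases $n=23,24,25$ are then determined by the $V_{2n,n}$-summand and the boundary relations $\Delta\iota_{25}=[\eta_{23}]_{25}$, $\Delta\eta_{24}=[\eta^2_{22}]_{24}$. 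Elsewhere the sequences split by order considerations, except at three nontrivial extensions: the $\Z_{16}$ summand of $R^{10}_{24}$ generated by $\Delta(E\rho')$, the $\Z_{16}$ summand of $R^{18}_{24}$ generated by $\Delta\sigma_{18}$, and the $\Z_4$ summand of $R^{10}_{24}$ generated by $[\bar\varepsilon_9]$, each detected by one of the relations $8\Delta(E\rho')=[\bar\zeta_5]_{10}+[\iota_7]_{10}\bar\mu_7$, $8\Delta\sigma_{18}=[\mu_{15}]_{18}$ and $2[\bar\varepsilon_9]=[\iota_7]_{10}\nu_7\kappa_{10}$.

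The main obstacle will be precisely these three non-split extensions. They require detecting higher $\eta$-multiplications (involving $\mu_{15}$, $\bar\zeta_5$ and $\nu_7\kappa_{10}$) at the level of $R_n$ rather than of $S^n$, and this is where the careful matrix-Toda-bracket choice of $[\bar\varepsilon_9]$ becomes essential: it is arranged so that the doubling identity $2[\bar\varepsilon_9]=[\iota_7]_{10}\nu_7\kappa_{10}$ and the $\Delta$-relations above close up consistently. Once these order calculations are settled, the remainder of Theorem \ref{thm2} is a case-by-case run through the exact sequences $(24)_n$ using Theorem \ref{thm1} as input.
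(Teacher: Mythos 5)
Your overall strategy coincides with the paper's: induction on $n$ through the exact sequences $(24)_n$, determining $\Im\Delta$ and $\Ker\Delta$ at each stage and resolving the resulting extension, with Theorem \ref{thm1} feeding in the $k=23$ data. However, as written the proposal is a table of contents rather than a proof: every one of the $\Delta$-relations you list is asserted, and the ones you do sketch a mechanism for are not obtainable by that mechanism. Concretely, $\Delta(\nu^2_{19})=[\sigma^2_9]_{19}\eta_{23}$ does \emph{not} follow ``by post-composition with $\eta_{23}$'' from a $k=23$ relation --- the formula $\Delta(\alpha\circ E\beta)=\Delta\alpha\circ\beta$ would require writing $\nu^2_{19}=\alpha\circ E\beta$ with $\Delta\alpha$ known, and $\Delta\nu_{19}$ is not among the Theorem \ref{thm1} relations (only $\Delta\nu_{21}=[\sigma^2_9]_{21}$ is). The paper instead argues backwards by exactness: $\Delta\nu_{21}=[\sigma^2_9]_{21}$ forces $[\sigma^2_9]_{21}\eta_{23}=0$, hence $[\sigma^2_9]_{19}\eta_{23}\in\Delta\pi^{19}_{25}=\{\Delta(\nu^2_{19})\}$, and then the nonvanishing is detected by $J([\sigma^2_9]_{19}\eta_{23})=\psi_{19}\eta_{42}\equiv\bar{\sigma}'_{19}+\delta_{19}\ne 0$. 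Without that last $J$-computation your argument would only show $\Delta(\nu^2_{19})\in\{0,[\sigma^2_9]_{19}\eta_{23}\}$.

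The second genuine gap is the claim that ``elsewhere the sequences split by order considerations.'' In the range $6\le n\le 10$ the computation of $\Im\Delta$ is the bulk of the work and is not an order count: e.g.\ $\Delta(\sigma_8\eta_{15}\mu_{16})$, $\Delta(\sigma_8\mu_{15})$, $\Delta(\sigma_9\mu_{16})$ and $\Delta(\mu_9\sigma_{18})$ all require the explicit expansion $\Delta\sigma_8\equiv x[\iota_7]\sigma'+y[\bar{\nu}_6+\varepsilon_6]_8$ together with relations such as $[\bar{\nu}_6+\varepsilon_6]\mu_{14}\equiv[\eta_6]\mu_7\sigma_{16}+[\zeta_5]_7\sigma_{16}$, and orders such as $\sharp[\bar{\zeta}_5]=8$ and $\sharp[\mu_{15}]=2$ each need a Toda-bracket shuffle like $8[\bar{\zeta}_5]\in-[\zeta_5]\circ E\{8\iota_{15},2\sigma_{15},8\iota_{22}\}=0$. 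Finally, for the three extensions you correctly single out as the crux, the proposal offers no argument at all: the paper proves $8\Delta(E\rho')=[\bar{\zeta}_5]_{10}+[\iota_7]_{10}\bar{\mu}_7$ by first establishing $[\bar{\zeta}_5]_{11}=[\iota_7]_{11}\bar{\mu}_7$ via $\{[\iota_7]_{11}\mu_7,8\iota_{16},2\sigma_{16}\}_1$ and then locating $[\bar{\zeta}_5]_{10}+[\iota_7]_{10}\bar{\mu}_7$ in $\Delta\pi^{10}_{25}$, and it proves $2[\bar{\varepsilon}_9]=[\iota_7]_{10}\nu_7\kappa_{10}$ by expanding the matrix bracket into $-\Delta(\nu_{10})\circ\{\nu_{12},\bar{\nu}_{15},2\iota_{23}\}+[\iota_7]_{10}\circ\{\varepsilon_7+\sigma'\eta_{14},\bar{\nu}_{15},2\iota_{23}\}$ and showing the second bracket contains $\nu_7\kappa_{10}$ with controlled indeterminacy. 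Saying the bracket ``is arranged so that'' these identities hold does not substitute for these computations, nor for the determination of the bracket's indeterminacy $\{[\iota_7]_{10}\nu_7\kappa_{10},[\iota_7]_{10}\bar{\mu}_7\}$, without which $[\bar{\varepsilon}_9]$ is not even well enough defined for the statement $2[\bar{\varepsilon}_9]=[\iota_7]_{10}\nu_7\kappa_{10}$ to be checked.
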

 
\section{Recollection of fundamental facts}

First of all we recall a formula \cite[(2.2)]{MT2}
\begin{equation} \label{DeSig}
\Delta(\alpha\circ E\beta)=\Delta\alpha\circ\beta.
\end{equation}

Suppose given elements $\alpha\in\pi_k(S^n)$ and $\beta\in\pi_m(S^k)$ such 
that $\Delta\alpha=0$. Since $\Delta(\alpha\beta)= 
\Delta({p_{n+1}}_*([\alpha]\beta))=0$, 
we obtain
\begin{equation} \label{Delta}
\Delta(\alpha\beta)=0\ (\Delta\alpha=0).
\end{equation}

By \cite[Theorem 4]{Wh42}, we have the relation
\begin{equation}\label{pD} 
{p_n}_*\Delta\iota_n=(1+(-1)^n)\iota_{n-1}. 
\end{equation}
So, for an
element $\alpha\in\pi_k(S^n)$, we obtain  a formula
\begin{equation} \label{DelS2}
{p_{n+1}}_*\Delta(E\alpha)=0\ (n: \mbox{even}); \ 
{p_{n+1}}_*\Delta(E\alpha)=2\iota_n\circ\alpha\ (n: \mbox{odd}).
\end{equation}

Let $J: \pi_k(R_n)\to \pi_{k+n}(S^n)$ be the $J$\ homomorphism \cite{Wh}. 
For $\alpha\in\pi_{m}(R_{n})$ and $\beta\in\pi_{k}(S^{m})$, we have a formula
\[
J(\alpha\beta )=J(\alpha )E^n\beta.
\]

We use the following formulas without referring to them explicitly.
\[\begin{split}
EJ(\alpha)&=-J({i}_{n+1}\circ\alpha)
\text{\ for\ }  \alpha\in\pi_k(R_{n})
\text{\ \cite[(2.1)]{JHCWh},}
\\
J\Delta(\gamma)&=[\gamma,\iota_{n}]
\text{\ for\ } \gamma\in\pi_{k}(S^n)
\text{\ \cite[(3.6)]{JHCWh},}
\\
HJ(\beta)&=E^n(p_n\circ\beta) 
\text{\ for\ } \beta\in\pi_{k}(R_n)
\text{\ \cite[Corollary 15.9]{J}.}
\end{split}\]
We have the relation $3[\iota_n,[\iota_n,\iota_n]]=0$ from the Jacobi identity for Whitehead products. Hence, for $\gamma\in\pi_{k}^{n}$,
by the formula \cite[Corollary (7.4)]{BB}, we have the relation
\begin{equation}\label{JD}
J\Delta(\gamma)=(-1)^{kn}[\iota_{n},\gamma]
=(-1)^{kn}[\iota_n,\iota_n]\circ E^{n-1}\gamma \in\pi_{k+n-1}^{n}.
\end{equation}
Concerning the exact sequence \ref{seqk_n}, we obtain the commutative diagram up to
sign:
\begin{equation} \label{EHP}
\begin{CD}
R^{n}_k @>{i_*}>> R^{n+1}_k @>{p_*}>> \pi^{n}_k \ @>{\Delta}>> R^{n}_{k-1}\\
@V{J}VV @V{J}VV @V{E^{n+1}}VV \ @V{J}VV\\
\pi^{n}_{k+n} @>{E}>> \pi^{n+1}_{k+n+1} @>{H}>> \pi^{2n+1}_{k+n+1},
@>{P}>> \pi^{n}_{k+n-1},
\end{CD}
\end{equation}
where the lower sequence is the EHP sequence. This diagram is also a direct 
consequence of \cite[(11.4)]{T}. 

The following formula about Toda brackets is useful \cite{MO}:
\begin{equation} \label{mt1}
\Delta\{\alpha, E^n\beta, E^n\gamma\}_n\subset(-1)^n\{\Delta(\alpha),E^{n-1}\beta,E^{n-1}\gamma\}_{n-1}.
\end{equation}

By \cite[Lemma 5.1]{MiM}, we know ($\alpha\in\pi_{n+k}(S^n)$):
\begin{equation}\label{JTbr}
J\{\alpha,\beta,\gamma\}\subset\{J\alpha,E^n\beta,E^n\gamma\}_n. 
\end{equation}

We need the following:
\begin{gather}
\label{t1}
\eta_3\nu_4=\nu'\eta_6,\ \eta_5\nu_6=0,\ P(\iota_{11})=\nu_5\eta_8,\ \nu_6\eta_9=0 
\quad\text{\cite[(5.9), (5.10)]{T}};\\
%\label{t2}
%[\iota_5,\iota_5]=\nu_5\eta_8 
%\quad\text{\cite[(5.10)]{T}};\\
%\label{2s''}
%2\sigma''=E\sigma''',\ 2\sigma'=E\sigma''
%\quad\text{\cite[Lemma 5.14]{T}};\\
%\label{2s8}
%P(\iota_{17})=\pm(2\sigma_8-E\sigma')
%\quad\text{\cite[(5.16)]{T}};\\
%\label{t4}
%\eta_6\sigma' =4\bar{\nu_6},\ 
%\eta_7\sigma_8=\sigma'\eta_{14}+\bar{\nu}_7+\varepsilon_7
%\quad\text{\cite[(7.4)]{T}};\\
\label{et9s}
\eta_9\sigma_{10}=\bar{\nu}_9+\varepsilon_9,\ 
\eta_9\sigma_{10}+\sigma_9\eta_{16}=P(\iota_{19})
\quad\text{\cite[Lemma 6.4, (7.1)]{T}};\\
%\label{t3}
%\eta_5\bar{\nu}_6=\nu^3_5,\ \bar{\nu}_6\eta_{14}=\nu^3_6
%\quad\text{\cite[Lemma 6.3, (7.3)]{T}};\\
%\label{t5}
%\varepsilon_3\eta_{11}=\eta_3\varepsilon_4
%\quad\text{\cite[(7.5)]{T}};\\
\label{n7s}
\sigma'\nu_{14}
=x\nu_7\sigma_{10}\ (x:\mbox{odd})
\quad\text{\cite[(7.19)]{T}};\\
%\label{2s10n}
%2\sigma_{10}\nu_{17}=P(\eta_{21}),\ 
%\sigma_{11}\nu_{18}=P(\iota_{23})
%\quad\text{\cite[(7.21)]{T}};\\
\label{n11s}
\nu_{11}\sigma_{14}=0,\ \sigma_{12}\nu_{19}=0
\quad\text{\cite[(7.20)]{T}};\\
%\label{etm}
%\mu_3\eta_{12}=\eta_3\mu_4
%\quad\text{\cite[Proposition 2.2(2)]{Og}};\\
\label{t6}
\varepsilon_3\nu_{11}=\nu'\bar{\nu}_6,\ 
\varepsilon_4\nu_{12}=P(\bar\nu_9)
\quad\text{\cite[(7.12), (7.13)]{T}};\\
\label{n6e}
\nu_6\bar\nu_9=\nu_6\varepsilon_9=P(\nu^2_{13})
\quad\text{\cite[(7.17), (7.18)]{T}};\\
\label{bn9n}
\bar{\nu}_9\nu_{17}=P(\nu_{19})
\quad\text{\cite[(7.22)]{T}}.
%\label{kaet}
%\bar{\varepsilon}_6=\eta_6\kappa_7
%\quad\text{\cite[(10.23)]{T}};\\
%\label{epsig1}
%\varepsilon_3\sigma_{11}=0,\ \bar{\nu}_6\sigma_{14}=0.
 %\sigma_{11}\bar{\nu}_{18}=0 \quad  \text{and} \\
%\label{epsig2}
% \sigma_{10}\bar\nu_{17}=\sigma_{10}\varepsilon_{17}=P(\nu^2_{21}) 
%$\quad\text{\cite[Lemma 10.7, (10.18)]{T}};\\
%\label{s'7z}
%\sigma'\zeta_{14}=x\zeta_7\sigma_{18}\ (x: \mbox{odd})
%\quad\text{\cite[Lemma 12.12]{T}}.
%\label{z11s}
%\zeta_{11}\sigma_{22}=0
%\quad\text{\cite[I-Proposition 3.5(7)]{Od1}}.
%;\\
%\label{sigsigmu}
%\sigma^2_{10}\mu_{24} = 0
%\quad\text{\cite[(2.3)]{MMO}}.
\end{gather}

Next we show relations in the homotopy groups of spheres which are necessary for our calculations.

By \cite[Lemma 5.14]{T}, we know
\begin{equation}\label{2s''}
2\sigma''=E\sigma''',\ 2\sigma'=E\sigma''.
\end{equation}

By \cite[(7.10), (7.20)]{T}, we obtain
\begin{equation}\label{et^2e}
\eta^2_9\varepsilon_{11}=4(\nu_9\sigma_{12})=0.
\end{equation}

By \cite[(10.23)]{T}, we know 
\begin{equation}\label{kaet}
\bar{\varepsilon}_6=\eta_6\kappa_7.
\end{equation}

By $\sigma'\nu_{14}=x\nu_7\sigma_{10}$ for odd $x$ \eqref{n7s} and
\eqref{t6}, we have
\begin{equation}\label{n7sigsig}
\nu_7\sigma^2_{10}=0.
\end{equation}

By \cite[Proposition 2.13(7)]{Og} and \cite[Lemma 12.3]{T}% and \eqref{2n4}
, we have
\begin{equation}\label{Ogu0}
\mu_5\varepsilon_{14}=\varepsilon_5\mu_{13}=\eta_5\mu_6\sigma_{15}.
\end{equation}

We recall the relations from (\cite[Table 3]{Ka}, \cite[Lemma 1.1(i)]{KM1}):
\begin{gather}
\label{k1}
\Delta\iota_4=2[\iota_3]-[\eta_2]_4,\\
\label{k2}
\Delta\iota_5=[\iota_3]_5\eta_3,\\
\label{k3}
\Delta\iota_8=2[\iota_7]-[\eta_6]_8,\\
\label{Dn5}
\Delta\nu_5=0,\\
\label{k4}
\Delta\iota_9=[\nu_5]_9+[\iota_7]_9\eta_7,\\
\label{k4.5}
\Delta\iota_{11}=[\iota_7]_{11}\nu_7,\\
\label{k5}
\Delta\nu_6=2[\nu_5],\\
%\intertext{and}
\label{k6}
\Delta\nu_4\equiv [\iota_3]\nu'\ \bmod\ [\eta_2]_4\nu'.
\end{gather}

We take 
\begin{gather}
\label{Ji3}
J[\iota_3]=\nu_4,\\
\label{Jet2}
J[\eta_2]=\nu'
\end{gather}

We also recall the relations from \cite[Lemma 1.1(ii)]{KM1}:
\begin{gather}
\label{Ji7}
J[\iota_7]=\sigma_8, \\
\label{Jn5}
J[\nu_5]=\bar\nu_6+\varepsilon_6,\\
\label{Jet5ep}
J[\eta_5\varepsilon_6]=-\sigma''\sigma_{13}+\bar{\nu}_6\nu^2_{14},\\
\label{Jeta6}
J[\eta_6]=\sigma', \\ 
\label{Jbn6e6}
J[\bar\nu_6+\varepsilon_6]=\sigma'\sigma_{14},\\
\label{let6sg}
[\eta_6]\sigma'=4[\bar\nu_6+\varepsilon_6]+[\nu_5]_7\nu^2_8+[\eta_5\varepsilon_6]_7.
\end{gather}

For a group $G$, let $\alpha\in G$. Denote by $\sharp\alpha$ the order of $\alpha$. 
By \cite[(3.2)]{GM} and the fact that  
$2\pi_{4n+4}(R_{4n+4})=0$ \cite{Ke}, we have
\begin{equation}\label{Ke1}
\Delta(\eta_{4n+3})=0;\ 
\sharp[\eta_{4n+3}]=2 \ (n\ge 2).
\end{equation}

By \cite[(4.2)]{GM}, \eqref{DeSig} and the fact that $2R^{4n+3}_{4n+4}=0$ \cite{Ke}, we have 
\begin{equation}\label{De^2}
\Delta(\eta^2_{4n+2})=0;\ \sharp[\eta^2_{4n+2}]=2.
\end{equation}

By \eqref{Ke1} and \cite[(4.1), (4.4)]{GM}, we obtain 
\begin{equation} \label{Ke2}
\Delta\iota_{4n+1} = [\eta_{4n-1}]_{4n+1}\ \mbox{for}\ n\geq 2.
\end{equation}

By \cite[p. 161, Table]{Ke}, we know $R^{4n+2}_{4n+2}\cong\Z_4$ for $n\ge 2$.
So, we have 
\begin{equation}\label{De4+2}
\Delta(\eta_{4n+2})\in 2R^{4n+2}_{4n+2}\ \mbox{for}\ n\ge 2.
\end{equation}
In particular, by \cite[p.~96]{KM2}, we have
\begin{equation}\label{Det10}
\Delta(\eta_{10}) = 2[\iota_7]_{10}\nu_7.
\end{equation}

By \cite[p. 410, \mbox{the proof of} Lemma 3.5(1)]{GM}, we have 
\begin{equation}\label{nu8n-1}
\Delta(\nu_{8n-1})=0;\ \sharp[\nu_{8n-1}]=8.
\end{equation}

By \cite[(4.1)-(4.5)]{GM}, we have 
\begin{equation}\label{gm1}
\Delta(\eta^2_{8n+1})=4[\nu_{8n-1}]_{8n+1},\
\Delta(\eta_{8n+2})=2[\nu_{8n-1}]_{8n+2},
\end{equation}
\begin{equation}\label{gm2}
\Delta(\iota_{8n+3})=[\nu_{8n-1}]_{8n+3}.
\end{equation}

We recall from \cite[(1.8)]{HKM},
\begin{equation}\label{Ke21}
\Delta\eta_{4n+4}=[\eta^2_{4n+2}]_{4n+4}\ \mbox{for}\ n\geq 2.
\end{equation}

By \cite[Lemma 5.2]{MiM}, we have 
\begin{equation}\label{M-m}
\{p_{n+1}, i_{n+1}, \Delta\iota_n\}\ni \iota_n \bmod 2\iota_n.
\end{equation}
 
We need the following.
\begin{lem}\label{[Ea]}
Suppose that $\alpha\in\pi^{n-2}_{n-2+k}$ satisfies $\Delta(E\alpha)=0$. Then,
$[E\alpha]\in -\{i_n,\Delta\iota_{n-1},\alpha\}\ \bmod\ {i_n}_*R^{n-1}_{n-1+k}+R^n_{n-1}\circ E\alpha$. In particular,\\ 
$\Delta\iota_{2n}\in -\{i_{2n},\Delta\iota_{2n-1},2\iota_{2n-2}\}
\ \bmod\ {i_{2n}}_*R^{2n-1}_{2n} + R^{2n}_{2n-1}\circ 2\iota_{2n-1}$.
\end{lem}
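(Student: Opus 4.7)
The plan is to identify $\{i_n,\Delta\iota_{n-1},\alpha\}$ as the Toda bracket naturally arising from the 4-fold juggling applied to the composable sequence
\[S^{n-2+k}\xrightarrow{\alpha}S^{n-2}\xrightarrow{\Delta\iota_{n-1}}R_{n-1}\xrightarrow{i_n}R_n\xrightarrow{p_n}S^{n-1},\]
and to use \eqref{M-m} to pin down $p_n$ of this bracket as $\pm E\alpha$.

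First I would check that the bracket is defined. By formula \eqref{DeSig}, $\Delta(E\alpha)=\Delta(\iota_{n-1}\circ E\alpha)=\Delta\iota_{n-1}\circ\alpha$, so the hypothesis $\Delta(E\alpha)=0$ forces $\Delta\iota_{n-1}\circ\alpha=0$; together with $i_n\circ\Delta\iota_{n-1}=0$ from exactness of the fibration sequence, this makes $\{i_n,\Delta\iota_{n-1},\alpha\}\subset R^n_{n-1+k}$ well-defined, with indeterminacy ${i_n}_*R^{n-1}_{n-1+k}+R^n_{n-1}\circ E\alpha$ --- already matching the stated modulus.

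Next comes the key step. All three consecutive compositions in the sequence above vanish, so Toda's juggling formula applies and yields, modulo the appropriate indeterminacy,
\[p_n\circ\{i_n,\Delta\iota_{n-1},\alpha\}\;+\;\{p_n,i_n,\Delta\iota_{n-1}\}\circ E\alpha\;\ni\;0.\]
By \eqref{M-m}, $\{p_n,i_n,\Delta\iota_{n-1}\}\ni\iota_{n-1}\bmod 2\iota_{n-1}$, so the second summand contains $E\alpha$ modulo $2E\alpha$. Therefore some $\xi\in\{i_n,\Delta\iota_{n-1},\alpha\}$ satisfies $p_*\xi=-E\alpha$, and $-\xi$ is a valid representative of $[E\alpha]$. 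Since the remaining ambiguity ${i_n}_*R^{n-1}_{n-1+k}$ in the choice of $[E\alpha]$ is already contained in the bracket's indeterminacy, this gives the first assertion.

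For the particular case, I would specialize to $n\rightsquigarrow 2n$, $k=1$, $\alpha=2\iota_{2n-2}$, so $E\alpha=2\iota_{2n-1}$. Formula \eqref{pD} gives ${p_{2n}}_*\Delta\iota_{2n}=2\iota_{2n-1}$, identifying $\Delta\iota_{2n}$ as a legitimate representative of $[2\iota_{2n-1}]$; the hypothesis $\Delta(2\iota_{2n-1})=2\Delta\iota_{2n-1}=0$ at the prime $2$ follows from $J\Delta\iota_{2n-1}=\pm[\iota_{2n-1},\iota_{2n-1}]$ via \eqref{JD} together with the classical fact that $2[\iota_{2n-1},\iota_{2n-1}]=0$ on the odd sphere $S^{2n-1}$. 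The hard part is the sign and indeterminacy bookkeeping in the juggling identity --- specifically confirming that the error terms contributed by the indeterminacies of the two inner 3-fold brackets are fully absorbed into ${i_n}_*R^{n-1}_{n-1+k}+R^n_{n-1}\circ E\alpha$ and do not enlarge the stated modulus.
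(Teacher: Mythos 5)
Your argument is essentially the paper's: the same juggling identity $p_n\circ(-\{i_n,\Delta\iota_{n-1},\alpha\})=\{p_n,i_n,\Delta\iota_{n-1}\}\circ E\alpha$ combined with \eqref{M-m} and exactness of $(n-1+k)_{n-1}$, so the main part is fine (your explicit check that the bracket is defined, via \eqref{DeSig}, is a welcome addition the paper omits).

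One sub-step in your treatment of the particular case is not valid as written: you deduce $2\Delta\iota_{2n-1}=0$ from $2[\iota_{2n-1},\iota_{2n-1}]=0$ via $J\Delta\iota_{2n-1}=\pm[\iota_{2n-1},\iota_{2n-1}]$, but $J$ is not injective in this range, so killing $2J\Delta\iota_{2n-1}$ only places $2\Delta\iota_{2n-1}$ in $\Ker J$. The fact you need is immediate from exactness instead: by \eqref{pD} one has $2\iota_{2n-1}={p_{2n}}_*\Delta\iota_{2n}$, hence $\Delta(2\iota_{2n-1})=\Delta\,{p_{2n}}_*\Delta\iota_{2n}=0$ since $\Delta\circ p_*=0$ in \ref{seqk_n}. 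This same observation is what the paper means by ``considering $[2\iota_{2n-1}]=\Delta\iota_{2n}$'': it simultaneously shows the bracket is defined and exhibits $\Delta\iota_{2n}$ as the lift $[2\iota_{2n-1}]$ to which the first assertion applies.
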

\begin{proof} 
By \eqref{M-m}, we obtain
\[
p_n\circ(-\{i_n,\Delta\iota_{n-1},\alpha\})
=\{p_n,i_n,\Delta\iota_{n-1}\}\circ E\alpha\ni E\alpha \bmod 
p_n\circ R^{n}_{n-1}\circ E\alpha.
\]
This and ${p_n}_*[E\alpha]=E\alpha$ imply the relation
\[
[E\alpha]\in -\{i_n,\Delta\iota_{n-1},\alpha\}\ \bmod\ {i_n}_*R^n_{n-1+k}+R^n_{n-1}\circ E\alpha
\]
by using the exact sequence $(n-1+k)_{n-1}$, and we obtain the assertion.

By considering $[2\iota_{2n-1}]=\Delta\iota_{2n}$, we have the rest.
This completes the proof.
\end{proof}

Suppose that $n\geq 2$. By \cite[Theorem 7.4, Lemma 9.1]{T}, 
we know $\zeta_{4n+1}\in\{2\iota_{4n+1},\eta^2_{4n+1},\varepsilon_{4n+3}\}\ \bmod\ \bar{\nu}_{4n+1}\nu_{4n+9}$. 
By relations 
$\Delta\iota_{4n+2}\circ\eta^2_{4n+1}
=\Delta(\eta^2_{4n+2})=0$ \eqref{De^2} and 
$\eta^2_{4n+1}\circ\varepsilon_{4n+3} =0$ 
\eqref{et^2e}, the Toda bracket\\  
$\{\Delta\iota_{4n+2},\eta^2_{4n+1},\varepsilon_{4n+3}\}$ is defined. 
By \cite[p. 96]{KM2}, we have $\Delta(\zeta_9) = 0$ and 
$\Delta(\bar{\nu}_9\nu_{17})=[\nu_5]_9\bar{\nu}_8\nu_{16}\neq 0$.  
We define 
$$
[\zeta_{4n+1}]\in\{\Delta\iota_{4n+2}, \eta^2_{4n+1}, \varepsilon_{4n+3}\}.
$$

By \cite[(7.14)]{T}, we have
\begin{equation}\label{4z5}
4\zeta_5=\eta^2_5\mu_7.
\end{equation}

We show the following.
\begin{lem} \label{zeta1}
\begin{enumerate}
\item 
$\Delta\zeta_{4n+2} = \pm 2[\zeta_{4n+1}]
\quad\text{and}\quad \sharp[\zeta_{4n+1}]=8$\ for\ $n\ge 2$.
\item 
$[\zeta_{4n+1}]_{4n+4}\equiv\Delta(\eta_{4n+4}\varepsilon_{4n+5})\
 \bmod\ {i_{4n+2,4n+4}}_*R^{4n+2}_{4n+4}\circ\varepsilon_{4n+4}$ \
 for\ $n\ge 2$. In particular, $\Delta(\eta_{8n+4}\varepsilon_{8n+5})
 =[\zeta_{8n+1}]_{8n+4}$\ for\ $n\ge 1$.
\end{enumerate}
\end{lem}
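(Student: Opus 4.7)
I would derive the equation by suspending the definition of $\zeta$ once: $\zeta_{4n+2}=E\zeta_{4n+1}\in\{2\iota_{4n+2},E\eta^2_{4n+1},E\varepsilon_{4n+3}\}_1$. Applying \eqref{mt1} with $n=1$ gives
\[
\Delta\zeta_{4n+2}\in -\{2\Delta\iota_{4n+2},\eta^2_{4n+1},\varepsilon_{4n+3}\}.
\]
Pulling the $2$ out of the first slot via the inclusion $2\{\alpha,\beta,\gamma\}\subset\{2\alpha,\beta,\gamma\}$ identifies this with $\pm 2[\zeta_{4n+1}]$; one then checks that the indeterminacy $2\Delta\iota_{4n+2}\circ\pi^{4n+1}_{4n+12}+R^{4n+1}_{4n+5}\circ\varepsilon_{4n+4}$ is absorbed into $2$ times the indeterminacy of $[\zeta_{4n+1}]$, yielding the asserted equality up to sign. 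For the order, $\sharp[\zeta_{4n+1}]\ge 8$ is immediate from $p_*[\zeta_{4n+1}]=\zeta_{4n+1}$ and $\sharp\zeta_{4n+1}=8$, and the reverse bound follows from
\[
\pm 8[\zeta_{4n+1}]=4\Delta\zeta_{4n+2}=\Delta(4\zeta_{4n+2})=\Delta(\eta^2_{4n+2}\mu_{4n+4})=\Delta(\eta^2_{4n+2})\circ\mu_{4n+3}=0,
\]
combining the equation just established, the suspended form of \eqref{4z5}, \eqref{DeSig}, and \eqref{De^2}.

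\textbf{Part (2).} By \eqref{DeSig} and \eqref{Ke21}, the right-hand side rewrites as
\[
\Delta(\eta_{4n+4}\varepsilon_{4n+5})=\Delta\eta_{4n+4}\circ\varepsilon_{4n+4}=[\eta^2_{4n+2}]_{4n+4}\circ\varepsilon_{4n+4},
\]
which already lies in ${i_{4n+2,4n+4}}_*R^{4n+2}_{4n+4}\circ\varepsilon_{4n+4}$. The congruence therefore reduces to showing $[\zeta_{4n+1}]_{4n+4}\in{i_{4n+2,4n+4}}_*R^{4n+2}_{4n+4}\circ\varepsilon_{4n+4}$. Starting from $[\zeta_{4n+1}]\in\{\Delta\iota_{4n+2},\eta^2_{4n+1},\varepsilon_{4n+3}\}$, I apply ${i_{4n+1,4n+2}}_*$: since $i_{4n+1,4n+2}{}_*\Delta\iota_{4n+2}=0$ via the canonical null-homotopy from the fibration $R_{4n+1}\to R_{4n+2}\to S^{4n+1}$, the image Toda bracket factors as a composition with $\varepsilon_{4n+4}$ from an element of $R^{4n+2}_{4n+4}$. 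Matching that null-homotopy with the identity $\Delta\iota_{4n+2}\circ\eta_{4n+1}=\Delta\eta_{4n+2}$ obtained from \eqref{DeSig} identifies the factor, after further inclusion into $R^{4n+4}$, as $[\eta^2_{4n+2}]_{4n+4}$ modulo ${i_{4n+2,4n+4}}_*R^{4n+2}_{4n+4}$. For the ``in particular'' clause, substituting $n\mapsto 2n$ and using the known $2$-primary structure of $R^{8n+2}_{8n+4}$ \cite{Ke} to verify ${i_{8n+2,8n+4}}_*R^{8n+2}_{8n+4}\circ\varepsilon_{8n+4}=0$ upgrades the congruence to an equality.

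\textbf{Main obstacle.} The delicate step is the null-homotopy bookkeeping in Part (2): \eqref{mt1} controls $\Delta$ on Toda brackets but not $i_*$, so I must either prove a companion compatibility formula for $i_*\{\Delta\iota,\beta,\gamma\}$ or unpack Toda's construction at the level of null-homotopies to match the fibration-theoretic trivialization of $i_*\Delta\iota_{4n+2}$ with the factored element $[\eta^2_{4n+2}]$. Tracking the indeterminacies precisely, and verifying the vanishing needed for the in-particular case, is where the real effort lies.
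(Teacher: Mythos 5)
Both halves of your argument run into trouble at precisely the points where the paper has to work hardest. In Part (1), placing $\Delta\zeta_{4n+2}$ and $\pm 2[\zeta_{4n+1}]$ in the common bracket $\{2\Delta\iota_{4n+2},\eta^2_{4n+1},\varepsilon_{4n+3}\}$ only identifies them modulo the indeterminacy of \emph{that} bracket, namely $2\Delta\iota_{4n+2}\circ\pi^{4n+1}_{4n+12}+R^{4n+2}_{4n+4}\circ\varepsilon_{4n+4}$, in which the second summand carries no factor of $2$. By contrast, twice the indeterminacy of $[\zeta_{4n+1}]$ is $2\Delta\iota_{4n+2}\circ\pi^{4n+1}_{4n+12}+2R^{4n+2}_{4n+4}\circ\varepsilon_{4n+4}$, and the last term dies because $2\varepsilon_{4n+4}=0$. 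So your ``absorption'' claim amounts to $R^{4n+2}_{4n+4}\circ\varepsilon_{4n+4}\subset\{2\Delta\zeta_{4n+2}\}$, which you never verify and which is essentially the unresolved issue behind Part (2) and the Conjecture following the lemma. The paper avoids this by computing $-2[\zeta_{4n+1}]$ as $\{\Delta\iota_{4n+2},\eta^2_{4n+1},\varepsilon_{4n+3}\}\circ(-2\iota_{4n+12})=\Delta\iota_{4n+2}\circ\{\eta^2_{4n+1},\varepsilon_{4n+3},2\iota_{4n+11}\}$: right-composition with $2\iota$ multiplies the whole coset, indeterminacy included, by $2$, so the order-two term disappears and only $\{2\Delta\zeta_{4n+2}\}$ survives, which is then killed by $4\Delta\zeta_{4n+2}=\Delta(\eta^2_{4n+2}\mu_{4n+4})=0$ (\eqref{4z5}, \eqref{DeSig}, \eqref{De^2}). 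Your order computation is fine.

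In Part (2) the proposed reduction is false: $\Delta(\eta_{4n+4}\varepsilon_{4n+5})=[\eta^2_{4n+2}]_{4n+4}\varepsilon_{4n+4}$ does \emph{not} lie in ${i_{4n+2,4n+4}}_*R^{4n+2}_{4n+4}\circ\varepsilon_{4n+4}$ in general, since $[\eta^2_{4n+2}]$ is a lift of $\eta^2_{4n+2}$ and hence not in $\Im\,i_*$. Concretely, for $4n+4=16$ the modulus is zero (proof of \eqref{ztrem1}) while $\Delta(\eta_{16}\varepsilon_{17})=[\zeta_{13}]_{16}\neq 0$ by Theorem \ref{thm2}; your reduction would force $[\zeta_{13}]_{16}=0$. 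Moreover, the crude observation ${i_{4n+3}}_*\{\Delta\iota_{4n+2},\eta^2_{4n+1},\varepsilon_{4n+3}\}\subset\{0,\eta^2_{4n+1},\varepsilon_{4n+3}\}=R^{4n+3}_{4n+4}\circ\varepsilon_{4n+4}$ cannot decide whether $[\zeta_{4n+1}]_{4n+3}$ lands in the coset of $[\eta^2_{4n+2}]\varepsilon_{4n+4}$ or in ${i_{4n+3}}_*R^{4n+2}_{4n+4}\circ\varepsilon_{4n+4}$, and deciding that is the entire content of the statement. The ``companion compatibility formula'' you say you would need is exactly Lemma \ref{[Ea]}: via \eqref{M-m} it gives $[\eta^2_{4n+2}]\in-\{i_{4n+3},\Delta\iota_{4n+2},\eta^2_{4n+1}\}$ modulo the stated terms, and composing with $\varepsilon_{4n+4}$ and invoking \eqref{Ke21} is the paper's proof. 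Your handling of the ``in particular'' clause (Kervaire's $R^{8n+2}_{8n+4}\cong\Z_{12}$ generated by $\Delta\nu_{8n+2}$ together with $\nu_{8n+2}\varepsilon_{8n+5}=0$) does match the paper once the main congruence is in place.
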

\begin{proof}
We have
\[\begin{split}
- 2[\zeta_{4n+1}]
&\in\{\Delta\iota_{4n+2}, \eta^2_{4n+1}, \varepsilon_{4n+3}\}\circ (-2\iota_{4n+12})\\
&=\Delta\iota_{4n+2}\circ\{\eta^2_{4n+1},\varepsilon_{4n+3}, 2\iota_{4n+11}\}\\
&\ni\Delta\iota_{4n+2}\circ(\zeta_{4n+1})
 = \Delta\zeta_{4n+2}.
\end{split}\]
Since the indeterminacy of this relation is
$\Delta\iota_{4n+2}\circ\pi_{4n+12}^{4n+1}\circ 2\iota_{4n+12}=
\{\Delta\iota_{4n+2}\circ 2\zeta_{4n+1}\}=\{ 2\Delta\zeta_{4n+2}\}$,
we have
$2[\zeta_{4n+1}]\equiv -\Delta\zeta_{4n+2} \bmod 2\Delta\zeta_{4n+2}$.
By relations  
\eqref{4z5} and \eqref{De^2},
we have
$4\Delta\zeta_{4n+2} = \Delta(\eta^2_{4n+2}\mu_{4n+4}) = 0$.
Hence we obtain the first of (1).
We also have
$
8[\zeta_{4n+1}] = 4\Delta\zeta_{4n+2} = 0.
$
On the other hand, by the fact that $\zeta_{4n+1}$ has order 8,
$[\zeta_{4n+1}]$ has order at least 8. 
This leads to the second of (1).

By the definition of $[\zeta_{4n+1}]$ and Lemma \ref{[Ea]}, we have 
$$
[\zeta_{4n+1}]_{4n+3}\in\{i_{4n+3},\Delta\iota_{4n+2},\eta^2_{4n+1}\}\circ\varepsilon_{4n+4}
$$
$$
\ni
[\eta^2_{4n+2}]\varepsilon_{4n+4}\ 
\bmod \ {i_{4n+3}}_*R^{4n+2}_{4n+4}\circ\varepsilon_{4n+4}.
$$
This and \eqref{Ke21} imply the first of (2).

By \cite[p. 161, Table; Theorem 3]{Ke}, $R^{4n+2}_{4n+4}\cong
\Z_{12}$\ for\ $n$ even and $\cong\Z_{12}\oplus\Z_2$\ for\ $n$ odd
and the direct summand $Z_{12}$ is generated by $\Delta(\nu_{4n+2})$, 
where $\nu_{4n+2}$ stands for the generator of $\pi_{4n+5}(S^{4n+2})\cong\Z_{24}$. 
Since $\nu_{4n+2}\circ\varepsilon_{4n+5}=0$ \eqref{n6e}, we have the second of (2). This completes the proof.
\end{proof}

The assertion for $n=2$ is obtained by \cite[Theorem 3.2, Lemma 3.9(1), the last page]{KM2}.

We define $[\zeta_5]\in\{[\nu_5],8\iota_8,E\sigma'\}_1$ \cite[(2.3)]{HKM}. Then, by the fact that $\zeta_6\in\{\nu_6,8\iota_9,2\sigma_9\}_2$, \eqref{mt1} and \eqref{k5}, we have 
$$
\Delta\zeta_6\in\{2[\nu_5],8\iota_8,E\sigma'\}_1
\supset 2\iota_{R_6}\circ\{[\nu_5],8\iota_8,E\sigma'\}_1  
=\{[\nu_5],8\iota_8,E\sigma'\}_1\circ 2\iota_{16}
$$
$$
\ni 2[\zeta_5]\ \bmod\ 2[\nu_5]\circ E\pi^7_{15} + R^6_9\circ 2\sigma_9.
$$
We know $\pi^7_{15}\cong(Z_2)^3$ \cite[Theorem 7.1]{T} and $R^6_9\cong\Z_2$ \cite[p. 162, Table]{Ke}. So, the above indeterminacy is trivial 
and $\Delta\zeta_6=2[\zeta_5]$. 

\begin{conj0}
$\Delta(\eta_{4n+4}\varepsilon_{4n+5}) = [\zeta_{4n+1}]_{4n+4}$\ 
for\ $n\ge 3$.
\end{conj0}
Recall from \cite[Lemma 6.5]{T} the relation 
\begin{equation}\label{mu4}
\mu_n\in\{\eta_n,2\iota_{n+1},E^{n-4}\sigma'''\}_{n-4}+\{\nu^3_n\}
\text{\ \ for\ } n\ge 4.
\end{equation}

By \cite[Lemma 12.10]{T}, we have
\begin{equation}\label{epep}
\varepsilon^2_3=\varepsilon_3\bar{\nu}_{11}=\eta_3\bar{\varepsilon}_4
\quad\text{and}\quad
%\bar{\varepsilon}_3\eta_{18}
\nu_5\sigma_{8}\nu^2_{15}=\eta_5\bar\varepsilon_6.
\end{equation}

By \cite[Lemma 12.12]{T}, we have 
\begin{equation}\label{s'7z}
\sigma'\zeta_{14}=x\zeta_7\sigma_{18}\ (x: \mbox{odd}).
\end{equation}

We show
\begin{lem} \label{zesig}
\begin{enumerate}
\item $\bar{\nu}^2_6=\bar{\nu}_6\varepsilon_{14}=0$.
\item $(\bar{\nu}_6+\varepsilon_6)\bar{\nu}_{14} =
(\bar{\nu}_6+\varepsilon_6)\varepsilon_{14} = \eta_6\bar{\varepsilon}_7$.
\item $\sigma'\eta_{14}\bar{\nu}_ {15} = \sigma'\nu^3_{14}
= \nu_7\sigma_{10}\nu^2_{17} = \eta_7\bar{\varepsilon}_8$
\quad\text{and}\quad $\eta_9\bar{\varepsilon}_{10}=0$.
\item $\nu_5\sigma_8\bar{\nu}_{15} = \nu_5\sigma_8\varepsilon_{15}
= \nu_5\bar{\varepsilon}_8$.
\item $\zeta_5\sigma_{16}\in\{\nu_5\sigma_8, 8\iota_{15}, 2\sigma_{15}\}_1
\ \bmod \nu_5\bar{\varepsilon}_8,\ 2\zeta_5\sigma_{16}$.
\end{enumerate}
\end{lem}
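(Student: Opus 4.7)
The plan is to treat the five parts in turn, with the recurring device being to multiply through the identity $\eta_n\sigma_{n+1}=\bar\nu_n+\varepsilon_n$ (the suspension of \eqref{et9s}, valid for $n\ge 9$), using $\eta_n\bar\nu_{n+1}=\nu^3_n$ from \cite[Lemma 6.3]{T} to simplify $\eta\bar\nu$-compositions and \eqref{n11s} to kill residual $\nu\sigma$-terms.

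For (1), I compose $\bar\nu_6$ with $\eta_{14}\sigma_{15}=\bar\nu_{14}+\varepsilon_{14}$: since $\bar\nu_6\eta_{14}=\eta_6\bar\nu_7=\nu^3_6$ and $\nu_{12}\sigma_{15}=0$ forces $\nu^3_6\sigma_{15}=0$, I obtain $\bar\nu^2_6=\bar\nu_6\varepsilon_{14}$. Expanding $\varepsilon_{14}\in\{\eta_{14},2\iota_{15},\nu^2_{15}\}$ (cf.\ \cite[(6.1)]{T}) and absorbing $\bar\nu_6$ into the first slot reduces the common value to an element of $\{\nu^3_6,2\iota_{15},\nu^2_{15}\}$, which vanishes with trivial indeterminacy. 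Part (2) runs in parallel: multiplying the same identity by $\bar\nu_6+\varepsilon_6$ and using (1) together with $\bar\varepsilon_7=\eta_7\kappa_8$ from \eqref{kaet} identifies the common value as $\eta_6\bar\varepsilon_7$.

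For (3), $\sigma'\eta_{14}\bar\nu_{15}=\sigma'\nu^3_{14}$ is the suspension of $\eta_n\bar\nu_{n+1}=\nu^3_n$; applying \eqref{n7s} (the odd coefficient being immaterial on the $2$-torsion result) yields $\nu_7\sigma_{10}\nu^2_{17}$, which coincides with $\eta_7\bar\varepsilon_8$ by the double suspension of \eqref{epep}. A further double suspension, combined with $\sigma_{12}\nu_{19}=0$ from \eqref{n11s}, gives $\eta_9\bar\varepsilon_{10}=\nu_9\sigma_{12}\nu^2_{19}=0$. Part (4) follows by a closely parallel computation: composing $\nu_5\sigma_8$ with $\eta_{15}\sigma_{16}=\bar\nu_{15}+\varepsilon_{15}$ and exploiting (3) together with $\bar\varepsilon_8=\eta_8\kappa_9$ from \eqref{kaet} pins down the three products.

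For (5), I would start from a Toda bracket expression $\zeta_5\in\{\nu_5,8\iota_8,2\sigma_8\}_1$, post-compose with $\sigma_{16}$, and invoke the Toda bracket shift $\{\alpha,\beta,\gamma\}\delta\subset\pm\{\alpha\beta,\gamma,\delta\}$, valid here because the inner products $8\nu_5\sigma_8$ and $16\sigma_8\sigma_{15}$ vanish by order considerations. This gives $\zeta_5\sigma_{16}\in\{\nu_5\sigma_8,8\iota_{15},2\sigma_{15}\}_1$ modulo the indeterminacy $\nu_5\sigma_8\circ E\pi^{14}_{22}+\pi^6_{16}\circ 2\sigma_{16}$, which collapses via (4) and the known structures of $\pi^{14}_{22}$, $\pi^6_{16}$ from \cite{T} to the claimed subgroup generated by $\nu_5\bar\varepsilon_8$ and $2\zeta_5\sigma_{16}$. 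The principal obstacle throughout is this indeterminacy analysis in (5); a secondary difficulty is verifying that the bracket $\{\nu^3_6,2\iota_{15},\nu^2_{15}\}$ in (1) has trivial indeterminacy, so that $\bar\nu^2_6$ is pinned to zero.
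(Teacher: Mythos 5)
Your parts (2) and (3) track the paper's own argument: the paper derives (2) from (1) together with \eqref{epep}, and (3) from \eqref{t3}, \eqref{n7s}, \eqref{epep} and \eqref{n11s}, which are exactly the identities you use. The difficulties are in (1), (4) and (5), where the paper does something different and your substitutes have gaps. In (1) the paper simply quotes \cite[Proposition 2.8(2)]{Og}. Your reduction of $\bar\nu^2_6=\bar\nu_6\varepsilon_{14}$ to an element of $\{\nu^3_6,2\iota_{15},\nu^2_{15}\}_1$ is fine, but trivial indeterminacy only makes that bracket a single element; it does not make the element zero. You still must exhibit $0$ in the bracket (for instance via $\nu^2_6\circ\{\nu_{12},2\iota_{15},\nu^2_{15}\}_1$ and $\nu_9\eta_{12}=0$), and you do not. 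In (4) the paper invokes \cite[Theorem 2]{Od2}; your computation with $\eta_{15}\sigma_{16}=\bar\nu_{15}+\varepsilon_{15}$ only yields the equality $\nu_5\sigma_8\bar\nu_{15}=\nu_5\sigma_8\varepsilon_{15}$ (and even that needs the nontrivial inputs $\nu_5\sigma_8\eta_{15}=\nu_5\varepsilon_8$ and $\varepsilon_8\sigma_{16}=0$), while nothing you cite identifies this common value with $\nu_5\bar\varepsilon_8$ --- that identification is the entire content of Oda's theorem.

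The most serious gap is (5). First, your starting bracket $\{\nu_5,8\iota_8,2\sigma_8\}_1$ is not defined, because $8\iota_8\circ 2\sigma_8=16\sigma_8\neq 0$ in $\pi_{15}(S^8)\cong\Z\{\sigma_8\}\oplus\Z_{120}\{E\sigma'\}$; this is precisely why the paper works with $E\sigma'$ in dimension $8$, or with $\zeta_6\in\{\nu_6,8\iota_9,2\sigma_9\}_2$ one dimension up. Second, the ``shift'' $\{\alpha,\beta,\gamma\}\delta\subset\pm\{\alpha\beta,\gamma,\delta\}$ is not a valid Toda-bracket identity: the legitimate shifts $\{\alpha\circ\beta,\gamma,\delta\}\subset\{\alpha,\beta\circ\gamma,\delta\}\supset\{\alpha,\beta,\gamma\circ\delta\}$ only place the two brackets you want to compare inside a common larger bracket, whose indeterminacy (involving $\nu_5\circ E\pi^7_{22}$) you would then have to control and which is not small here. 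The paper's actual argument is quite different and is the one you need: apply $E^2$ to $\{\nu_5\sigma_8,8\iota_{15},2\sigma_{15}\}_1$, rewrite $\nu_7\sigma_{10}$ as $x\sigma'\nu_{14}$ by \eqref{n7s}, land on $xy\zeta_7\sigma_{18}$ via $\sigma'\zeta_{14}=y\zeta_7\sigma_{18}$ \eqref{s'7z}, and then use that $E^2:\pi^5_{23}\to\pi^7_{25}$ is a split epimorphism with kernel $\{\nu_5\bar\varepsilon_8\}$ --- which is exactly where the ``$\bmod\ \nu_5\bar\varepsilon_8$'' in the statement comes from.
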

\begin{proof}
(1) is just \cite[Proposition 2.8(2)]{Og}.
By relations (1) and \eqref{epep}, we have (2).
By \eqref{t3}, \eqref{n7s}, %\cite[(7.19)]{T}, 
\eqref{epep} and \eqref{n11s}, we have (3).
The assertion (4) is obtained by \cite[Theorem 2]{Od2}.

By \cite[Theorem 12.8]{T}, we know $\pi^5_{23}=\{\zeta_5\sigma_{16}, \nu_5\bar{\varepsilon}_8, \eta_5\bar{\mu}_6\}\cong\Z_8\oplus(\Z_2)^2$ 
and $E^2: \pi^5_{23}\to\pi^7_{25}$ is a split epimorphism and $\Ker E^2=\{\nu_5\bar{\varepsilon}_8\}$.

By the relations $\sigma'\nu_{14}=x\nu_7\sigma_{10}$ ($x$: odd) 
\eqref{n7s} and $\sigma'\zeta_{14}=y\zeta_7\sigma_{18}$  ($y$: odd) 
\eqref{s'7z}, we have 
\[\begin{split}
E^2\{\nu_5\sigma_8, 8\iota_{15}, 2\sigma_{15}\}_1
&\subset
\{\nu_7\sigma_{10},8\iota_{17}, 2\sigma_{17}\}_1
=\{x\sigma'\nu_{14},8\iota_{17}, 2\sigma_{17}\}_1\\
&\supset
x\sigma'\circ\{\nu_{14},8\iota_{17}, 2\sigma_{17}\}_1
\ni
x\sigma'\zeta_{14}=xy\zeta_7\sigma_{18}\\
&\qquad
\mod\ 
x\sigma'\nu_{14}\circ E\pi^{16}_{24} + \pi^7_{18}\circ 2\sigma_{17}
=\{2\zeta_7\sigma_{18}\},
\end{split}\]
where  $x$ and $y$ are odd.
This completes the proof.
\end{proof}

Next we show
\begin{lem}\label{m3sg}
\begin{enumerate}
\item
$\mu_3\sigma_{12}\in\{\varepsilon_3,\sigma_{11},16\iota_{18}\}_1
=\{\varepsilon_3,2\sigma_{11},8\iota_{18}\}_1\\ 
=\{\varepsilon_3,2\iota_{11},8\sigma_{11}\}_1
\ \bmod\ \eta_3\bar{\varepsilon}_4$.
\item
$\zeta'\in\{\bar{\nu}_6,8\iota_{14},2\sigma_{14}\}_1 \bmod  \eta_6\bar{\varepsilon}_7$\ and\\ 
$\zeta'\in\{\bar{\nu}_6,\sigma_{14},16\iota_{21}\}_1
 =\{\bar{\nu}_6,2\sigma_{14},8\iota_{21}\}_1
\bmod 2\zeta', \eta_6\bar{\varepsilon}_7$. 
\item
$\zeta'+\mu_6\sigma_{15}\in\{\bar{\nu}_6+\varepsilon_6,\sigma_{14},16\iota_{21}\}_1
=\{\bar{\nu}_6+\varepsilon_6,2\sigma_{14},8\iota_{21}\}_1\
 \bmod\ 2\zeta', \eta_6\bar{\varepsilon}_7$,
$E\zeta'+\mu_7\sigma_{16}\in\{\bar{\nu}_7+\varepsilon_7,2\iota_{15},8\sigma_{15}\}_1\ \bmod\ \eta_7\bar{\varepsilon}_8$.
\end{enumerate}
\end{lem}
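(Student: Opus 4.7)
The plan is to deduce all three parts from Toda's juggling/shuffle identities for triple brackets \cite[\S 1]{T}, applied to three basic ingredients: Toda's definition $\varepsilon_n\in\{\eta_n,2\iota_{n+1},\nu^2_{n+1}\}_1$, the formula \eqref{mu4} for $\mu_n$, and Toda's defining bracket for $\zeta'\in\pi^6_{17}$ coming from \cite[Theorem 7.4]{T}. I would begin by establishing the equalities among the three bracket forms within each of (1), (2), and (3). This is a formal application of the identity $\{\alpha,\beta k,\gamma\}\equiv\{\alpha,\beta,k\gamma\}$ (modulo indeterminacy) with $k=2,8,16$, together with the shuffle $\{\alpha,\beta,\gamma\}\circ\delta\subset\{\alpha,\beta,\gamma\circ\delta\}$. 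Each rewrite changes the indeterminacy by terms of the form $\alpha\circ E\pi$ or $\pi\circ\gamma$, and these must be shown to lie inside the stated $\bmod\,\eta_3\bar{\varepsilon}_4$ (respectively $\eta_6\bar{\varepsilon}_7$, $\eta_7\bar{\varepsilon}_8$, $2\zeta'$) using the $2$-primary structure of the relevant $\pi^n_k$ from \cite{T}.

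For the memberships themselves, in (1) I would substitute $\varepsilon_3\in\{\eta_3,2\iota_4,\nu^2_4\}_1$ into $\{\varepsilon_3,2\iota_{11},8\sigma_{11}\}_1$ and apply the four-fold juggling formula so as to rearrange the resulting nested bracket into one of the form $\eta_3\circ\{2\iota_4,\nu^2_4,8\sigma_{11}\}$; after comparing to \eqref{mu4} (using $E^3\sigma'''=4\sigma_8$ together with the desuspension argument for $n=3$), the resulting element is $\mu_3\sigma_{12}$, and the ``junk'' terms produced by juggling all turn out to be multiples of $\eta_3\bar{\varepsilon}_4$ by virtue of $\varepsilon_3\bar{\nu}_{11}=\eta_3\bar{\varepsilon}_4$ from \eqref{epep}. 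For (2), the inclusion $\zeta'\in\{\bar{\nu}_6,8\iota_{14},2\sigma_{14}\}_1$ is essentially Toda's defining bracket for $\zeta'$, in parallel with $\zeta_{4n+1}\in\{2\iota_{4n+1},\eta^2_{4n+1},\varepsilon_{4n+3}\}$ in the stable range; the second bracket form follows by juggling. For (3), I would use the bilinearity of Toda brackets in the first argument to write $\{\bar{\nu}_6+\varepsilon_6,\sigma_{14},16\iota_{21}\}_1\supset\{\bar{\nu}_6,\sigma_{14},16\iota_{21}\}_1+\{\varepsilon_6,\sigma_{14},16\iota_{21}\}_1$ modulo common indeterminacy; the first summand contains $\zeta'$ by (2), and the second — after three suspensions of (1) — contains $\mu_6\sigma_{15}$. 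The final assertion for $E\zeta'+\mu_7\sigma_{16}$ is then obtained by suspending once more and absorbing the indeterminacy into $\eta_7\bar{\varepsilon}_8$.

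The principal obstacle is precise tracking of the Toda bracket indeterminacy. After each juggling or substitution step the indeterminacy enlarges by contributions of the form $\varepsilon_n\circ E\pi^{n+8}_{n+16}$ or $\pi^n_{n+12}\circ 16\sigma$, and one must verify — using the Toda tables \cite{T} together with the extensions in \cite{MT1, Mi1, Od1} — that these collapse to the stated $\bmod$. The relation $\varepsilon_n\bar{\nu}_{n+8}=\eta_n\bar{\varepsilon}_{n+1}$ of \eqref{epep} is pivotal throughout, as it identifies the stray $\varepsilon\bar{\nu}$ terms that appear in the juggling with the asserted $\eta\bar{\varepsilon}$ indeterminacy, and similarly the relations \eqref{n11s} and \eqref{et^2e} are used to verify that the middle-slot compositions ($\nu^2_4\sigma_{11}$, $\eta^2_9\varepsilon_{11}$, etc.) either vanish or fall into the stated modulo.
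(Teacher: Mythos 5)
Your overall architecture is right: the equalities among the three bracket forms in each part are indeed obtained by the shuffle $\{\alpha,\beta k,\gamma\}$ versus $\{\alpha,\beta,k\gamma\}$ with the enlarged indeterminacy controlled by $\varepsilon_3\circ E\pi^{10}_{18}+8\pi^3_{19}$ (resp.\ $\bar{\nu}_6\circ E\pi^{13}_{21}+2\pi^6_{22}$, $\bar{\nu}_7\circ E\pi^{14}_{22}+8\pi^7_{23}$) and killed by $\varepsilon_3\bar{\nu}_{11}=\varepsilon^2_3=\eta_3\bar{\varepsilon}_4$ from \eqref{epep} together with $\bar{\nu}_6^2=\bar{\nu}_6\varepsilon_{14}=0$; and part (3) does follow from additivity in the first entry plus $E^3$ of (1), with the last claim obtained by one more suspension. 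All of that matches the paper.

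The genuine gap is in the two base memberships, which you do not actually establish. First, $\mu_3\sigma_{12}\in\{\varepsilon_3,2\iota_{11},8\sigma_{11}\}_1\bmod\eta_3\bar{\varepsilon}_4$ is \emph{not} a formal consequence of $\varepsilon_3\in\{\eta_3,2\iota_4,\nu^2_4\}_1$ and \eqref{mu4}: substituting the defining bracket of $\varepsilon_3$ into the first slot produces a nested (quaternary) composition, and your proposed rearrangement into $\eta_3\circ\{2\iota_4,\nu^2_4,8\sigma_{11}\}$ is not even dimensionally coherent (that composite would land in $\pi^3_{18}$, not $\pi^3_{19}$, once the indices are made to match). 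The paper does not derive this membership at all; it imports it from Oda \cite[I-Proposition 3.2(1) and its proof]{Od1}, which is a substantive computation. Second, $\zeta'\in\{\bar{\nu}_6,8\iota_{14},2\sigma_{14}\}_1\bmod\eta_6\bar{\varepsilon}_7$ is \emph{not} ``essentially Toda's defining bracket for $\zeta'$'': Toda's $\zeta'$ is not defined through $\bar{\nu}_6$, and identifying it inside this particular bracket is a nontrivial result, which the paper takes from Mimura \cite[Lemma 5.4(iv)]{Mi3}. As written, your argument for (2) rests on a false premise, and your argument for the membership in (1) would not close without reproducing Oda's computation (or citing it). Everything downstream of these two inputs in your proposal is fine.
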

\begin{proof}
By \cite[I-Proposition 3.2(1), its proof]{Od1}, we have
\[
\mu_3\sigma_{12}\in\{\varepsilon_3,2\iota_{11},8\sigma_{11}\}_1 \bmod \eta_3\bar{\varepsilon}_4.
\]
We have
\[
\{\varepsilon_3,\sigma_{11},16\iota_{18}\}_1
\subset \{\varepsilon_3,2\sigma_{11},8\iota_{18}\}_1
\supset\{\varepsilon_3,2\iota_{11},8\sigma_{11}\}_1
\bmod\ \varepsilon_3\circ E\pi^{10}_{18} + 8\pi^3_{19}.
\]
We know $E\pi^{10}_{18}=\{\bar\nu_{11},\varepsilon_{11}\}$ and $8\pi^3_{19}=0$ \cite[Theorems 7.1 and 12.6]{T}. So, by  \eqref{epep}, we have (1). 

The first of (2) is just \cite[Lemma 5.4(iv)]{Mi3}. 
We have 
$$
\{\bar{\nu}_6,8\iota_{14},2\sigma_{14}\}_1
\subset
\{\bar{\nu}_6,8\sigma_{14},2\iota_{21}\}_1
$$
$$
\supset
\{\bar{\nu}_6,\sigma_{14},16\iota_{21}\}_1
\bmod\ \bar\nu_6\circ E\pi^{13}_{21} + 2\pi^6_{22}.
$$
We know $\bar{\nu}_6\circ E\pi^{13}_{21} = \{\bar\nu^2_6,\bar\nu_6\varepsilon_{14}\}=0$ (\cite[Theorem 7.1]{T}, Lemma \ref{zesig}(1)) and $2\pi^6_{22} = \{2\zeta'\}$ \cite[Theorem 12.6]{T}.
This leads to the second of (2). 

The first of (3) are obtained directly from (1) and (2). 
We have 
\[\begin{split}
E\{\bar{\nu}_6,2\sigma_{14},8\iota_{21}\}_1
&\subset\{\bar{\nu}_7,2\sigma_{15},8\iota_{22}\}_1
\supset\{\bar{\nu}_7,2i\iota_{15},8\sigma_{15}\}_1\\
&\qquad\qquad
\ \bmod\ 
\bar{\nu}_7\circ E\pi^{14}_{22} + 8\pi^7_{23}.
\end{split}\]
We know $\bar{\nu}_7\circ E\pi^{14}_{22} = 0$ (\cite[Theorem 7.1]{T}, Lemma \ref{zesig}(1)) and $8\pi^7_{23} = 0$ \cite[Theorem 12.6]{T}.
This leads the second of (3) and completes the proof.
\end{proof}

By \cite[(7.4)]{T}, we have 
\begin{equation}\label{t4}
\eta_7\sigma_8=\sigma'\eta_{14}+\bar{\nu}_7+\varepsilon_7.
\end{equation}

We recall \cite[(4.1)]{HKM}: 
\begin{equation} \label{Desig9}
\Delta\sigma_9 = [\iota_7]_9(\bar{\nu}_7+\varepsilon_7).
\end{equation}
By this and \cite[Lemma 1.2(v)]{KM1}, we have 
\begin{equation}\label{[nu5]sg8}
[\nu_5]_9\sigma_8 = [\iota_7]_9\sigma'\eta_{14}.
\end{equation}

We know $\Delta(\nu^2_{10})=[\iota_7]_{10}\bar{\nu}_{7}+ x[\nu_5]_{10}\sigma_8$ for some
$x\in\{0,1\}$ \cite[p.~22]{Ka}.
Since $[\nu_5]_{10}=[\iota_7]_{10}\eta_7$ from \eqref{k4}, we have
$\Delta(\nu^2_{10})=[\iota_7]_{10}(\bar{\nu}_{7}+ x\eta_7\sigma_8)$.
By composing this equation with $\eta_{15}$ on the right and using the relation
$\bar{\nu}_7\eta_{15}=\nu^3_7$ \eqref{t3}, we get that
$0=[\iota_7]_{10}(\nu^3_{7}+ x\eta_7\sigma_8\eta_{15})$. We know  
$[\iota_7]_{10}\nu^3_7 \neq 0$ \cite[Proposition 4.1]{Ka}. 
This implies that $x=1$. Hence, by \eqref{t4}, we obtain
\begin{equation}\label{Dnu^2}
\Delta(\nu^2_{10})=[\iota_7]_{10}(\bar{\nu}_{7}+ \eta_7\sigma_8)
 = [\iota_7]_{10} (\varepsilon_7 + \sigma'\eta_{14})
 = [\iota_7]_{10}\bar{\nu}_{7}+ [\nu_5]_{10}\sigma_8.
\end{equation}
From the last relation, we have $[\iota_7]_{11}\bar{\nu}_{7} = [\nu_5]_{11}\sigma_8$. We know $\Delta(\nu_{13})\ne 0$ (\cite[p. 161, Table]{Ke}, \cite[(3.13)]{Ka}), $R^{13}_{15} = \{[\nu_5]_{13}\sigma_8, [\eta^3_{12}]\}\cong\Z_2\oplus\Z$ and $R^{14}_{15} = \{[\eta^3_{12}]_{14}\}\cong\Z$ \cite[Proposition 2.1]{Ka}. 
So, by $(15)_{13}$ and \eqref{Desig9}, we have 

\begin{equation}\label{Dn13}
\Delta(\nu_{13}) = [\nu_5]_{13}\sigma_8 = [\iota_7]_{13}\bar{\nu}_7 = [\iota_7]_{13}\varepsilon_7.
\end{equation}

Finally we need

\begin{lem} \label{ord}
Suppose that $\alpha\in\pi^{4n+2}_k$ for $n\ge 2$ be an element satisfying the relations
$2\alpha=0,\ (E^2\alpha)\eta_{k+2}=0$\ and\ $\eta^2_{4n+2}(E^3\alpha)=0$.
Then, for an element $\beta\in\{\eta_{4n+2},2\iota_{4n+3},E\alpha\}_1\subset\pi^{4n+2}_{k+2}$, 
there exists a lift $[\eta_{4n+2}(E\beta)]\in R^{4n+3}_{k+2}$ of $\eta_{4n+2}(E\beta)$
which is of order $2$.
\end{lem}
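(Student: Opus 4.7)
I construct the lift $[\eta_{4n+2}(E\beta)]$ as an element of a Toda bracket in $R_{4n+3}$. By \eqref{De^2} there is a lift $[\eta^2_{4n+2}]\in R^{4n+3}_{4n+4}$ of order $2$, and the relations $2[\eta^2_{4n+2}]=0$ together with $2\iota_{4n+4}\circ E^2\alpha=E^2(2\alpha)=0$ make the bracket $\{[\eta^2_{4n+2}],2\iota_{4n+4},E^2\alpha\}\subset R^{4n+3}_{k+3}$ well-defined. On the sphere level, suspending $\beta\in\{\eta_{4n+2},2\iota_{4n+3},E\alpha\}_1$ to $E\beta\in\{\eta_{4n+3},2\iota_{4n+4},E^2\alpha\}_1$ and composing on the left with $\eta_{4n+2}$ gives $\eta_{4n+2}\cdot E\beta\in\{\eta^2_{4n+2},2\iota_{4n+4},E^2\alpha\}$, via the standard composition inclusion $\alpha\circ\{\beta,\gamma,\delta\}\subset\{\alpha\beta,\gamma,\delta\}$. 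Since $p_*$ maps the $R$-bracket into the corresponding sphere bracket, I pick $[\eta_{4n+2}(E\beta)]$ inside the $R$-bracket with $p_*[\eta_{4n+2}(E\beta)]=\eta_{4n+2}\cdot E\beta$.

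To verify $2[\eta_{4n+2}(E\beta)]=0$, I invoke Toda's juggling identity for triple brackets:
\[
-\bigl\{[\eta^2_{4n+2}],\,2\iota_{4n+4},\,E^2\alpha\bigr\}\circ 2\iota_{k+3}\equiv [\eta^2_{4n+2}]\circ\bigl\{2\iota_{4n+4},\,E^2\alpha,\,2\iota_{k+2}\bigr\},
\]
modulo $[\eta^2_{4n+2}]\circ\pi^{4n+4}_{k+3}\circ 2\iota_{k+3}=2[\eta^2_{4n+2}]\circ\pi^{4n+4}_{k+3}=0$. A direct analysis of $\{2\iota_{4n+4},E^2\alpha,2\iota_{k+2}\}$ (choose trivial null-homotopies) shows this sphere bracket is the coset $2\pi^{4n+4}_{k+3}$; composing on the left with the order-$2$ element $[\eta^2_{4n+2}]$ annihilates this subgroup, so the right-hand side is zero in $R^{4n+3}_{k+3}$, yielding $2[\eta_{4n+2}(E\beta)]=0$.

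The principal obstacle is the management of Toda bracket indeterminacies throughout: the $R$-bracket containing the lift has indeterminacy $[\eta^2_{4n+2}]\circ\pi^{4n+4}_{k+3}+R^{4n+3}_{4n+5}\circ E^2\alpha$, and one must ensure that a particular choice of representative with the required projection under $p_*$ satisfies the order calculation. The two auxiliary hypotheses $(E^2\alpha)\eta_{k+2}=0$ and $\eta^2_{4n+2}(E^3\alpha)=0$ enter precisely here: they annihilate the residual terms $[\eta^2_{4n+2}]\circ(E^2\alpha)\eta_{k+2}$ and $[\eta^2_{4n+2}]\circ E^3\alpha\circ(\text{factor})$ arising from modifying the defining null-homotopies of the bracket, which is why the lemma isolates exactly these three vanishing conditions on $\alpha$, each tailored to kill a specific obstruction coset.
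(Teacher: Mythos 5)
Your proposal follows the same route as the paper: the lift is defined as an element of the Toda bracket $\{[\eta^2_{4n+2}],2\iota_{4n+4},E^2\alpha\}$ and its double is computed by shifting to $-[\eta^2_{4n+2}]\circ\{2\iota_{4n+4},E^2\alpha,2\iota_{k+2}\}$. Two steps, however, do not go through as written. First, the claim that $\{2\iota_{4n+4},E^2\alpha,2\iota_{k+2}\}$ "is the coset $2\pi^{4n+4}_{k+3}$" because one may "choose trivial null-homotopies" is false as a general principle: by \cite[Corollary 3.7]{T}, for an element $\gamma$ of order $2$ the bracket $\{2\iota,\gamma,2\iota\}$ necessarily contains $\gamma\circ\eta$, and no choice of null-homotopy removes this term. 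The bracket reduces to its indeterminacy only because the hypothesis $(E^2\alpha)\eta_{k+2}=0$ is in force; as your argument is actually laid out, that hypothesis is never invoked where it is needed, and if your claim about null-homotopies were correct the hypothesis would be superfluous. (Your closing paragraph shows you know the term $[\eta^2_{4n+2}](E^2\alpha)\eta_{k+2}$ appears, but it is the main term of the bracket, not an indeterminacy artifact.) Note also that the paper works with the bracket $\{\;\}_2$ precisely so that the residual indeterminacy is $[\eta^2_{4n+2}]\circ E^2\pi^{4n+2}_{k+1}\circ 2\iota_{k+3}$, a composite through double suspensions, where the factor $2$ can be moved across $[\eta^2_{4n+2}]$; for a general element of $\pi^{4n+4}_{k+3}$ the identity $2([\eta^2_{4n+2}]\circ\gamma)=(2[\eta^2_{4n+2}])\circ\gamma$ you use is not automatic.

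Second, you never establish that a lift of the given element $\eta_{4n+2}(E\beta)$ exists, nor that the $R$-bracket contains one. The projection $p_*$ carries $\{[\eta^2_{4n+2}],2\iota_{4n+4},E^2\alpha\}$ \emph{into} the sphere bracket $\{\eta^2_{4n+2},2\iota_{4n+4},E^2\alpha\}$, but that does not let you "pick" a preimage of the particular element $\eta_{4n+2}(E\beta)$: the two cosets have different indeterminacies and the image of $p_*$ restricted to the bracket need not contain it. This is exactly where the third hypothesis does its real work in the paper: by \eqref{DeSig} and \eqref{De4+2},
\[
\Delta(\eta_{4n+2}(E\beta))\in R^{4n+2}_{4n+2}\circ 2\iota_{4n+2}\circ\beta\subset -R^{4n+2}_{4n+2}\circ\{2\iota_{4n+2},\eta_{4n+2},2\iota_{4n+3}\}_1\circ(E^2\alpha)=-R^{4n+2}_{4n+2}\circ\eta^2_{4n+2}(E^3\alpha)=0,
\]
so $\eta_{4n+2}(E\beta)$ admits a lift at all. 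Your assignment of the role of $\eta^2_{4n+2}(E^3\alpha)=0$ to killing a residual term "$[\eta^2_{4n+2}]\circ E^3\alpha\circ(\mbox{factor})$" does not correspond to any step of the argument; its actual role is the vanishing of the connecting map above. Supplying this $\Delta$-computation and replacing the "trivial null-homotopies" claim by the explicit use of $(E^2\alpha)\eta_{k+2}=0$ would bring your argument in line with the paper's proof.
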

\begin{proof}
By \eqref{De4+2}, we have 
\[\begin{split}
\Delta(\eta_{4n+2}(E\beta))&\in R^{4n+2}_{4n+2}\circ 
2\iota_{4n+2}\circ\beta\\
&\subset 
-R^{4n+2}_{4n+2}\circ\{2\iota_{4n+2},\eta_{4n+2},2\iota_{4n+3}\}_1\circ(E^2\alpha)
\\
&=-R^{4n+2}_{4n+2}\circ\eta^2_{4n+2}(E^3\alpha)=0.
\end{split}\]
So, $\eta_{4n+2}(E\beta)$ has a lift. We set
\[
[\eta_{4n+2}(E\beta)]\in\{[\eta^2_{4n+2}],2\iota_{4n+4},E^2\alpha\}_2.
\]
Then, by \eqref{De^2}, we have
\[\begin{split}
2[\eta_{4n+2}(E\beta)]
&\in\{[\eta^2_{4n+2}],2\iota_{4n+4},E^2\alpha\}_2\circ 2\iota_{k+3}\\
&=-[\eta^2_{4n+2}]\circ\{2\iota_{4n+4},E^2\alpha,2\iota_{k+2}\}_2\\
&\ni[\eta^2_{4n+2}](E^2\alpha)\eta_{k+2}=0
\mod\ [\eta^2_{4n+2}]\circ E^2\pi^{4n+2}_{k+1}\circ 2\iota_{k+3}=0.
\end{split}\]
This completes the proof.
\end{proof}

By \cite[Lemma 6.3, (7.3)]{T}, we have
\begin{equation}\label{t3}
\eta_5\bar{\nu}_6=\nu^3_5,\ \bar{\nu}_6\eta_{14}=\nu^3_6.
\end{equation}

We know $\varepsilon_n\in\{\eta_n,2\iota_{n+1},\nu^2_{n+1}\}_1$ for $n\ge 5$ 
\cite[(6.1)]{T}. By the relation $\eta_n\bar{\nu}_{n+1} = \nu^3_n (n \ge 5)$ 
\eqref{t3} and by \eqref{mu4}, we have $\mu_n\in\{\eta_n,2\iota_{n+1},8\sigma_{n+1}\}_1$ for $n \ge 9$. So, by Lemma \ref{ord}, we have 

\begin{exam}\label{shetep}
$\sharp[\eta_{4n+2}\varepsilon_{4n+3}]=\sharp[\eta_{4n+2}\mu_{4n+3}]=2$ for $n \ge 2$. 
\end{exam}

\section{Determination of $\pi_{23}(R_n:2)$\ ($n\leq 9)$}

Let $n\geq 10$. By \cite[Corollary 3.7, Theorem 7.1]{T}, we have
$\{2\iota_n,8\sigma_n,2\iota_{n+7}\}_1\ni(8\sigma_n)\eta_{n+7}=0 \ \bmod\ \ 2\iota_n\circ E\pi^{n-1}_{n+7}+2\pi^n_{n+8}=0$ 
and $\{8\iota_n,2\sigma_n,8\iota_{n+7}\}_1\equiv 0 \ \bmod \ 8\iota_n\circ E\pi^{n-1}_{n+7}+8\pi^n_{n+8}=0$. Since $E\pi^{n-1}_{n+7}=\pi^n_{n+8}$, we have $\{2\iota_n,8\sigma_n,2\iota_{n+7}\}_1=\{2\iota_n,8\sigma_n,2\iota_{n+7}\}$ and $\{8\iota_n,2\sigma_n,8\iota_{n+7}\}_1=\{8\iota_n,2\sigma_n,8\iota_{n+7}\}$. Hence we obtain
\begin{equation}\label{2i8s2i}
\{2\iota_n,8\sigma_n,2\iota_{n+7}\}=\{8\iota_n,2\sigma_n,8\iota_{n+7}\}=0
\text{\quad for } n\geq 10.
\end{equation}

We show 
\begin{lem}\label{236}
\begin{enumerate}
\item If $i_{5 \ast}R_n^4=0$, where $n \ge 4$, then $p_{5 \ast} :
	   R_n^5 \to \pi_n^4$ is a monomorphism. 
\item If $\pi^3_n=\nu' \circ \pi^6_n$ for $n \ge 6$, then $i_{5
 \ast}R^4_n=0$. 
\end{enumerate}
\end{lem}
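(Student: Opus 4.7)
Part (1) is immediate from the exact sequence $(n)_4$: exactness at $R^5_n$ identifies $\Ker(p_{5*}:R^5_n\to\pi^4_n)$ with $\Im\,i_{5*}=i_{5*}R^4_n$, which is zero by hypothesis, so $p_{5*}$ is injective.

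For part (2), the strategy is to use the classical decomposition $R_4\simeq R_3\times S^3$ (coming from the two-sided $S^3$-action on $\H\cong\R^4$), which trivialises $p_4:R_4\to S^3$ and yields a direct-sum splitting
\[
R^4_n \;=\; i_{4*}R^3_n \;\oplus\; [\iota_3]\pi^3_n,
\]
with the second summand being the image of a section $\alpha\mapsto[\iota_3]\alpha$ of $p_{4*}$. In parallel, the universal cover $S^3\to R_3$ induces an isomorphism $\pi^3_n\xrightarrow{\cong}R^3_n$, $\gamma\mapsto[\eta_2]\gamma$, for $n\ge 3$. Under the hypothesis $\pi^3_n=\nu'\circ\pi^6_n$, every element of $R^4_n$ is therefore a combination of terms of the shape $[\iota_3]\nu'\gamma$ and $[\eta_2]_4\nu'\gamma$ with $\gamma\in\pi^6_n$.

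The crux is showing $i_{5*}[\iota_3]\nu'=0$ in $R^5_6$. By \eqref{k6}, $\Delta\nu_4=[\iota_3]\nu'+k\,[\eta_2]_4\nu'$ for some integer $k$; applying $i_{5*}$ and using $i_{5*}\circ\Delta=0$ gives $i_{5*}[\iota_3]\nu'=-k\,[\eta_2]_5\nu'$. Applying $i_{5*}$ to \eqref{k1} gives $[\eta_2]_5=2[\iota_3]_5$ in $R^5_3$, so $[\eta_2]_5\nu'=2\,i_{5*}[\iota_3]\nu'$. Substituting, $(1+2k)\,i_{5*}[\iota_3]\nu'=0$; since $R^5_6$ is $2$-primary and $1+2k$ is odd (hence a unit in $\Z_{(2)}$), we obtain $i_{5*}[\iota_3]\nu'=0$, whence also $[\eta_2]_5\nu'=0$.

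Both families then die under $i_{5*}$: $i_{5*}([\iota_3]\nu'\gamma)=(i_{5*}[\iota_3]\nu')\circ\gamma=0$ and $i_{5*}(i_{4*}[\eta_2]\nu'\gamma)=([\eta_2]_5\nu')\circ\gamma=0$, so $i_{5*}R^4_n=0$, proving (2). The only nonroutine step is the brief combination of \eqref{k6} and \eqref{k1}; the rest is formal bookkeeping using the splitting of $p_4$ and the identification $R^3_n\cong\pi^3_n$ from the covering $S^3\to R_3$.
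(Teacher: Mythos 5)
Your proof is correct, and part (1) coincides with the paper's (both are immediate from exactness of $(n)_4$). For part (2) you follow the same overall reduction as the paper --- use the splitting $R^4_n=[\eta_2]_4\circ\pi^3_n\oplus[\iota_3]\circ\pi^3_n$ together with the hypothesis $\pi^3_n=\nu'\circ\pi^6_n$ to reduce everything to the vanishing of $[\iota_3]_5\nu'$ and $[\eta_2]_5\nu'$ in $R^5_6$ --- but you justify that vanishing differently. The paper simply quotes the external fact $\pi_6(R_5)\cong\pi_6(Sp(2))=0$, so both elements die because the ambient group is trivial. You instead push $\Delta\nu_4\equiv[\iota_3]\nu'\bmod[\eta_2]_4\nu'$ \eqref{k6} and $\Delta\iota_4=2[\iota_3]-[\eta_2]_4$ \eqref{k1} through $i_{5*}$ (where $i_{5*}\circ\Delta=0$), combine them to get $(1+2k)\,i_{5*}[\iota_3]\nu'=0$, and conclude by oddness of $1+2k$ in the $2$-primary group $R^5_6$; the step $(2[\iota_3]_5)\nu'=2([\iota_3]_5\nu')$ is licensed by the left-distributivity the paper records for H-spaces. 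Your route is self-contained relative to the relations already listed in Section 2 and avoids invoking $\pi_6(Sp(2))=0$, at the cost of being longer; the paper's route is shorter but leans on a known computation of $\pi_6(R_5)$. Both are valid.
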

\begin{proof}
(1) is a direct consequence of $(n)_4$. 

(2) Recall that 
$R^4_n=[\eta_2]_4 \circ \pi^3_n
 \oplus [\iota_3] \circ \pi^3_n$ (\cite[p.~14]{Ka}, \cite[p. 116]{St}). 
We know $\pi_n(R_3)\cong\pi_n(S^3)$, $\pi_n(R_4)\cong\pi_n(R_3)
\oplus\pi_n(S^3)$. 
We know $\pi_6(R_5) \cong \pi_6(Sp(2))=0$. 
Then, $i_{5 \ast}([\eta_2]_4\nu')=[\eta_2]_5\nu'=0$ and $i_{5 \ast}([\iota_3]\nu')=[\iota_3]_5\nu'=0$. 
\end{proof}
By \cite{MT1}, $\pi^3_{23} = \Z_2\{\nu'\bar{\mu}_6\}
\oplus\Z_2\{\nu'\eta_6\mu_7\sigma_{16}\}$. So we obtain
\begin{gather*}
R^3_{23} = \Z_2\{[\eta_2]\nu'\bar{\mu}_6\}
\oplus\Z_2\{[\eta_2]\nu'\eta_6\mu_7\sigma_{16}\}
\shortintertext{and}
R^4_{23} = \Z_2\{[\eta_2]_4\nu'\bar{\mu}_6\}
\oplus\Z_2\{[\eta_2]_4\nu'\eta_6\mu_7\sigma_{16}\}
\oplus\Z_2\{[\iota_3]\nu'\bar{\mu}_6\}
\oplus\Z_2\{[\iota_3]\nu'\eta_6\mu_7\sigma_{16}\}.
\end{gather*}
By applying Lemma~\ref{236} for $n=23$, we have
\begin{equation}\label{i5R423}
i_{5 \ast}(R^4_{23})=0.
\end{equation} 

We recall from \cite[Theorem 5.1]{MT2} that 
$\pi_{23}(Sp(2): 2) = \Z_2\{[\sigma'\mu_{14}]\}
\oplus\Z_2\{[E\zeta']\}\oplus\Z_2\{[\eta_7\bar{\varepsilon}_8]\}$.
So, 
by \cite[Lemma 1.2]{HKM}, 
we obtain
\[
R^5_{23} = \Z_2\{[\nu_4\sigma'\mu_{14}]\}
\oplus\Z_2\{[\nu_4(E\zeta')]\}
\oplus\Z_2\{[\nu_4\eta_7\bar{\varepsilon}_8]\}.
\]
By the fact that $\mu_{14}\in\{\eta_{14},2\iota_{15},8\sigma_{15}\}_1
\bmod \eta_{14}\varepsilon_{15}, \nu^3_{14}$ \eqref{mu4} and 
${i_5}_*R^4_{23}=0$,
we obtain  
\begin{equation}\label{mR524}
[\nu_4\sigma'\mu_{14}] \in \{[\nu_4\sigma'\eta_{14}], 2\iota_{15},8\sigma_{15}\}_1
\ \bmod\ [\nu_4(E\zeta')],\ [\nu_4\eta_7\bar{\varepsilon}_8].
\end{equation}
Since $2[\nu_4 \sigma'\mu_{14}] \in [\nu_4\sigma'\eta_{14}]\circ\{ 2\iota_{15},8\sigma_{15}, 2\iota_{23}\}= [\nu_4\sigma'\eta_{14}]
\circ 0 = 0$ \eqref{2i8s2i}, 
it follows that $\sharp [\nu_4\sigma'\mu_{14}]=2$.  

Making use of the exact sequence $(23)_5$, by the fact that $\pi^5_{24}
= \Z_8\{\bar{\zeta}_5\}\oplus\Z_2\{\nu_5\mu_8\sigma_{17}\}$ and $\pi^6_{25}
= \Z_8\{\zeta_5\sigma_{16}\}\oplus\Z_2\{\nu_5\bar{\varepsilon}_8\}\oplus\Z_2\{\eta_5\bar{\mu}_6\}$ \cite[Theorems 12.8-9]{T}, we obtain 
\begin{equation} \label{R623}
R^6_{23} = \{[\zeta_5]\sigma_{16}, [\nu_5]\bar{\varepsilon}_8, [\nu_4\sigma'\mu_{14}]_6, [\nu_4(E\zeta')]_6, [\nu_4\eta_7\bar{\varepsilon}_8]_6\}\cong\Z_8\oplus(\Z_2)^4.
\end{equation}

We see that $\pi^3_{24}
= \Z_2\{\nu'\eta_6\bar{\mu}_7\}$ \cite{Mi1}. This yields the groups
$R^3_{24}$ and $R^4_{24}$:
$$
R^3_{24}=\{[\eta_2]\nu'\eta_6\bar{\mu}_7\}\cong\Z_2;\ 
R^4_{24}=\{[\eta_2]_4\nu'\eta_6\bar{\mu}_7,[\iota_3]\nu'\eta_6\bar{\mu}_7\}\cong(\Z_2)^2.
$$

By applying Lemma~\ref{236} for $\pi_{24}^3$, we have
\begin{equation}\label{i5R424}
i_{5 \ast}(R^4_{24})=0.
\end{equation}

We show the following.
\begin{equation} \label{524}
R^5_{24} = \Z_2\{[\nu^2_4]\kappa_{10}\}
\oplus\Z_2\{[\nu_4\sigma'\eta_{14}]\mu_{15}\}.
\end{equation}
\begin{proof}
In the exact sequence $(24)_5$:
$$
\pi^4_{25}\rarrow{\Delta}R^4_{24}\rarrow{i_*}R^5_{24}
\rarrow{p_*}\pi^4_{24}\rarrow{\Delta}R^4_{23}, 
$$
we know that $\Ker\ \{\Delta:\pi^4_{24}\to R^4_{23}\}= \Z_2\{\nu^2_4\kappa_{10},
\nu_4\sigma'\eta_{14}\mu_{15}\}\cong(\Z_2)^2$ and 
$$
\pi^4_{25} = \Z_2\{(E\nu')\eta_7\bar{\mu}_8\}
\oplus\Z_2\{\nu_4\eta_7\bar{\mu}_8\}\oplus\Z_8\{\nu_4\zeta_7\sigma_{18}\}\ 
\ \text{\cite[Theorem A]{Mi1}}.
$$
By relations \eqref{k1}, \eqref{k6} and 
$\nu'\zeta_6=0$ \cite[Proposition 2.4(i)]{Og}, we have
$$
\Delta((E\nu')\eta_7\bar{\mu}_8) = [\eta_2]_4\nu'\eta_6\bar{\mu}_7,\ 
\Delta(\nu_4\eta_7\bar{\mu}_8)\equiv [\iota_3]\nu'\eta_6\bar{\mu}_7
\bmod [\eta_2]_4\nu'\eta_6\bar{\mu}_7
$$
and $\Delta(\nu_4\zeta_7\sigma_{18})\equiv[\iota_3]\nu'\zeta_6=0\ \bmod\ 
[\eta_2]_4\nu'\zeta_6=0$. This completes the proof.
\end{proof}

We recall from \cite[p.~137]{T} that
$$
\bar{\zeta}_5\in\{\zeta_5, 8\iota_{16}, 2\sigma_{16}\}_1\subset\pi^5_{24}.
$$
Since the order of $[\zeta_5]\in R^5_{16}$ is $8$ \cite[Proposition 4.1]{Ka}, we define a lift of $\bar{\zeta}_5$ as
\begin{equation}\label{[bzeta_5]}
[\bar{\zeta}_5]\in\{[\zeta_5], 8\iota_{16}, 2\sigma_{16}\}_1\subset R^6_{24}.
\end{equation}

We show 

\begin{prop} \label{R61}
$R^6_{24} = \Z_8\{[\bar{\zeta}_5]\}
\oplus\Z_2\{[\nu_5]\mu_8\sigma_{17}\}
\oplus\Z_2\{[\nu_4\sigma'\eta_{14}]_6\mu_{15}\}
\oplus\Z_2\{[\nu^2_4]_6\kappa_{10}\}$.
\end{prop}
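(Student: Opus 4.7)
The proposal is to exploit the exact sequence $(24)_5$,
$$
\pi^5_{25}\rarrow{\Delta}R^5_{24}\rarrow{i_*}R^6_{24}\rarrow{p_*}\pi^5_{24}\rarrow{\Delta}R^5_{23},
$$
with $\pi^5_{24}=\Z_8\{\bar\zeta_5\}\oplus\Z_2\{\nu_5\mu_8\sigma_{17}\}$ from \cite[Theorem 12.8]{T}, $R^5_{24}$ as in \eqref{524}, and $R^5_{23}$ as recorded in \eqref{R623}. The four claimed generators of $R^6_{24}$ are already in hand: $[\bar\zeta_5]$ is the matrix Toda bracket lift specified in \eqref{[bzeta_5]}, $[\nu_5]\mu_8\sigma_{17}$ is the obvious composite in $R^6_{24}$, and the remaining two are the $i_*$-images of the generators of $R^5_{24}$.

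First I would verify surjectivity of $p_*$ onto $\pi^5_{24}$: the mere existence of the lifts $[\bar\zeta_5]$ and $[\nu_5]\mu_8\sigma_{17}$ in $R^6_{24}$ forces $\Delta\bar\zeta_5=\Delta(\nu_5\mu_8\sigma_{17})=0$ by exactness. Second, I would show $\Delta:\pi^5_{25}\to R^5_{24}$ is zero, so that $i_*$ is injective on $R^5_{24}$. This would be done generator-by-generator using the list of $\pi^5_{25}$ from \cite{Mi1}: each generator of the form $\alpha\circ E\beta$ is killed by \eqref{DeSig}, $\Delta(\alpha\circ E\beta)=\Delta(\alpha)\circ\beta$, once $\Delta\alpha$ is known (for instance $\Delta\nu_5=0$ by \eqref{Dn5}) and the resulting composite is seen to land in $0\subset R^5_{24}$ by degree considerations.

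Granted these two vanishings, I have a short exact sequence
$$
0\longrightarrow R^5_{24}\longrightarrow R^6_{24}\longrightarrow \pi^5_{24}\longrightarrow 0,
$$
and it remains to determine the extension and the orders of the distinguished lifts. Since $p_*[\bar\zeta_5]=\bar\zeta_5$ has order $8$, the order of $[\bar\zeta_5]$ is at least $8$; I would show equality by shuffling the defining bracket, $8[\bar\zeta_5]\in [\zeta_5]\circ\{8\iota_{16},2\sigma_{16},8\iota_{24}\}_1$, and invoking \eqref{2i8s2i} to conclude this is zero modulo indeterminacy that I can identify inside $i_*R^5_{24}$. The order of $[\nu_5]\mu_8\sigma_{17}$ is $2$ because $2[\nu_5]=\Delta\nu_6$ by \eqref{k5}, whence $2[\nu_5]\mu_8\sigma_{17}=\Delta(\nu_6\mu_9\sigma_{18})$ via \eqref{DeSig}, and this lies in $\Delta(\pi^5_{25})=0$. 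The chief obstacle is controlling the indeterminacies in the matrix Toda-bracket computation for $8[\bar\zeta_5]$ tightly enough to exclude hidden $2$-extensions between the $\Z_8$ summand on $[\bar\zeta_5]$ and the image $i_*R^5_{24}$, so that $R^6_{24}$ genuinely splits as the asserted $\Z_8\oplus(\Z_2)^3$.
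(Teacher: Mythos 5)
Your proposal follows the paper's own proof essentially step for step: the same exact sequence, the same observation that both connecting maps $\Delta:\pi^5_{24}\to R^5_{23}$ and $\Delta:\pi^5_{25}\to R^5_{24}$ vanish (the latter because every generator of $\pi^5_{25}$ factors through $\nu_5$ and $\Delta\nu_5=0$ \eqref{Dn5}), and the same shuffle $8[\bar{\zeta}_5]\in\{[\zeta_5],8\iota_{16},2\sigma_{16}\}_1\circ 8\iota_{24}=-[\zeta_5]\circ E\{8\iota_{15},2\sigma_{15},8\iota_{22}\}=0$ via \eqref{2i8s2i}, which settles the extension with no residual indeterminacy. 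One step of yours is misstated: in arguing that $[\nu_5]\mu_8\sigma_{17}$ has order $2$ you place $2[\nu_5]\mu_8\sigma_{17}=\Delta(\nu_6\mu_9\sigma_{18})$ in $\Delta(\pi^5_{25})$, but $\nu_6\mu_9\sigma_{18}$ lies in $\pi^6_{25}$, and $\Delta(\pi^6_{25})$ lands in $R^6_{24}$ itself and is certainly not zero (the paper later shows $\Delta\bar{\zeta}_6=2[\bar{\zeta}_5]$), so that justification fails as written. The conclusion survives for a much simpler reason: $\mu_8\sigma_{17}$ already has order $2$ in $\pi^8_{24}\cong(\Z_2)^7$, so $2([\nu_5]\mu_8\sigma_{17})=[\nu_5]\circ 2(\mu_8\sigma_{17})=0$; replace your argument by this and the proof is complete.
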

\begin{proof}
We consider the exact sequence $(24)_6$:
$$
\pi^5_{25}\rarrow{\Delta}R^5_{24}\rarrow{i_*}R^6_{24}
\rarrow{p_*}\pi^5_{24}\rarrow{\Delta}R^5_{23}.
$$
Since $\pi^{5}_{24}=\Z_8\{\bar{\zeta}_5\}\oplus\Z_2\{\nu_5\mu_8\sigma_{17}\}$ \cite[Theorem 12.9]{T} and $\Delta\nu_5=0$ \eqref{Dn5}, we have $\Delta(\pi^5_{24})=0$. We know $\pi^5_{25} = \{\nu^2_5\kappa_{11}, \nu_5\bar{\mu}_8,\nu_5\eta_8\mu_9\sigma_{18}\}$ \cite[p.~46]{MT1}. So, we have $\Delta(\pi^5_{25})=0$.

By \eqref{[bzeta_5]} and \eqref{2i8s2i}, we have
$$
8[\bar{\zeta}_5]\in\{[\zeta_5],8\iota_{16},2\sigma_{16}\}_1\circ 8\iota_{24}= -[\zeta_5]\circ E\{8\iota_{15},2\sigma_{15},8\iota_{22}\}=0.
$$
This implies $\sharp[\bar{\zeta}_5]=8$ and completes the proof.
\end{proof}

Let $r_3:SU(3)\to R_6$ be the canonical inclusion.
By \cite[Corollaries 5.3, 5.4, Theorem 5.5]{A} and \cite{Y},
the fibrations $G_2\rarrow{p_G}G_2/SU(3)=S^6$ and $R_7 \rarrow{p_7}S^6$
give a commutative diagram
\begin{equation} \label{Y}
\begin{CD}
SU(3) @>{i_G}>> G_2 @>{p_G}>> S^6 \\
@V{r_3}VV @V{h}VV @V{=}VV \\
R_6 @>{i_7}>> R_7 @>{p_7}>> S^6.
\end{CD}
\end{equation}

By
\cite[Table 2, Proposition 4.1]{Ka}, \cite[Theorem
2]{KM1} and the relation $\bar{\nu}_7\nu_{15}=\eta_7\sigma_8\nu_{15}$ 
(\eqref{t1}, \eqref{t4}, \eqref{t6}), 
\begin{gather*}
R^7_8=\{[\nu_5]_7,[\eta_6]\eta_7\}\cong(\Z_2)^2,\\
R^7_{16}=\{[\zeta_5]_7,[\nu_4\sigma'\eta_{14}]_7\eta_{15},[\nu^2_4]_7\nu^2_{10},[\nu_5]_7\bar{\nu}_8,[\nu_5]_7\varepsilon_8,[\eta_6]\eta_7\varepsilon_8,[\eta_6]\mu_7\}\cong(\Z_2)^7,\\
R^7_{18}=\{[2P(\iota_{13})]\sigma_{11},[\nu_4\zeta_7]_7,[\nu_5]_7\sigma_8\nu_{15}\}\cong\Z_{16}\oplus\Z_8\oplus\Z_2.
\end{gather*}

Since $\pm[\iota_8,\iota_8] = 2\sigma_8 - E\sigma'$ and the Whitehead product is trivial in $\pi_{15}(R_5)$, we have 
\begin{equation}\label{2[nu5]s}
2[\nu_5]\sigma_8 =[\nu_5](E\sigma').
\end{equation}

\begin{lem} \label{89}
$[\nu_4\sigma'\mu_{14}]\eta_{23}= [\nu_4\sigma'\eta_{14}]\mu_{15}$,
$[\nu_4(E\zeta')]\eta_{23}= [\nu_4\eta_7\bar{\varepsilon}_8]\eta_{23}=0$ \ and \ $[\nu_5]\eta_8\bar{\varepsilon}_9=0$. 
\end{lem}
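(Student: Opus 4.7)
The plan is to leverage the injectivity of the projection $p_{5*}\colon R^5_k\to\pi^4_k$ for $k=23,24$, which follows from \eqref{i5R423}, \eqref{i5R424} and exactness of \ref{seqk_n} (since $\Ker p_{5*}=\Im i_{5*}=0$), to reduce the first three identities to equalities in $\pi^4_{*}$; the last identity is handled analogously using $p_{6*}$ on $R^6_{24}$ together with the structure of $i_{6*}R^5_{24}$ from \eqref{524}.

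First I would dispatch the second identity $[\nu_4\eta_7\bar{\varepsilon}_8]\eta_{23}=0$: after applying $p_{5*}$, the task becomes $\eta_7\bar{\varepsilon}_8\eta_{23}=0$ in $\pi^7_{24}$. By Lemma~\ref{zesig}(3), $\eta_7\bar{\varepsilon}_8=\sigma'\eta_{14}\bar{\nu}_{15}$, and by \eqref{t3}, $\bar{\nu}_{15}\eta_{23}=\nu^3_{15}$, so this reduces to $\sigma'\eta_{14}\nu^3_{15}$, which vanishes because $\eta_n\nu_{n+1}=0$ for $n\geq 5$ (stable $\eta\nu=0$). The first identity, after projection, reduces to the commutation $\mu_{14}\eta_{23}=\eta_{14}\mu_{15}$ in $\pi^{14}_{24}$; this I derive from $\mu_{14}\in\{\eta_{14},2\iota_{15},8\sigma_{15}\}_1$ \eqref{mu4} by composing with $\eta_{23}$ on the right and applying the standard Toda-bracket shuffle, the indeterminacies being trivial in this stable range.

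The third identity $[\nu_4(E\zeta')]\eta_{23}=0$ is the main obstacle. After $p_{5*}$, it suffices to show $\nu_4(E\zeta')\eta_{23}=0$ in $\pi^4_{24}$. Using Lemma~\ref{m3sg}(3), $E\zeta'+\mu_7\sigma_{16}\in\{\bar{\nu}_7+\varepsilon_7,2\iota_{15},8\sigma_{15}\}_1\bmod\eta_7\bar{\varepsilon}_8$, and composing on the right with $\eta_{23}$ the $\bmod$-contribution is $\eta_7\bar{\varepsilon}_8\eta_{23}=0$ (from the previous step). The Toda bracket composed with $\eta_{23}$ is then absorbed, via the shuffle $\{a,b,c\}_1\circ d\subset-a\circ E\{b',c',d'\}$ exactly as used in the $8[\bar{\zeta}_5]$ computation in Proposition~\ref{R61}, into $\pm(\bar{\nu}_7+\varepsilon_7)\mu_{15}$. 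The remaining summand $\mu_7\sigma_{16}\eta_{23}$ equals $\eta_7\mu_8\sigma_{17}$ by the stable commutations $\sigma\eta=\eta\sigma$ and $\mu\eta=\eta\mu$. After left-multiplication by $\nu_4$, both $\nu_4(\bar{\nu}_7+\varepsilon_7)\mu_{15}$ and $\nu_4\eta_7\mu_8\sigma_{17}$ are checked to vanish in $\pi^4_{24}$ using \eqref{t1}, \eqref{t6}, and the explicit generator structure of $\pi^4_{24}$.

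For the fourth identity $[\nu_5]\eta_8\bar{\varepsilon}_9=0$, I would use the suspension of \eqref{kaet}, namely $\bar{\varepsilon}_9=\eta_9\kappa_{10}$, to rewrite the expression as $[\nu_5]\eta^2_8\kappa_{10}$. Since $\nu_5\eta_8=0$ by \eqref{t1}, $[\nu_5]\eta_8\in i_{6*}R^5_9$; writing $[\nu_5]\eta_8=i_{6*}z$, the target becomes $i_{6*}(z\eta_9\kappa_{10})$, whose vanishing reduces via $p_{5*}$-injectivity on $R^5_{24}$ to $p_{5*}(z)\eta_9\kappa_{10}=0$ in $\pi^4_{24}$; this follows from the explicit description of $\pi^4_9$ and the standard relations for $\kappa$-composition with low-dimensional elements. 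The principal difficulty throughout is the Toda-bracket manipulation and careful indeterminacy tracking in the third identity.
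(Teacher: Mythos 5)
Your overall strategy---reducing everything to sphere-level identities via the vanishing of ${i_5}_*R^4_{24}$ \eqref{i5R424}, so that $p_{5*}$ is injective on $R^5_{24}$---is exactly the paper's, and your handling of the first identity (reduce to $\mu_{14}\eta_{23}=\eta_{14}\mu_{15}$) and of $[\nu_4\eta_7\bar{\varepsilon}_8]\eta_{23}=0$ (via Lemma~\ref{zesig}(3) and $\eta\nu=0$) is correct and agrees with the paper up to which factorization of $\eta_7\bar{\varepsilon}_8$ is used.

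The genuine gap is in what you call the main obstacle, $[\nu_4(E\zeta')]\eta_{23}=0$. The paper settles this in one line: $\zeta'\eta_{22}=0$ (Oguchi, Proposition 2.13(5)), hence $(E\zeta')\eta_{23}=E(\zeta'\eta_{22})=0$ and the composite lies in ${i_5}_*R^4_{24}=0$. Your substitute via Lemma~\ref{m3sg}(3) leaves two essential steps unproved. First, after the shuffle you must evaluate $\{2\iota_{14},8\sigma_{14},\eta_{21}\}$; this is a different ordering from the defining bracket $\mu\in\{\eta,2\iota,8\sigma\}$, its identification with $\pm\mu$ requires the Jacobi identity together with an evaluation of $\{8\sigma,\eta,2\iota\}$, and its indeterminacy $2\pi^{14}_{23}+\pi^{14}_{22}\circ\eta_{22}$ contains $\nu^3_{14}$ and $\eta^2_{14}\sigma_{16}$, so it is not ``trivial in this stable range'' and produces extra terms $(\bar{\nu}_7+\varepsilon_7)\nu^3_{15}$, $(\bar{\nu}_7+\varepsilon_7)\eta^2_{15}\sigma_{17}$ that you never address. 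Second, the concluding claim that $\nu_4(\bar{\nu}_7+\varepsilon_7)\mu_{15}$ and $\nu_4\eta_7\mu_8\sigma_{17}$ ``are checked to vanish in $\pi^4_{24}$'' is not routine: by \eqref{Ogu0} and \eqref{bn6m} one has $(\bar{\nu}_7+\varepsilon_7)\mu_{15}=\eta_7\mu_8\sigma_{17}$, and $\nu_4\eta_7=\eta_4\nu_5$ with $\nu_5\mu_8\sigma_{17}\neq 0$ in $\pi^5_{24}$, so neither term is individually obviously zero (indeed, if your bracket identification were right the two contributions would cancel against each other, which you do not observe). For the last identity your detour through $\bar{\varepsilon}_9=\eta_9\kappa_{10}$ and $R^5_9$ is plausible but again ends in an unverified composition claim; the paper closes it immediately from $\eta_8\bar{\varepsilon}_9=(E\sigma')\nu^3_{15}$ and $[\nu_5](E\sigma')=2[\nu_5]\sigma_8$ \eqref{2[nu5]s}, since $\nu^3_{15}$ has order $2$.
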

\begin{proof}
Since $\nu_4\sigma'\mu_{14}\eta_{23} = \nu_4\sigma'\eta_{14}\mu_{15}$,
we have $[\nu_4\sigma'\mu_{14}]\eta_{23} - [\nu_4\sigma'\eta_{14}]\mu_{15}
\in {i_5}_*R^4_{24} = 0$ \eqref{i5R424}. This leads to the first relation.

As $\zeta'\eta_{22} = 0$ by \cite[Proposition 2.13(5)]{Og},
$[\nu_4(E\zeta')]\eta_{23}\in {i_5}_*R^4_{24} = 0$. Since
$\eta_7\bar{\varepsilon}_8 = \nu_7\sigma_{10}\nu^2_{17}$ from Lemma \ref{zesig} (3), we have
$\nu_4\eta_7\bar{\varepsilon}_8\eta_{23} = 0$. This implies the relation
$[\nu_4\eta_7\bar{\varepsilon}_8]\eta_{23} = 0$. By \cite[the proof of (12.15)]{T}, $\nu_5\eta_8\bar{\varepsilon}_9 = 0$.
By \eqref{2[nu5]s} and the fact that $\eta_8\bar{\varepsilon}_9=(E\sigma')\nu^3_{15}$, 
we have $[\nu_5]\eta_8\bar{\varepsilon}_9
=[\nu_5](E\sigma')\nu^3_{15}=0$. 
This leads to the third and completes the proof.
\end{proof}

By Lemma \ref{zesig}(4) and the group structure of $R^5_{23}$, we have
\begin{equation}\label{zesig1}
[\zeta_5]\sigma_{16}\in\{[\nu_5]\sigma_8,8\iota_{15},2\sigma_{15}\}_1 
\bmod 2[\zeta_5]\sigma_{16}, [\nu_5]\bar{\varepsilon}_8,
[\nu_4\sigma'\mu_{14}]_6, [\nu_4(E\zeta')]_6, 
[\nu_4\eta_7\bar{\varepsilon}_8]_6.
\end{equation}

By \cite[Theorem 6.1]{Mi3} and \cite[Theorem 1.3]{MMOh}, we know
\[
\pi_{23}(G_2:2)=\{\langle P(E\theta)+\nu_6\kappa_9\rangle, \langle\eta_6\mu_7\rangle\sigma_{16}\}\cong\Z_4\oplus\Z_2,
\]
where  
$2\langle P(E\theta)+\nu_6\kappa_9\rangle
={i_G}_*[\nu_5\bar{\varepsilon}_8]$.

We will use the diagram \eqref{Y}.  We define 
\[
[P(E\theta)+\nu_6\kappa_9]=h_*\langle P(E\theta)+\nu_6\kappa_9\rangle. 
\]
From the fact that $p_6r_3=p_U$ and \eqref{R623}, we have 
$$
{r_3}_*[\nu_5\bar{\varepsilon}_8]\equiv[\nu_5]\bar{\varepsilon}_8\ \bmod\ 
[\nu_4\sigma'\mu_{14}]_6, [\nu_4(E\zeta')]_6, [\nu_4\eta_7\bar{\varepsilon}_8]_6.
$$
Hence, we obtain
\[\begin{split}
2[P(E\theta)+\nu_6\kappa_9]&=h_*{i_G}_*[\nu_5\bar{\varepsilon}_8]
={i_7}_*{r_3}_*[\nu_5\bar{\varepsilon}_8]\\
&\equiv[\nu_5]_7\bar{\varepsilon}_8
\ \bmod\ 
[\nu_4\sigma'\mu_{14}]_7, [\nu_4(E\zeta')]_7, [\nu_4\eta_7\bar{\varepsilon}_8]_7.
\end{split}\]
That is,
\begin{equation}\label{mmoh}
2[P(E\theta)+\nu_6\kappa_9]\equiv[\nu_5]_7\bar{\varepsilon}_8 \ 
\bmod \ [\nu_4\sigma'\mu_{14}]_7, [\nu_4(E\zeta')]_7, [\nu_4\eta_7\bar{\varepsilon}_8]_7.
\end{equation}

We know $\pi^7_{23}=\{\sigma'\mu_{14},E\zeta',\mu_7\sigma_{16},\eta_7\bar{\varepsilon}_8\}\cong(\Z_2)^4$ \cite[Theorem 12.6]{T}.
So, by Lemma \ref{R^7_k}, we have $R^7_{23}\cong\Z_4\oplus(\Z_2)^5$. 
We show 
\begin{prop} \label{R723}
$R^7_{23} = \Z_2\{[\zeta_5]_7\sigma_{16}\}
\oplus\Z_2\{[\nu_4\sigma'\mu_{14}]_7\}
\oplus\Z_2\{[\nu_4(E\zeta')]_7\}
\oplus\Z_2\{[\nu_4\eta_7\bar{\varepsilon}_8]_7\}
\oplus\Z_2\{[\eta_6]\mu_7\sigma_{16}]\}
\oplus \Z_4\{[P(E\theta)+\nu_6\kappa_9]\}$,
where $2[P(E\theta)+\nu_6\kappa_9]
\equiv[\nu_5]_7\bar{\varepsilon}_8\ \bmod\ 
[\nu_4\sigma'\mu_{14}]_7, [\nu_4(E\zeta')]_7, [\nu_4\eta_7\bar{\varepsilon}_8]_7$.
\end{prop}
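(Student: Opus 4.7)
The plan is to use the $2$-primary exact sequence
\[
\pi^6_{24}\rarrow{\Delta}R^6_{23}\rarrow{i_*}R^7_{23}\rarrow{p_*}\pi^6_{23}\rarrow{\Delta}R^6_{22},
\]
combined with the description of $R^6_{23}$ in \eqref{R623} and the abstract isomorphism $R^7_{23}\cong\Z_4\oplus(\Z_2)^5$ recorded immediately before the proposition. This reduces the task to (a) computing $\Im\Delta$ on the left, (b) choosing generators of $\Im p_*$ on the right, and (c) fixing the order-$4$ extension.

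First I would identify $\Im\Delta:\pi^6_{24}\to R^6_{23}$. The decisive input is the identity $\Delta\zeta_6=2[\zeta_5]$ obtained just above the proposition from $[\zeta_5]\in\{[\nu_5],8\iota_8,E\sigma'\}_1$, the Toda-bracket formula \eqref{mt1}, and \eqref{k5}. Composing with $\sigma_{17}$ and applying \eqref{DeSig} gives
\[
\Delta(\zeta_6\sigma_{17})=\Delta\zeta_6\circ\sigma_{16}=2[\zeta_5]\sigma_{16},
\]
so $\Im\Delta$ already contains the cyclic subgroup of order $4$ generated by $2[\zeta_5]\sigma_{16}$. Running through the remaining generators of $\pi^6_{24}$ (from the Toda--Mimura table) and applying either \eqref{Delta} to classes factoring through an element with $\Delta=0$ (for instance classes of the form $\nu_6\cdot{}$, $\nu_5\cdot{}$ or obtained from $[\iota_7]$) or \eqref{mt1} to classes presented as secondary brackets, one checks that every contribution lands in this same cyclic subgroup. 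Hence $\Im\Delta=\Z_4\{2[\zeta_5]\sigma_{16}\}$ and $\Im i_*\cong(\Z_2)^5$ is generated by the images of the other five summands of \eqref{R623}.

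Next I would determine $\Im p_*\subset\pi^6_{23}$. The class $[\eta_6]\mu_7\sigma_{16}$ projects to $\eta_6\mu_7\sigma_{16}$, while $[P(E\theta)+\nu_6\kappa_9]:=h_*\langle P(E\theta)+\nu_6\kappa_9\rangle$, defined through the diagram \eqref{Y}, satisfies $p_*[P(E\theta)+\nu_6\kappa_9]=P(E\theta)+\nu_6\kappa_9$ because $p_7\circ h=p_G$. A cardinality count from $|R^6_{23}|=128$, $|\Im\Delta|=4$ and $|R^7_{23}|=128$ forces $|\Im p_*|=4$, so these two elements exhaust $\Im p_*$. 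The order-$4$ extension is supplied by \eqref{mmoh}: transporting the identity $2\langle P(E\theta)+\nu_6\kappa_9\rangle={i_G}_*[\nu_5\bar{\varepsilon}_8]$ in $\pi_{23}(G_2)$ along \eqref{Y} yields $2[P(E\theta)+\nu_6\kappa_9]\equiv[\nu_5]_7\bar{\varepsilon}_8$ modulo $[\nu_4\sigma'\mu_{14}]_7$, $[\nu_4(E\zeta')]_7$, $[\nu_4\eta_7\bar{\varepsilon}_8]_7$. Since $[\nu_5]\bar{\varepsilon}_8\notin\Im\Delta$, its image $[\nu_5]_7\bar{\varepsilon}_8$ is nonzero, so $[P(E\theta)+\nu_6\kappa_9]$ has order exactly $4$ and the proposition follows.

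The main obstacle is the exhaustion in the first step: although the single nontrivial contribution $2[\zeta_5]\sigma_{16}$ is produced directly from $\Delta\zeta_6=2[\zeta_5]$, certifying that no further element of $\pi^6_{24}$ contributes requires going through every Toda--Mimura generator and tracking each via \eqref{Delta} or \eqref{mt1}, with a few ancillary Toda-bracket identifications. A secondary subtlety is that the lift $[P(E\theta)+\nu_6\kappa_9]$ is only well-defined modulo $\Im i_*$, which is exactly why the stated doubling formula is expressed as a congruence modulo the three $[\nu_4\cdot]_7$-classes.
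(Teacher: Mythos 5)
Your argument follows the paper's proof essentially verbatim: both work in the exact sequence $(23)_6$, both identify $\Delta(\zeta_6\sigma_{17})=2[\zeta_5]\sigma_{16}$ as the only nontrivial contribution of $\Delta$ on $\pi^6_{24}=\{\zeta_6\sigma_{17},P(E\theta)\eta_{23},\eta_6\bar{\mu}_7\}$, and both settle the order-$4$ extension by transporting $2\langle P(E\theta)+\nu_6\kappa_9\rangle={i_G}_*[\nu_5\bar{\varepsilon}_8]$ through the $G_2$ diagram, i.e.\ by \eqref{mmoh}. The one verification you leave open --- that $\Delta(P(E\theta)\eta_{23})=\Delta(\eta_6\bar{\mu}_7)=0$ --- is carried out in the paper exactly along the lines you sketch, using \eqref{Delta} after rewriting $P(E\theta)\eta_{23}=(P(E\theta)+\nu_6\kappa_9)\eta_{23}$, and your cardinality count for $\Im p_*$ is a harmless variant of the paper's appeal to $\Ker\{\Delta:\pi^6_{23}\to R^6_{22}\}\cong(\Z_2)^2$ from \cite{HKM}, so there is no substantive gap.
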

\begin{proof}
In the exact sequence $(23)_6$, we know $\Ker\{\Delta:\pi^6_{23}\to R^6_{22}\}
=\{P(E\theta)+\nu_6\kappa_9, \eta_6\mu_7\sigma_{16}\}\cong(\Z_2)^2$ \cite[p. 12-3]{HKM}. We know\\ 
$\pi^6_{24}=\{\zeta_6\sigma_{17}, P(E\theta)\eta_{23}, 
\eta_6\bar{\mu}_7\}\cong\Z_8\oplus(\Z_2)^2$ \cite[Theorem 12.8]{T}. 
We have $\Delta(\zeta_6\sigma_{17}) = 2[\zeta_5]\sigma_{16}$ Lemma \ref{zeta1}.
We have $\Delta(\eta_6\bar{\mu}_7)= 0$ by \eqref{Delta}. Since $P(E\theta)\eta_{23}=
(P(E\theta)+\nu_6\kappa_9)\eta_{23}$, we have $\Delta(P(E\theta)\eta_{23})=0$. 

We consider the natural map up to sign between two exact sequences:

\begin{equation} \label{EHP}
\begin{CD}
\pi^6_{24}@>{\Delta}>>
R^{6}_{23} @>{{i_7}_*}>> R^{7}_{23} @>{{p_7}_*}>> \pi^{6}_{23} \ @>{\Delta}>> R^{6}_{22}\\
@V{E^7}VV @V{J}VV @V{J}VV @V{E^{7}}VV \ @V{J}VV\\
\pi^{13}_{31}@>{P}>>\pi^{6}_{29} @>{E}>> \pi^{7}_{30} @>{H}>> \pi^{13}_{30}
@>{P}>> \pi^{6}_{28},
\end{CD}
\end{equation}
So, by \eqref{R623} and \eqref{mmoh}, we have the desired assertion.
This completes the proof.
\end{proof}

By Lemma \ref{89}, we have 
$[\nu_5]_7\bar{\varepsilon}_8\eta_{23}=0$.
Since $[\nu_4\eta_7\bar{\varepsilon}_8]\eta_{23}\in{i_5}_*R^4_{24}=0$ \eqref{i5R424},
we have $[\nu_4(E\zeta')]_7\eta_{23}=0$. 
Finally, we have 
$[\nu_4\sigma'\mu_{14}]\eta_{23}\equiv[\nu_4\sigma'\eta_{14}]\mu_{15}
 \ \bmod\ {i_5}_*R^4_{24}=0$.
This and  \eqref{mmoh}, we have the following:

If 
$[\nu_4\sigma'\mu_{14}]_7 \eta_{23}= [\nu_4\sigma'\eta_{14}]_7\mu_{15} \neq 0$,
we obtain
\[
2[P(E\theta)+\nu_6\kappa_9]
\equiv[\nu_5]_7\bar{\varepsilon}_8\ \bmod\ 
[\nu_4\eta_7\bar{\varepsilon}_8]_7, [\nu_4(E\zeta')]_7.
\]

The relation in Proposition \ref{R723} which is \eqref{mmoh} will be improved after knowing the group structure of $R^7_{24}$.

Here, we remark the following.
 
\begin{rem}\label{R^7_k}
$R^7_k \cong h_*\pi_k (G_2:2) \oplus [\eta_6]\circ\pi^7_k$ for $k\ge 2$.
\end{rem}
\begin{proof}
Let %be 
$k\ge 2$. For a double covering map $q:Spin(7)\to R_7$, we know
$R^7_k \cong q_*\pi_k(Spin(7):2)$.
We consider the fibration $G_2\xrightarrow{i'} Spin(7) \xrightarrow{p'}S^7$.
We recall from \cite[Proposition 9.1]{Mi3} the following:
\[
\pi_k(Spin(7):2)\cong {i'}_*\pi_k(G_2:2)\oplus \chi\pi^7_k,
\]
where $\chi:\pi^7_k\to \pi_k(Spin(7):2)$ is a homomorphism satisfying ${p'}_*\circ\chi=\rm{id}_{\pi^7_k}$.
Therefore, since $h=q\circ i'$ by the construction of $h$, we obtain
\[
R^7_k \cong {h}_*\pi_k(G_2:2)\oplus (q_*\circ \chi)\pi^7_k.
\]
For $k=7$, by the fact that $R^7_7=\Z\{[\eta_6]\}$ \cite[Table 2]{Ka},
$\pi_7(G:2)=0$ \cite[Theorem 6.1]{Mi3} and $\pi^7_7=\Z\{\iota_7\}$, we have
$[\eta_6]=\pm(q_*\circ \chi)(\iota_7)$ and hence, we have
$[\eta_6]_*= \pm q_*\circ \chi$. Thus, for $k\ge 2$, the relation
$
R^7_k \cong h_*\pi_k (G_2:2) \oplus [\eta_6]\circ\pi^7_k$ holds.
This completes the proof.
\end{proof}

By \eqref{Ogu0} and \eqref{4z5}, we have
\begin{equation}\label{4z5sig}
\eta_5\varepsilon_6\mu_{14}
= 4\zeta_5\sigma_{16}.
\end{equation}

Recall that, if $i_{5 \ast}R_{23}^4=0$, $p_{5 \ast} :
	   R_{23}^5 \to \pi_{23}^4$ is a monomorphism (Lemma \ref{236}). 
	  	   
\begin{lem} \label{rel1}
\begin{enumerate}
\item $[\nu_4\sigma'\eta_{14}]\bar{\nu}_{15}
= [\nu_4\eta_7\bar{\varepsilon}_8] = [\nu^2_4]\sigma_{10}\nu^2_{17}$ and
$[\nu_4\sigma'\eta_{14}]\varepsilon_{15}
= [\nu_4(E\zeta')]$.
\item $[\eta_5\varepsilon_6]\nu^3_{14}
= [\nu_4\eta_7\bar{\varepsilon}_8]_6$,
$[\eta_5\varepsilon_6]\eta_{14}\varepsilon_{15}
= [\nu_4(E\zeta')]_6$, $[\eta_6]\eta_7\bar{\varepsilon}_8
= [\nu_4\eta_7\bar{\varepsilon}_8]_7$, \\
$[\eta_6]E\zeta' = [\nu_4(E\zeta')]_7$.
\item $[\eta_5\varepsilon_6]\mu_{14}+[\nu_4\sigma'\mu_{14}]_6
\equiv 4[\zeta_5]\sigma_{16} \ 
\bmod [\nu_4(E\zeta')]_6, [\nu_4\eta_7\bar{\varepsilon}_8]_6$\ and\ 
%\item 
$[\eta_6]\sigma'\mu_{14}=[\eta_5\varepsilon_6]_7\mu_{14}
\equiv[\nu_4\sigma'\mu_{14}]_7\ \bmod \ 
[\nu_4\eta_7\bar{\varepsilon}_8]_7, [\nu_4(E\zeta')]_7$.
\end{enumerate}
\end{lem}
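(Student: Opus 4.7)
The overall strategy is to use the exact sequence $(k)_n$ and the injectivity of $p_{5*}:R^5_{23}\to\pi^4_{23}$ (obtained from \eqref{i5R423} and Lemma \ref{236}(1)) to reduce each identity of lifts to an identity in a sphere homotopy group, controlling the $i_*$-contributions via the known decompositions of $R^5_{23}$, of $R^6_{23}$ (equation \eqref{R623}), and of $R^7_{23}$ (Proposition \ref{R723}).

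For part (1), applying $p_{5*}$ reduces the first chain of equalities to $\sigma'\eta_{14}\bar{\nu}_{15}=\eta_7\bar{\varepsilon}_8=\nu_7\sigma_{10}\nu^2_{17}$ composed on the left with $\nu_4$, which is Lemma \ref{zesig}(3). For $[\nu_4\sigma'\eta_{14}]\varepsilon_{15}=[\nu_4(E\zeta')]$, I substitute $\sigma'\eta_{14}=\eta_7\sigma_8-\bar{\nu}_7-\varepsilon_7$ from \eqref{t4}, apply the suspension of Lemma \ref{zesig}(2) to rewrite $(\bar{\nu}_7+\varepsilon_7)\varepsilon_{15}=\eta_7\bar{\varepsilon}_8$, and identify the remaining $\eta_7\sigma_8\varepsilon_{15}$-term with $E\zeta'$ via the Toda bracket description in Lemma \ref{m3sg}(2).

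For part (2), applying $p_{6*}$ one checks that $\eta_5\varepsilon_6\nu^3_{14}=\eta_5\varepsilon_6\eta_{14}\varepsilon_{15}=0$ in $\pi^5_{23}$, using that $\varepsilon_n\nu_{n+8}=0$ for $n\geq 5$ (by suspending $\varepsilon_4\nu_{12}=P(\bar{\nu}_9)$ from \eqref{t6} and invoking $E\circ P=0$ in the EHP sequence). Hence $[\eta_5\varepsilon_6]\nu^3_{14}$ and $[\eta_5\varepsilon_6]\eta_{14}\varepsilon_{15}$ lie in $i_{6*}R^5_{23}$, and the identifications with $[\nu_4\eta_7\bar{\varepsilon}_8]_6$ and $[\nu_4(E\zeta')]_6$ follow from part (1) together with the Toda bracket lift defining $[\eta_5\varepsilon_6]$. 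The two $R^7_{23}$ relations then follow by combining \eqref{let6sg}, namely $[\eta_6]\sigma'=4[\bar{\nu}_6+\varepsilon_6]+[\nu_5]_7\nu^2_8+[\eta_5\varepsilon_6]_7$, with $\eta_7\bar{\varepsilon}_8=\sigma'\eta_{14}\bar{\nu}_{15}$ from Lemma \ref{zesig}(3) and $\eta_{14}\bar{\nu}_{15}=\nu^3_{14}$ from \eqref{t3}. The extra terms $4[\bar{\nu}_6+\varepsilon_6]\nu^3_{14}$ and $[\nu_5]_7\nu^2_8\nu^3_{14}$ vanish by $\nu_{11}\eta_{14}=0$ and $2$-torsion in $R^7_{23}$, leaving exactly $[\nu_4\eta_7\bar{\varepsilon}_8]_7$; the parallel argument with $\varepsilon_{15}$ in place of $\bar{\nu}_{15}$ handles $[\eta_6]E\zeta'$.

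Part (3) is the main obstacle, requiring delicate Toda bracket juggling. Starting from \eqref{zesig1}, $[\zeta_5]\sigma_{16}\in\{[\nu_5]\sigma_8,8\iota_{15},2\sigma_{15}\}_1$, and \eqref{mR524}, $[\nu_4\sigma'\mu_{14}]\in\{[\nu_4\sigma'\eta_{14}],2\iota_{15},8\sigma_{15}\}_1$, I would express $[\eta_5\varepsilon_6]\mu_{14}$ through a parallel Toda bracket using $\mu_{14}\in\{\eta_{14},2\iota_{15},8\sigma_{15}\}_1$ of \eqref{mu4}. The identity $\eta_5\varepsilon_6\mu_{14}=4\zeta_5\sigma_{16}$ from \eqref{4z5sig} then connects $[\eta_5\varepsilon_6]\mu_{14}$ to $4[\zeta_5]\sigma_{16}$, and adding with $[\nu_4\sigma'\mu_{14}]_6$ yields the claimed congruence. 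The hardest step will be tracking the indeterminacies across these bracket manipulations and verifying the residual terms lie in $[\nu_4(E\zeta')]_6$ and $[\nu_4\eta_7\bar{\varepsilon}_8]_6$. The $R^7_{23}$ identity $[\eta_6]\sigma'\mu_{14}=[\eta_5\varepsilon_6]_7\mu_{14}$ modulo the allowed subgroup follows directly from \eqref{let6sg} after verifying $4[\bar{\nu}_6+\varepsilon_6]\mu_{14}=0$ and $[\nu_5]_7\nu^2_8\mu_{14}=0$ by standard sphere relations, and the congruence $[\eta_6]\sigma'\mu_{14}\equiv[\nu_4\sigma'\mu_{14}]_7$ then follows from the $R^6_{23}$ version via ${i_7}_*$.
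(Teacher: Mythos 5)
Your overall framework --- reduce each identity to a sphere-level relation using $i_{5*}R^4_{23}=0$, then control the remaining ambiguity inside ${i_6}_*R^5_{23}$ and ${i_7}_*R^6_{23}$ --- matches the paper's, and part (1) is essentially right (though your derivation of $\sigma'\eta_{14}\varepsilon_{15}=E\zeta'$ via \eqref{t4} and Lemma \ref{m3sg}(2) does not go through as described; the paper simply invokes Toda's relation $E\zeta'=\sigma'\eta_{14}\varepsilon_{15}$ from \cite[(12.4)]{T}, after which uniqueness of lifts finishes it). The genuine gap is in parts (2) and (3). Showing that $\eta_5\varepsilon_6\nu^3_{14}$ and $\eta_5\varepsilon_6\eta_{14}\varepsilon_{15}$ vanish in $\pi^5_{23}$ only places $[\eta_5\varepsilon_6]\nu^3_{14}$ and $[\eta_5\varepsilon_6]\eta_{14}\varepsilon_{15}$ in ${i_6}_*R^5_{23}\cong(\Z_2)^3$, an eight-element group; it does not identify \emph{which} element they are, and "part (1) together with the Toda bracket lift defining $[\eta_5\varepsilon_6]$" supplies no mechanism for that identification (part (1) is a statement about $[\nu_4\sigma'\eta_{14}]$, not about $[\eta_5\varepsilon_6]$, and the paper gives no Toda-bracket definition of $[\eta_5\varepsilon_6]$). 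The missing ingredient is the relation \eqref{km1} from \cite[Lemma 1.2(iii)]{KM1}, namely $[\eta_5\varepsilon_6]\eta_{14}=4[\nu_5]\sigma_8+[\nu_4\sigma'\eta_{14}]_6$ and $[\eta_5\varepsilon_6]_7\eta_{14}=[\nu_4\sigma'\eta_{14}]_7$: writing $\nu^3_{14}=\eta_{14}\bar{\nu}_{15}$ by \eqref{t3}, this converts $[\eta_5\varepsilon_6]\nu^3_{14}$ into $[\nu_4\sigma'\eta_{14}]_6\bar{\nu}_{15}$ (the $4[\nu_5]\sigma_8\bar{\nu}_{15}$ term dying by $2$-torsion), at which point part (1) applies. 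Without \eqref{km1} or an equivalent bridge you cannot get from $[\eta_5\varepsilon_6]$ to the generators $[\nu_4\eta_7\bar{\varepsilon}_8]_6$ and $[\nu_4(E\zeta')]_6$.

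The same omission undermines part (3), where you explicitly defer "the hardest step." The content of the first claim of (3) is not the weak congruence $[\eta_5\varepsilon_6]\mu_{14}\equiv 4[\zeta_5]\sigma_{16}\bmod {i_6}_*R^5_{23}$ (which follows at once from \eqref{4z5sig} by applying ${p_6}_*$), but the assertion that the coefficient of $[\nu_4\sigma'\mu_{14}]_6$ in the difference is exactly $1$. The paper pins this down by computing $[\eta_5\varepsilon_6]_7\mu_{14}\in\{[\eta_5\varepsilon_6]_7\eta_{14},2\iota_{15},8\sigma_{15}\}_1=\{[\nu_4\sigma'\eta_{14}]_7,2\iota_{15},8\sigma_{15}\}_1\ni[\nu_4\sigma'\mu_{14}]_7$ --- again using \eqref{km1}, together with \eqref{et6s'} and \eqref{mR524} --- and checking the indeterminacy is $\{[\nu_4\eta_7\bar{\varepsilon}_8]_7,[\nu_4(E\zeta')]_7\}$. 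Your plan to use "a parallel Toda bracket" for $\mu_{14}$ is the right instinct, but without the identification of $[\eta_5\varepsilon_6]\eta_{14}$ with $[\nu_4\sigma'\eta_{14}]_6$ modulo $4[\nu_5]\sigma_8$ the two brackets cannot be compared, so the argument as proposed does not close. (Your alternative route to $[\eta_6]\eta_7\bar{\varepsilon}_8=[\nu_4\eta_7\bar{\varepsilon}_8]_7$ via \eqref{let6sg} is workable in principle --- the paper instead uses $[\eta_6]\nu_7=b[\nu^2_4]_7$ with $b$ odd and Lemma \ref{zesig}(3) --- but it too ultimately rests on the first relation of (2), so it inherits the same gap.)
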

\begin{proof}
We recall 
${i_5}_*R^4_{23}=0$ \eqref{i5R423}. So, by Lemma \ref{zesig}(3), we obtain
$[\nu_4\sigma'\eta_{14}]\bar{\nu}_{15} = [\nu_4\sigma'\eta_{14}\bar{\nu}_{15}]
 = [\nu_4\eta_7\bar{\varepsilon}_8]$ and 
$[\nu^2_4]\sigma_{10}\nu^2_{17} = [\nu^2_4\sigma_{10}\nu^2_{17}]
 = [\nu_4\eta_7\bar{\varepsilon}_8]$. 
Similarly, by \cite[(12.4)]{T}, we have the second relation of (1).

By \cite[Lemma 1.1(v)]{KM1}, we know
\begin{equation}\label{et6s'}
[\eta_6]\sigma'=4[\bar{\nu}_6+\varepsilon_6]+[\nu_5]_7\nu^2_8+[\eta_5\varepsilon_6]_7
\end{equation}
By \cite[Lemma 1.2(iii)]{KM1}, % and \cite[p. 162, Table]{Ke}, 
we know
\begin{equation}\label{km1}
[\eta_5\varepsilon_6]\eta_{14}
=4[\nu_5]\sigma_8 + [\nu_4\sigma'\eta_{14}]_6,\ [\eta_5\varepsilon_6]_7\eta_{14}
=[\nu_4\sigma'\eta_{14}]_7
\end{equation}

By 
\eqref{km1} and \eqref{t3},
we have 
$%\begin{split}
[\eta_5\varepsilon_6]\nu^3_{14}  = [\eta_5\varepsilon_6]\eta_{14}\bar{\nu}_{15}
= 4[\nu_5]\sigma_8\bar{\nu}_{15} + [\nu_4\sigma'\eta_{14}]_6\bar{\nu}_{15}
=[\nu_4\sigma'\eta_{14}]_6\bar{\nu}_{15}$
and
$
[\eta_5\varepsilon_6]\eta_{14}\varepsilon_{15} = [\nu_4\sigma'\eta_{14}]_6\varepsilon_{15}.
$
So, by (1), we obtain
$[\eta_5\varepsilon_6]\nu^3_{14} = [\nu_4\sigma'\eta_{14}]_6\bar{\nu}_{15}
= [\nu_4\eta_7\bar{\varepsilon}_8]_6$ and
$[\eta_5\varepsilon_6]\eta_{14}\varepsilon_{15}
= [\nu_4\sigma'\eta_{14}]_6\varepsilon_{15} = [\nu_4(E\zeta')]_6$. 
By Lemma \ref{zesig}(3) and relations 
$[\eta_6]\nu_7 = b[\nu^2_4]_7$ for $b$ odd (\cite[Lemma 1.1(iv)]{KM1}),
$2[\eta_6]\eta_7\bar{\varepsilon}_8=0$ and (1), we obtain
$[\eta_6]\eta_7\bar{\varepsilon}_8 = [\eta_6]\nu_7\sigma_{10}\nu^2_{17}
 = [\nu^2_4]_7\sigma_{10}\nu^2_{17} = [\nu_4\eta_7\bar{\varepsilon}_8]_7$.
By \eqref{et6s'} and \eqref{t1}, we have
$[\eta_6]\sigma'\eta_{14} = 4[\bar{\nu}_6+\varepsilon_6]\eta_{14}
 + [\nu_5]_7\nu^2_8\eta_{14} + [\eta_5\varepsilon_6]_7\eta_{14}
 =[\eta_5\varepsilon_6]_7\eta_{14}$. 
So, by relations $E\zeta'=\sigma'\eta_{14}\varepsilon_{15}$ (\cite[(12.4)]{T}
 and (1), we obtain 
$[\eta_6]E\zeta'
= [\eta_6]\sigma'\eta_{14}\varepsilon_{15}
= [\eta_5\varepsilon_6]_7\eta_{14}\varepsilon_{15}
= [\nu_4(E\zeta')]_7$. Hence we obtain (2).

By the relation 
${p_6}_*([\eta_5\varepsilon_6]\mu_{14})={p_6}_\ast(4[\zeta_5]\sigma_{16})$
from \eqref{4z5sig}, we have
\[
[\eta_5\varepsilon_6]\mu_{14}\equiv 4[\zeta_5]\sigma_{16}\ \bmod\ {i_6}_*R^5_{23}
= \{[\nu_4\sigma'\mu_{14}]_6, [\nu_4(E\zeta')]_6,
[\nu_4\eta_7\bar{\varepsilon}_8]_6\}.
\]
By \eqref{mR524}, \eqref{et6s'} and \eqref{km1}, we have 
\[\begin{split}
[\eta_6]\sigma'\mu_{14}&=[\eta_5\varepsilon_6]_7\mu_{14}
\in\{[\nu_4\sigma'\eta_{14}],2\iota_{15},8\sigma_{15}\}_1\ni[\nu_4\sigma'\mu_{14}]_7\\
&\qquad
\bmod\ [\nu_4\sigma'\eta_{14}]\circ(E\pi^{14}_{22})+8R^7_{16}\circ\sigma_{16}=\{[\nu_4\eta_7\bar{\varepsilon}_8]_7, [\nu_4(E\zeta')]_7\}.
\end{split}\]
This completes the proof.
\end{proof}

We remark that the following.
\[
[\eta_6]\circ\pi^7_{23}=\{[\eta_6]\mu_7\sigma_{16}, [\nu_4\sigma'\mu_{14}], [\nu_4(E\zeta')]_7, [\nu_4\eta_7\bar{\varepsilon}_8]_7\},
\]
where $[\eta_6](E\zeta')=[\nu_4(E\zeta')]_7$, $[\eta_6]\eta_7\bar{\varepsilon}_8=[\nu_4\eta_7\bar{\varepsilon}_8]_7$ and 
$[\eta_6]\sigma'\mu_{14}\equiv[\nu_4\sigma'\mu_{14}]_7\ \bmod\ 
[\nu_4(E\zeta')]_7, [\nu_4\eta_7\bar{\varepsilon}_8]_7$.

By relations \eqref{kaet},
$\eta_5\varepsilon_6\kappa_{14}=4\nu_5\bar\kappa_8$ \cite[I-Proposition 3.1(2)]{Od1}
and $4\bar\kappa_7=\nu^2_7\kappa_{13}$ \cite[Lemma 15.4]{MT1}, we have
\begin{equation}\label{ep5bep}
\varepsilon_5\bar{\varepsilon}_{13}=\varepsilon_5\eta_{13}\kappa_{14}
=\eta_5\varepsilon_6\kappa_{14}=4\nu_5\bar\kappa_8=\nu^3_5\kappa_{14}.
\end{equation}
So, by $\bar{\nu}_6\eta_{14}=\nu^3_6$ \eqref{t3}, we have 
\begin{equation*}\label{bn6+epbep}
\bar{\nu}_6\bar{\varepsilon}_{14}=\nu^3_6\kappa_{15}=\varepsilon_6\bar{\varepsilon}_{14}.
\end{equation*}
By the relation $J[\nu_5]=\bar{\nu}_6+\varepsilon_6$ \cite[Lemma 1.1(ii)]{KM1},
we have 
\[
J([\nu_5]\bar{\varepsilon}_8)=J[\nu_5]\circ \bar{\varepsilon}_{14}
 =(\bar{\nu}_6+\varepsilon_6)\bar{\varepsilon}_{14}.
\]
So, %by the relation \eqref{bn6+epbep}, 
we obtain
\begin{equation}\label{Jbar}
J([\nu_5]\bar{\varepsilon}_8)=0.
\end{equation}

By the fact that $\pi^7_{27}=\{\bar{\kappa}_7\}\cong\Z_8$ \cite[p. 48]{MT1} 
and the proof of \cite[(12.16)]{T}, we have 
\begin{equation}\label{Hbk7}
H(\bar{\kappa}_7)\equiv\kappa_{13}\ \bmod\ 8\sigma^2_{13}.
\end{equation}

We show
\begin{lem}\label{JP}
$J[P(E\theta)+\nu_6\kappa_9]=\bar{\kappa}_7\nu_{27}-\nu_7\bar{\kappa}_{10}$.
\end{lem}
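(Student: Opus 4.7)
The plan is to detect $J[P(E\theta)+\nu_6\kappa_9]$ via its Hopf invariant and then eliminate the resulting ambiguity in $\ker H$. Using the commutativity (up to sign) of the square in \eqref{EHP} which relates $J$ to $H$ and $E^{n+1}\circ p_*$, I compute
\[
H(J[P(E\theta)+\nu_6\kappa_9]) = \pm E^7(p_*[P(E\theta)+\nu_6\kappa_9]) = \pm E^7\bigl(P(E\theta)+\nu_6\kappa_9\bigr).
\]
Since $EP=0$ in the EHP sequence we have $E^7(P(E\theta))=0$, while $E^7(\nu_6\kappa_9)=\nu_{13}\kappa_{16}$. Hence $H(J[P(E\theta)+\nu_6\kappa_9]) = \pm\nu_{13}\kappa_{16}$.

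Next I compute the Hopf invariant of the proposed right-hand side. Because $\nu_{27}$ is an iterated suspension, the composition rule for $H$ gives $H(\bar{\kappa}_7\nu_{27}) = H(\bar{\kappa}_7)\circ\nu_{27}$; by \eqref{Hbk7} this equals $\kappa_{13}\nu_{27}$ modulo $8\sigma^2_{13}\nu_{27}$, and the latter vanishes since $\sigma^2_{13}\nu_{27}$ already lies in a group killed by $8$. Stably, $\kappa_{13}\nu_{27}=\nu_{13}\kappa_{16}$. For the second summand, $H(\nu_7)\in\pi^{13}_{10}=0$, so $H(\nu_7\bar{\kappa}_{10})=0$. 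Thus $H(\bar{\kappa}_7\nu_{27}-\nu_7\bar{\kappa}_{10})=\nu_{13}\kappa_{16}$, matching the computation above.

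Combining, $J[P(E\theta)+\nu_6\kappa_9]-(\bar{\kappa}_7\nu_{27}-\nu_7\bar{\kappa}_{10})\in \ker H = E(\pi^6_{29})$. To eliminate this ambiguity, I would use the relation $2[P(E\theta)+\nu_6\kappa_9]\equiv[\nu_5]_7\bar{\varepsilon}_8$ modulo $[\nu_4\sigma'\mu_{14}]_7$, $[\nu_4(E\zeta')]_7$, $[\nu_4\eta_7\bar{\varepsilon}_8]_7$ from Proposition \ref{R723}, together with \eqref{Jbar} giving $J([\nu_5]\bar{\varepsilon}_8)=0$, and the naturality $J\circ i_*=-E\circ J$ from \eqref{EHP}. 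These give $J([\nu_5]_7\bar{\varepsilon}_8)=0$, so $2J[P(E\theta)+\nu_6\kappa_9]$ is forced to lie in the subgroup spanned by $J$ of the three correction generators. Matching this against $2(\bar{\kappa}_7\nu_{27}-\nu_7\bar{\kappa}_{10})$, and making an appropriate adjustment of the representative $[P(E\theta)+\nu_6\kappa_9]$ modulo the other generators of $R^7_{23}$, will pin down the element exactly.

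The main obstacle is this last step: identifying the precise element of $E\pi^6_{29}$ that is the difference, rather than settling for equality modulo $\Im E$. This requires a handle on $\pi^6_{29}$ and on $J$ applied to each of the remaining generators of $R^7_{23}$ (namely $[\zeta_5]_7\sigma_{16}$, $[\eta_6]\mu_7\sigma_{16}$ and the three $[\nu_4\cdot]_7$-classes), possibly via presenting $\langle P(E\theta)+\nu_6\kappa_9\rangle$ inside a matrix Toda bracket so that \eqref{JTbr} applies directly. The Hopf-invariant half of the argument is routine given \eqref{Hbk7} and dimension-counting; the delicate bookkeeping is in ruling out extraneous $E$-suspension contributions in $\pi^7_{30}$.
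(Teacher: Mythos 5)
Your first half — computing $HJ[P(E\theta)+\nu_6\kappa_9]=E^7(P(E\theta)+\nu_6\kappa_9)=\nu_{13}\kappa_{16}=H(\bar{\kappa}_7\nu_{27})$ via \eqref{Hbk7} and concluding that the difference from $\bar{\kappa}_7\nu_{27}-\nu_7\bar{\kappa}_{10}$ lies in $E\pi^6_{29}$ — is exactly the paper's route and is fine. The genuine gap is the second half, which you yourself flag as "the main obstacle": your proposed tools do not determine the element inside $E\pi^6_{29}=\{2\bar{\rho}',\nu_7\bar{\kappa}_{10},\phi_7\}\cong\Z_4\oplus\Z_8\oplus\Z_2$. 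Writing $J[P(E\theta)+\nu_6\kappa_9]=\bar{\kappa}_7\nu_{27}+a\nu_7\bar{\kappa}_{10}+2b\bar{\rho}'+c\phi_7$, the relation $2J[P(E\theta)+\nu_6\kappa_9]=0$ (which does hold, but only after one also checks $J[\nu_4\sigma'\mu_{14}]=J[\nu_4(E\zeta')]=J[\nu_4\eta_7\bar{\varepsilon}_8]=0$ — a step you defer rather than carry out) yields a single congruence that leaves $a$ determined only mod $4$ and $b$ only mod $2$, and says nothing about pinning $c$ beyond $2c\phi_7=0$, which is automatic. Moreover, "adjusting the representative $[P(E\theta)+\nu_6\kappa_9]$ modulo the other generators of $R^7_{23}$" is not available to you: the lemma concerns the specific element $h_*\langle P(E\theta)+\nu_6\kappa_9\rangle$ fixed via the diagram \eqref{Y}, so changing the representative changes the statement.

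The missing idea in the paper's proof is stabilization. Since $R^\infty_{23}\cong\Z$ and $[P(E\theta)+\nu_6\kappa_9]$ is torsion, $[P(E\theta)+\nu_6\kappa_9]_\infty=0$, so the stable image of $J[P(E\theta)+\nu_6\kappa_9]$ vanishes in $\pi^s_{23}=\{\bar{\rho},\nu\bar{\kappa},\phi\}\cong\Z_{16}\oplus\Z_8\oplus\Z_2$. Using $\bar{\kappa}\nu=\nu\bar{\kappa}$ and $2\bar{\rho}_{11}=E^4\bar{\rho}'$, the stabilized equation reads $(1+a)\nu\bar{\kappa}+4b\bar{\rho}+c\phi=0$, forcing $a=7$, $b=0$, $c=0$, i.e.\ $J[P(E\theta)+\nu_6\kappa_9]=\bar{\kappa}_7\nu_{27}-\nu_7\bar{\kappa}_{10}$. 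Without this input (or an equivalent computation of $J$ on the remaining generators of $R^7_{23}$, which you mention only as a possibility), your argument stops at equality modulo $E\pi^6_{29}$ and does not prove the lemma.
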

\begin{proof}
Since $HJ[\nu_4\sigma'\mu_{14}]=E^5(\nu_4\sigma'\mu_{14})=0$, we have 
$J[\nu_4\sigma'\mu_{14}]\equiv 0\ \bmod\ E\pi^4_{27}=\{\nu_5\bar{\kappa}_8\}$ 
\cite[p. 19]{MMO}. 
$\nu_5\bar{\kappa}_8$ does not in the J-image, we have 
$
J[\nu_4\sigma'\mu_{14}]=0.
$
By the same argument, we have 
$J[\nu_4(E\zeta')]=0$ and $J[\nu_4\eta_7\bar{\varepsilon}_8]=0$.
Hence, by \eqref{mmoh} and \eqref{Jbar},
we obtain 
$$
2J[P(E\theta)+\nu_6\kappa_9]=0.
$$

We recall from \cite[Theorem 1.1(a), (3.2), (4.9)]{MMO} that 
\[
2\bar{\rho}_{11}=E^4\bar{\rho}',\ 
E\pi^6_{29} = \{2\bar{\rho}',\nu_7\bar{\kappa}_{10}, \phi_7\}\cong\Z_4\oplus\Z_8\oplus\Z_2
\]
and
\[
\pi^s_{23}=\{\bar{\rho}, \nu\bar{\kappa}, \phi\}\cong\Z_{16}\oplus\Z_8\oplus\Z_2.
\]
We have 
\[\begin{split}
HJ[P(E\theta)+\nu_6\kappa_9] &
= E^7(P(E\theta)+\nu_6\kappa_9) = 
\kappa_{13}\nu_{27} = H(\bar{\kappa}_7\nu_{27})
\end{split}
\]
%from \cite[(4.10)]{MMO} and 
by \eqref{Hbk7}. So, by \cite[Theorem 1.1(a), (3.2)]{MMO},
we obtain 
\[
J[P(E\theta)+\nu_6\kappa_9]\equiv\bar{\kappa}_7\nu_{27} \bmod E\pi^6_{29} = \{2\bar{\rho}',
\nu_7\bar{\kappa}_{10}, \phi_7\}.
\]
We set
\[
J[P(E\theta)+\nu_6\kappa_9]
=\bar{\kappa}_7\nu_{27}
+a\nu_7\bar{\kappa}_{10}
+2b\bar{\rho}' + c\phi_7,
\]
where $a\in\{0,1,\dots,7\}$, $b\in\{0,1,2,3\}$ and $c\in\{0,1\}$.
We know %$2J[P(E\theta)+\nu_6\kappa_9]=0$,  
$[P(E\theta)+\nu_6\kappa_9]_\infty=0$ and $\bar{\kappa}\nu= \nu\bar{\kappa}$.
This implies $a=7$, $b=0$ and $c=0$. 
Hence,  
we obtain this lemma.
\end{proof}

Since $R^8_{23}\cong R^7_{23}\oplus\pi^7_{23}$ and $\pi^7_{23}=\{\sigma'\mu_{14}, E\zeta', \eta_7\bar{\varepsilon}_8,
\mu_7\sigma_{16}\}\cong(\Z_2)^4$ \cite{T}, we have the following.
\begin{prop} \label{R8}
$R^8_{23} = \Z_2\{[\zeta_5]_8\sigma_{16}\}
\oplus\Z_2\{[\nu_4\sigma'\mu_{14}]_8\}
\oplus\Z_2\{[\nu_4(E\zeta')]_8\}
\oplus\Z_2\{[\nu_4\eta_7\bar{\varepsilon}_8]_8\}
\oplus\Z_2\{[\eta_6]_8\mu_7\sigma_{16}\}
\oplus \Z_4\{[P(E\theta)+\nu_6\kappa_9]\}
\oplus\Z_2\{[\iota_7]\sigma'\mu_{14}\}\oplus\Z_2\{[\iota_7](E\zeta')\}\\
\oplus\Z_2\{[\iota_7]\eta_7\bar{\varepsilon}_8\}
\oplus\Z_2\{[\iota_7]\mu_7\sigma_{16}\}$.
\end{prop}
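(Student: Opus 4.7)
My plan is to apply the exact sequence $(23)_7$
$$
\pi^7_{24}\rarrow{\Delta}R^7_{23}\rarrow{i_{8*}}R^8_{23}
\rarrow{p_{8*}}\pi^7_{23}\rarrow{\Delta}R^7_{22},
$$
together with the classical cross-section $s : S^7 \to R_8$ of $p_8$ coming from left octonion multiplication, $s(x)(y) = x\cdot y$ (this is orthogonal of determinant one because $|x\cdot y| = |x|\,|y|$). Since $s$ is a genuine section, $s_*$ is a right inverse to $p_{8*}$ on every homotopy group, so $p_{8*}$ is surjective in each degree; by exactness, both $\Delta$ maps above vanish. Consequently the sequence reduces to a split short exact sequence
$$
0 \to R^7_{23} \rarrow{i_{8*}} R^8_{23} \rarrow{p_{8*}} \pi^7_{23} \to 0,
$$
and so $R^8_{23} \cong i_{8*}(R^7_{23}) \oplus s_*(\pi^7_{23})$.

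Next, I would realize $s_*$ on $\pi^7_{23}$ concretely by $\alpha \mapsto [\iota_7]\alpha$: the identity $p_{8*}([\iota_7]\alpha) = \iota_7\circ\alpha = \alpha$ shows this is a section at the level of $\pi^7_{23}$. This produces the four extra generators $[\iota_7]\sigma'\mu_{14}$, $[\iota_7](E\zeta')$, $[\iota_7]\eta_7\bar\varepsilon_8$, $[\iota_7]\mu_7\sigma_{16}$. Right distributivity then gives $2\,[\iota_7]\alpha = [\iota_7]\,(2\alpha) = 0$ for each $\alpha$ of order two in $\pi^7_{23}$, so each new generator has order two.

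Substituting the decomposition of $R^7_{23}$ from Proposition~\ref{R723} and the presentation $\pi^7_{23} = \{\sigma'\mu_{14}, E\zeta', \eta_7\bar\varepsilon_8, \mu_7\sigma_{16}\} \cong (\Z_2)^4$ from \cite[Theorem~12.6]{T} then yields the stated formula. There is no serious obstacle in this argument: the main point worth emphasizing is that the existence of the octonion section trivializes both connecting maps simultaneously, which is what makes the decomposition a genuine direct sum rather than a mere extension and simultaneously forces the orders of the new summands.
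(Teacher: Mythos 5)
Your proposal is correct and is essentially the paper's argument: the paper simply invokes the splitting $R^8_{23}\cong R^7_{23}\oplus\pi^7_{23}$ (which is exactly the consequence of the octonionic cross-section of $p_8:R_8\to S^7$ that you spell out) together with $\pi^7_{23}\cong(\Z_2)^4$ and Proposition \ref{R723}. Your version just makes explicit the section, the identification of the complementary summand with $[\iota_7]\circ\pi^7_{23}$, and the order computation, all of which are consistent with the paper's conventions.
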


By \cite[(7.25)]{T}, we have
\begin{equation}\label{n6m}
\nu_6\mu_9 = 8P(\sigma_{13}).
\end{equation}

We consider $J: R^9_{14}\to\pi^9_{23}$.
We know $R^9_{14}=\{[\bar{\nu}_6+\varepsilon_6]_9, [\nu_5]_9\nu^2_8\}
\cong\Z_8\oplus\Z_2$ \cite[Table 2]{Ka} and $\pi^9_{23}=\{\sigma^2_9,\kappa_9\}
\cong\Z_{16}\oplus\Z_4$ \cite[Theorem 10.3]{T}. So, $J$ is a monomorphism.
By \eqref{Ji7} and \eqref{Jbn6e6}, we have 
$$
J([\iota_7]_9\sigma')=2\sigma^2_9=J([\bar{\nu}_6+\varepsilon_6]_9).
$$
Hence, we have 
$$
[\iota_7]_9\sigma'=[\bar{\nu}_6+\varepsilon_6]_9.
$$
This implies 
$$
[\iota_7]_8\sigma'-[\bar{\nu}_6+\varepsilon_6]_8\in\Delta\pi^8_{15}.
$$
We know $\pi^8_{15}=\{\sigma_8, E\sigma'\}\cong\Z\oplus\Z_8$ \cite[Prposition 5.15]{T}. So, by \cite[Lemma 1.2(ii)]{KM1}, we have 
%\begin{equation}\label{Dsg81}
$$
\Delta\sigma_8\equiv[\iota_7]\sigma'-[\bar{\nu}_6+\varepsilon_6]_8\ 
\bmod\ \Delta(E\sigma').
$$
%\end{equation}
We have $\Delta(E\sigma')=2[\iota_7]\sigma'-[\eta_6]_8\sigma'$. 
So, by \eqref{et6s'}, we obtain
\begin{equation}\label{Dsg82}
\Delta\sigma_8\equiv x[\iota_7]\sigma'+y[\bar{\nu}_6+\varepsilon_6]_8\ 
\bmod\ [\nu_5]_7\nu^2_8+[\eta_5\varepsilon_6]_7\ \mbox{for odd}\ x, y.
\end{equation}

By \cite[Lemma 2.3(ii)]{KM1}, we also know
\begin{equation} \label{tr2}
[\bar{\nu}_6+\varepsilon_6]\eta_{14}
\equiv [\eta_6](\bar{\nu}_7+\varepsilon_7) + [\nu_5]_7\sigma_8 \bmod
[\nu_4\sigma'\eta_{14}]_7.
\end{equation}
 
By \cite[Proposition 2.13(8)]{Og}, we know
\begin{equation}\label{bn6m}
\bar{\nu}_6\mu_{14}=0
\end{equation}

Next we show the following.
\begin{lem} \label{nusiep}
\begin{enumerate}
\item $[\nu_5]\sigma_8\bar{\nu}_{15}
\equiv [\nu_5]\sigma_8\varepsilon_{15}\equiv [\nu_5]\bar{\varepsilon}_8
\bmod [\nu_4\sigma'\mu_{14}]_6, [\nu_4(E\zeta')]_6,
[\nu_4\eta_7\bar{\varepsilon}_8]_6$.
\item $[\bar{\nu}_6+\varepsilon_6]\nu^3_{14}
\equiv [\bar{\nu}_6+\varepsilon_6]\eta_{14}\varepsilon_{15}
\equiv %[\nu_4\eta_7\bar{\varepsilon}_8]_7 + 
[\nu_5]_7\bar{\varepsilon}_8\ 
%\hspace{20mm} 
\bmod \
\ [\nu_4\sigma'\mu_{14}]_7, [\nu_4(E\zeta')]_7,
[\nu_4\eta_7\bar{\varepsilon}_8]_7$.
\end{enumerate}
\end{lem}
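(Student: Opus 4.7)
The plan is to handle the two parts separately, with (1) as a prerequisite for (2).

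For part (1), I would simply project by $p_{6*}: R^6_{23}\to\pi^5_{23}$. By Lemma \ref{zesig}(4) we have the equalities
$\nu_5\sigma_8\bar{\nu}_{15}=\nu_5\sigma_8\varepsilon_{15}=\nu_5\bar{\varepsilon}_8$
in $\pi^5_{23}$. Hence $p_{6*}$ sends all three of $[\nu_5]\sigma_8\bar{\nu}_{15}$, $[\nu_5]\sigma_8\varepsilon_{15}$ and $[\nu_5]\bar{\varepsilon}_8$ to the same element, so their pairwise differences lie in $\Ker p_{6*}={i_6}_*R^5_{23}$, and this kernel is exactly the subgroup generated by $[\nu_4\sigma'\mu_{14}]_6$, $[\nu_4(E\zeta')]_6$, $[\nu_4\eta_7\bar{\varepsilon}_8]_6$ by the description of $R^5_{23}$. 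This gives (1).

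For part (2), the key input is the relation \eqref{tr2}:
\[
[\bar{\nu}_6+\varepsilon_6]\eta_{14}\equiv [\eta_6](\bar{\nu}_7+\varepsilon_7)+[\nu_5]_7\sigma_8\bmod [\nu_4\sigma'\eta_{14}]_7.
\]
Writing $\nu^3_{14}=\eta_{14}\bar{\nu}_{15}$ via \eqref{t3}, I compose this relation on the right successively by $\bar{\nu}_{15}$ and by $\varepsilon_{15}$. Lemma \ref{zesig}(2) gives $(\bar{\nu}_7+\varepsilon_7)\bar{\nu}_{15}=(\bar{\nu}_7+\varepsilon_7)\varepsilon_{15}=\eta_7\bar{\varepsilon}_8$, and Lemma \ref{rel1}(2) gives $[\eta_6]\eta_7\bar{\varepsilon}_8=[\nu_4\eta_7\bar{\varepsilon}_8]_7$. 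Meanwhile, the indeterminacy terms transform by Lemma \ref{rel1}(1): $[\nu_4\sigma'\eta_{14}]_7\bar{\nu}_{15}=[\nu_4\eta_7\bar{\varepsilon}_8]_7$ and $[\nu_4\sigma'\eta_{14}]_7\varepsilon_{15}=[\nu_4(E\zeta')]_7$, both of which are absorbed into the stated modulus. Finally, part (1) applied via ${i_7}_*$ gives $[\nu_5]_7\sigma_8\bar{\nu}_{15}\equiv [\nu_5]_7\sigma_8\varepsilon_{15}\equiv [\nu_5]_7\bar{\varepsilon}_8$ modulo the same ideal $\{[\nu_4\sigma'\mu_{14}]_7,[\nu_4(E\zeta')]_7,[\nu_4\eta_7\bar{\varepsilon}_8]_7\}$. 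Collecting contributions yields the two stated congruences.

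The calculation is essentially bookkeeping once the right building blocks are assembled. The step that needs the most care is the propagation of the indeterminacy in \eqref{tr2}: after composing with $\bar{\nu}_{15}$ or $\varepsilon_{15}$ I must verify that $[\nu_4\sigma'\eta_{14}]_7\bar{\nu}_{15}$ and $[\nu_4\sigma'\eta_{14}]_7\varepsilon_{15}$ really do lie in the claimed modulus rather than producing new terms; this is secured by Lemma \ref{rel1}(1). Everything else is substitution.
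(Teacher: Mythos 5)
Your proposal is correct and follows essentially the same route as the paper: part (1) via Lemma \ref{zesig}(4) and exactness at $R^6_{23}$, and part (2) by composing \eqref{tr2} with $\bar{\nu}_{15}$ and $\varepsilon_{15}$, controlling the indeterminacy with Lemma \ref{rel1}(1) and then invoking part (1). The only cosmetic difference is that you cite Lemma \ref{rel1}(2) directly for $[\eta_6]\eta_7\bar{\varepsilon}_8=[\nu_4\eta_7\bar{\varepsilon}_8]_7$, whereas the paper re-derives that chain inline.
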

\begin{proof}
By Lemma \ref{zesig}(4), $[\nu_5]\sigma_8\bar{\nu}_{15}
- [\nu_5]\bar{\varepsilon}_8\in {i_6}_*R^5_{23}$ and
$[\nu_5]\sigma_8\varepsilon_{15}- [\nu_5]\bar{\varepsilon}_8\in {i_6}_*R^5_{23}$.
This leads to (1).

By relations $\nu^3_{14}=\eta_{14}\bar{\nu}_{15}$ \eqref{t3}
and \eqref{tr2}, we have
\[\begin{split}
[\bar{\nu}_6+\varepsilon_6]\nu^3_{14}
=[\bar{\nu}_6+\varepsilon_6]\eta_{14}\bar{\nu}_{15}
\equiv [\eta_6](\bar{\nu}_7+\varepsilon_7)\bar{\nu}_{15}
+[\nu_5]_7\sigma_8\bar{\nu}_{15} \ \bmod \ [\nu_4\sigma'\eta_{14}]_7\bar{\nu}_{15}.
\end{split}\]
By Lemma \ref{zesig}(2)-(3), \eqref{et6s'}, \eqref{km1} and
Lemma \ref{rel1}(1), we have
\[\begin{split}
[\eta_6](\bar{\nu}_7+\varepsilon_7)\bar{\nu}_{15}
&=[\eta_6]\eta_7\bar{\varepsilon}_8
=[\eta_6]\sigma'\nu^3_{14}=[\eta_5\varepsilon_6]_7\nu^3_{14}\\
&=[\eta_5\varepsilon_6]_7\eta_{14}\bar{\nu}_{15}
=[\nu_4\sigma'\eta_{14}]_7\bar{\nu}_{15}=[\nu_4\eta_7\bar{\varepsilon}_8]_7.
\end{split}\]
So, by (1), we have
$$
[\bar{\nu}_6+\varepsilon_6]\nu^3_{14}\equiv[\nu_5]_7\bar{\varepsilon}_8 \ 
\bmod \
\ [\nu_4\sigma'\mu_{14}]_7, [\nu_4(E\zeta')]_7,
[\nu_4\eta_7\bar{\varepsilon}_8]_7.
$$

The rest is obtained by the parallel arguments. 
This completes the proof.
\end{proof}

We show the following.

\begin{lem} \label{bnuepmu}
$[\bar{\nu}_6+\varepsilon_6]\mu_{14}\equiv
[\eta_6]\mu_7\sigma_{16} + [\zeta_5]_7\sigma_{16} \\
\bmod \ [\nu_5]_7\bar{\varepsilon}_8, [\nu_4\sigma'\mu_{14}]_7,
[\nu_4(E\zeta')]_7, [\nu_4\eta_7\bar{\varepsilon}_8]_7$.
\end{lem}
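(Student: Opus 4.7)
The strategy is to represent $\mu_{14}$ by the Toda bracket $\{\eta_{14},2\iota_{15},8\sigma_{15}\}_1$ (modulo $\eta_{14}\varepsilon_{15}$ and $\nu^3_{14}$) coming from \eqref{mu4}, and to pull $[\bar\nu_6+\varepsilon_6]$ inside by the standard composition rule $\alpha\circ\{\beta,\gamma,\delta\}\subset\{\alpha\beta,\gamma,\delta\}$. This gives $[\bar\nu_6+\varepsilon_6]\mu_{14}\in\{[\bar\nu_6+\varepsilon_6]\eta_{14},2\iota_{15},8\sigma_{15}\}_1$ up to the side terms $[\bar\nu_6+\varepsilon_6]\eta_{14}\varepsilon_{15}$ and $[\bar\nu_6+\varepsilon_6]\nu^3_{14}$. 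Lemma \ref{nusiep}(2) shows that both of these are $\equiv[\nu_5]_7\bar\varepsilon_8$ modulo the four listed generators, so they are absorbed into the stated modulus and cause no harm.

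Next I would substitute \eqref{tr2}, $[\bar\nu_6+\varepsilon_6]\eta_{14}\equiv[\eta_6](\bar\nu_7+\varepsilon_7)+[\nu_5]_7\sigma_8$ modulo $[\nu_4\sigma'\eta_{14}]_7$, and expand the bracket additively in the first slot. The correction term produces $\{[\nu_4\sigma'\eta_{14}]_7,2\iota_{15},8\sigma_{15}\}_1$; using the identity $[\nu_4\sigma'\eta_{14}]_7=[\eta_5\varepsilon_6]_7\eta_{14}$ from \eqref{km1}, I pull $[\eta_5\varepsilon_6]_7$ out to land inside $[\eta_5\varepsilon_6]_7\{\eta_{14},2\iota_{15},8\sigma_{15}\}_1\ni[\eta_5\varepsilon_6]_7\mu_{14}$, which by Lemma \ref{rel1}(3) is $\equiv[\nu_4\sigma'\mu_{14}]_7$ modulo the allowed terms. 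For the principal contribution $\{[\eta_6](\bar\nu_7+\varepsilon_7),2\iota_{15},8\sigma_{15}\}_1$ I factor $[\eta_6]$ back out and invoke Lemma \ref{m3sg}(3) to get $[\eta_6](E\zeta'+\mu_7\sigma_{16})$; Lemma \ref{rel1}(2) then identifies $[\eta_6]E\zeta'=[\nu_4(E\zeta')]_7$ and $[\eta_6]\eta_7\bar\varepsilon_8=[\nu_4\eta_7\bar\varepsilon_8]_7$, both absorbed, so that only $[\eta_6]\mu_7\sigma_{16}$ survives.

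The remaining summand $\{[\nu_5]_7\sigma_8,2\iota_{15},8\sigma_{15}\}_1$ has to deliver the $[\zeta_5]_7\sigma_{16}$ term. Relation \eqref{zesig1} realizes $[\zeta_5]\sigma_{16}$ inside the \emph{juggled} bracket $\{[\nu_5]\sigma_8,8\iota_{15},2\sigma_{15}\}_1$, so the plan is to identify the two brackets up to indeterminacy via a standard Toda juggling using the two factorizations $(2\iota_{15})(8\sigma_{15})=(8\iota_{15})(2\sigma_{15})=16\sigma_{15}$, and then to suspend the result from $R^6$ to $R^7$. The main obstacle of the proof is precisely this juggling step: one must verify that the indeterminacy of the swap — consisting of $[\nu_5]_7\sigma_8\circ E\pi^{14}_{22}$, $8R^7_{16}\circ\sigma_{16}$, and the cross-terms entering the two bracket representations of $\mu_{14}$ — lies entirely within the listed modulus $[\nu_5]_7\bar\varepsilon_8$, $[\nu_4\sigma'\mu_{14}]_7$, $[\nu_4(E\zeta')]_7$, $[\nu_4\eta_7\bar\varepsilon_8]_7$. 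Carrying out this bookkeeping with the help of Lemma \ref{zesig}(1)--(4) and Lemma \ref{rel1} and then adding the three contributions yields the asserted congruence $[\bar\nu_6+\varepsilon_6]\mu_{14}\equiv[\eta_6]\mu_7\sigma_{16}+[\zeta_5]_7\sigma_{16}$.
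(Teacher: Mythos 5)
Your decomposition is essentially the one the paper uses: substitute \eqref{tr2} into the bracket representing $\mu_{14}$, split additively, and extract $[\eta_6](E\zeta'+\mu_7\sigma_{16})$ from the first slot via Lemma \ref{m3sg}(3), $[\zeta_5]\sigma_{16}$ from the $[\nu_5]_7\sigma_8$ slot via \eqref{zesig1}, and $[\nu_4\sigma'\mu_{14}]_7$ from the correction slot via \eqref{mR524}. The one substantive difference is that you skip the paper's opening move: from $\varepsilon_6\mu_{14}=\eta_6\mu_7\sigma_{16}$ \eqref{Ogu0} and $\bar{\nu}_6\mu_{14}=0$ \eqref{bn6m} one has $(\bar{\nu}_6+\varepsilon_6)\mu_{14}=\eta_6\mu_7\sigma_{16}$ on the sphere level, so applying ${p_7}_*$ gives at once
\[
[\bar{\nu}_6+\varepsilon_6]\mu_{14}\equiv[\eta_6]\mu_7\sigma_{16}\ \bmod\ {i_7}_*R^6_{23}
=\{[\zeta_5]_7\sigma_{16},[\nu_5]_7\bar{\varepsilon}_8,[\nu_4\sigma'\mu_{14}]_7,[\nu_4(E\zeta')]_7,[\nu_4\eta_7\bar{\varepsilon}_8]_7\}
\]
by \eqref{R623}. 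Since every generator of ${i_7}_*R^6_{23}$ except $[\zeta_5]_7\sigma_{16}$ already lies in the stated modulus, this reduces the entire lemma to detecting a single mod~$2$ coefficient, and almost all of the indeterminacy bookkeeping you propose (the $\eta_{14}\varepsilon_{15}$ and $\nu^3_{14}$ side terms, the correction slot, the indeterminacies of the principal bracket) becomes unnecessary. What this buys is significant, because the step you defer as "the main obstacle" --- reconciling $\{[\nu_5]_7\sigma_8,2\iota_{15},8\sigma_{15}\}_1$ with the bracket $\{[\nu_5]\sigma_8,8\iota_{15},2\sigma_{15}\}_1$ of \eqref{zesig1} --- is precisely the step that carries the whole content of the lemma: if that juggling or its indeterminacy control fails, you do not get the $[\zeta_5]_7\sigma_{16}$ term, and everything else you produce is already in the modulus. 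So that comparison must actually be carried out (in the style of Lemma \ref{m3sg}, passing through $\{-,2\sigma_{15},8\iota_{15}\}$ and estimating $[\nu_5]_7\sigma_8\circ E\pi^{14}_{22}$ by Lemma \ref{zesig}(4) and $8R^7_{16}\circ\sigma_{16}=0$), not merely planned; note the paper itself quietly writes the outer bracket with $8\iota_{15},2\sigma_{15}$ from the start to make \eqref{zesig1} apply directly. With that step supplied your argument is correct, but adopting the ${p_7}_*$ reduction first would make it both shorter and safer.
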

\begin{proof}

By \eqref{Ogu0} and \eqref{bn6m}, we have $(\bar{\nu}_6+\varepsilon_6)\mu_{14}=\eta_6\mu_7\sigma_{16}$. So, we have 
$$
[\bar{\nu}_6+\varepsilon_6]\mu_{14}\equiv[\eta_6]\mu_7\sigma_{16}\ \bmod\ 
{i_7}_*R^6_{23}.
$$
Hence, by \eqref{R623}, we obtain 
$$
[\bar{\nu}_6+\varepsilon_6]\mu_{14}\equiv[\eta_6]\mu_7\sigma_{16}\ \bmod\ 
[\zeta_5]_7\sigma_{16}, [\nu_5]_7\bar{\varepsilon}_8, [\nu_4\sigma'\mu_{14}]_7, [\nu_4\eta_7\bar{\varepsilon}_8]_7, [\nu_4(E\zeta')]_7.
$$

By \eqref{tr2}, \eqref{mu4}, \eqref{zesig1} and \eqref{mR524},
we have
\[\begin{split}
[\bar{\nu}_6+\varepsilon_6]\mu_{14}
&\in\{[\bar{\nu}_6+\varepsilon_6]\eta_{14}, 8\iota_{15}, 2\sigma_{15}\}\\
&=\{[\eta_6](\bar{\nu}_7+\varepsilon_7)+[\nu_5]_7\sigma_8
+x[\nu_4\sigma'\eta_{14}]_7, 8\iota_{15}, 2\sigma_{15}\}\\
&\subset\{[\eta_6](\bar{\nu}_7+\varepsilon_7), 8\iota_{15}, 2\sigma_{15}\}
+ \{[\nu_5]_7\sigma_8, 8\iota_{15}, 2\sigma_{15}\}\\
&\qquad\qquad+\{x[\nu_4\sigma'\eta_{14}]_7, 8\iota_{15}, 2\sigma_{15}\}\\ 
&\ni[\eta_6](E\zeta'+\mu_7\sigma_{16}) + [\zeta_5]_7\sigma_{16}
+ x[\nu_4\sigma'\mu_{14}]_7\  (x\in\{0, 1\}).
\end{split}\]
This completes the proof.
\end{proof}

By \cite[p. 77]{T}, we have 
\begin{equation}\label{nu45}
\nu^4_5=P(\bar{\nu}_{11}).
\end{equation}

Now we show the following.

\begin{prop} \label{R923}
$R^9_{23} = \Z_{4}\{[P(E\theta)+\nu_6\kappa_9]_9\}\oplus\Z_2\{[\iota_7]_9\mu_7\sigma_{16}\}
\oplus\Z_2\{[\zeta_5]_9\sigma_{16}\}$, where 
$2[P(E\theta)+\nu_6\kappa_9]_9=[\nu_5]_9\bar{\varepsilon}_8$.

\end{prop}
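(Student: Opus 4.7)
The plan is to work with the exact sequence $(23)_8$:
\[
\pi^8_{24}\rarrow{\Delta}R^8_{23}\rarrow{i_*}R^9_{23}\rarrow{p_*}\pi^8_{23}\rarrow{\Delta}R^8_{22},
\]
taking $R^8_{23}$ from Proposition \ref{R8} and attacking both sides.

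First I would identify the kernel of $\Delta:\pi^8_{23}\to R^8_{22}$. Using Toda's tables for $\pi^8_{23}$ together with \eqref{DeSig} and \eqref{Delta}, one finds the surviving classes; the expected output of $p_*$ is two-torsion, giving rise in $R^9_{23}$ only to the classes $p_*([\iota_7]_9\mu_7\sigma_{16})$ and $p_*([\zeta_5]_9\sigma_{16})$ (the class $[P(E\theta)+\nu_6\kappa_9]_9$ lies in $\Ker p_*$, as it originates from $R^8_{23}$). So the $p_*$-side contributes the two $\Z_2$-summands.

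Next, the main computation is the image of $\Delta:\pi^8_{24}\to R^8_{23}$, equivalently $\Ker i_*$. The expected collapse from the $10$-generator list in Proposition \ref{R8} to the $3$-generator answer comes from three ingredients. First, from \eqref{k3} applied under $i_{8,9*}$ one has $[\eta_6]_9=2[\iota_7]_9$, so $[\eta_6]_9\mu_7\sigma_{16}=2[\iota_7]_9\mu_7\sigma_{16}=0$ since $[\iota_7]_9\mu_7\sigma_{16}$ has order $2$. Second, the relation $[\iota_7]_9\sigma'=[\bar\nu_6+\varepsilon_6]_9$ established just before \eqref{Dsg82} lets me rewrite
\[
[\iota_7]_9\sigma'\mu_{14},\quad [\iota_7]_9(E\zeta')=[\iota_7]_9\sigma'\eta_{14}\varepsilon_{15},\quad [\iota_7]_9\eta_7\bar\varepsilon_8=[\iota_7]_9\sigma'\nu^3_{14},
\]
then apply Lemmas \ref{bnuepmu} and \ref{nusiep}(2) to reduce each of these to combinations of $[\zeta_5]_9\sigma_{16}$, $[\nu_5]_9\bar\varepsilon_8$ and the three $[\nu_4\,\cdot\,]$-generators modulo image of $i_*$. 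Third, I would show that $[\nu_4\sigma'\mu_{14}]_9$, $[\nu_4(E\zeta')]_9$, $[\nu_4\eta_7\bar\varepsilon_8]_9$ all vanish by producing preimages in $\pi^8_{24}$ under $\Delta$: use \eqref{mt1} to pull back the defining Toda-bracket descriptions one dimension, combined with $\Delta\sigma_8$ from \eqref{Dsg82} to absorb the $E\sigma'$- and $\sigma'$-contributions.

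With these simplifications in hand, $i_*(R^8_{23})$ in $R^9_{23}$ is generated by $[P(E\theta)+\nu_6\kappa_9]_9$, $[\iota_7]_9\mu_7\sigma_{16}$, $[\zeta_5]_9\sigma_{16}$, and $[\nu_5]_9\bar\varepsilon_8$, and the congruence \eqref{mmoh} becomes an equality $2[P(E\theta)+\nu_6\kappa_9]_9=[\nu_5]_9\bar\varepsilon_8$ once its modulo-terms $[\nu_4\sigma'\mu_{14}]_9$, $[\nu_4(E\zeta')]_9$, $[\nu_4\eta_7\bar\varepsilon_8]_9$ are killed. The orders follow: $[P(E\theta)+\nu_6\kappa_9]_9$ has order $4$ (it cannot drop to $2$ since $J$ of it is nonzero of order $2$ by Lemma \ref{JP}, but $[\nu_5]_9\bar\varepsilon_8\neq 0$ in $R^9_{23}$), while the other two summands are $\Z_2$. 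Finally a splitting argument combining $p_*$ with the three lifts above verifies the direct sum decomposition.

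The main obstacle is the third sub-step, namely producing explicit $\Delta$-preimages in $\pi^8_{24}$ for $[\nu_4\sigma'\mu_{14}]_8$, $[\nu_4(E\zeta')]_8$ and $[\nu_4\eta_7\bar\varepsilon_8]_8$ so as to kill them in $R^9_{23}$; this requires careful juggling of the Toda-bracket/\ref{mt1} description of the generators together with the known image of $\Delta$ on suspensions, and is where the extension problem between $\Z_2$ and $\Z_2\{[\nu_5]_9\bar\varepsilon_8\}$ is effectively resolved in favor of $\Z_4\{[P(E\theta)+\nu_6\kappa_9]_9\}$.
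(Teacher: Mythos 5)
Your overall frame---computing $R^9_{23}$ from the exact sequence $(23)_8$ with $R^8_{23}$ as in Proposition \ref{R8}---is the paper's, but your first step misreads how the sequence contributes. In this range $\Delta:\pi^8_{23}\to R^8_{22}$ is a monomorphism, so $p_*:R^9_{23}\to\pi^8_{23}$ is zero and $i_*:R^8_{23}\to R^9_{23}$ is onto; nothing in $R^9_{23}$ ``arises from the $p_*$-side.'' In particular $[\iota_7]_9\mu_7\sigma_{16}$ and $[\zeta_5]_9\sigma_{16}$ are $i_*$-images of generators of $R^8_{23}$, hence lie in $\Ker p_*$; the classes $p_*([\iota_7]_9\mu_7\sigma_{16})$ and $p_*([\zeta_5]_9\sigma_{16})$ you invoke are $0$. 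Crediting these two $\Z_2$'s to a nonzero image of $p_*$ while also listing them among the generators of $i_*R^8_{23}$ double-counts them and makes the order of $R^9_{23}$ come out too large. The correct bookkeeping is simply $R^9_{23}\cong R^8_{23}/\Delta\pi^8_{24}$, where $|R^8_{23}|=2^{11}$, $|\pi^8_{24}|=2^7$ and $\Delta$ is injective on $\pi^8_{24}$, leaving order $16$.

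The heart of the proof, which your sketch leaves essentially open, is the explicit computation of $\Delta$ on all seven generators of $\pi^8_{24}=\{\sigma_8\nu^3_{15},\sigma_8\eta_{15}\varepsilon_{16},\sigma_8\mu_{15},(E\sigma')\mu_{15},E^2\zeta',\mu_8\sigma_{17},\eta_8\bar{\varepsilon}_9\}$. The last four are suspensions, so \eqref{DeSig} with $\Delta\iota_8=2[\iota_7]-[\eta_6]_8$ \eqref{k3} and Lemma \ref{rel1} already kills $[\nu_4\sigma'\mu_{14}]_9$, $[\nu_4(E\zeta')]_9$, $[\nu_4\eta_7\bar{\varepsilon}_8]_9$ and $[\eta_6]_9\mu_7\sigma_{16}$; no Toda-bracket pullback via \eqref{mt1} is needed for this. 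The three generators of the form $\sigma_8\circ(-)$ require $\Delta\sigma_8$ \eqref{Dsg82} together with Lemmas \ref{nusiep}(2) and \ref{bnuepmu}, and give $\Delta(\sigma_8\eta_{15}\varepsilon_{16})\equiv[\iota_7]E\zeta'+[\nu_5]_8\bar{\varepsilon}_8$, $\Delta(\sigma_8\nu^3_{15})\equiv[\iota_7]\eta_7\bar{\varepsilon}_8+[\nu_5]_8\bar{\varepsilon}_8$ and $\Delta(\sigma_8\mu_{15})\equiv[\iota_7]\sigma'\mu_{14}+[\zeta_5]_8\sigma_{16}+[\eta_6]_8\mu_7\sigma_{16}$ modulo the already-killed classes. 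It is exactly from these that one reads off both that $[\nu_5]_9\bar{\varepsilon}_8=[\iota_7]_9E\zeta'=[\iota_7]_9\eta_7\bar{\varepsilon}_8$ survives in the quotient (hence, by \eqref{mmoh}, $[P(E\theta)+\nu_6\kappa_9]_9$ has order $4$) and that $[\iota_7]_9\sigma'\mu_{14}$ collapses onto $[\zeta_5]_9\sigma_{16}$. Your appeal to Lemma \ref{JP} cannot substitute for this: $J[P(E\theta)+\nu_6\kappa_9]$ has order $2$, which only bounds the order of the lift below by $2$; the whole issue is the nonvanishing of $[\nu_5]_9\bar{\varepsilon}_8$, and that requires the full list of $\Delta$-images above.
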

\begin{proof}
In the exact sequence $(23)_8$:
$$
\pi^8_{24}\rarrow{\Delta}R^8_{23}\rarrow{i_*}R^9_{23}\rarrow{p_*}\pi^8_{23}
\rarrow{\Delta}R^8_{22},
$$
$\Delta: \pi^8_{23}\to R^8_{22}$ is a monomorphism and
$$
\pi^8_{24} = \{\sigma_8\nu^3_{15}, \sigma_8\eta_{15}\varepsilon_{16},
\sigma_8\mu_{15}, (E\sigma')\mu_{15}, E^2\zeta', \mu_8\sigma_{17},
\eta_8\bar{\varepsilon}_9\}\cong(\Z_2)^7.
$$
By Lemma \ref{rel1}(3) and the relation $\Delta\iota_8 = 2[\iota_7]
- [\eta_6]_8$ \eqref{k3}, we obtain the following:
$$
\Delta((E\sigma')\mu_{15}) = [\eta_6]_8\sigma'\mu_{14}
\equiv [\nu_4\sigma'\mu_{14}]_8\ \bmod\  [\nu_4(E\zeta')]_8,
[\nu_4\eta_7\bar{\varepsilon}_8]_8;
$$
$$
\Delta(E^2\zeta') = [\eta_6]_8E\zeta' = [\nu_4(E\zeta')]_8;
\Delta(\mu_8\sigma_{17}) = [\eta_6]_8\mu_7\sigma_{16};
$$
$$
\Delta(\eta_8\bar{\varepsilon}_9) = [\eta_6]_8\eta_7\bar{\varepsilon}_8
= [\nu_4\eta_7\bar{\varepsilon}_8]_8.
$$
By %Lemma \ref{rel1} and 
\eqref{Dsg82}, we have
$$
\Delta(\sigma_8\eta_{15}\varepsilon_{16})
\equiv [\iota_7]E\zeta'
+ [\bar{\nu}_6+\varepsilon_6]_8\eta_{14}\varepsilon_{15} \bmod 
[\eta_5\varepsilon_6]_7\eta_{14}\varepsilon_{15}.
$$
So, by Lemmas \ref{rel1}(1) and \ref{nusiep}(2),
$$
\Delta(\sigma_8\eta_{15}\varepsilon_{16})
\equiv [\iota_7]E\zeta' + [\nu_4\eta_7\bar{\varepsilon}_8]_8
+ [\nu_5]_8\bar{\varepsilon}_8 \bmod [\nu_4\sigma'\mu_{14}]_8,
[\nu_4(E\zeta')]_8, [\nu_4\eta_7\bar{\varepsilon}_8]_8.
$$
By the fact that $\sigma'\nu^3_{14} = \eta_7\bar{\varepsilon}_8$ (Lemma \ref{zesig}(3)) and
$\nu^5_8 = 0$ \eqref{nu45}, %\cite[Proposition 2.6(3)]{Og}, 
we have
$$
\Delta(\sigma_8\nu^3_{15})\equiv [\iota_7]\eta_7\bar{\varepsilon}_8
+ [\bar{\nu}_6+\varepsilon_6]_8\nu^3_{14} \bmod
[\eta_5\varepsilon_6]_8\nu^3_{14}.
$$
By Lemma \ref{nusiep}(2),
$$
[\bar{\nu}_6+\varepsilon_6]_8\nu^3_{14}
= [\bar{\nu}_6+\varepsilon_6]_8\eta_{14}\bar{\nu}_{15}
\equiv [\bar{\nu}_6+\varepsilon_6]_8\eta_{14}\varepsilon_{15} \bmod
[\nu_4\sigma'\mu_{14}]_8, [\nu_4(E\zeta')]_8, 
[\nu_4\eta_7\bar{\varepsilon}_8]_8.
$$
By Lemma \ref{rel1}.(2), $[\eta_5\varepsilon_6]_8\nu^3_{14}
= [\nu_4\eta_7\bar{\varepsilon}_8]_8$. Hence we obtain
$$
\Delta(\sigma_8\nu^3_{15})\equiv [\iota_7]\eta_7\bar{\varepsilon}_8
+ [\nu_4\eta_7\bar{\varepsilon}_8]_8 + [\nu_5]_8\bar{\varepsilon}_8 \bmod
[\nu_4\sigma'\mu_{14}]_8, [\nu_4(E\zeta')]_8,
[\nu_4\eta_7\bar{\varepsilon}_8]_8.
$$

Next, by \eqref{Dsg82},  we have
$$
\Delta(\sigma_8\mu_{15})\equiv [\iota_7]\sigma'\mu_{14}
+ [\bar{\nu}_6+\varepsilon_6]_8\mu_{14} \bmod
[\eta_5\varepsilon_6]_8\mu_{14}.
$$
By Lemma \ref{bnuepmu},
$$
[\bar{\nu}_6+\varepsilon_6]_8\mu_{14}
\equiv [\eta_6]_8(E\zeta'+\mu_7\sigma_{16}) + [\zeta_5]_8\sigma_{16}
\ \bmod [\nu_5]_8\bar{\varepsilon}_8, [\nu_4\sigma'\mu_{14}]_8,
[\nu_4(E\zeta')]_8, [\nu_4\eta_7\bar{\varepsilon}_8]_8.
$$
By Lemma \ref{rel1}(3), 
$$
[\eta_5\varepsilon_6]_8\mu_{14}\equiv 0 \ \bmod \
[\nu_4\sigma'\mu_{14}]_8, 
[\nu_4(E\zeta')]_8, [\nu_4\eta_7\bar{\varepsilon}_8]_8.
$$
Hence we obtain
$$
\Delta(\sigma_8\mu_{15})\equiv [\iota_7]\sigma'\mu_{14}
+ [\zeta_5]_8\sigma_{16} + [\eta_6]_8\mu_7\sigma_{16} \bmod
[\nu_5]_8\bar{\varepsilon}_8, [\nu_4\sigma'\mu_{14}]_8,
[\nu_4(E\zeta')]_8, [\nu_4\eta_7\bar{\varepsilon}_8]_8.
$$
Thus we obtain the group $R^9_{23}$, completing the proof.
\end{proof}

In the exact sequence $(23)_9$:
$$
\pi^9_{24}\rarrow{\Delta}R^9_{23}\rarrow{i_*}R^{10}_{23}
\rarrow{p_*}\pi^9_{23}\rarrow{\Delta}R^9_{22},
$$
we know
$$
\pi^9_{24} = \Z_{16}\{\rho'\}\oplus\Z_2\{\sigma_9\bar{\nu}_{16}\}
\oplus\Z_2\{\sigma_9\varepsilon_{16}\}\oplus\Z_2\{\bar{\varepsilon}_9\}.
$$
By \eqref{Desig9}, Lemmas \ref{zesig} (2) and \ref{rel2} (2), we have 
\[\begin{split}
\Delta(\sigma_9\bar{\nu}_{16})&
 = [\iota_7]_9(\bar{\nu}_7+\varepsilon_7)\bar{\nu}_{15}
 = [\iota_7]_9\eta_7\bar{\varepsilon}_8 =[\nu_5]_9\bar{\varepsilon}_8,
\\
\Delta(\sigma_9\varepsilon_{16})&
 = [\iota_7]_9(\bar{\nu}_7+\varepsilon_7)\varepsilon_{15}
 = [\iota_7]_9\eta_7\bar{\varepsilon}_8 =[\nu_5]_9\bar{\varepsilon}_8.
\end{split}\]
That is,
\begin{equation}\label{Desig9e}
\Delta(\sigma_9\bar{\nu}_{16})=\Delta(\sigma_9\varepsilon_{16})=[\nu_5]_9\bar{\varepsilon}_8.
\end{equation}

Summarizing the above results, we obtain the following.

\begin{lem} \label{rel2}
\begin{enumerate}
\item $[\eta_5\varepsilon_6]_9\mu_{14}
=[\eta_5\varepsilon_6]_9\nu^3_{14}
=[\eta_5\varepsilon_6]_9\eta_{14}\varepsilon_{15}
=[\eta_6]_9\mu_7\sigma_{16} = [\nu_4\sigma'\mu_{14}]_9
=[\nu_4\eta_7\bar{\varepsilon}_8]_9 = [\nu_4(E\zeta')]_9=0$.
\item $[\bar{\nu}_6+\varepsilon_6]_9\nu^3_{14}
=[\bar{\nu}_6+\varepsilon_6]_9\eta_{14}\varepsilon_{15}
=[\nu_5]_9\bar{\varepsilon}_8 = [\nu_5]_9\sigma_8\bar{\nu}_{15}
=[\nu_5]_9\sigma_8\varepsilon_{15} = [\iota_7]_9\eta_7\bar{\varepsilon}_8
=[\iota_7]_9E\zeta'$.
\item $[\iota_7]_9\sigma'\mu_{14}
=[\bar{\nu}_6+\varepsilon_6]_9\mu_{14}
\equiv [\zeta_5]_9\sigma_{16} \bmod [\nu_5]_9\bar{\varepsilon}_8$.
\item $[\bar{\nu}_6+\varepsilon_6]_{10}\nu^3_{14}
= [\bar{\nu}_6+\varepsilon_6]_{10}\eta_{14}\varepsilon_{15}
= [\nu_5]_{10}\bar{\varepsilon}_8 = [\nu_5]_{10}\sigma_8\bar{\nu}_{15}
= [\nu_5]_{10}\sigma_8\varepsilon_{15}
= [\iota_7]_{10}\eta_7\bar{\varepsilon}_8
= [\iota_7]_{10}E\zeta' = 0$.
\end{enumerate}
\end{lem}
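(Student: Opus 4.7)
The overall strategy is to assemble all four parts from congruences already established in Lemma \ref{rel1}, Lemma \ref{nusiep} and Lemma \ref{bnuepmu}, exploiting the fact that the common ``error'' subgroup of $R^n_{23}$ generated by $[\nu_4\sigma'\mu_{14}]_n$, $[\nu_4(E\zeta')]_n$, $[\nu_4\eta_7\bar\varepsilon_8]_n$ and $[\eta_6]_n\mu_7\sigma_{16}$ collapses to zero once $n=9$.

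For part (1), I would first recall from the proof of Proposition \ref{R923} that each of these four classes lies in $\Delta(\pi^8_{24})$: namely $\Delta((E\sigma')\mu_{15})\equiv[\nu_4\sigma'\mu_{14}]_8$ modulo the other two, $\Delta(E^2\zeta')=[\nu_4(E\zeta')]_8$, $\Delta(\eta_8\bar\varepsilon_9)=[\nu_4\eta_7\bar\varepsilon_8]_8$ and $\Delta(\mu_8\sigma_{17})=[\eta_6]_8\mu_7\sigma_{16}$. Exactness of $(23)_8$ then forces each to vanish under ${i_9}_\ast$. The three $[\eta_5\varepsilon_6]_9$-equalities follow by pushing Lemma \ref{rel1}(2)--(3) up to level $9$ and substituting these vanishings.

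For part (2), apply ${i_9}_\ast\circ {i_7}_\ast$ to Lemma \ref{nusiep}; by part (1) the indeterminacy is killed in $R^9_{23}$, giving the four equalities of $[\nu_5]_9\sigma_8\bar\nu_{15}$, $[\nu_5]_9\sigma_8\varepsilon_{15}$, $[\bar\nu_6+\varepsilon_6]_9\nu^3_{14}$ and $[\bar\nu_6+\varepsilon_6]_9\eta_{14}\varepsilon_{15}$ with $[\nu_5]_9\bar\varepsilon_8$. For the two $[\iota_7]_9$-entries, use the identity $[\iota_7]_9\sigma'=[\bar\nu_6+\varepsilon_6]_9$ established just before \eqref{Dsg82}, combined with the factorizations $\eta_7\bar\varepsilon_8=\sigma'\nu^3_{14}$ of Lemma \ref{zesig}(3) and $E\zeta'=\sigma'\eta_{14}\varepsilon_{15}$ of \cite[(12.4)]{T}. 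Part (3) is analogous: Lemma \ref{bnuepmu} at level $9$ reduces via (1) to $[\bar\nu_6+\varepsilon_6]_9\mu_{14}\equiv[\zeta_5]_9\sigma_{16}\bmod[\nu_5]_9\bar\varepsilon_8$, and substituting $[\iota_7]_9\sigma'=[\bar\nu_6+\varepsilon_6]_9$ delivers the remaining equality $[\iota_7]_9\sigma'\mu_{14}=[\bar\nu_6+\varepsilon_6]_9\mu_{14}$.

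Finally, by (2), part (4) amounts to showing $[\nu_5]_{10}\bar\varepsilon_8=0$; but \eqref{Desig9e} identifies $[\nu_5]_9\bar\varepsilon_8$ with $\Delta(\sigma_9\bar\nu_{16})\in\Im\Delta$, so exactness of $(23)_9$ kills it under ${i_{10}}_\ast$. I expect no essential obstacle: the entire argument is a careful bookkeeping of indeterminacies through the successive ${i_n}_\ast$ pushforwards, and that bookkeeping has already been arranged by the way Lemmas \ref{rel1}, \ref{nusiep} and \ref{bnuepmu} were stated.
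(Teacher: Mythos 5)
Your proposal is correct and follows essentially the same route as the paper, which presents Lemma \ref{rel2} merely as a summary of the preceding computations: the vanishings in (1) come from exactness of $(23)_8$ applied to the images $\Delta((E\sigma')\mu_{15})$, $\Delta(E^2\zeta')$, $\Delta(\eta_8\bar{\varepsilon}_9)$, $\Delta(\mu_8\sigma_{17})$ computed in the proof of Proposition \ref{R923}, parts (2)--(3) then follow from Lemmas \ref{rel1}, \ref{nusiep}, \ref{bnuepmu} and the identity $[\iota_7]_9\sigma'=[\bar{\nu}_6+\varepsilon_6]_9$ once the indeterminacies die at level $9$, and (4) follows from $\Delta(\sigma_9\bar{\nu}_{16})=[\nu_5]_9\bar{\varepsilon}_8$ and exactness of $(23)_9$. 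Your derivation of $[\iota_7]_9\eta_7\bar{\varepsilon}_8$ and $[\iota_7]_9E\zeta'$ via the factorizations $\eta_7\bar{\varepsilon}_8=\sigma'\nu^3_{14}$ and $E\zeta'=\sigma'\eta_{14}\varepsilon_{15}$ is a slight (valid) shortcut past the paper's use of $\Delta(\sigma_8\nu^3_{15})$ and $\Delta(\sigma_8\eta_{15}\varepsilon_{16})$, but rests on the same ingredients.
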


\section{Determination of $\pi_{23}(R_n:2) (n\geq 10)$}
\hspace{2ex}

By \cite[Lemma 10.7]{T}, we have 
\begin{equation}\label{epsig1}
\varepsilon_3\sigma_{11}=0,\ \bar{\nu}_6\sigma_{14}=0.
\end{equation}

By \cite[Proposition 4.1]{Ka} and its proof, we know
\begin{equation}\label{R1016}
R^{10}_{16}=\{\Delta\sigma_{10}, [\iota_7]_{10}\mu_7, [\iota_7]_{10}\nu^3_7\}
\cong \Z_{16}\oplus (\Z_2)^2.
\end{equation}

We recall $\pi^{9}_{23}=\{\sigma^2_9,\kappa_9\}\cong\Z_{16}\oplus\Z_4$
\cite[Theorem 10.3]{T}. Since $2\Delta\kappa_9=2\Delta(\iota_9)\circ\kappa_8=0$, we have
$\Delta(\bar{\nu}_9\nu^2_{17}) = 2\Delta\kappa_9 = 0$ in $R^9_{22}$.
So, in $R^{10}_{23}$, there exists  a lift $[\bar{\nu}_9\nu^2_{17}]$ of 
$\bar{\nu}_9\nu^2_{17}$. 

By relations \eqref{Desig9} and \eqref{epsig1}, we have $\Delta(\sigma^2_9)
= [\iota_7]_9(\bar{\nu}_7+\varepsilon_7)\sigma_{15} = 0$.
So, in $R^{10}_{23}$, there exist a lift
$[\sigma^2_9]$ of $\sigma^2_9$. 
By \eqref{M-m}, we have
\[\begin{split}
{p_{10}}_\ast \{[\iota_7]_{10},\bar{\nu}_7+\varepsilon_7, \sigma_{15}\}
&= - \{p_{10}, [\iota_7]_{10},\bar{\nu}_7+\varepsilon_7\}\circ\sigma_{16}\\
&\supset - \{p_{10}, i_{10}, \Delta\iota_9\}\circ\sigma^2_9\ni \sigma^2_9 \bmod
2\sigma^2_9.
\end{split}\]
So we define
\begin{equation}\label{[sig^2_9]}
[\sigma^2_9]\in\{[\iota_7]_{10},\bar{\nu}_7+\varepsilon_7,\sigma_{15}\}.
\end{equation}

Since ${p_{10}}_*\Delta(\sigma^2_{10})=2\sigma^2_9$
by  \eqref{DelS2}, we have 
\begin{equation}\label{Dsgm210}
\Delta(\sigma^2_{10}) - 2[\sigma^2_9]\in{i_{10}}_*R^9_{23}
= \{[P(E\theta)+\nu_6\kappa_9]_{10}, [\iota_7]_{10}\mu_7\sigma_{16}\}.
\end{equation}

We show the following.
\begin{lem} \label{Derh}
$8\Delta(\sigma^2_{10})=16[\sigma^2_{10}] = [\iota_7]_{10}\mu_7\sigma_{16}\neq 0$ and $[\zeta_5]_{10}\sigma_{16}=0$.
\end{lem}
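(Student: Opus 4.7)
My proof plan splits the statement into four pieces. The equation $8\Delta(\sigma^2_{10})=16[\sigma^2_9]$ is immediate from \eqref{Dsgm210}: the two potential corrections $[P(E\theta)+\nu_6\kappa_9]_{10}$ and $[\iota_7]_{10}\mu_7\sigma_{16}$ are images of elements of orders dividing $4$ and $2$ respectively in $R^9_{23}$ (Propositions \ref{R723} and \ref{R923}), so multiplication by $8$ kills the ambiguity. The non-vanishing $[\iota_7]_{10}\mu_7\sigma_{16}\neq 0$ follows by applying the $J$-homomorphism: from \eqref{Ji7} and the relation $EJ(\alpha)=-J(i_*\alpha)$ we get $J[\iota_7]_{10}=\pm\sigma_{10}$, hence $J([\iota_7]_{10}\mu_7\sigma_{16})=\pm\sigma_{10}\mu_{17}\sigma_{26}$, which is a known non-zero element of $\pi^{10}_{33}$.

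The identification $16[\sigma^2_9]=[\iota_7]_{10}\mu_7\sigma_{16}$ is the main content. I would start from the Toda bracket definition $[\sigma^2_9]\in\{[\iota_7]_{10},\bar{\nu}_7+\varepsilon_7,\sigma_{15}\}$, compose on the right with $16\iota_{23}$, and apply the standard shuffling formula to obtain
\[
16[\sigma^2_9]\in -[\iota_7]_{10}\circ\{\bar{\nu}_7+\varepsilon_7,\sigma_{15},16\iota_{22}\},
\]
the inner bracket being interpreted $2$-locally since $16\sigma_{15}$ is of odd order in $\pi^{15}_{22}$. Writing $\bar{\nu}_7+\varepsilon_7=\eta_7\sigma_8-\sigma'\eta_{14}$ via \eqref{t4} and using the three-fold bracket description $\mu_n\in\{\eta_n,2\iota_{n+1},8\sigma_{n+1}\}$ from \eqref{mu4}, I then recognise the inner bracket as $\pm\mu_7\sigma_{16}$ plus terms whose left composite with $[\iota_7]_{10}$ vanishes. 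The resulting indeterminacy of the whole expression is controlled by Propositions \ref{R723} and \ref{R923}, pinning down the equality on the nose.

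Finally, to show $[\zeta_5]_{10}\sigma_{16}=0$ I use the exact sequence $(23)_9$: it suffices to exhibit $[\zeta_5]_9\sigma_{16}$ in $\Delta(\pi^9_{24})$. Choosing a Toda bracket representative for $\rho'\in\pi^9_{24}$ (which is of order $16$) and applying \eqref{mt1} together with \eqref{Desig9} and Lemma \ref{zesig}\,(5), one obtains $\Delta(\rho')\equiv [\zeta_5]_9\sigma_{16}\ \bmod\ [\nu_5]_9\bar{\varepsilon}_8$, so that $i_{10,*}[\zeta_5]_9\sigma_{16}=0$. The principal obstacle throughout is the middle step: the shuffle sends $16\iota_{23}$ across the non-$2$-primary class $16\sigma_{15}$, forcing a $2$-local argument, and one must recognise $\mu_7\sigma_{16}$ as a specific representative of a three-fold bracket whose indeterminacy is small enough that the identification holds strictly rather than modulo a nontrivial subgroup of $R^{10}_{23}$.
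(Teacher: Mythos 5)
Your first step (killing the ambiguity in \eqref{Dsgm210} by multiplying by $8$, using the orders of $[P(E\theta)+\nu_6\kappa_9]$ and $[\iota_7]\mu_7\sigma_{16}$) is fine, and your bracket-shuffle for $16[\sigma^2_9]$ is essentially the paper's computation: the inner bracket $\{\bar{\nu}_7+\varepsilon_7,\sigma_{15},16\iota_{22}\}$ contains $E\zeta'+\mu_7\sigma_{16}$ by Lemma \ref{m3sg}(3), and the extra term dies because $[\iota_7]_{10}E\zeta'=0$ (Lemma \ref{rel2}(4)); you should name that cancellation explicitly rather than leave it as ``terms whose left composite vanishes.''

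The genuine gap is in your non-vanishing argument. You claim $J([\iota_7]_{10}\mu_7\sigma_{16})=\pm\sigma_{10}\mu_{17}\sigma_{26}$ is a known non-zero element of $\pi^{10}_{33}$. It is not: $\sigma_{10}\mu_{17}\sigma_{26}=\sigma^2_{10}\mu_{24}=E(\sigma^2_9\mu_{23})=E\bigl(P(\rho_{19})\bigr)=0$ by \eqref{sigsigmu}, and the paper itself records $J([\iota_7]_{10}\mu_7\sigma_{16})=\sigma^2_{10}\mu_{24}=0$ in the proof of Proposition \ref{R11}. So the $J$-homomorphism on $R^{10}_{23}$ cannot detect this class. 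The paper's proof works one level down: it shows $J([\iota_7]_9\mu_7\sigma_{16})=\sigma^2_9\mu_{23}\neq 0$ in $\pi^9_{32}$ \eqref{s_9smu} while $J\Delta\pi^9_{24}=0$ (checking $\rho'$, $\sigma_9\bar{\nu}_{16}$, $\sigma_9\varepsilon_{16}$, $\bar{\varepsilon}_9$ one by one), whence $[\iota_7]_9\mu_7\sigma_{16}\notin\Delta\pi^9_{24}=\Ker\,{i_{10}}_*$ and its image in $R^{10}_{23}$ survives. Your argument needs to be replaced by something of this form.

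Your last step also risks circularity. In the paper the congruence $\Delta\rho'\equiv[\zeta_5]_9\sigma_{16}\bmod[\nu_5]_9\bar{\varepsilon}_8$ is \emph{deduced from} $[\zeta_5]_{10}\sigma_{16}=0$ (proof of Proposition \ref{R10}), not the other way around, and you give no independent derivation of it beyond an unspecified Toda bracket for $\rho'$. The paper's route is different and shorter: by \eqref{kazt10}, $[\zeta_5]_{10}\sigma_{16}=[\iota_7]_{10}\mu_7\sigma_{16}+8\Delta(\sigma^2_{10})$ (the $[\iota_7]_{10}\nu^3_7\sigma_{16}$ term vanishes by \eqref{n11s}), and this is $0$ by the first assertion of the lemma. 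Unless you can produce the $\Delta\rho'$ relation independently, you should adopt this argument.
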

\begin{proof}
By Lemmas \ref{m3sg}(3) and \ref{rel2}(4), we have 
$$
16[\sigma^2_9]\in -[\iota_7]_{10}\circ\{\bar{\nu}_7+\varepsilon_7,\sigma_{15},16\iota_{22}\}\ni [\iota_7]_{10}\mu_7\sigma_{16}+[\iota_7]_{10}E\zeta'=[\iota_7]_{10}\mu_7\sigma_{16}
$$
$$
\bmod\ [\iota_7]_{10}(\bar{\nu}_7+\varepsilon_7)\circ\pi^{15}_{23} 
+ 16[\iota_7]_{10}\pi^7_{23} = 0.
$$
We will examine the image of a map $J\Delta:\pi_{24}^9\to\pi_{32}^9$, where 
$\pi_{24}^9=\{\rho',\sigma_9\bar{\nu}_{16},\sigma_9\varepsilon_{16},\bar{\varepsilon}_9\}$ \cite[Theorem 7.1]{T}.
By the relation \eqref{JD} and the fact that 
$2\rho_{13} = E^4\rho'$ \cite[Lemma 10.9]{T} and
$2[\iota_9, \iota_9]=0$, we have
$J\Delta(\rho')=[\iota_9, \rho'] = [\iota_9, \iota_9]\circ E^8\rho' = 0$.
By the relations \eqref{Desig9e} and \eqref{Jbar}, we have 
$J\Delta(\sigma_9\bar{\nu}_{16})=J\Delta(\sigma_9\varepsilon_{16})=J([\nu_5]_9\bar{\varepsilon}_8)
%=(-1)^3E^3(J[\nu_5]\bar{\varepsilon}_8)
=0$.
By \eqref{k4} and Lemma \ref{rel2} (2), we have 
\[
\Delta\bar{\varepsilon}_9 = [\nu_5]_9\bar{\varepsilon}_8
+ [\iota_7]_9\eta_7\bar{\varepsilon}_8 
= 2[\nu_5]_9\bar{\varepsilon}_8 = 0.
\]
%By (\ref{Jbar}), $J([\nu_5]_9\bar{\varepsilon}_8)=0.$
Hence we have $J\Delta\pi^9_{24} = 0$. 
By \eqref{Ji7} %$J[\iota_7]=\sigma_8$
 and \cite[Theorem 1.1(a)]{MMO}, we have
\begin{equation}\label{s_9smu}
J([\iota_7]_9\mu_7\sigma_{16}) = \sigma_9\mu_{16}\sigma_{25}
= \sigma^2_9\mu_{23}\neq 0.
\end{equation}
This implies $[\iota_7]_9\mu_7\sigma_{16}\not\in\Delta\pi^9_{24}$. 

By \eqref{Dsgm210}, Proposition \ref{R923} and Lemma \ref{rel2} (4), 
we have $2(\Delta(\sigma^2_{10})-2[\sigma^2_9])=0$. This leads to the first 
assertion. 

By \cite[(4.7), p.~28]{Ka}, we know
\begin{equation}\label{kazt10}
[\zeta_5]_{10}\equiv [\iota_7]_{10}\mu_7 + 8\Delta\sigma_{10}
\bmod [\iota_7]_{10}\nu^3_7.
\end{equation}
So, we have 
$[\zeta_5]_{10}\sigma_{16} = 
[\iota_7]_{10}\mu_7\sigma_{16} + 8\Delta(\sigma^2_{10})$.
This and the first lead to the second assertion 
$[\zeta_5]_{10}\sigma_{16}=0$. This completes the proof. 
\end{proof}

Next, we show 
\begin{equation}\label{8sgm8et}
%[8\sigma_8]_{10}\eta_{15}\sigma_{16}=8\Delta(\sigma^2_{10}).
[8\sigma_8]_{10}\eta_{15} \equiv  8\Delta(\sigma_{10}) 
\bmod [\iota_7]_{10}\nu^3_7.
\end{equation}
\begin{proof}
By the relation $HJ[8\sigma_8]=8\sigma_{17}=H(\rho')$ from \cite[(10.12)]{T},
we have $J[8\sigma_8]\equiv \rho' \bmod E\pi^8_{23}$.
We know $E\pi^8_{23}=\{E^2\rho'', \sigma_9\bar\nu_{16}, 
\sigma_9\varepsilon_{16}, \bar\varepsilon_9\}$ and 
$E^2\rho''=2\rho'$ \cite[p.~107]{T}. Hence, we have
\[
J[8\sigma_8] \equiv \rho' 
\bmod 2\rho', \sigma_9\bar\nu_{16}, 
\sigma_9\varepsilon_{16}, \bar\varepsilon_9.
\]
By composing $\eta_{24}$ to this relation on the right and using
Lemma \ref{zesig} (3), we have
\[
J([8\sigma_8]\eta_{15}) \equiv \rho' \eta_{24}
\bmod \sigma_9\bar\nu_{16}\eta_{24}, \sigma_9\varepsilon_{16}\eta_{24}.
\]
By \eqref{t3},  
$\sigma_{10}\nu^3_{17}
=\sigma_{10}\eta_{17}\varepsilon_{18}=0$ and
$E(\rho' \eta_{24})=8[\iota_{10},\sigma_{10}]$ \cite[Proposition 2.8(5), (6)]{Og}, 
we have
\[
J([8\sigma_8]_{10}\eta_{15}) = E(\rho' \eta_{24})=8P(\sigma_{21})
=J(8\Delta(\sigma_{10})).
\]
We know 
$\pi^{10}_{26} = \{P(\sigma_{21}), \sigma_{10}\mu_{17}\}
 \cong \Z_{16} \oplus \Z_2$
\cite[Theorem 12.16]{T}.
So, by \eqref{R1016} and \eqref{Ji7},
$J: R^{10}_{16} \to \pi^{10}_{26}$
is a split epimorphism and $\Ker J =\{[\iota_7]_{10}\nu^3_{7}\}\cong\Z_2$.
This implies the desired relation. 
\end{proof}

By using \cite[Lemma 5.1]{MiM} and  \eqref{Ji7},
we have
$$ 
J[\sigma^2_9]\in
J\{[\iota_7]_{10},\bar{\nu}_7+\varepsilon_7,\sigma_{15}\}
\subset
\{J[\iota_7]_{10},\bar{\nu}_{17}+\varepsilon_{17},\sigma_{25}\}_{10}
=\{\sigma_{10},\bar{\nu}_{17}+\varepsilon_{17},\sigma_{25}\}_{10}.
$$
By \cite[(4.27)]{MMO} and its proof, $\psi_{10}$ is taken as
$$
\psi_{10}\in\{\sigma_{10}, \bar{\nu}_{17}+\varepsilon_{17}, \sigma_{25}\}_4 .
$$
So we can set 
\begin{equation}\label{psi10}
J[\sigma^2_9] = \psi_{10}.
\end{equation}

By \cite[Theorem 0.1(ii)-(iii), pp. 15-16]{HKM}, we have 
\begin{equation}\label{HKmtp}
R^9_{21} = \Z_4\{[\iota_7]_9\kappa_7\}, \ 
R^{10}_{21} = \Z_2\{[\iota_7]_{10}\kappa_7\}\ \mbox{and} \ 
R^{11}_{21} = \{[\iota_7]_{11}\kappa_7, [\eta^2_{10}]\mu_{12}\}\cong(\Z_2)^2.
\end{equation}

Since $\bar{\nu}_9\nu^2_{17} = [\iota_9, \nu^2_9]$ in \cite[p.~102]{T} and
$\nu^2_{17} = \{\eta_{17},\nu_{18}, \eta_{21}\}_1$ \cite[Lemma 5.12, its proof]{T}, we have
$\bar{\nu}_9\nu^2_{17}\in [\iota_9,\iota_9]\circ\{\eta_{17}, \nu_{18}, \eta_{21}\}_1$.
By \eqref{HKmtp}, we have
$[[\iota_9,\eta_9]]\circ\nu_{18} \in R^{10}_{21}=  \Z_2\{[\iota_7]_{10}\kappa_7\}$.
Since $J([[\iota_9,\eta_9]]\circ\nu_{18})=\zeta_{10}\sigma_{19}\nu_{26}=0$ \cite[p.~343]{KM1}
and $J([\iota_7]_{10}\kappa_7)=\sigma_{10}\kappa_{17}\neq 0$ \cite[Theorem A]{Mi1},
we obtain $[[\iota_9,\eta_9]]\circ\nu_{18}=0$. Hence, the Toda bracket 
$\{[[\iota_9,\eta_9]], \nu_{18}, \eta_{21}\}_1$ is well-defined.
We set
\begin{equation}\label{[bn9n^2]}
[\bar{\nu}_9\nu^2_{17}]\in\{[[\iota_9,\eta_9]], \nu_{18}, \eta_{21}\}_1.
\end{equation}

By \cite[Proposition 2.4(2)]{Og} and \cite[(10.7)]{T}, we also know 
$$
\nu_6\zeta_9=\zeta_6\nu_{17}=2(\sigma''\sigma_{13}).
$$
So, by \eqref{2s''}, we have $\nu_7\zeta_{18}=4(\sigma'\sigma_{14})$, 
$\nu_9\zeta_{12}=8\sigma^2_{9}$.
Hence, by the fact that $\sharp(\sigma^2_{14})=8$ \cite[Theorem 10.3]{T}, we have
\begin{equation}\label{n9z}
\nu_7\zeta_{18}=\zeta_7\nu_{18}=4(\sigma'\sigma_{14}),\ 
\nu_9\zeta_{12}=\zeta_{9}\nu_{20}=8\sigma^2_{9} \quad\text{and}\quad \nu_{14}\zeta_{17}=0.
\end{equation}

We show 
\begin{lem}\label{4om14}
$\{\zeta_{14}, \nu_{25}, \eta_{28}\}_1 \equiv 4\omega_{14} \bmod
 \sigma_{14}\mu_{21}$ and\\
$\{\zeta_{n+14}, \nu_{n+25}, \eta_{n+28}\}_1 \equiv 0 \bmod
 \sigma_{n+14}\mu_{n+21}$ for $n\geq 1$. 
\end{lem}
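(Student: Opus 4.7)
The plan is to address the two cases separately, with the $n=0$ case being the crux and the $n\ge 1$ case following by a suspension argument. First I would check that the Toda bracket $\{\zeta_{n+14},\nu_{n+25},\eta_{n+28}\}_1$ is defined for all $n\ge 0$. The vanishing $\nu_{n+25}\eta_{n+28}=0$ is stable and standard. For $\zeta_{n+14}\nu_{n+25}=0$ I suspend \eqref{n9z}, which gives $\nu_9\zeta_{12}=\zeta_9\nu_{20}=8\sigma_9^2$; suspending $n+5$ times and using $\sharp\sigma_{14}^2=8$ (from \cite[Theorem 10.3]{T}) yields $\zeta_{n+14}\nu_{n+25}=8\sigma_{n+14}^2=0$.

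For the $n=0$ case, I would start from $\zeta_{13}\in\{2\iota_{13},\eta^2_{13},\varepsilon_{15}\}$, which is the defining expression for $\zeta_{4n+1}$ with $n=3$ (suspended to dimension $13$), and suspend once so that $\zeta_{14}\in\{2\iota_{14},\eta^2_{14},\varepsilon_{16}\}_1$. Then I would apply Toda's four-fold shuffling formula to
\[
\{\{2\iota_{14},\eta^2_{14},\varepsilon_{16}\}_1,\nu_{25},\eta_{28}\}_1,
\]
splitting it into a sum of three-fold brackets starting or ending with $2\iota_{14}$ and inner bracket $\{\eta^2_{14},\varepsilon_{16}\nu_{25},\eta_{28}\}$ or $\{\eta^2_{14}\varepsilon_{16},\nu_{25},\eta_{28}\}$. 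The bracket $\{\eta^2_{14}\varepsilon_{16},\nu_{25},\eta_{28}\}$ vanishes because $\eta^2\varepsilon=0$ stably by \eqref{et^2e}, so what survives is a $2$-divisible expression. Comparing with the known defining Toda-bracket description of $\omega_{14}$ in \cite{MT1}, \cite{Od1} (as a three- or four-fold bracket living in $\pi^{14}_{30}$ whose stable image is the order-$8$ generator $\omega\in\pi^s_{16}$), one recognizes the resulting element as $4\omega_{14}$, the indeterminacy being controlled by $\sigma_{14}\mu_{21}$ — which comes from $2\iota_{14}\circ\pi^{14}_{30}$ combined with $\eta_{28}\circ\pi^{28}_{30}$ and collapses to the single possible extra summand $\sigma_{14}\mu_{21}$ once the other terms are identified as zero via \eqref{Ogu0} and the known 2-primary stem generators.

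The $n\ge 1$ case then follows by naturality of Toda brackets under suspension: applying $E^n$ to the $n=0$ identification shows the bracket lies in $E^n(4\omega_{14})+\sigma_{n+14}\mu_{n+21}$, and the vanishing $E(4\omega_{14})=0$ is a standard fact about $\omega$ (since $\omega$ becomes of order $4$ or lower after one suspension, or more precisely the unstable element $4\omega_{14}$ represents an element killed by the EHP sequence because its Hopf invariant lies in the image of $P$). One should verify this carefully by the EHP sequence diagram \eqref{EHP}, checking that $H(4\omega_{14})$ is the suspension of something divisible.

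The main obstacle I expect is the second step: the explicit identification of the shuffled four-fold bracket with exactly $4\omega_{14}$ rather than some other element of $\pi^{14}_{30}$, together with pinning down the indeterminacy to be no larger than $\{\sigma_{14}\mu_{21}\}$. This requires combining the definition of $\omega_{14}$ from the cited references with a careful inventory of $2$-primary generators in $\pi^{14}_{30}$ and repeated use of the vanishing relations from Section 2 (particularly \eqref{et^2e}, \eqref{Ogu0}, and the properties of $\zeta$). The suspension claim for $n\ge 1$ should then be comparatively routine.
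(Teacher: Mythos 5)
Your reduction of the $n\ge 1$ case to $4\omega_{15}=0$ is exactly what the paper does, and your well-definedness check ($\zeta_{14}\nu_{25}=8\sigma^2_{14}=0$) is fine. But the core of the lemma --- identifying $\{\zeta_{14},\nu_{25},\eta_{28}\}_1$ with $4\omega_{14}$ modulo $\sigma_{14}\mu_{21}$ --- is left as an acknowledged ``main obstacle'' in your proposal, and the mechanism you offer for it does not work. The shuffle you propose produces inner brackets $\{\eta^2_{14},\varepsilon_{16}\nu_{25},\eta_{28}\}$ and $\{\eta^2_{14}\varepsilon_{16},\nu_{25},\eta_{28}\}$; since $\varepsilon\nu=0$ in this range and $\eta^2\varepsilon=0$ by \eqref{et^2e}, both degenerate to their indeterminacy subgroups, so the shuffle yields no information about which element of $\pi^{14}_{30}$ has been produced. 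Moreover $\omega_{14}$ has no ``defining Toda-bracket description'' to compare against: in \cite{T} it is pinned down by its Hopf invariant $H(\omega_{14})=\nu_{27}$, not by a bracket.

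The missing idea is precisely to compute the Hopf invariant of the bracket. The paper uses $\zeta_{13}\nu_{24}=8\sigma^2_{13}=P(\eta^2_{27})$ (from \eqref{n9z} and \cite[(10.10)]{T}) together with \cite[Proposition 2.6]{T} to get $H\{\zeta_{14},\nu_{25},\eta_{28}\}_1=-P^{-1}(8\sigma^2_{13})\circ\eta_{29}=\{\eta^3_{27}\}=\{4\nu_{27}\}=\{H(4\omega_{14})\}$; the difference from $4\omega_{14}$ then lies in $\Ker H=\Im E=E\pi^{13}_{29}=\{\sigma_{14}\mu_{21}\}$, which is where the stated modulus actually comes from (not from the indeterminacy bookkeeping you sketch, which in any case misstates the right-hand contribution as $\eta_{28}\circ\pi^{28}_{30}$ instead of $\pi^{14}_{29}\circ\eta_{29}$). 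Without this Hopf-invariant computation, or some equivalent invariant separating $4\omega_{14}$ from the other elements of $\pi^{14}_{30}$, your argument cannot be completed.
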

\begin{proof}
Since $\zeta_{13}\nu_{24}=8\sigma^2_{13}=P(\eta^2_{27})$ (\eqref{n9z}, \cite[(10.10)]{T}), we obtain $H\{\zeta_{14}, \nu_{25}, \eta_{28}\}_1
 = -P^{-1}(8\sigma^2_{13}) \circ
 \eta_{29}=\{\eta^3_{27}\}=\{4\nu_{27}\}$. Together with the fact
 $H(\omega_{14})=\nu_{27}$ and \cite[Proposition 2.6]{T}, it implies
 that $\{\zeta_{14}, \nu_{25}, \eta_{28}\}_1 \equiv 4\omega_{14} \bmod
 E\pi^{13}_{29}=\{\sigma_{14}\mu_{21}\}$. 
Since $4\omega_{15}=0$ from \cite[Theorem 12.16]{T}, we have the second relation.
 This completes the proof. 
\end{proof}

Now we show the following.
\begin{prop} \label{R10}
$R^{10}_{23} = \Z_{32}\{[\sigma^2_9]\}\oplus\Z_2\{[\bar{\nu}_9\nu^2_{17}]\}
\oplus\Z_2\{[P(E\theta)+\nu_6\kappa_9]_{10}\}$
with relations %$[\nu_5]_{10}\bar{\varepsilon}_8=0$,
$\Delta\rho'\equiv [\zeta_5]_9\sigma_{16} \bmod
[\nu_5]_9\bar{\varepsilon}_8$
and $16[\sigma^2_9] = 8\Delta(\sigma^2_{10})
= [\iota_7]_{10}\mu_7\sigma_{16}$.
\end{prop}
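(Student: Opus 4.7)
The plan is to apply the exact sequence $(23)_9$:
$$
\pi^9_{24}\rarrow{\Delta}R^9_{23}\rarrow{{i_{10}}_*}R^{10}_{23}\rarrow{{p_{10}}_*}\pi^9_{23}\rarrow{\Delta}R^9_{22},
$$
with $\pi^9_{24}=\Z_{16}\{\rho'\}\oplus\Z_2\{\sigma_9\bar{\nu}_{16}\}\oplus\Z_2\{\sigma_9\varepsilon_{16}\}\oplus\Z_2\{\bar{\varepsilon}_9\}$, $\pi^9_{23}=\Z_{16}\{\sigma^2_9\}\oplus\Z_4\{\kappa_9\}$, and $R^9_{23}$ as in Proposition~\ref{R923}. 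The strategy is to (i) compute $\Delta$ on each generator of $\pi^9_{24}$ to identify $\Im\Delta\subseteq R^9_{23}$; (ii) identify $\ker(\Delta\colon\pi^9_{23}\to R^9_{22})$ to pin down $\Im p_{10*}$; and (iii) resolve the extension $0\to R^9_{23}/\Im\Delta\to R^{10}_{23}\to\Im p_{10*}\to 0$ using Lemma~\ref{Derh}.

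For step (i), formula \eqref{Desig9e} supplies $\Delta(\sigma_9\bar{\nu}_{16})=\Delta(\sigma_9\varepsilon_{16})=[\nu_5]_9\bar{\varepsilon}_8$. Writing $\bar{\varepsilon}_9=E\bar{\varepsilon}_8$, formulae \eqref{DeSig} and \eqref{k4} give $\Delta\bar{\varepsilon}_9=\Delta\iota_9\circ\bar{\varepsilon}_8=[\nu_5]_9\bar{\varepsilon}_8+[\iota_7]_9\eta_7\bar{\varepsilon}_8$, which collapses to $2[\nu_5]_9\bar{\varepsilon}_8=0$ via the identity $[\iota_7]_9\eta_7\bar{\varepsilon}_8=[\nu_5]_9\bar{\varepsilon}_8$ in Lemma~\ref{rel2}(2) together with the fact that $[\nu_5]_9\bar{\varepsilon}_8=2[P(E\theta)+\nu_6\kappa_9]_9$ has order two. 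To identify $\Delta\rho'$, the decisive input is Lemma~\ref{Derh}: the equality $[\zeta_5]_{10}\sigma_{16}=0$ forces $[\zeta_5]_9\sigma_{16}\in\ker{i_{10}}_*=\Im\Delta$. Since $[\zeta_5]_9\sigma_{16}$ and $[\nu_5]_9\bar{\varepsilon}_8$ lie in independent $\Z_2$-summands of $R^9_{23}$, and since the preceding computations exhaust $\Im\Delta$ apart from $\Delta\rho'$, the class $[\zeta_5]_9\sigma_{16}$ must come from $\Delta\rho'$, yielding $\Delta\rho'\equiv[\zeta_5]_9\sigma_{16}\bmod[\nu_5]_9\bar{\varepsilon}_8$. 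One also rules out an $[\iota_7]_9\mu_7\sigma_{16}$ component in $\Delta\rho'$ by contrasting ${i_{10}}_*[\iota_7]_9\mu_7\sigma_{16}=[\iota_7]_{10}\mu_7\sigma_{16}\ne 0$ with ${i_{10}}_*\Delta\rho'=0$.

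Step (ii) uses the lift $[\sigma^2_9]$ of \eqref{[sig^2_9]}, which satisfies $\Delta\sigma^2_9=0$ by construction. A counting argument based on $|\Im\Delta|=4$ from step (i) then forces $\Delta\kappa_9\ne 0$ in $R^9_{22}$, so only $\bar{\nu}_9\nu^2_{17}=2\kappa_9$ lifts among the nonzero multiples of $\kappa_9$; the required lift $[\bar{\nu}_9\nu^2_{17}]$ is the one from \eqref{[bn9n^2]}. Together with ${i_{10}}_*[P(E\theta)+\nu_6\kappa_9]_9$ and ${i_{10}}_*[\iota_7]_9\mu_7\sigma_{16}$, these classes generate $R^{10}_{23}$, of total order $128$.

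The principal obstacle is step (iii), establishing the direct-sum form $\Z_{32}\oplus\Z_2\oplus\Z_2$. The decisive input is Lemma~\ref{Derh}'s identity $16[\sigma^2_9]=[\iota_7]_{10}\mu_7\sigma_{16}\ne 0$, which simultaneously raises $\sharp[\sigma^2_9]$ from $16$ to $32$ and absorbs the $[\iota_7]_{10}\mu_7\sigma_{16}$ summand into the $\Z_{32}$ factor. The remaining order-two assertions are then handled separately: $2[P(E\theta)+\nu_6\kappa_9]_{10}={i_{10}}_*[\nu_5]_9\bar{\varepsilon}_8=0$ by Lemma~\ref{rel2}(4), while $\sharp[\bar{\nu}_9\nu^2_{17}]=2$ is obtained by expanding $2[\bar{\nu}_9\nu^2_{17}]$ through the Toda-bracket definition \eqref{[bn9n^2]} and shifting the factor of $2$ onto the outer entries to bound the indeterminacy. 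This last computation, together with verifying that no hidden extension couples $[\bar{\nu}_9\nu^2_{17}]$ to the $\Z_{32}$ summand, is where the bookkeeping is most delicate.
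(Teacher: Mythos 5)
Your overall strategy coincides with the paper's: the same exact sequence $(23)_9$, the same use of Lemma \ref{Derh} to obtain $16[\sigma^2_9]=[\iota_7]_{10}\mu_7\sigma_{16}\neq 0$ (whence the $\Z_{32}$ summand) and $[\zeta_5]_{10}\sigma_{16}=0$ (whence $\Delta\rho'\equiv[\zeta_5]_9\sigma_{16}$), Lemma \ref{rel2}(4) for $2[P(E\theta)+\nu_6\kappa_9]_{10}=0$, and the Toda bracket \eqref{[bn9n^2]} for $2[\bar{\nu}_9\nu^2_{17}]=0$. Steps (i) and (iii) are essentially the paper's argument and are sound.

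The gap is in step (ii), where you assert that a counting argument based on $|\Im\Delta|=4$ forces $\Delta\kappa_9\neq 0$ in $R^9_{22}$. This does not follow: $\Im(\Delta\colon\pi^9_{24}\to R^9_{23})$ and $\Ker(\Delta\colon\pi^9_{23}\to R^9_{22})$ belong to two different connecting maps and are logically independent; the first controls $|\Im\,{i_{10}}_*|$ but places no constraint on $|\Im\,{p_{10}}_*|$. A count could only close this off if $|R^{10}_{23}|=128$ were known in advance, and there is no such independent input for $n=10$ (the paper's tables begin at $n=13$) --- computing this order is exactly the content of the proposition. If $\kappa_9$ itself lifted, $R^{10}_{23}$ would have order $256$ with an extra generator, and nothing in your argument excludes that. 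What is actually needed is the value $\Delta\kappa_9=\Delta\iota_9\circ\kappa_8=[\nu_5]_9\kappa_8+[\iota_7]_9\bar{\varepsilon}_7\neq 0$, i.e., the input $\Ker\{\Delta\colon\pi^9_{23}\to R^9_{22}\}=\{\sigma^2_9,\,2\kappa_9\}$ coming from the $22$-stem computation of \cite{HKM}; the paper imports this silently by constructing lifts only of $\sigma^2_9$ and of $\bar{\nu}_9\nu^2_{17}=2\kappa_9$. Supply that input and your proof closes up. (A minor further point: $[\nu_5]_9\bar{\varepsilon}_8=2[P(E\theta)+\nu_6\kappa_9]_9$ is not an independent $\Z_2$ summand of $R^9_{23}$ but twice the $\Z_4$ generator; your conclusion about $\Delta\rho'$ still holds, but one should also note that $\Delta\rho'$ carries no odd multiple of $[P(E\theta)+\nu_6\kappa_9]_9$, which follows by writing $[\zeta_5]_9\sigma_{16}=m\Delta\rho'+n[\nu_5]_9\bar{\varepsilon}_8$ and comparing components.)
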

\begin{proof}
The order of $[\sigma^2_9]$ and the last relation follow from the first relation of Lemma \ref{Derh}. We know $2[P(E\theta)+\nu_6\kappa_9]_{10}
=[\nu_5]_{10}\bar{\varepsilon}_8=0$ by Lemma \ref{rel2}. 

By the second of Lemma \ref{Derh}, we know $[\zeta_5]_{10}\sigma_{16} = 0$. So, we have $[\zeta_5]_9\sigma_{16}\in\Delta\pi_{24}^9$. 
By the fact that $\pi_{24}^9=\{\rho',\sigma_9\bar{\nu}_{16},\sigma_9\varepsilon_{16},\bar{\varepsilon}_9\}$,
\eqref{Desig9e} and  $\Delta\bar{\varepsilon}_9 =0$,
we have $\Delta\pi^9_{24}=\{\Delta\rho',[\nu_5]_9\bar{\varepsilon}_8\}$. 
This implies the first relation. 

By \eqref{[bn9n^2]}, we have
\[\begin{split}
2[\bar{\nu}_9\nu^2_{17}]&\in\{[[\iota_9,\eta_9]], \nu_{18}, \eta_{21}\}
\circ 2\iota_{23} = - [[\iota_9,\eta_9]]\circ\{\nu_{18}, \eta_{21}, 
2\iota_{22}\}\\
&\subset[[\iota_9,\eta_9]]\circ\pi^{18}_{23} = 0.
\end{split}\]
So the order of $[\bar{\nu}_9\nu^2_{17}]$ is $2$. This implies the group
$R^{10}_{23}$.
This completes the proof.
\end{proof}

By \cite[p. 22]{MMO}, we know
\begin{equation}\label{sigsigmu}
P(\rho_{19})=\sigma^2_9\mu_{23}.
\end{equation}

Next we show the following.

\begin{prop} \label{R11}
$\Delta(\sigma^2_{10}) = 2x[\sigma^2_9]$ for odd $x$, $\Delta\kappa_{10}
\equiv [\bar{\nu}_9\nu^2_{17}] + [P(E\theta)+\nu_6\kappa_9]_{10}
\bmod\ 16[\sigma^2_9]$
and $R^{11}_{23} = \Z_2\{[P(E\theta)+\nu_6\kappa_9]_{11}\}
\oplus\Z_2\{[\sigma^2_9]_{11}\}$.
\end{prop}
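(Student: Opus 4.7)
I analyze the exact sequence $(23)_{10}$
\[
\pi^{10}_{24}\rarrow{\Delta} R^{10}_{23}\rarrow{i_*} R^{11}_{23}\rarrow{p_*}\pi^{10}_{23}\rarrow{\Delta} R^{10}_{22},
\]
in which $R^{10}_{23}$ is supplied by Proposition \ref{R10}. The strategy is to determine $\Delta$ on the generators $\sigma^2_{10}$ and $\kappa_{10}$ of the relevant part of $\pi^{10}_{24}$ by transferring the problem to the EHP side via the $J$-homomorphism in diagram \eqref{EHP}, and then to read off $R^{11}_{23}$ from the sequence.

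For $\Delta(\sigma^2_{10})$, combining \eqref{Dsgm210} with Lemma \ref{Derh} (which gives $[\iota_7]_{10}\mu_7\sigma_{16}=16[\sigma^2_9]$) yields
\[
\Delta(\sigma^2_{10}) \equiv 2[\sigma^2_9] + a[P(E\theta)+\nu_6\kappa_9]_{10} \pmod{16[\sigma^2_9]}
\]
for some $a\in\{0,1\}$, and the assertion $\Delta(\sigma^2_{10})=2x[\sigma^2_9]$ with $x$ odd is equivalent to $a=0$. I would apply $J$ and use formula \eqref{JD} to obtain
\[
J\Delta(\sigma^2_{10}) = [\iota_{10},\iota_{10}]\circ \sigma^2_{19},
\]
while by \eqref{psi10}, Lemma \ref{JP}, and $EJ=-J\circ i_*$, the $J$-image of the putative right-hand side is $2\psi_{10} - a(\bar\kappa_8\nu_{28}-\nu_8\bar\kappa_{11})$ modulo $16\psi_{10}$. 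Comparing these in $\pi^{10}_{33}$, via the known expansion of $[\iota_{10},\iota_{10}]$ and the independence of $\bar\kappa_8\nu_{28}-\nu_8\bar\kappa_{11}$ from the $\psi_{10}$-line, forces $a=0$.

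For $\Delta\kappa_{10}$, I would use the renamed relation $2\kappa_{10}=\bar\nu_{10}\nu^2_{18}$ together with two applications of \eqref{DeSig} to write $2\Delta\kappa_{10} = \Delta\bar\nu_{10}\circ\nu^2_{17}$. Using \eqref{DelS2} (which gives $p_*\Delta\bar\nu_{10}=2\bar\nu_9$) and the relation \eqref{bn9n}, together with the Toda bracket description \eqref{[bn9n^2]} of $[\bar\nu_9\nu^2_{17}]$, identifies $2\Delta\kappa_{10}$ in $R^{10}_{23}$ up to the $2$-torsion summands. To pin down the $[\bar\nu_9\nu^2_{17}]$- and $[P(E\theta)+\nu_6\kappa_9]_{10}$-coefficients modulo $16[\sigma^2_9]$, I would apply $J$ once more: $J\Delta\kappa_{10}=[\iota_{10},\iota_{10}]\circ\kappa_{19}$, and match this against $J\bigl([\bar\nu_9\nu^2_{17}]+[P(E\theta)+\nu_6\kappa_9]_{10}\bigr)$ computed via Lemma \ref{JP} and the defining bracket \eqref{[bn9n^2]}.

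Finally, to assemble $R^{11}_{23}$: by the previous two steps $\Delta\pi^{10}_{24}$ contains $2[\sigma^2_9]$ and $[\bar\nu_9\nu^2_{17}]+[P(E\theta)+\nu_6\kappa_9]_{10}$, and I would check that the remaining generators of $\pi^{10}_{24}$ contribute nothing new by applying \eqref{DeSig} together with Lemma \ref{rel2}. Modding out gives image $\Z_2\{[\sigma^2_9]_{11}\}\oplus\Z_2\{[P(E\theta)+\nu_6\kappa_9]_{11}\}$ in $R^{11}_{23}$, with the identification $[\bar\nu_9\nu^2_{17}]_{11}=[P(E\theta)+\nu_6\kappa_9]_{11}$. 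The sequence closes once I verify that $\Delta:\pi^{10}_{23}\to R^{10}_{22}$ is injective on $2$-components, which should follow from \eqref{Desig9}, \eqref{Desig9e} and the explicit form of the generators of $\pi^{10}_{23}$. The main obstacle is the two $J$-homomorphism comparisons: identifying $[\iota_{10},\iota_{10}]\sigma^2_{19}$ and $[\iota_{10},\iota_{10}]\kappa_{19}$ in $\pi^{10}_{33}$ sharply enough to separate the $[P(E\theta)+\nu_6\kappa_9]$-contribution from the $[\sigma^2_9]$-line and to fix the coefficients in $\Delta\kappa_{10}$.
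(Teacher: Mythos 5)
Your treatment of $\Delta(\sigma^2_{10})$ is essentially the paper's: both start from \eqref{Dsgm210} and kill the possible $[P(E\theta)+\nu_6\kappa_9]_{10}$-component by comparing $J\Delta(\sigma^2_{10})=[\iota_{10},\sigma^2_{10}]=2\psi_{10}$ with $J[\sigma^2_9]=\psi_{10}$ \eqref{psi10} and $J[P(E\theta)+\nu_6\kappa_9]_{10}=\bar\kappa_{10}\nu_{30}-\nu_{10}\bar\kappa_{13}=P(\kappa_{21})$ from Lemma \ref{JP}. (Minor slip: your $\bar\kappa_8\nu_{28}-\nu_8\bar\kappa_{11}$ should be $\bar\kappa_{10}\nu_{30}-\nu_{10}\bar\kappa_{13}$, since the comparison takes place in $\pi^{10}_{33}$.)

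The $\Delta\kappa_{10}$ step has a genuine gap. Your route to the $[\bar{\nu}_9\nu^2_{17}]$-coefficient is to compute $2\Delta\kappa_{10}=\Delta(\bar{\nu}_{10}\nu^2_{18})=\Delta\bar{\nu}_{10}\circ\nu^2_{17}$, but this is vacuous: by \eqref{bn9n} we have $\bar{\nu}_{10}\nu_{18}=E(\bar{\nu}_9\nu_{17})=EP(\nu_{19})=0$, so $2\kappa_{10}=0$ in $\pi^{10}_{24}\cong\Z_{16}\oplus\Z_2$ and both sides of your identity vanish, giving no information. Your fallback, matching $J\Delta\kappa_{10}$ against $J\bigl([\bar{\nu}_9\nu^2_{17}]+[P(E\theta)+\nu_6\kappa_9]_{10}\bigr)$, also cannot detect this coefficient, because $J[\bar{\nu}_9\nu^2_{17}]=0$ (the paper proves exactly this in order to isolate the other component). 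The missing ingredient is to apply \eqref{DelS2} to $\kappa_{10}=E\kappa_9$ itself: since $9$ is odd, ${p_{10}}_*\Delta\kappa_{10}=2\kappa_9=\bar{\nu}_9\nu^2_{17}\neq 0$ (recall $\kappa_9$ has order $4$ in $\pi^9_{23}\cong\Z_{16}\oplus\Z_4$), so $\Delta\kappa_{10}$ is a lift of $\bar{\nu}_9\nu^2_{17}$ and $\Delta\kappa_{10}-[\bar{\nu}_9\nu^2_{17}]\in{i_{10}}_*R^9_{23}=\{[P(E\theta)+\nu_6\kappa_9]_{10},[\iota_7]_{10}\mu_7\sigma_{16}\}$. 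Only after this does the $J$-comparison ($J\Delta\kappa_{10}=P(\kappa_{21})=J[P(E\theta)+\nu_6\kappa_9]_{10}$ and $J([\iota_7]_{10}\mu_7\sigma_{16})=\sigma^2_{10}\mu_{24}=0$) pin down $\Delta\kappa_{10}\equiv[\bar{\nu}_9\nu^2_{17}]+[P(E\theta)+\nu_6\kappa_9]_{10}\bmod 16[\sigma^2_9]$. With that repaired, your assembly of $R^{11}_{23}$ from Proposition \ref{R10} and the injectivity of $\Delta:\pi^{10}_{23}\to R^{10}_{22}$ is the paper's argument.
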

\begin{proof}
In the exact sequence $(23)_{10}$:
$$
\pi^{10}_{24}\rarrow{\Delta}R^{10}_{23}\rarrow{i_*}R^{11}_{23}
\rarrow{p_*}\pi^{10}_{23}\rarrow{\Delta}R^{10}_{22},
$$
$\Delta: \pi^{10}_{23}\to R^{10}_{22}$ is a monomorphism and $\pi^{10}_{24}
= \Z_{16}\{\sigma^2_{10}\}\oplus\Z_2\{\kappa_{10}\}$.

We recall $\Delta(\sigma^2_{10}) - 2[\sigma^2_9]\in{i_{10}}_*R^9_{23}
= \{[P(E\theta)+\nu_6\kappa_9]_{10}, [\iota_7]_{10}\mu_7\sigma_{16}\}$
\eqref{Dsgm210}.
We know $J\Delta(\sigma^2_{10}) = [\iota_{10}, \sigma^2_{10}] = 2\psi_{10}$
\cite[(4.27)]{MMO}.
By Lemma \ref{JP} and \cite[(4.16)]{MMO}, we know
\begin{equation}\label{JPeth10}
J[P(E\theta)+\nu_6\kappa_9]_{10}
=\bar{\kappa}_{10}\nu_{30} - \nu_{10}\bar{\kappa}_{13}
=P(\kappa_{21})=J\Delta\kappa_{10}.
\end{equation}
Hence we take $\Delta(\sigma^2_{10})\equiv 2[\sigma^2_9] \bmod
[\iota_7]_{10}\mu_7\sigma_{16} = 16[\sigma^2_9]$ (Lemma \ref{Derh}). 
This leads to the first relation.

Since ${p_{10}}_*\Delta\kappa_{10}=2\kappa_9=\bar{\nu}_9\nu^2_{17}$ 
by \eqref{DelS2}, we have 
\begin{equation}\label{Dk10}
\Delta\kappa_{10}-[\bar{\nu}_9\nu^2_{17}]\in{i_{10}}_*R^9_{23}
=\{[P(E\theta)+\nu_6\kappa_9]_{10}, [\iota_7]_{10}\mu_7\sigma_{16}\}.
\end{equation} 
By relations \eqref{[bn9n^2]},
$J[[\iota_9,\eta_9]] = \pm \sigma_{10}\zeta_{17}$ \cite[p.~343]{KM1} and
$\zeta_{17}\nu_{28}=\nu_{17}\zeta_{20}=0$ from \eqref{n9z}, we have
\[\begin{split}
J[\bar{\nu}_9\nu^2_{17}]&\in J\{[[\iota_9,\eta_9]], \nu_{18}, \eta_{21}\}
\subset\{J[[\iota_9,\eta_9]], \nu_{28}, \eta_{31}\}\\
&=\{\pm\sigma_{10}\zeta_{17}, \nu_{28}, \eta_{31}\}
\supset\pm\sigma_{10}\circ\{\zeta_{17}, \nu_{28}, \eta_{31}\}.
\end{split}\]
By Lemma \ref{4om14}, we have
\[
\sigma_{10}\circ\{\zeta_{17}, \nu_{28}, \eta_{31}\}\ni 0
 \bmod \sigma^2_{10}\mu_{24} = 0.
\]
Hence, by the fact that
$\pi^{28}_{33}=0$ \cite[Proposition 5.9]{T} and 
$\pi^{10}_{32}=\{\sigma_{10}\rho_{17},\varepsilon_{10}\kappa_{18},\nu_{10}\bar\sigma_{13}\}$
\cite[Theorem B]{Mi1}, we have
\[
J[\bar{\nu}_9\nu^2_{17}]\equiv 0
\bmod\sigma_{10}\zeta_{17}\circ\pi^{28}_{33}+\pi^{10}_{32}\circ\eta_{32}
=\{\sigma_{10}\rho_{17}\eta_{32}, \varepsilon_{10}\kappa_{18}\eta_{32},
 \nu_{10}\bar\sigma_{13}\eta_{32}\}.
\] 
We know $\sigma_{10}\rho_{17}\eta_{32}=\sigma^2_{10}\mu_{24} = 0$, 
$\varepsilon_{10}\kappa_{18}\eta_{32}=4\nu_{10}\bar\kappa_{13}$ by \eqref{ep5bep} and
$\nu_{10}\bar\sigma_{13}\eta_{32}=\nu_{10}\eta_{13}\bar\sigma_{14}=0$ by \cite[(2.7)]{MMO}.
Since $4\nu_{10}\bar\kappa_{13}$ is not J-image,
we have $J[\bar{\nu}_9\nu^2_{17}]=0$.
Hence, applying $J$ to \eqref{Dk10}, we have
\[
J\Delta\kappa_{10}\equiv 0
\ \bmod\ J[P(E\theta)+\nu_6\kappa_9]_{10}, 
J([\iota_7]_{10}\mu_7\sigma_{16}).
\]
Furthermore, by \eqref{JPeth10}, 
$J([\iota_7]_{10}\mu_7\sigma_{16})=\sigma^2_{10}\mu_{24}=0$ from \eqref{s_9smu},
we have
\[
\Delta\kappa_{10} \equiv [\bar{\nu}_9\nu^2_{17}] +[P(E\theta)+\nu_6\kappa_9]_{10}
\ \bmod\ [\iota_7]_{10}\mu_7\sigma_{16}.
\]
Thus, by Proposition \ref{R10}, we obtain the second relation and the group $R^{11}_{23}$. 
This completes the proof.
\end{proof}

We show the following.

\begin{lem} \label{fin}
$[\eta_9\varepsilon_{10}]\nu_{18} = 0$\ and\ 
$[\bar{\nu}_9\nu^2_{17}]_{11} = [P(E\theta)+\nu_6\kappa_9]_{11}
=[[\iota_{11},\nu_{10}]]\eta_{22}$.
\end{lem}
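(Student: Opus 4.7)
The plan is to establish the three equalities in order: first the auxiliary relation $[\eta_9\varepsilon_{10}]\nu_{18}=0$, then the identification $[\bar{\nu}_9\nu^2_{17}]_{11}=[P(E\theta)+\nu_6\kappa_9]_{11}$ (which falls out of Proposition~\ref{R11}), and finally the identification of this common value with the lifted Whitehead product $[[\iota_{11},\nu_{10}]]\eta_{22}$. The auxiliary relation is used in the final step to kill stray terms in a Toda bracket juggling.

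For $[\eta_9\varepsilon_{10}]\nu_{18}=0$: the ambient group $R^{10}_{21}=\Z_2\{[\iota_7]_{10}\kappa_7\}$ is known from \eqref{HKmtp}, so the element in question is either $0$ or the generator. Since $\eta_9\varepsilon_{10}\nu_{18}=0$ in $\pi^9_{21}$ (because $\varepsilon_n\nu_{n+8}=0$ for $n\ge 5$ by repeated use of \eqref{t6} and $EP=0$), the lift lies in the image of $i_{9,10\,*}:R^9_{21}\to R^{10}_{21}$, which is all of $R^{10}_{21}$. To decide between $0$ and $[\iota_7]_{10}\kappa_7$ I would use the $J$-homomorphism together with the product formula $J(\alpha\beta)=J(\alpha)E^n\beta$: $J[\iota_7]_{10}\kappa_7=\sigma_{10}\kappa_{17}$ is nonzero by \eqref{Ji7} and \cite[Theorem B]{Mi1}, while $J([\eta_9\varepsilon_{10}]\nu_{18})=J[\eta_9\varepsilon_{10}]\circ\nu_{28}$ vanishes because $\eta_{19}\varepsilon_{20}\nu_{28}=0$ (again $\varepsilon\nu=0$) and the remaining EHP-indeterminacy is inspected against $\pi^{10}_{31}$ to exclude $\sigma_{10}\kappa_{17}$.

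For $[\bar{\nu}_9\nu^2_{17}]_{11}=[P(E\theta)+\nu_6\kappa_9]_{11}$ I apply $i_{10,11\,*}$ to the relation $\Delta\kappa_{10}\equiv[\bar{\nu}_9\nu^2_{17}]+[P(E\theta)+\nu_6\kappa_9]_{10}\bmod 16[\sigma^2_9]$ of Proposition~\ref{R11}. Exactness of $(23)_{10}$ gives $i_{10,11\,*}\Delta\kappa_{10}=0$, and since $[\sigma^2_9]_{11}$ has order $2$ in $R^{11}_{23}\cong(\Z_2)^2$, the modulus $16[\sigma^2_9]_{11}$ vanishes, yielding the equality at once. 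For the final identification, I expand the Toda-bracket definition \eqref{[bn9n^2]}: including $[\bar{\nu}_9\nu^2_{17}]\in\{[[\iota_9,\eta_9]],\nu_{18},\eta_{21}\}_1$ into $R^{11}_{23}$ gives $[\bar{\nu}_9\nu^2_{17}]_{11}\in\{[[\iota_9,\eta_9]]_{11},\nu_{18},\eta_{21}\}_1$. Using \eqref{k4.5}, $\Delta\iota_{11}=[\iota_7]_{11}\nu_7$, together with the Whitehead-product interpretation $J\Delta\gamma=[\iota_n,\gamma]$, the first entry can be rewritten in terms of a lift of $[\iota_{10},\iota_{10}]$ to $R^{11}$. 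A juggling of the triple bracket then produces $[[\iota_{10},\iota_{10}]]\nu_{19}\eta_{22}=[[\iota_{11},\nu_{10}]]\eta_{22}$ modulo an indeterminacy contained in $[[\iota_9,\eta_9]]_{11}\circ E\pi^{17}_{23}+R^{11}_{22}\circ\eta_{22}$; the first assertion $[\eta_9\varepsilon_{10}]\nu_{18}=0$ is used precisely to eliminate the stray summand in the first piece.

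The main obstacle will be the last identification. The delicate point is cutting the indeterminacy of $\{[[\iota_9,\eta_9]]_{11},\nu_{18},\eta_{21}\}_1$ below the ambiguity already permitted by Proposition~\ref{R11}; in particular, one must show that no term escapes into the other $\Z_2$-summand $\Z_2\{[\sigma^2_9]_{11}\}$ of $R^{11}_{23}$. The role of the preliminary vanishing $[\eta_9\varepsilon_{10}]\nu_{18}=0$ is exactly to prevent this crossover, after which the remaining pieces of the indeterminacy are absorbed into the already-established equality $[\bar{\nu}_9\nu^2_{17}]_{11}=[P(E\theta)+\nu_6\kappa_9]_{11}$.
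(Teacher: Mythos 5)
Your overall architecture (auxiliary vanishing, then the identification from Proposition \ref{R11}, then a Toda-bracket analysis) matches the paper, and your middle step is exactly the paper's argument. But there are two genuine gaps. First, your $J$-homomorphism route to $[\eta_9\varepsilon_{10}]\nu_{18}=0$ does not close: showing $H\bigl(J[\eta_9\varepsilon_{10}]\circ\nu_{28}\bigr)=\eta_{19}\varepsilon_{20}\nu_{28}=0$ only places $J([\eta_9\varepsilon_{10}]\nu_{18})$ in $E\pi^9_{30}$, and $\sigma_{10}\kappa_{17}=E(\sigma_9\kappa_{16})$ also lies in $E\pi^9_{30}$, so nothing is excluded; to finish you would need to actually compute $J[\eta_9\varepsilon_{10}]\in\pi^{10}_{29}$, which is not available. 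The paper avoids this entirely by pushing one stage up: since $R^{11}_{18}=\Z_8\{[\varepsilon_{10}]\}$ and $p_{11*}$ kills $[\eta_9\varepsilon_{10}]_{11}$, one has $[\eta_9\varepsilon_{10}]_{11}=2y[\varepsilon_{10}]$, whence $[\eta_9\varepsilon_{10}]_{11}\nu_{18}\in 2R^{11}_{21}=0$; as $[\iota_7]_{11}\kappa_7\neq0$ in $R^{11}_{21}$, the hypothetical value $[\iota_7]_{10}\kappa_7$ is ruled out.

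Second, for the last equality your "juggling" is missing its engine. The paper's key input is the relation $\Delta[\iota_{10},\iota_{10}]=[[\iota_9,\iota_9]\eta_{17}]+2[\eta_9\varepsilon_{10}]$ from \cite[p.~343]{KM1}, which after applying $i_{11*}$ gives $[[\iota_9,\eta_9]]_{11}=-2[\eta_9\varepsilon_{10}]_{11}$; only then does the shuffle $\{-2[\eta_9\varepsilon_{10}]_{11},\nu_{18},\eta_{21}\}\supset-2\{[\eta_9\varepsilon_{10}]_{11},\nu_{18},\eta_{21}\}=0$ (using $2R^{11}_{23}=0$) collapse the bracket. Note also that the first assertion is used precisely to make the inner bracket $\{[\eta_9\varepsilon_{10}]_{11},\nu_{18},\eta_{21}\}$ defined, not to kill a cross-term into $\Z_2\{[\sigma^2_9]_{11}\}$; the indeterminacy $-2[\eta_9\varepsilon_{10}]_{11}\circ\pi^{18}_{23}$ vanishes for free since $\pi^{18}_{23}=0$. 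Finally, the conclusion that the common value equals $[[\iota_{10},\nu_{10}]]\eta_{22}$ rather than $0$ requires computing the remaining indeterminacy $R^{11}_{22}\circ\eta_{22}=\{[[\iota_{10},\nu_{10}]]\eta_{22}\}$ via $R^{11}_{22}=\{[[\iota_{10},\nu_{10}]],[8\sigma_8]_{11}\sigma_{15}\}$ and $[8\sigma_8]_{11}\sigma_{15}\eta_{22}=0$ from \eqref{8sgm8et}, together with the nonvanishing of $[P(E\theta)+\nu_6\kappa_9]_{11}$ as a $\Z_2$-summand generator; your proposal gestures at this but supplies neither computation.
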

\begin{proof}
Since $[\eta_9\varepsilon_{10}]_{11}\in R^{11}_{18} 
= \Z_8\{[\varepsilon_{10}]\}$ \cite[Theorem 2(iii)]{KM1}, we have 
$[\eta_9\varepsilon_{10}]_{11}= 2y[\varepsilon_{10}]$ for $y\in\{0, 1, 2, 3\}$. 
Assume that $[\eta_9\varepsilon_{10}]\nu_{18}\neq 0$. Then, by \eqref{HKmtp}, we have $[\eta_9\varepsilon_{10}]\nu_{18}=[\iota_7]_{10}\kappa_7$. So, by the group structure of $R^{11}_{21}$, we have 
$[\eta_9\varepsilon_{10}]_{11}\nu_{18}=[\iota_7]_{11}\kappa_7\neq 0$.
On the other hand, we have $[\eta_9\varepsilon_{10}]_{11}\nu_{18}
=2y[\varepsilon_{10}]\nu_{18}\in 2R^{11}_{21}=0$. 
This implies a contradiction and hence 
we obtain  $[\eta_9\varepsilon_{10}]\nu_{18} = 0$.

By Proposition \ref{R11}, we have 
$[\bar{\nu}_9\nu^2_{17}]_{11}=[P(E\theta)+\nu_6\kappa_9]_{11}$.
By the relation  
$\Delta[\iota_{10},\iota_{10}]
=[[\iota_9,\iota_9]\eta_{17}] + 2[\eta_9\varepsilon_{10}]$
\cite[p.~343]{KM1}, we have
$$
[[\iota_9,\eta_9]]_{11} = -2[\eta_9\varepsilon_{10}]_{11}.
$$
So, by the relation \eqref{[bn9n^2]} and 
the fact that $2R^{11}_{23} = 0$ (Proposition \ref{R11}), 
we see that
\[
[\bar{\nu}_9\nu^2_{17}]_{11}
\in\{-2[\eta_9\varepsilon_{10}]_{11}, \nu_{18},\eta_{21}\}
\supset
-2\{[\eta_9\varepsilon_{10}]_{11}, \nu_{18},\eta_{21}\}
= 0
\]
with the indeterminacy 
$-2[\eta_9\varepsilon_{10}]_{11}\circ\pi^{18}_{23}
 +R^{11}_{22}\circ\eta_{22}=R^{11}_{22}\circ\eta_{22}$.
We recall that $R^{11}_{22} = \{[[\iota_{10},\nu_{10}]],
[8\sigma_8]_{11}\sigma_{15}\}$ \cite[Theorem 0.2]{HKM}. 
Then, by the relation $[8\sigma_8]_{11}\sigma_{15}\eta_{22}=0$ 
\eqref{8sgm8et}, we obtain $R^{11}_{22}\circ\eta_{22}
= \{[[\iota_{10},\nu_{10}]]\eta_{22}\}$. This implies $[\bar{\nu}_9\nu^2_{17}]_{11}=[[\iota_{10},\nu_{10}]]\eta_{22}$ and 
completes the proof.
\end{proof}

By relations \eqref{psi10} and 
$\psi_n\neq 0$ for $10\leq n\leq 21$ \cite[Theorem 1.1(a)]{MMO}, 
we have 
$$
\sharp[\sigma^2_9]_n=2\ (11\leq n\leq 21).
$$

By the relation ${p_{12}}_\ast\Delta\theta = \theta'$ by \cite[Lemma 6.2(ii)]{HKM},
we take $[\theta']=\Delta\theta\in R^{12}_{23}$.
We show the following.

\begin{prop}\label{R12}
$R^{12}_{23} = \Z_2\{[\theta']\}
\oplus\Z_2\{[\bar{\nu}_9\nu^2_{17}]_{12}\}
\oplus\Z_2\{[\sigma^2_9]_{12}\}$, where $[\theta']=\Delta\theta$.
\end{prop}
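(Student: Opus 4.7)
The plan is to apply the exact sequence $(23)_{11}$ coming from the fibration $p_{12}:R_{12}\to S^{11}$:
\[
\pi^{11}_{24}\xrightarrow{\Delta}R^{11}_{23}\xrightarrow{i_\ast}R^{12}_{23}\xrightarrow{p_\ast}\pi^{11}_{23}\xrightarrow{\Delta}R^{11}_{22}.
\]
By Proposition \ref{R11} together with Lemma \ref{fin}, the left-adjacent group is
$R^{11}_{23}=\Z_2\{[\bar\nu_9\nu^2_{17}]_{11}\}\oplus\Z_2\{[\sigma^2_9]_{11}\}\cong(\Z_2)^2$, so the task reduces to (i) computing $\operatorname{Im} p_\ast$, which should be $\Z_2\{\theta'\}\subset\pi^{11}_{23}$, and (ii) showing $\operatorname{Im}\Delta=0$ on the left so that $i_\ast$ is injective.

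For (i), the element $[\theta']:=\Delta\theta\in R^{12}_{23}$ already satisfies ${p_{12}}_\ast[\theta']=\theta'$ by \cite[Lemma 6.2(ii)]{HKM}, so $\theta'\in\operatorname{Im} p_\ast$. To show $\operatorname{Im} p_\ast$ is no larger, I read off the $2$-primary generators of $\pi^{11}_{23}$ from Toda's tables and, for each generator $\beta\neq\theta'$, verify $\Delta\beta\neq 0$ in $R^{11}_{22}$. For generators lying in $E\pi^{10}_{22}$, formulas \eqref{DeSig} and \eqref{k4.5} give $\Delta(E\alpha)=[\iota_7]_{11}\nu_7\circ\alpha$, and non-triviality is detected in the known structure of $R^{11}_{22}$ from \cite[Theorem 0.2]{HKM}, using the composition identities $[\iota_7]_{11}\bar\nu_7=[\nu_5]_{11}\sigma_8$ from \eqref{Dnu^2} together with the associated relations in Section 2. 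The elements outside $E\pi^{10}_{22}$ are handled through the commutative diagram \eqref{EHP} coupled with the relevant $J$-image information.

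For (ii), I would list the $2$-primary generators of $\pi^{11}_{24}$ and, again via $\Delta(E\beta)=[\iota_7]_{11}\nu_7\circ\beta$, check that each maps to zero in $R^{11}_{23}$. When direct composition is ambiguous, the $J$-homomorphism is an effective detector: by Lemma \ref{JP}, \eqref{psi10} and the calculations already performed in the proofs of Propositions \ref{R10} and \ref{R11}, both basis elements $[\bar\nu_9\nu^2_{17}]_{11}$ and $[\sigma^2_9]_{11}$ are distinguished (after stabilization) by their $J$-images, so any hypothetical nonzero $\Delta\beta$ would force a corresponding nonvanishing in $\pi^{11}_{31}$ which one can rule out by stable-range data.

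Combining (i) and (ii), the sequence collapses to a short exact sequence
\[
0\to R^{11}_{23}\xrightarrow{i_\ast}R^{12}_{23}\xrightarrow{p_\ast}\Z_2\{\theta'\}\to 0,
\]
which splits via the lift $\theta'\mapsto[\theta']=\Delta\theta$; the order of $[\theta']$ is exactly $2$ because $2\theta\in 2\pi^{12}_{24}$ is detected after $\Delta$ inside $i_\ast R^{11}_{23}$ (here one uses that $E\theta'=2\theta$ modulo controlled indeterminacy, matching the relation $\Delta(E\theta')=[P(E\theta)+\nu_6\kappa_9]_{12}=[\bar\nu_9\nu^2_{17}]_{12}$ already recorded in Theorem \ref{thm1}(3)). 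This yields the asserted decomposition. The main obstacle is the bookkeeping for $\pi^{11}_{23}$ and $\pi^{11}_{24}$ in this metastable range and the systematic verification that every suspended generator lands where claimed; once those composition tables and $J$-image identifications are assembled, the argument follows the same template as Propositions \ref{R10} and \ref{R11}.
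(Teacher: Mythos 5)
Your overall strategy is the same as the paper's: run the exact sequence $(23)_{11}$, show $\Delta:\pi^{11}_{24}\to R^{11}_{23}$ vanishes so that $i_*$ is injective, lift $\theta'$ by $[\theta']=\Delta\theta$, and split off a $\Z_2$. Two remarks on the bookkeeping you defer. First, $\pi^{11}_{23}=\Z_2\{\theta'\}$ is the \emph{entire} group, so once $\theta'$ lifts, surjectivity of $p_*$ is automatic and there are no ``other generators'' whose $\Delta$-images need to be shown nonzero; that part of your plan is vacuous. Second, $\pi^{11}_{24}=\Z_2\{\theta'\eta_{23}\}\oplus\Z_2\{\sigma_{11}\nu^2_{18}\}$, and no $J$-homomorphism detection or stable-range argument is needed: $\Delta(\theta'\eta_{23})=\Delta(\theta')\circ\eta_{22}=0$ by \eqref{DeSig} and exactness (since $\theta'={p_{12}}_*\Delta\theta$ forces $\Delta\theta'=0$), while $\Delta(\sigma_{11}\nu^2_{18})=[\iota_7]_{11}\nu_7\sigma_{10}\nu^2_{17}=[\iota_7]_{11}\eta_7\bar{\varepsilon}_8=0$ by \eqref{k4.5}, Lemma \ref{zesig}(3) and Lemma \ref{rel2}(4). (The paper instead changes basis to $\{\theta'\eta_{23},\eta_{11}\theta\}$ via $\eta_{11}\theta=\sigma_{11}\nu^2_{18}+\theta'\eta_{23}$ and kills $\eta_{11}\theta$ using $\Delta\eta_{11}=0$; either route works.)

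The genuine defect is your justification that $[\theta']$ has order $2$. You assert that $E\theta'=2\theta$ modulo indeterminacy and that $2\theta$ is ``detected after $\Delta$ inside $i_*R^{11}_{23}$.'' Both claims are false, and taken literally they contradict the proposition you are proving: $\pi^{12}_{24}=\Z_2\{\theta\}\oplus\Z_2\{E\theta'\}$, so $2\theta=0$ while $E\theta'\neq 0$, and if $\Delta(2\theta)$ were a nonzero element of $i_*R^{11}_{23}$ then $2[\theta']=\Delta(2\theta)\neq 0$ would force a $\Z_4$ summand in $R^{12}_{23}$ rather than $(\Z_2)^3$. The relation $\Delta(E\theta')=[P(E\theta)+\nu_6\kappa_9]_{12}=[\bar{\nu}_9\nu^2_{17}]_{12}$ that you invoke lives one stage later (it is the input to Proposition \ref{R13}, computing $\Delta:\pi^{12}_{24}\to R^{12}_{23}$) and says nothing about $2[\theta']$. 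The correct argument is immediate: $2\theta=0$ by \cite[Lemma 7.5]{T}, hence $2[\theta']=\Delta(2\theta)=0$.
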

\begin{proof}
In the exact sequence $(23)_{11}$:
$$
\pi^{11}_{24}\rarrow{\Delta}R^{11}_{23}\rarrow{i_*}R^{12}_{23}
\rarrow{p_*}\pi^{11}_{23}\rarrow{\Delta} R^{11}_{22},
$$
we know $\pi^{11}_{23}=\Z_2\{\theta'\}$ and 
$\pi^{11}_{24}= \Z_2\{\theta'\eta_{23}\}\oplus\Z_2\{\sigma_{11}\nu^2_{18}\}$
\cite[Theorems 7.6 and 7.7]{T}.
We recall the relation
 $\eta_{11}\theta=\sigma_{11}\nu^2_{18}+\theta'\eta_{23}$ by
 \cite[Proposition 2.2(8)]{Og}. So, we can take
 $\pi^{11}_{24}=\{\theta'\eta_{23}, \eta_{11}\theta\}$. 
We have $2[\theta']=\Delta(2\theta)= 0$ by \cite[Lemma 7.5]{T}. 
We have  $\Delta(\theta'\eta_{23}) =\Delta\theta'\circ\eta_{22}= 0$ and 
$\Delta(\eta_{11}\theta)=0$ by \eqref{Delta} and \eqref{Ke1}.
So Proposition \ref{R11} implies the conclusion.
\end{proof}

We know Mahowald's result $\sharp\Delta([\iota_{12},\iota_{12}])=11!/8$ 
\cite[Theorem 1.3]{HKM}. In the $2$-primary components, we have 
$\sharp\Delta([\iota_{12},\iota_{12}])=32$. 
We show the following.

\begin{prop} \label{R13}
$R^{13}_{23} = \Z\{[32[\iota_{12}, \iota_{12}]]\}
\oplus\Z_2\{[\sigma^2_9]_{13}\}$, where \\
$\Delta(E\theta') = [\bar{\nu}_9\nu^2_{17}]_{12}
= [P(E\theta)+\nu_6\kappa_9]_{12}
= [[\iota_{10},\nu_{10}]]_{12}\eta_{22}$.
\end{prop}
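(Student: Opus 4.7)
The plan is to apply the exact sequence $(23)_{12}$:
\begin{equation*}
\pi^{12}_{24}\rarrow{\Delta}R^{12}_{23}\rarrow{{i_{13}}_*}R^{13}_{23}\rarrow{{p_{13}}_*}\pi^{12}_{23}\rarrow{\Delta}R^{12}_{22},
\end{equation*}
with $R^{12}_{23}$ known from Proposition \ref{R12} and $R^{12}_{22}$ from \cite[Theorem 0.2]{HKM}. The work splits into computing $\Ker({p_{13}}_*)=\Im({i_{13}}_*)$ and $\Im({p_{13}}_*)=\Ker(\Delta\colon\pi^{12}_{23}\to R^{12}_{22})$; once these are known, no extension problem arises since the short exact sequence with quotient a free abelian group splits.

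For $\Ker({p_{13}}_*)$, Proposition \ref{R12} gives $R^{12}_{23}=\Z_2\{[\theta']\}\oplus\Z_2\{[\bar\nu_9\nu^2_{17}]_{12}\}\oplus\Z_2\{[\sigma^2_9]_{12}\}$, and $[\theta']=\Delta\theta$ with $\theta\in\pi^{12}_{24}$ is killed by ${i_{13}}_*$ automatically. The key new relation to establish is
\begin{equation*}
\Delta(E\theta')=[\bar\nu_9\nu^2_{17}]_{12}=[P(E\theta)+\nu_6\kappa_9]_{12}=[[\iota_{10},\nu_{10}]]_{12}\eta_{22},
\end{equation*}
where the last two equalities are already furnished by Proposition \ref{R11} and Lemma \ref{fin}. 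Using \eqref{DelS2} with $n=11$ odd and $2\theta'=0$ (since $\pi^{11}_{23}=\Z_2\{\theta'\}$), one computes ${p_{12}}_*\Delta(E\theta')=2\iota_{11}\circ\theta'=2\theta'=0$, so $\Delta(E\theta')\in{i_{12}}_*R^{11}_{23}=\{[\bar\nu_9\nu^2_{17}]_{12},[\sigma^2_9]_{12}\}$ by Proposition \ref{R11}. To pin down the linear combination I apply $J$: formula \eqref{JD} yields $J\Delta(E\theta')=[\iota_{12},\iota_{12}]\circ E^{12}\theta'\in\pi^{12}_{35}$, which I compare against $J[P(E\theta)+\nu_6\kappa_9]_{12}=\bar\kappa_{12}\nu_{32}-\nu_{12}\bar\kappa_{15}$ (by Lemma \ref{JP}) and $J[\sigma^2_9]_{12}=\psi_{12}$ (by \eqref{psi10}). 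The Whitehead bracket composition falls into the $\bar\kappa\nu$/$\nu\bar\kappa$ summand of $\pi^{12}_{35}$, disjoint from the $\psi_{12}$ line, forcing $\Delta(E\theta')=[\bar\nu_9\nu^2_{17}]_{12}$ with no $[\sigma^2_9]_{12}$-coefficient. Consequently ${i_{13}}_*$ kills both $[\theta']$ and $[\bar\nu_9\nu^2_{17}]_{12}$ but not $[\sigma^2_9]_{12}$, whose survival in $R^{13}_{23}$ is further corroborated by $J[\sigma^2_9]_{13}=\psi_{13}\neq 0$ from \cite[Theorem 1.1(a)]{MMO}.

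For $\Im({p_{13}}_*)$, the free part of $\pi^{12}_{23}$ is generated by $[\iota_{12},\iota_{12}]$, and by the Mahowald calculation recalled above the proposition the $2$-primary order of $\Delta[\iota_{12},\iota_{12}]$ is $32$. Hence $32[\iota_{12},\iota_{12}]\in\Ker\Delta$ admits a lift $[32[\iota_{12},\iota_{12}]]\in R^{13}_{23}$, which has infinite order since its image under ${p_{13}}_*$ is the non-torsion class $32[\iota_{12},\iota_{12}]$. The torsion generators of $\pi^{12}_{23}$ listed in \cite[Theorem 7.7]{T} are each shown to have non-zero $\Delta$-image in $R^{12}_{22}$ by tracking through the commutative diagram \eqref{EHP}: the corresponding Whitehead product $[\iota_{12},-]=\pm J\Delta(-)$ is computed and matched against the $J$-image of $R^{12}_{22}$. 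Combining both analyses yields the claimed direct sum. The principal obstacle is the $J$-homomorphism distinction in the middle paragraph, namely locating $[\iota_{12},\iota_{12}]\circ E^{12}\theta'$ inside $\pi^{12}_{35}$ with enough precision to rule out a $\psi_{12}$ contribution.
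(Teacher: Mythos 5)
Your overall framework (the exact sequence $(23)_{12}$, identifying $\Ker\Delta|_{\pi^{12}_{23}}=\{32[\iota_{12},\iota_{12}]\}$ via the orders of $\Delta([\iota_{12},\iota_{12}])$ and $\Delta(\zeta_{12})$, and reducing everything to the single relation $\Delta(E\theta')=[\bar{\nu}_9\nu^2_{17}]_{12}$) matches the paper. But the step where you establish that relation has a genuine gap. Your $J$-homomorphism comparison cannot do the job, for two reasons. First, $E\theta'=[\iota_{12},\iota_{12}]\circ\eta_{23}$, so $E^2\theta'=0$ and hence $J\Delta(E\theta')=\pm[\iota_{12},\iota_{12}]\circ E^{11}(E\theta')=0$; there is no nonzero class in $\pi^{12}_{35}$ to "locate." Second, the candidate generator you need to detect also has trivial $J$-image: $J[\bar{\nu}_9\nu^2_{17}]=0$ (shown in the proof of Proposition \ref{R11}), and $J[P(E\theta)+\nu_6\kappa_9]_{12}=E^2(\bar{\kappa}_{10}\nu_{30}-\nu_{10}\bar{\kappa}_{13})=E^2P(\kappa_{21})=0$, so the element $\bar{\kappa}_{12}\nu_{32}-\nu_{12}\bar{\kappa}_{15}$ you invoke is not a nonzero class in a "$\bar{\kappa}\nu$ summand" but is already zero at $S^{12}$. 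Consequently your argument only rules out the $[\sigma^2_9]_{12}$-coefficient (which is correct, since $\psi_{12}\neq 0$); it says nothing about whether the $[\bar{\nu}_9\nu^2_{17}]_{12}$-coefficient is $1$ or $0$. That distinction is exactly what determines whether $R^{13}_{23}$ is $\Z\oplus\Z_2$ or $\Z\oplus(\Z_2)^2$, so the proof is incomplete at its central point.

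The paper closes this gap not with $J$ but with the factorization $E\theta'=[\iota_{12},\iota_{12}]\circ\eta_{23}$ together with formula \eqref{DeSig}: $\Delta(E\theta')=\Delta([\iota_{12},\iota_{12}])\circ\eta_{22}$. Then the known expression for $\Delta([\iota_{12},\iota_{12}])$ from \cite[Lemma 6.3(ii)]{HKM}, the vanishing $[8\sigma_8]_{12}\sigma_{15}\eta_{22}=0$ from \eqref{8sgm8et}, and Lemma \ref{fin} give $\Delta(E\theta')=[[\iota_{10},\nu_{10}]]_{12}\eta_{22}=[\bar{\nu}_9\nu^2_{17}]_{12}$, which is nonzero. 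You should replace your middle paragraph with this computation (or, alternatively, import the group structure $R^{13}_{23}\cong\Z\oplus\Z_2$ from the Barratt--Mahowald splitting as in Table \ref{R23} to force $\Delta(E\theta')\neq 0$ — but then you would still need the paper's argument to identify which generator it hits, since the relation itself is part of the statement).
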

\begin{proof}
In the exact sequence $(23)_{12}$:
$$
\pi^{12}_{24}\rarrow{\Delta}R^{12}_{23}\rarrow{i_*}R^{13}_{23}
\rarrow{p_*}\pi^{12}_{23}\rarrow{\Delta}R^{12}_{22},
$$
$\Ker\ \{\Delta: \pi^{12}_{23}\to R^{12}_{22}\} = \{32[\iota_{12}, 
\iota_{12}]\}$. %\cite[Lemma 63.(ii), Proposition 6.6(i)]{HKM}. 
Notice that\ $\sharp P(\zeta_{25})=8$ \cite[p. 320]{Mi1} implies $\sharp\Delta(\zeta_{12})=8$. 
We know $\pi^{12}_{24} = \Z_2\{\theta\}\oplus\Z_2\{E\theta'\}$ \cite[Theorem 7.6]{T}. $\Delta\theta = [\theta']$ by Proposition \ref{R12} and $J[\sigma^2_9]_{13} = \psi_{13}$ by \eqref{psi10}.
By relations $E\theta' = [\iota_{12},\iota_{12}]\circ\eta_{23}$ \cite[(7.30)]{T},
\cite[Lemma 6.3(ii)]{HKM} and \eqref{8sgm8et}, we have
$$
\Delta(E\theta') = \Delta([\iota_{12}, \iota_{12}])\circ\eta_{22}
\equiv [[\iota_{10},\nu_{10}]]_{12}\eta_{22} \bmod
[8\sigma_8]_{12}\sigma_{15}\eta_{22} = 0.
$$
So, by Lemma \ref{fin}, we obtain
$$
\Delta(E\theta') = [[\iota_{10},\nu_{10}]]_{12}\eta_{22}
= [P(E\theta)+\nu_6\kappa_9]_{12} = [\bar{\nu}_9\nu^2_{17}]_{12}.
$$
This implies the group $R^{13}_{23}$, completing the proof.
\end{proof}

In the exact sequence $(23)_{13}$:
$$
\pi^{13}_{24}\rarrow{\Delta}R^{13}_{23}\rarrow{i_*}R^{14}_{23}
\rarrow{p_*}\pi^{13}_{23}\rarrow{\Delta}R^{13}_{22},
$$
$\Delta: \pi^{13}_{23}\to R^{13}_{22}$ is a monomorphism \cite[Lemma 6.7]{HKM},
$\pi^{13}_{24}=\{\zeta_{13}\}$ \cite[Theorem 7.4]{T}
and
$\Delta\zeta_{13} = 0$ by Lemma \ref{zeta1}. So we obtain the following.

\begin{prop} \label{R14}
$R^{14}_{23} = \Z\{[32[\iota_{12}, \iota_{12}]]_{14}\}
\oplus\Z_2\{[\sigma^2_9]_{14}\}$.
\end{prop}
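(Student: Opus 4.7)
The plan is to read off $R^{14}_{23}$ as an isomorphic copy of $R^{13}_{23}$ by showing that both the incoming $\Delta$ and the outgoing $p_\ast$ in the exact sequence $(23)_{13}$ are trivial on the relevant groups. All of the real work has already been done upstream: Proposition \ref{R13} gives the presentation of $R^{13}_{23}$; Lemma~\ref{zeta1} controls $\Delta\zeta_{13}$; and \cite[Lemma~6.7]{HKM} provides the monomorphism $\Delta\colon\pi^{13}_{23}\to R^{13}_{22}$ that is invoked just before the statement. So the proposition should reduce to routine exactness bookkeeping.

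First I would argue that $i_\ast\colon R^{13}_{23}\to R^{14}_{23}$ is surjective. Since $\Delta$ is injective on $\pi^{13}_{23}$, exactness at $\pi^{13}_{23}$ forces $p_\ast(R^{14}_{23})=0$, and then exactness at $R^{14}_{23}$ gives $R^{14}_{23}=\Im\,i_\ast$. Second, I would argue that $i_\ast$ is injective: as $\pi^{13}_{24}=\Z\{\zeta_{13}\}$ (by \cite[Theorem~7.4]{T}) and $\Delta\zeta_{13}=0$ by Lemma~\ref{zeta1}, the connecting map $\Delta\colon\pi^{13}_{24}\to R^{13}_{23}$ vanishes, so exactness at $R^{13}_{23}$ makes $i_\ast$ injective.

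Combining the two steps, $i_\ast\colon R^{13}_{23}\to R^{14}_{23}$ is an isomorphism, and the presentation of $R^{13}_{23}$ in Proposition \ref{R13} transports to the claimed presentation of $R^{14}_{23}$. The generators keep their orders under $i_\ast$ since it is injective; alternatively, the fact $\sharp[\sigma^2_9]_n=2$ for $11\le n\le 21$ recorded after Lemma~\ref{fin} confirms that $[\sigma^2_9]_{14}$ is still of order $2$, while $[32[\iota_{12},\iota_{12}]]_{14}$ has infinite order because the analogous generator of the free summand of $R^{13}_{23}$ does. Honestly, there is no genuine obstacle at this stage: the calculation is a direct consequence of the five-term exact sequence with both boundary maps known to vanish, and the substantive inputs (the structure of $R^{13}_{23}$, the injectivity of $\Delta$ on $\pi^{13}_{23}$, and $\Delta\zeta_{13}=0$) have all been established earlier in the paper.
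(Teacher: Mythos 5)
Your argument is exactly the paper's: the paper also reads off $R^{14}_{23}\cong R^{13}_{23}$ from the exact sequence $(23)_{13}$ using the monomorphism $\Delta\colon\pi^{13}_{23}\to R^{13}_{22}$ from \cite[Lemma 6.7]{HKM}, the fact that $\pi^{13}_{24}$ is generated by $\zeta_{13}$, and $\Delta\zeta_{13}=0$ from Lemma \ref{zeta1}. The only blemish is writing $\pi^{13}_{24}=\Z\{\zeta_{13}\}$ where it should be $\Z_8\{\zeta_{13}\}$, which does not affect the argument.
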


By the facts $R^\infty_{23}\cong \Z$ \cite{Bo} and \cite[Table 1]{HoM},
we have Table \ref{R23}.
{\begin{table}[H]
\centering
\begin{tabular}{|c|c||c|c||c|c|}\hline
$R^{13}_{23}$ & $\Z \oplus \Z_2$ 
 & $R^{14}_{23}$ & $\Z \oplus \Z_2$
 & $R^{15}_{23}$ & $\Z \oplus (\Z_2)^3$  \\ \hline
$R^{16}_{23}$ & $\Z \oplus (\Z_2)^5$ 
 & $R^{17}_{23}$ & $\Z \oplus (\Z_2)^3$
 & $R^{18}_{23}$ & $\Z \oplus (\Z_2)^2$\\ \hline
\end{tabular}
\caption{$R^{n}_{23}$ for $13\leq n\leq 18$}\label{R23}
\end{table}
On the other hand, we have Table \ref{R23K} from \cite{Ke}.
\begin{table}[H]
\centering
\begin{tabular}{|c|c||c|c||c|c|}\hline
$R^{19}_{23}$ & $\Z \oplus \Z_2$ 
 & $R^{20}_{23}$ & $\Z \oplus \Z_2$
 & $R^{21}_{23}$ & $\Z \oplus \Z_2$  \\ \hline
$R^{22}_{23}$ & $\Z$ 
 & $R^{23}_{23}$ & $\Z$
 & $R^{24}_{23}$ & $\Z\oplus\Z$\\ \hline
 \end{tabular}
\caption{$R^{n}_{23}$ for $19\le n\le 24$}\label{R23K}
\end{table}}

By Tables \ref{R23} and \ref{R23K}, we have the group structure of $R^n_{23}$ for $n \ge 13$.
So, hereafter our main work is to research the generators and the relations in 
$R^n_{23}$ for $n \ge 13$.

We define 
\[
 [\eta_{14}\varepsilon_{15}] \in \{[\eta^2_{14}], 2\iota_{16}, \nu^2_{16}\}_{11}.
\]
By  Lemma \ref{ord} and Example \ref{shetep}, $\sharp[\eta_{14}\varepsilon_{15}]=2$. 
We show the following.
\begin{prop} \label{R15}
%\begin{enumerate}\item 
$R^{15}_{23} = \Z\{[32[\iota_{12}, \iota_{12}]]_{15}\}
\oplus\Z_2\{[\sigma^2_9]_{15}\}\oplus\Z_2\{[\eta_{14}\varepsilon_{15}]\}
\oplus\Z_2\{[\eta_{14}\sigma_{15}]\eta_{22}\}$.
\end{prop}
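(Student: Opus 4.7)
The plan is to apply the exact sequence
\[
\pi^{14}_{24}\xrightarrow{\Delta} R^{14}_{23}\xrightarrow{i_*} R^{15}_{23}\xrightarrow{p_*}\pi^{14}_{23}\xrightarrow{\Delta} R^{14}_{22},
\]
combined with Proposition~\ref{R14} ($R^{14}_{23}=\mathbb{Z}\oplus\mathbb{Z}_2$) and Table~\ref{R23} (which fixes the isomorphism class $R^{15}_{23}\cong\mathbb{Z}\oplus(\mathbb{Z}_2)^3$). The task is therefore to identify $i_*R^{14}_{23}$ as a direct summand and find two explicit lifts accounting for the remaining $(\mathbb{Z}_2)^2$.

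First, I would show that $i_*$ is injective by verifying $\Delta(\pi^{14}_{24})=0$. Every generator of $\pi^{14}_{24}$ in the stable $10$-stem may be written as $\alpha\circ E\beta$ with $\alpha\in\{\eta_{14},\nu_{14},\sigma_{14}\}$, so by \eqref{DeSig} each image factors through one of $\Delta\eta_{14}\in 2R^{14}_{14}$, $\Delta\nu_{14}$, $\Delta\sigma_{14}$. Applying $J$ and using \eqref{JD} reduces these to Whitehead products $[\iota_{14},\iota_{14}]\circ E^{13}\gamma$, all of which are known to vanish in the range $R^{14}_{23}=\mathbb{Z}\{[32[\iota_{12},\iota_{12}]]_{14}\}\oplus\mathbb{Z}_2\{[\sigma^2_9]_{14}\}$.

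Next I would determine the image of $p_*$. By the stable range and Toda's tables, $\pi^{14}_{23}\cong(\mathbb{Z}_2)^3$ has basis $\{\nu^3_{14},\eta_{14}\varepsilon_{15},\mu_{14}\}$, and combining \eqref{et9s} with \eqref{t3} gives the stable relation
\[
\eta_{14}\sigma_{15}\eta_{22}=\nu^3_{14}+\eta_{14}\varepsilon_{15},
\]
so $\{\eta_{14}\varepsilon_{15},\eta_{14}\sigma_{15}\eta_{22}\}$ spans a $(\mathbb{Z}_2)^2$ subgroup complementary to $\mu_{14}$. Since the cokernel of $i_*$ is abstractly $(\mathbb{Z}_2)^2$ by Table~\ref{R23}, it suffices to realize these two elements as $p_*$-images of lifts in $R^{15}_{23}$.

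For the first lift, I take $[\eta_{14}\varepsilon_{15}]\in\{[\eta^2_{14}],2\iota_{16},\nu^2_{16}\}_{11}$; the bracket is defined because $2[\eta^2_{14}]=0$ by \eqref{De^2} and $2\nu^2_{16}=0$, its $p_*$-image lies in $\{\eta^2_{14},2\iota_{16},\nu^2_{16}\}_{11}\ni\eta_{14}\varepsilon_{15}$ by \cite[(6.1)]{T}, and its order is $2$ by Lemma~\ref{ord} (cf.\ Example~\ref{shetep}). For the second, I first produce $[\eta_{14}\sigma_{15}]\in R^{15}_{22}$ by verifying $\Delta(\eta_{14}\sigma_{15})=\Delta\eta_{14}\circ\sigma_{14}=0$ in $R^{14}_{21}$ (using $\Delta\eta_{14}\in 2R^{14}_{14}$ from \eqref{De4+2} and the known structure of $R^{14}_{14}\cong\mathbb{Z}_4$), and then compose with $\eta_{22}$. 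The two new lifts have $p_*$-images independent in $\pi^{14}_{23}/\langle\mu_{14}\rangle$ and have trivial intersection with $i_*R^{14}_{23}$, yielding the asserted decomposition.

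The main obstacle is the verification $\Delta(\eta_{14}\sigma_{15})=0$, i.e.\ that the $2$-torsion element $\Delta\eta_{14}\in R^{14}_{14}\cong\mathbb{Z}_4$ is annihilated upon right-composition with $\sigma_{14}$; this requires a careful computation in $R^{14}_{21}$, which controls whether $[\eta_{14}\sigma_{15}]$ exists at all before we may even form the product with $\eta_{22}$.
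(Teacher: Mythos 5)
Your skeleton matches the paper's: use $(23)_{14}$, show $i_*$ is injective, and account for the $(\Z_2)^2$ cokernel by the two lifts $[\eta_{14}\varepsilon_{15}]\in\{[\eta^2_{14}],2\iota_{16},\nu^2_{16}\}_{11}$ and $[\eta_{14}\sigma_{15}]\eta_{22}$. But there is a genuine gap exactly where you flag "the main obstacle": you never establish that $[\eta_{14}\sigma_{15}]$ exists. Your reduction $\Delta(\eta_{14}\sigma_{15})=\Delta\eta_{14}\circ\sigma_{14}$ together with $\Delta\eta_{14}\in 2R^{14}_{14}$ only gives an element of the form $x\circ 2\sigma_{14}$ with $x\in R^{14}_{14}\cong\Z_4$, and since $2\sigma_{14}\neq 0$ (the $2$-component of $\pi_{21}(S^{14})$ is $\Z_{16}$) this does not vanish for formal reasons; you would be forced into an explicit computation in $R^{14}_{21}$ that you do not carry out. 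The paper avoids this route entirely: by \eqref{et9s} one has $\eta_{14}\sigma_{15}=\bar{\nu}_{14}+\varepsilon_{14}$, and the relation $\Delta(\bar{\nu}_{14})=\Delta(\varepsilon_{14})$ quoted from \cite[p.~17]{HKM} gives $\Delta(\eta_{14}\sigma_{15})=2\Delta(\varepsilon_{14})=\Delta(2\varepsilon_{14})=0$, so the lift $[\eta_{14}\sigma_{15}]\in R^{15}_{22}$ exists and composing with $\eta_{22}$ produces the fourth generator. Without this (or an equivalent computation) your argument does not produce the generator $[\eta_{14}\sigma_{15}]\eta_{22}$ at all.

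A secondary, repairable point: your verification that $\Delta(\pi^{14}_{24})=0$ by "applying $J$" is not by itself conclusive, since $J\Delta=0$ only forces $\Delta=0$ after checking $J$ is injective on the relevant part of $R^{14}_{23}$. Here $\pi^{14}_{24}=\Z_2\{\eta_{14}\mu_{15}\}$, and either the direct computation $\Delta(\eta_{14}\mu_{15})=\Delta\eta_{14}\circ\mu_{14}\in 2R^{14}_{14}\circ\mu_{14}=R^{14}_{14}\circ 2\mu_{14}=0$ (which works because $\mu_{14}$, unlike $\sigma_{14}$, has order $2$) or the paper's $\Delta(\eta_{14}\mu_{15})=\Delta\mu_{14}\circ\eta_{22}=8[\eta^2_{13}\sigma_{15}]\eta_{22}=0$ closes this cleanly. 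The remaining ingredients you cite (order $2$ of $[\eta_{14}\varepsilon_{15}]$ via Lemma \ref{ord} and Example \ref{shetep}, the relation $\eta_{14}\sigma_{15}\eta_{22}=\eta^2_{14}\sigma_{16}=\nu^3_{14}+\eta_{14}\varepsilon_{15}$, and the use of Table \ref{R23} to fix the abstract group) agree with the paper.
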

\begin{proof}
In the exact sequence $(23)_{14}$:
$$
\pi^{14}_{24}\rarrow{\Delta}R^{14}_{23}\rarrow{i_*}R^{15}_{23}
\rarrow{p_*}\pi^{14}_{23}\rarrow{\Delta}R^{14}_{22},
$$
we know 
$\pi^{14}_{24}=\Z_2\{\eta_{14}\mu_{15}\}$ \cite[Theorem 7.3]{T}, 
$\Ker\ \{\Delta: \pi^{14}_{23}\to R^{14}_{22}\}
= \{\eta_{14}\varepsilon_{15}, \eta^2_{14}\sigma_{16}\}$ and
$\Delta(\eta_{14}\mu_{15}) =\Delta\mu_{14}\circ \eta_{22}
= 8[\eta^2_{13}\sigma_{15}]\eta_{22}=0$ by \cite[the proof of Lemma 6.8]{HKM}. 
%by Lemma \ref{etab}. 
From the fact that
 $\Delta(\bar\nu_{14})=\Delta(\varepsilon_{14})$~\cite[p.17]{HKM}, We
 know that the element
$[\eta_{14}\sigma_{15}]\in R^{15}_{22}$. By Example \ref{shetep}(1),
we have $2[\eta_{14}\varepsilon_{15}] = 0$. 
This completes the proof.
\end{proof}

We recall \cite[(6.1)]{T}:
$$
\varepsilon_{15}\in\{\eta_{15},2\iota_{16},\nu^2_{16}\}_1 
= \{\eta_{15},2\nu_{16},\nu_{19}\}_1.
$$

By the fact that  $\sharp [\eta_{15}]=2$ \eqref{Ke1},
we can define 
%We set 
$$
[\varepsilon_{15}]\in\{[\eta_{15}],2\iota_{16},\nu^2_{16}\}_1.
$$
Since $R^{16}_{20} = 0$ \cite[p. 161, Table]{Ke}, we have $R^{16}_{20}\circ\nu^2_{20}=R^{16}_{17}\circ\nu^2_{17}=0$. Hence, we have
$$
\{[\eta_{15}],2\iota_{16},\nu^2_{16}\}_1 = \{[\eta_{15}],2\nu_{16},\nu_{19}\}_1.
$$

We show the following.

\begin{prop} \label{R16}
%\begin{enumerate}\item 
$R^{16}_{23} = \Z\{[32[\iota_{12}, \iota_{12}]]_{16}\}
\oplus\Z_2\{[\sigma^2_9]_{16}\}\oplus\Z_2\{[\eta_{14}\varepsilon_{15}]_{16}\}\\
\oplus\Z_2\{[\eta_{14}\sigma_{15}]_{16}\eta_{22}\}
\oplus\Z_2\{[\eta_{15}]\sigma_{16}\}\oplus\Z_2\{[\varepsilon_{15}]\}$.
\end{prop}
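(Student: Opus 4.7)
The plan is to run the exact sequence
\[
\pi^{15}_{24}\xrightarrow{\Delta} R^{15}_{23}\xrightarrow{i_*} R^{16}_{23}\xrightarrow{p_*}\pi^{15}_{23}\xrightarrow{\Delta} R^{15}_{22},
\]
that is $(23)_{15}$, and to compare it with Proposition \ref{R15} and with Toda's tables for $\pi^{15}_{23}$ and $\pi^{15}_{24}$. From Table \ref{R23} we already know $R^{16}_{23}\cong \Z\oplus(\Z_2)^5$, so it is enough to verify that the listed six classes are independent generators of the correct orders, and that they span.

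First I would analyze $\Delta$ on $\pi^{15}_{24}$. Using Toda's description of $\pi^{15}_{24}$, the formula $\Delta(\alpha\circ E\beta)=\Delta\alpha\circ\beta$ of \eqref{DeSig} together with the computations of $\Delta$ on generators of $\pi^{15}_{k}$ already used in Propositions \ref{R13}--\ref{R15}, I expect $\Delta\pi^{15}_{24}=0$ (the main input here being $\Delta\mu_{15}\circ\eta_{23}=0$, which appeared in Proposition \ref{R15}). Hence $i_*: R^{15}_{23}\hookrightarrow R^{16}_{23}$ is injective, which accounts for the summands $\Z\{[32[\iota_{12},\iota_{12}]]_{16}\}\oplus \Z_2\{[\sigma^2_9]_{16}\}\oplus\Z_2\{[\eta_{14}\varepsilon_{15}]_{16}\}\oplus\Z_2\{[\eta_{14}\sigma_{15}]_{16}\eta_{22}\}$.

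Next I would determine $\Ker\{\Delta:\pi^{15}_{23}\to R^{15}_{22}\}$. From Toda, $\pi^{15}_{23}$ contains $\eta_{15}\sigma_{16}$ and $\varepsilon_{15}$ among its generators. Using the group structure of $R^{15}_{22}$ computed in the preceding sections and the formulas $\Delta(\eta_{15}\sigma_{16})=\Delta\iota_{16}\circ\eta_{15}\sigma_{16}$ and similarly for $\varepsilon_{15}$, together with Ke\-r\-vaire's relation $\Delta\iota_{16}=0$ in the relevant range (which follows from \eqref{k3} and Kervaire's tables), I expect exactly the subgroup $\{\eta_{15}\sigma_{16},\varepsilon_{15}\}$ to map to zero. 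This provides the two remaining $\Z_2$ generators $[\eta_{15}]\sigma_{16}$ and $[\varepsilon_{15}]$ (the latter defined via the Toda bracket $\{[\eta_{15}],2\iota_{16},\nu^2_{16}\}_1$, which is well-defined since $\sharp[\eta_{15}]=2$ by \eqref{Ke1}).

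Finally, the orders are verified directly: $2[\eta_{15}]\sigma_{16}=0$ by \eqref{Ke1}, and for $[\varepsilon_{15}]$ one computes
\[
2[\varepsilon_{15}]\in\{[\eta_{15}],2\iota_{16},\nu^2_{16}\}_1\circ 2\iota_{23}
= -[\eta_{15}]\circ\{2\iota_{16},\nu^2_{16},2\iota_{22}\}_1,
\]
which lies in $[\eta_{15}]\circ\pi^{16}_{24}$; since $\pi^{16}_{24}=\{\eta_{16}\mu_{17}\}$ and $[\eta_{15}]\circ\eta_{16}\mu_{17}=0$ (because $2[\eta_{15}]=0$ absorbs the $\eta$ factor), we conclude $\sharp[\varepsilon_{15}]=2$, the delicate point being to check that the indeterminacy does not add anything outside the listed summands. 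The main obstacle is exactly this last bookkeeping of the Toda-bracket indeterminacies in $R^{16}_{23}$, together with confirming that the lift $[\varepsilon_{15}]$ chosen inside the bracket is independent of $[\eta_{14}\varepsilon_{15}]_{16}$ and of $[\eta_{15}]\sigma_{16}$ modulo ${i_{16}}_*R^{15}_{23}$; this is handled by applying $p_*$ and using ${p_*}[\varepsilon_{15}]=\varepsilon_{15}\neq \eta_{15}\sigma_{16}$ in $\pi^{15}_{23}$.
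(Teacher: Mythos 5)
Your overall strategy coincides with the paper's: run $(23)_{15}$, show that $\Delta$ vanishes on $\pi^{15}_{24}$ and on $\pi^{15}_{23}$, and conclude that $0\to R^{15}_{23}\to R^{16}_{23}\to\pi^{15}_{23}\to 0$ is split exact. However, several of your justifications are not correct, and the one non-formal step is missing. The crucial point is $\Delta(\mu_{15})=0$. Your stated ``main input'' $\Delta\mu_{15}\circ\eta_{23}=0$ is not what appeared in Proposition \ref{R15} (there the computation was $\Delta(\eta_{14}\mu_{15})=\Delta\mu_{14}\circ\eta_{22}$, a statement about $\mu_{14}$ on $S^{14}$), and in any case the vanishing of $\Delta\mu_{15}\circ\eta_{23}$ would say nothing about $\Delta\mu_{15}$ itself. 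Since $\mu_{15}$ is not a composite of the form $\eta_{15}\circ E\beta$, formula \eqref{DeSig} does not apply; the paper instead uses $\mu_{15}\in\{\eta_{15},2\iota_{16},8\sigma_{16}\}_1$, \eqref{mt1} and $\Delta\eta_{15}=0$ to get $\Delta\mu_{15}\in R^{15}_{16}\circ 8\sigma_{16}=0$ from $R^{15}_{16}\cong(\Z_2)^2$. (Alternatively, since you invoke Table \ref{R23}, a pure order count $2^5=(2^3/|\Im\Delta|)\cdot|\Ker\Delta|$ would force both $\Delta$'s to vanish; that would be a legitimate shortcut, but you do not actually carry it out.)

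Two further points are wrong as written. First, $\Delta\iota_{16}\neq 0$: by \eqref{pD}, ${p_{16}}_*\Delta\iota_{16}=2\iota_{15}$, and the expression $\Delta(\eta_{15}\sigma_{16})=\Delta\iota_{16}\circ\eta_{15}\sigma_{16}$ is not an instance of \eqref{DeSig}; the correct computation is $\Delta(\eta_{15}\sigma_{16})=\Delta(\eta_{15})\circ\sigma_{15}=0$ using \eqref{Ke1}, while $\varepsilon_{15}\in\Im p_*$ follows already from the existence of the lift $[\varepsilon_{15}]\in\{[\eta_{15}],2\iota_{16},\nu^2_{16}\}_1$. Second, in your order computation the shuffled bracket $\{2\iota_{16},\nu^2_{16},2\iota_{22}\}_1$ lies in $\pi^{16}_{23}$, not $\pi^{16}_{24}$, and $\pi^{16}_{24}\cong(\Z_2)^2$ is generated by $\bar{\nu}_{16}$ and $\varepsilon_{16}$, not by $\eta_{16}\mu_{17}$; the phrase ``$2[\eta_{15}]=0$ absorbs the $\eta$ factor'' is not an argument. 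The paper's version is cleaner: $2[\varepsilon_{15}]\in-\{2\iota_{R_{16}},[\eta_{15}],2\iota_{16}\}\circ\nu^2_{17}\subset R^{16}_{17}\circ\nu^2_{17}=0$, the last equality coming from $R^{16}_{20}=0$. Your route can be repaired (one checks $\{2\iota_{16},\nu^2_{16},2\iota_{22}\}\subset 2\pi^{16}_{23}$ and $2[\eta_{15}]\sigma_{16}=0$), but as written the groups and generators are misidentified.
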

\begin{proof}
We have
$$
2[\varepsilon_{15}]\in
-\{2\iota_{R_{16}},[\eta_{15}], 2\iota_{16}\}\circ\nu^2_{17}
\subset R^{16}_{17}\circ\nu^2_{17}=0.
$$
We know $\pi^{15}_{24} = \{\eta_{15}\varepsilon_{16}, \eta_{15}\bar{\nu}_{16},
\mu_{15}\}$ by \eqref{t3} and \cite[Theorem 7.2]{T}. 
By \eqref{Delta} and \eqref{Ke1}, %Lemma \ref{rel2},
$$
\Delta(\eta_{15}\varepsilon_{16})=0 \ \mbox{and} \ 
\Delta(\eta_{15}\bar{\nu}_{16})=0.
$$
 
By the fact that $R^{15}_{16}\cong(\Z_2)^2$ \cite[p. 161, Table]{Ke}, we have
$$
\Delta(\mu_{15})\in\Delta\{\eta_{15},2\iota_{16},8\sigma_{16}\}_1
\subset\{0,2\iota_{15},8\sigma_{15}\}=R^{15}_{16}\circ 8\sigma_{16}=0.
$$
That is, $\Delta(\mu_{15})=0$.
So, by use of the exact sequence $(23)_{15}$, we have a split exact sequence:
$$
0\rarrow{}R^{15}_{23}
\rarrow{i_*}R^{16}_{23}\rarrow{p_*}\pi^{15}_{23}
\rarrow{}0.
$$
This completes the proof.
\end{proof}

We recall that $J[\eta^2_{14}]\equiv\eta^{*\prime}\ \bmod\ \mu_{15}\sigma_{24}$ \cite[Lemma 2.2(i)]{HKM} and $J[\eta_{14}\sigma_{15}]
\equiv x\sigma^{*\prime}\ \bmod\ \omega_{15}\nu^2_{31}$ for $x$ odd \cite[p. 26]{HKM}. 
Since $\eta^2_{14}\sigma_{16}=(\eta_{14}\sigma_{15})\eta_{22}$, we have
$[\eta_{14}\sigma_{15}]\eta_{22} \equiv [\eta^2_{14}]\sigma_{16}\
\bmod {i_{15}}_*R^{14}_{23}$. So, we can set 
$[\eta_{14}\sigma_{15}]\eta_{22} = [\eta^2_{14}]\sigma_{16} + y[\sigma^2_9]_{15}$\ for\ $y\in\{0, 1\}$. 
By Taking the $J$-image of this relation, we have 
$\sigma^{*\prime}\eta_{37} = \eta^{*\prime}\sigma_{31} + y\psi_{15}$.

By the relation $\pm P(\sigma_{33})\equiv 2\sigma^*_{16} - E\sigma^{*\prime}
\bmod\ \rho_{16}\sigma_{31}$, we have
$(E\sigma^{*\prime})\eta_{38}\equiv P(\sigma_{33}\eta_{40}) \bmod\  \rho_{16}\sigma_{31}\eta_{38}$.
Since $\rho_{16}\sigma_{31}\eta_{38}=\rho_{16}\eta_{31}\sigma_{32}
=\mu_{16}\sigma^2_{25}=0$, we obtain 
$(E\sigma^{*\prime})\eta_{38} = P(\sigma_{33}\eta_{40})$.
On the other hand, we know 
$
P(\eta^{*\prime}\sigma_{31} + y\psi_{15}) 
= P(\eta_{33}\sigma_{34}) + y\psi_{16}.
$
by \cite[Lemma 2.10]{Og} and \eqref{sigsigmu}.
This implies $y=0$, and hence we obtain
\begin{equation}\label{sgm*'et}
[\eta_{14}\sigma_{15}]\eta_{22} = [\eta^2_{14}]\sigma_{16}\ \mbox{and} \
\sigma^{*\prime}\eta_{37} = \eta^{*\prime}\sigma_{31}.
\end{equation}

We show the following.
\begin{prop} \label{R17}
$R^{17}_{23} = \Z\{[32[\iota_{12}, \iota_{12}]]_{17}\}
\oplus\Z_2\{[\sigma^2_9]_{17}\}\oplus\Z_2\{[\eta_{15}]_{17}\sigma_{16}\}
\oplus\Z_2\{[\varepsilon_{15}]_{17}\}$, where $\Delta(\eta_{16}\sigma_{17})=[\eta_{14}\sigma_{15}]_{16}\eta_{22}$ and\  
$\Delta\varepsilon_{16}= [\eta_{14}\varepsilon_{15}]_{16}$.
\end{prop}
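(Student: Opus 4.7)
The plan is to exploit the exact sequence $(23)_{16}$
\[
\pi^{16}_{24}\xrightarrow{\Delta}R^{16}_{23}\xrightarrow{i_*}R^{17}_{23}\xrightarrow{p_*}\pi^{16}_{23}\xrightarrow{\Delta}R^{16}_{22}
\]
together with Proposition \ref{R16}. In the stable range we have
$\pi^{16}_{24}=\Z_2\{\varepsilon_{16}\}\oplus\Z_2\{\eta_{16}\sigma_{17}\}$ (using $\eta_{16}\sigma_{17}=\bar{\nu}_{16}+\varepsilon_{16}$ from \eqref{et9s}), and $\pi^{16}_{23}=\Z_{16}\{\sigma_{16}\}$. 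The two $\Delta$-relations asserted in the proposition will identify $\Delta\pi^{16}_{24}$ as a $(\Z_2)^2$ inside $R^{16}_{23}$; the remaining work is to show that $\Delta:\pi^{16}_{23}\to R^{16}_{22}$ is injective, which by the known order of $R^{17}_{23}$ (Table \ref{R23}) is equivalent to a counting argument.

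I would first dispatch $\Delta(\eta_{16}\sigma_{17})$ because it is the easier one. Writing $\eta_{16}\sigma_{17}=\eta_{16}\circ E\sigma_{16}$ and applying \eqref{DeSig} yields $\Delta(\eta_{16}\sigma_{17})=\Delta(\eta_{16})\circ\sigma_{16}$. By \eqref{Ke21} with $n=3$ we have $\Delta\eta_{16}=[\eta^2_{14}]_{16}$, so $\Delta(\eta_{16}\sigma_{17})=[\eta^2_{14}]_{16}\sigma_{16}$. The identity \eqref{sgm*'et} (the first clause) then rewrites this as $[\eta_{14}\sigma_{15}]_{16}\eta_{22}$.

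Next, for $\Delta\varepsilon_{16}$, I would use the Toda bracket expression $\varepsilon_{16}\in\{\eta_{16},2\iota_{17},\nu^2_{17}\}_1$ and apply formula \eqref{mt1} with $n=1$:
\[
\Delta\varepsilon_{16}\in -\{\Delta\eta_{16},2\iota_{16},\nu^2_{16}\}
 = -\{[\eta^2_{14}]_{16},2\iota_{16},\nu^2_{16}\}.
\]
By the definition $[\eta_{14}\varepsilon_{15}]\in\{[\eta^2_{14}],2\iota_{16},\nu^2_{16}\}_{11}$ (used in Proposition \ref{R15}), the ${i_{16}}_*$-image $[\eta_{14}\varepsilon_{15}]_{16}$ is a member of $\{[\eta^2_{14}]_{16},2\iota_{16},\nu^2_{16}\}$. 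Thus $\Delta\varepsilon_{16}$ agrees with $[\eta_{14}\varepsilon_{15}]_{16}$ up to the indeterminacy $[\eta^2_{14}]_{16}\circ\pi^{16}_{23}+R^{16}_{17}\circ\nu^2_{17}$, which by \eqref{sgm*'et} and the vanishing $R^{16}_{17}\circ\nu^2_{17}=0$ (already invoked before Proposition \ref{R16}) reduces to $\Z_2\{[\eta_{14}\sigma_{15}]_{16}\eta_{22}\}$. Since this indeterminacy lies in $\Delta\pi^{16}_{24}$ by the first step, I can adjust the representative of $[\eta_{14}\varepsilon_{15}]$ (it is defined only up to its own Toda bracket indeterminacy) and absorb this ambiguity to land the clean equality $\Delta\varepsilon_{16}=[\eta_{14}\varepsilon_{15}]_{16}$.

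Once the two $\Delta$-relations are in hand, the image $\Delta\pi^{16}_{24}$ equals the subgroup $\Z_2\{[\eta_{14}\varepsilon_{15}]_{16}\}\oplus\Z_2\{[\eta_{14}\sigma_{15}]_{16}\eta_{22}\}$ of $R^{16}_{23}$, so the quotient $i_*R^{16}_{23}=R^{16}_{23}/\Delta\pi^{16}_{24}$ is exactly $\Z\oplus(\Z_2)^3$ with the generators claimed. Comparison with the group order $R^{17}_{23}\cong\Z\oplus(\Z_2)^3$ from Table \ref{R23} forces $p_*R^{17}_{23}=0$, equivalently $\Delta:\pi^{16}_{23}\to R^{16}_{22}$ is injective on the $2$-primary part (so $\Delta\sigma_{16}$ has order $16$). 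The main obstacle is the delicate indeterminacy check in the $\Delta\varepsilon_{16}$ computation: one must verify that the Toda bracket indeterminacy $[\eta^2_{14}]_{16}\circ\pi^{16}_{23}$ really does collapse to $\{[\eta_{14}\sigma_{15}]_{16}\eta_{22}\}$ and that this element is already in $\Delta\pi^{16}_{24}$, so that the asserted equality can be realized by a suitable choice of representative.
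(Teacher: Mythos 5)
Your overall strategy (the sequence $(23)_{16}$, the identification of $\pi^{16}_{24}$, and the computation of $\Delta(\eta_{16}\sigma_{17})$ via $\Delta\eta_{16}=[\eta^2_{14}]_{16}$ and \eqref{sgm*'et}) agrees with the paper, but your treatment of $\Delta\varepsilon_{16}$ has a genuine gap. Working with the low-index bracket $\{\eta_{16},2\iota_{17},\nu^2_{17}\}_1$ leaves you with the indeterminacy $[\eta^2_{14}]_{16}\circ\pi^{16}_{23}=\Z_2\{[\eta_{14}\sigma_{15}]_{16}\eta_{22}\}$, and your proposed fix --- redefining $[\eta_{14}\varepsilon_{15}]$ within ``its own Toda bracket indeterminacy'' to absorb this term --- does not work: $[\eta_{14}\varepsilon_{15}]$ was already fixed in Proposition \ref{R15} as an element of $\{[\eta^2_{14}],2\iota_{16},\nu^2_{16}\}_{11}$, and the indeterminacy of \emph{that} bracket is $[\eta^2_{14}]\circ E^{11}\pi^5_{12}+R^{15}_{17}\circ\nu^2_{17}$, where $E^{11}\pi^5_{12}=\{8\sigma_{16}\}$ kills the first term; it does not contain the independent generator $[\eta_{14}\sigma_{15}]\eta_{22}$, so you cannot reshuffle the representative to force the equality. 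The paper's route is to keep the suspension index: since $\varepsilon_{16}=E^{11}\varepsilon_5\in\{\eta_{16},2\iota_{17},\nu^2_{17}\}_{12}$, formula \eqref{mt1} puts $\Delta\varepsilon_{16}$ into $-\{[\eta^2_{14}]_{16},2\iota_{16},\nu^2_{16}\}_{11}$, the same index-$11$ bracket that defines $[\eta_{14}\varepsilon_{15}]$, and this bracket has indeterminacy $[\eta^2_{14}]_{16}\circ E^{11}\pi^5_{12}\subset[\eta^2_{14}]_{16}\circ 8\pi^{16}_{23}=0$ (together with $R^{16}_{17}\circ\nu^2_{17}=0$, noted before Proposition \ref{R16}), which yields the clean equality $\Delta\varepsilon_{16}=[\eta_{14}\varepsilon_{15}]_{16}$. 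Your weaker congruence modulo $[\eta_{14}\sigma_{15}]_{16}\eta_{22}$ would still determine the group, but not the relation asserted in the statement and in Theorem \ref{thm1}.

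A secondary problem is your final counting step. Because $R^{17}_{23}$ and $i_*R^{16}_{23}$ both contain a free summand, the isomorphism type $\Z\oplus(\Z_2)^3$ does not force $p_*R^{17}_{23}=0$: a subgroup of $\Z\oplus(\Z_2)^3$ of finite index $m$ dividing $16$ is again isomorphic to $\Z\oplus(\Z_2)^3$, so the cokernel could be a nontrivial cyclic subgroup of $\pi^{16}_{23}\cong\Z_{16}$, and then $[32[\iota_{12},\iota_{12}]]_{17}$ would fail to generate the free part. One needs an independent proof that $\Delta:\pi^{16}_{23}\to R^{16}_{22}$ is a monomorphism; the paper gets this from the proof of \cite[Lemma 6.4]{T} (essentially the nonvanishing of $[\iota_{16},\sigma_{16}]$ of order $16$), not from Table \ref{R23}.
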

\begin{proof}
In the exact sequence $(23)_{16}$:
$$
\pi^{16}_{24}\rarrow{\Delta}R^{16}_{23}\rarrow{i_*}R^{17}_{23}
\rarrow{p_*}\pi^{16}_{23}\rarrow{\Delta}R^{16}_{22},
$$
The second $\Delta$ is a monomorphism \cite[the proof of Lemma 6.4]{T} and 
$\pi^{16}_{24} = \Z_2\{\eta_{16}\sigma_{17}\}
\oplus\Z_2\{\varepsilon_{16}\}$ \cite[Theorem 7.1,  Lemma 6.4]{T}. 
By \eqref{sgm*'et}, we have
 $\Delta(\eta_{16}\sigma_{17})
= [\eta^2_{14}]_{16}\sigma_{16}= [\eta_{14}\sigma_{15}]_{16}\eta_{22}$. 
We have
\[
\Delta\varepsilon_{16}\in\Delta\{\eta_{16}, 2\iota_{17}, \nu^2_{17}\}_{12}
\subset -\{[\eta^2_{14}]_{16}, 2\iota_{16}, \nu^2_{16}\}_{11} \\
\ni[\eta_{14}\varepsilon_{15}]_{16}
\]
\[
 \bmod\ [\eta^2_{14}]_{16}\circ E^{11}\pi^5_{12}\subset [\eta^2_{14}]_{16}\circ 8\pi^{16}_{23}  = 0.
\]
This implies $\Delta(\varepsilon_{16}) 
= [\eta_{14}\varepsilon_{15}]_{16}$. 
and completes the proof.
\end{proof}

Next we show
\begin{prop} \label{R18}
$R^{18}_{23} = \Z\{[32[\iota_{12}, \iota_{12}]]_{18}\}
\oplus\Z_2\{[\sigma^2_9]_{18}\}\oplus\Z_2\{[\varepsilon_{15}]_{18}\}$, where 
$[\varepsilon_{15}]_{18}=[\bar{\nu}_{15}]_{18}$. 
\end{prop}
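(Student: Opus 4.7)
The plan is to use the exact sequence $(23)_{17}$
\[
\pi^{17}_{24}\rarrow{\Delta}R^{17}_{23}\rarrow{i_*}R^{18}_{23}\rarrow{p_*}\pi^{17}_{23}\rarrow{\Delta}R^{17}_{22}
\]
together with the description of $R^{17}_{23}$ from Proposition \ref{R17}. The end terms are stable: $\pi^{17}_{23}=\Z_2\{\nu^2_{17}\}$ is the $2$-primary stable $6$-stem and $\pi^{17}_{24}=\Z_{16}\{\sigma_{17}\}$ is the $2$-primary stable $7$-stem, so only two classes on the boundary need to be tracked.

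The main calculation is $\Delta\sigma_{17}$. Writing $\sigma_{17}=E\sigma_{16}$ and applying \eqref{DeSig} gives $\Delta\sigma_{17}=\Delta\iota_{17}\circ\sigma_{16}$, while \eqref{Ke2} with $n=4$ yields $\Delta\iota_{17}=[\eta_{15}]_{17}$. Hence
\[
\Delta\sigma_{17}=[\eta_{15}]_{17}\sigma_{16},
\]
which is one of the $\Z_2$-generators of $R^{17}_{23}$ listed in Proposition \ref{R17}. Since $\sharp[\eta_{15}]_{17}=2$ by \eqref{Ke1}, the image of $\Delta:\pi^{17}_{24}\to R^{17}_{23}$ is exactly $\Z_2\{[\eta_{15}]_{17}\sigma_{16}\}$, so
\[
i_*R^{17}_{23}=\Z\{[32[\iota_{12},\iota_{12}]]_{18}\}\oplus\Z_2\{[\sigma^2_9]_{18}\}\oplus\Z_2\{[\varepsilon_{15}]_{18}\}.
\]
Comparing this with Table \ref{R23}, which gives $R^{18}_{23}\cong\Z\oplus(\Z_2)^2$, shows that this subgroup already exhausts $R^{18}_{23}$. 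Consequently $p_*$ vanishes on $\nu^2_{17}$, i.e.\ $\Delta\nu^2_{17}\neq0$, and the claimed presentation of $R^{18}_{23}$ follows.

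For the relation $[\varepsilon_{15}]_{18}=[\bar{\nu}_{15}]_{18}$, observe that the stable identity $\bar{\nu}_{15}=\eta_{15}\sigma_{16}+\varepsilon_{15}$, obtained from \eqref{et9s} in the stable range, lifts in $R^{16}_{23}$ to $[\bar{\nu}_{15}]\equiv[\eta_{15}]\sigma_{16}+[\varepsilon_{15}]\pmod{{i_{16}}_*R^{15}_{23}}$. By exactness, $i_{17,18}\Delta\sigma_{17}=0$, so $[\eta_{15}]_{18}\sigma_{16}=i_{17,18}([\eta_{15}]_{17}\sigma_{16})=0$ in $R^{18}_{23}$; pushing the preceding lift forward to $R^{18}_{23}$ then gives $[\bar{\nu}_{15}]_{18}=[\varepsilon_{15}]_{18}$. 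The only nontrivial outside input is the rank count from Table \ref{R23} (which rests on \cite{Bo} and \cite{HoM}); everything else is a direct assembly of \eqref{DeSig}, \eqref{Ke1}, \eqref{Ke2} and \eqref{et9s}, so I expect no serious obstacle beyond the bookkeeping of the summands of $R^{17}_{23}$ modulo $\Delta\sigma_{17}$.
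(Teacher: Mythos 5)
Your computation of $\Delta\sigma_{17}=\Delta\iota_{17}\circ\sigma_{16}=[\eta_{15}]_{17}\sigma_{16}$ via \eqref{DeSig} and \eqref{Ke2} is exactly the paper's, and the derivation of $[\varepsilon_{15}]_{18}=[\bar\nu_{15}]_{18}$ from $\eta_{15}\sigma_{16}=\bar\nu_{15}+\varepsilon_{15}$ and the vanishing of $[\eta_{15}]_{18}\sigma_{16}$ matches the paper as well (modulo the caveat that your congruence is only mod $i_{16*}R^{15}_{23}$, which contains $[32[\iota_{12},\iota_{12}]]$ and $[\sigma^2_9]$, classes that survive to $R^{18}_{23}$; this is harmless only because $[\bar\nu_{15}]$ is itself only pinned down up to that same indeterminacy, as the paper implicitly fixes the choice by declaring $[\eta_{15}]\sigma_{16}=[\bar\nu_{15}]+[\varepsilon_{15}]$).

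The genuine gap is in your claim that ${i_{18}}_*$ is surjective. You argue that $i_*R^{17}_{23}\cong\Z\oplus(\Z_2)^2\cong R^{18}_{23}$ and conclude that the subgroup ``already exhausts'' $R^{18}_{23}$. That inference is invalid for groups with a free summand: $\Z\oplus(\Z_2)^2$ contains proper subgroups of index $2$ (for instance $2\Z\oplus(\Z_2)^2$) that are abstractly isomorphic to the whole group, and the cokernel of $i_*$ is exactly a subgroup of $\pi^{17}_{23}\cong\Z_2$, so an index-$2$ discrepancy is precisely what you need to exclude. Since the torsion subgroups already agree, the only way your count could fail is if $[32[\iota_{12},\iota_{12}]]_{18}$ were twice a free generator of $R^{18}_{23}$ with $p_*$ of that generator equal to $\nu^2_{17}$ --- and nothing in your argument rules this out. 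The paper closes this by showing directly that $\Delta:\pi^{17}_{23}\to R^{17}_{22}$ is a monomorphism: by \eqref{JD}, $J\Delta(\nu^2_{17})=\pm P(\nu^2_{35})=\pm\,\omega_{17}\nu^2_{33}\neq 0$ (\cite[p.~323]{Mi1}), hence $\Delta(\nu^2_{17})\neq 0$, $p_*=0$, and ${i_{18}}_*$ is onto. You need this (or an indivisibility argument for $[32[\iota_{12},\iota_{12}]]_{18}$, e.g.\ via the $J$-image \eqref{J32}) to complete the proof.
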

\begin{proof}
Since $P(\nu^2_{35}) =\omega_{17}\nu^2_{33} \neq 0$ \cite[p.~323]{Mi1},
$\Delta:\pi^{17}_{33}\to R^{17}_{22}$ is a monomorphism.
So, we have ${i_{18}}_*R^{17}_{23}=R^{18}_{23}$.
By relations 
$\Delta(\iota_{17}) = [\eta_{15}]_{17}$ \eqref{Ke2} and
$[\eta_{15}]\sigma_{16}=[\bar{\nu}_{15}]+[\varepsilon_{15}]$, 
we obtain 
$
\Delta(\sigma_{17})=[\eta_{15}]_{17}\sigma_{16}=[\bar{\nu}_{15}]_{17}+[\varepsilon_{15}]_{17}.
$
This completes the proof. 
\end{proof}

Next we show
\begin{prop}\label{R1923}
$R^{19}_{23} = \Z\{[32[\iota_{12}, \iota_{12}]]_{19}\}
\oplus\Z_2\{[\sigma^2_9]_{19}\}$, %\oplus\Z_2\{[\varepsilon_{15}]_{18}\}
where $\Delta(\nu^2_{18}) = [\varepsilon_{15}]_{18}$. 
\end{prop}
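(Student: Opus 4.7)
The exact sequence $(23)_{18}$
\[
\pi^{18}_{24}\xrightarrow{\Delta}R^{18}_{23}\xrightarrow{i_*}R^{19}_{23}\xrightarrow{p_*}\pi^{18}_{23}\xrightarrow{\Delta}R^{18}_{22}
\]
has $\pi^{18}_{24}=\Z_2\{\nu^2_{18}\}$ and $\pi^{18}_{23}=0$, both dimensions being in the stable range. Hence $R^{19}_{23}\cong R^{18}_{23}/\Delta(\nu^2_{18})$. By Proposition~\ref{R18}, $R^{18}_{23}=\Z\{[32[\iota_{12},\iota_{12}]]_{18}\}\oplus\Z_2\{[\sigma^2_9]_{18}\}\oplus\Z_2\{[\varepsilon_{15}]_{18}\}$; combined with Table~\ref{R23K} which gives $R^{19}_{23}\cong\Z\oplus\Z_2$, this forces $\Delta(\nu^2_{18})$ to be a nonzero element of order two in the torsion summand $\Z_2\{[\sigma^2_9]_{18}\}\oplus\Z_2\{[\varepsilon_{15}]_{18}\}$ (the $\Z$ component vanishes because $\Delta(\nu^2_{18})$ has order two).

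To identify $\Delta(\nu^2_{18})$ exactly, I will write $\nu^2_{18}=E\nu^2_{17}$ and apply \eqref{DeSig} to obtain $\Delta(\nu^2_{18})=\Delta\iota_{18}\circ\nu^2_{17}$. Lemma~\ref{[Ea]} (with $2n=18$) then yields
\[
\Delta\iota_{18}\in-\{i_{18},\Delta\iota_{17},2\iota_{16}\}\bmod\ {i_{18}}_*R^{17}_{18}+R^{18}_{17}\circ 2\iota_{17}.
\]
Composing on the right with $\nu^2_{17}$ and juggling the three-fold Toda bracket --- this is legitimate since $\Delta\iota_{17}\circ 2\iota_{16}=2[\eta_{15}]_{17}=0$ by \eqref{Ke1} and $2\iota_{16}\circ\nu^2_{16}=2\nu^2_{16}=0$ in the stable range --- gives
\[
\Delta(\nu^2_{18})\in{i_{18}}_*\{\Delta\iota_{17},2\iota_{16},\nu^2_{16}\}.
\]
Using $\Delta\iota_{17}=[\eta_{15}]_{17}$ from \eqref{Ke2} together with the suspension inclusion for Toda brackets and the defining relation $[\varepsilon_{15}]\in\{[\eta_{15}],2\iota_{16},\nu^2_{16}\}_1$ of Theorem~\ref{thm1}, the right-hand side contains $[\varepsilon_{15}]_{18}$. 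Thus $\Delta(\nu^2_{18})$ and $[\varepsilon_{15}]_{18}$ lie in the same Toda-bracket coset.

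The main obstacle is controlling the total indeterminacy accumulated from Lemma~\ref{[Ea]}, from the juggling formula, and from the suspension of Toda brackets, in order to upgrade the resulting congruence to an honest equality. As the Introduction emphasises, for an arbitrary lift of $\varepsilon_{15}$ only $\Delta(\nu^2_{18})\equiv[\varepsilon_{15}]_{18}\bmod [\sigma^2_9]_{18}$ can be secured, and it is precisely the specific lift inside $\{[\eta_{15}],2\iota_{16},\nu^2_{16}\}_1$ that annihilates the $[\sigma^2_9]_{18}$-ambiguity. Once the equality $\Delta(\nu^2_{18})=[\varepsilon_{15}]_{18}$ is established, exactness immediately gives the stated decomposition $R^{19}_{23}=\Z\{[32[\iota_{12},\iota_{12}]]_{19}\}\oplus\Z_2\{[\sigma^2_9]_{19}\}$.
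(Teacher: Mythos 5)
Your overall strategy is the one the paper uses: combine the exact sequence $(23)_{18}$ with Lemma~\ref{[Ea]}, the relation $\Delta\iota_{17}=[\eta_{15}]_{17}$ from \eqref{Ke2}, and the defining bracket $[\varepsilon_{15}]\in\{[\eta_{15}],2\iota_{16},\nu^2_{16}\}_1$ so as to place $\Delta(\nu^2_{18})$ and $[\varepsilon_{15}]_{18}$ in a common Toda-bracket coset. But there is a genuine gap exactly where you flag ``the main obstacle'': you never compute the accumulated indeterminacy, and instead assert that the chosen lift ``annihilates the $[\sigma^2_9]_{18}$-ambiguity.'' As written, your argument yields only $\Delta(\nu^2_{18})\equiv[\varepsilon_{15}]_{18}$ modulo an undetermined subgroup. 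Order counting does not rescue this: since $\sharp[\sigma^2_9]_{19}=2$ (because $J[\sigma^2_9]_{19}=\psi_{19}\neq 0$), the alternative $\Delta(\nu^2_{18})=[\sigma^2_9]_{18}+[\varepsilon_{15}]_{18}$ is compatible with $R^{19}_{23}\cong\Z\oplus\Z_2$ from Table~\ref{R23K}, so the stated relation is not yet proved.

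The missing step is short, and it is the substance of the paper's proof. All the indeterminacy collects inside the single composite $\{i_{18},\Delta\iota_{17},2\iota_{16}\}\circ\nu^2_{17}$: the bracket $\{i_{18},\Delta\iota_{17},2\iota_{16}\}$ is a coset of a subgroup contained in ${i_{18}}_*R^{17}_{17}+R^{18}_{17}\circ 2\iota_{17}$, so after composing on the right with $\nu^2_{17}$ the ambiguity lies in ${i_{18}}_*(R^{17}_{17}\circ\nu^2_{17})+R^{18}_{17}\circ 2\nu^2_{17}$. The second summand vanishes since $2\nu^2_{17}=0$, and the first vanishes because $R^{17}_{17}\circ\nu_{17}\subset R^{17}_{20}=0$ by Kervaire's table \cite{Ke}. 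Hence $\{i_{18},\Delta\iota_{17},2\iota_{16}\}\circ\nu^2_{17}$ consists of a single element; it contains $\Delta\iota_{18}\circ\nu^2_{17}=\Delta(\nu^2_{18})$ by Lemma~\ref{[Ea]}, and by the juggling formula it also contains ${i_{18}}_*\{\Delta\iota_{17},2\iota_{16},\nu^2_{16}\}\ni[\varepsilon_{15}]_{18}$ (signs are immaterial as everything here has order two). Supplying this computation closes the gap and turns your congruence into the asserted equality, after which the group statement follows as you say.
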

\begin{proof} 
It suffices to show the relation. 
By the relation $[\eta_{15}]_{17} = \Delta(\iota_{17})$ \eqref{Ke2}, Lemma \ref{[Ea]} and the fact that $R^{17}_{17}\circ\nu_{17}
\subset R^{17}_{20} = 0$ \cite[p. 161, Table]{Ke}, we have 
\[\begin{split}
[\varepsilon_{15}]_{18} &= {i_{16,18}}_*\{[\eta_{15}],2\iota_{16},\nu^2_{16}\}
\subset {i_{18}}_*\{\Delta(\iota_{17}),2\iota_{16},\nu^2_{16}\}\\
&\supset -\{i_{18},\Delta(\iota_{17}),2\iota_{16}\}\circ\nu^2_{17}\\
&\ni \Delta(\iota_{18})\circ\nu^2_{17} = \Delta(\nu^2_{18})
\bmod {i_{18}}_*R^{17}_{17}\circ\nu^2_{17} = 0.
\end{split}\]
This implies  $\Delta(\nu^2_{18}) = [\varepsilon_{15}]_{18}$.
\end{proof}

Since $\pi^{19}_{23+i} = 0$ for $i = 0$ and $1$, 
we have $R^{19}_{23}\cong R^{20}_{23}$. Since $\pi^{20}_{24} = 0$ 
and $\sharp P(\nu_{41}) = 8$ \cite[Lemma 8.3, Theorem B]{Mi1}, 
we have $R^{20}_{23}\cong R^{21}_{23}$. 
 
We consider the exact sequence $(22)_{23}$:
$$
\pi^{21}_{24}\rarrow{\Delta}R^{21}_{23}
\rarrow{{i}_*}R^{22}_{23}\rarrow{{p}_*}\pi^{21}_{23}
$$
By \cite[Table]{Ke}, $R^{22}_{23}\cong\Z$. We know 
$P(\eta^2_{43}) = 4\sigma^*_{21} \ne 0$ \cite[Lemma 8.3, Theorem B]{Mi1}, we get that ${i_{22}}_*$ is a split epimorphism. Hence, we obtain the following.

\begin{prop} \label{R2223}
$\Delta(\nu_{21}) = [\sigma^2_9]_{21}$ and
$R^{22}_{23} = \Z\{[32[\iota_{12}, \iota_{12}]]_{22}\}$.
\end{prop}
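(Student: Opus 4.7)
My plan is to read off both assertions from the exact sequence $(23)_{21}$,
\[
\pi^{21}_{24}\rarrow{\Delta}R^{21}_{23}\rarrow{i_*}R^{22}_{23}\rarrow{p_*}\pi^{21}_{23}\rarrow{\Delta}R^{21}_{22},
\]
combined with $R^{21}_{23}=\Z\{[32[\iota_{12},\iota_{12}]]_{21}\}\oplus\Z_2\{[\sigma^2_9]_{21}\}$ (from Proposition~\ref{R1923} and the subsequent isomorphisms $R^{19}_{23}\cong R^{20}_{23}\cong R^{21}_{23}$), with $R^{22}_{23}\cong\Z$ from \cite[Table]{Ke}, and with the 2-primary stable stems $\pi^{21}_{23}=\Z_2\{\eta^2_{21}\}$ and $\pi^{21}_{24}=\Z_8\{\nu_{21}\}$ taken from \cite{T}.

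First I would show that $p_*=0$. Applying the rightmost square of \eqref{EHP} together with the formula \eqref{JD} to $\gamma=\eta^2_{21}\in\pi^{21}_{23}$ yields
\[
J\Delta(\eta^2_{21})\,=\,\pm\,[\iota_{21},\iota_{21}]\circ\eta^2_{41}\,=\,\pm\,P(\eta^2_{43})\,=\,\pm\,4\sigma^*_{21}\in\pi^{21}_{43},
\]
which is nonzero by \cite[Lemma 8.3, Theorem B]{Mi1}. Hence $\Delta(\eta^2_{21})\neq 0$, the kernel of $\Delta\colon\pi^{21}_{23}\to R^{21}_{22}$ is trivial, and so $p_*=0$. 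Consequently $i_*\colon R^{21}_{23}\to R^{22}_{23}$ is surjective.

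For the first assertion I then argue purely in terms of abelian groups. Since $R^{22}_{23}\cong\Z$ is torsion-free, the summand $\Z_2\{[\sigma^2_9]_{21}\}$ of $R^{21}_{23}$ must be killed by $i_*$, i.e.\ must lie in $\ker i_*=\Delta(\pi^{21}_{24})$. But any homomorphism $\Z_8\to\Z\oplus\Z_2$ factors through the $\Z_2$-factor, so $\Delta(\nu_{21})\in\{0,[\sigma^2_9]_{21}\}$; the value $0$ is excluded because otherwise $i_*$ would be an isomorphism and produce $R^{22}_{23}\cong\Z\oplus\Z_2$, contradicting $R^{22}_{23}\cong\Z$. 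Therefore $\Delta(\nu_{21})=[\sigma^2_9]_{21}$, and $R^{22}_{23}\cong R^{21}_{23}/\Delta(\pi^{21}_{24})\cong\Z$ is generated by $[32[\iota_{12},\iota_{12}]]_{22}$.

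The only nontrivial ingredient is the $J$-computation above: one has to track the sign in \eqref{JD} and quote the nonvanishing $P(\eta^2_{43})=4\sigma^*_{21}$ from Mimura's analysis of $\pi^{21}_{43}$ in \cite{Mi1}. Once that is in place, the rest is a direct counting argument from the five-term exact sequence.
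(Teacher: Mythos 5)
Your proposal is correct and follows essentially the same route as the paper: the exact sequence $(23)_{21}$, the value $R^{22}_{23}\cong\Z$ from Kervaire's table, and the nonvanishing of $P(\eta^2_{43})=4\sigma^*_{21}$ to force ${i_{22}}_*$ to be a (split) epimorphism, after which the torsion-counting argument pins down $\Delta(\nu_{21})=[\sigma^2_9]_{21}$. You merely make explicit the $J\Delta$/EHP step and the elementary group-theoretic bookkeeping that the paper leaves implicit.
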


Obviously we obtain
$$
R^n_{23} = \Z\{[32[\iota_{12}, \iota_{12}]]_n\} (n = 23, n\geq 25)
$$
and
$$
R^{24}_{23} = \Z\{[32[\iota_{12}, 
\iota_{12}]]_{22}\}\oplus\Z\{\Delta\iota_{24}\}.
$$

We recall the relation  $P(\nu_{43})=\sigma_{21}\omega_{28}+\omega_{21}\sigma_{37}$ 
\cite[III-Proposition 2.1(3)]{Od1}. So, by the fact that $P(\nu_{43})=\psi_{21}$ and 
that $E^7: \pi^{14}_{37}\to\pi^{21}_{44}$ is isomorphic onto \cite[Theorem 1.1(a)]{MMO}, we obtain
%\begin{equation}\label{om14sg}
$\omega_{14}\sigma_{30}+\sigma_{14}\omega_{21}=\psi_{14}.$

Finally we can generalize of the equality $\Delta(\nu^2_{18}) 
= [\varepsilon_{15}]_{18}$. We define 
$$
[\varepsilon_{8n-1}]\in\{[\eta_{8n-1}],2\iota_{8n},\nu^2_{8n}\}_1\ 
\mbox{for} \ n\geq 2.
$$
Then, by the quite parallel argument to Proposition 4.15, we obtain 
$$
\Delta(\nu^2_{8n+2}) = [\varepsilon_{8n-1}]_{8n+2}\ \mbox{for}\ n\geq 2.
$$
This implies the relation $P(\nu^2_{16n+5}) 
= E^2J[\varepsilon_{8n-1}]$ and it is regarded as an example of \cite[Proposition 11.11(ii)]{T}.

By \cite{A0} and \cite{Bo}, we have 
$J[32[\iota_{12},\iota_{12}]]_\infty
=\bar{\rho}$. So, by \cite[Theorem 1.1(b)]{MMO}, we have 
\begin{equation}\label{J32}
J[32[\iota_{12},\iota_{12}]]\equiv\bar{\rho}_{13}\ \bmod\ \psi_{13}.
\end{equation}

\section{Determination of $R^n_{24}\ (7\leq n\leq 9)$}
\hspace{2ex}

By the same argument as the proof of \cite[I-Proposition 3.4(3)]{Od1}, 
we can take 
\begin{equation}\label{defbsg6}
\bar{\sigma}_6\in\{\bar{\nu}_6+\varepsilon_6,\sigma_{14},\nu_{21}\}_1\ \bmod\ 2\bar{\sigma}_6.
\end{equation}

We need the following \cite[Table 2]{Ka}:
\[
R^6_7=\Z\{[\eta^2_5]\}, \ R^5_{10}=\Z_8\{[\nu^2_4]\}
\text{ and } {i_6}_* R^5_{10}=\Z_8\{[\nu^2_4]_6\}.
\]
We recall the relations 
$J[\eta^2_5]=\pm\sigma''$ \cite[Lemma 2.1(i)]{HKM} and $J[\nu^2_4]=\nu_5\sigma_8$ \cite[Lemma 1.1(ii)]{KM1}. 
By \cite[Proposition 2.2(1)]{Og}, we know 
$\sigma''\nu_{13} = \pm 2\nu_6\sigma_9$.
So we obtain 
\begin{equation}\label{[eta25nu}
[\eta^2_5]\nu_7=\pm 2[\nu^2_4]_6.
\end{equation}

By the fact that
\ $[\nu^2_4]_6\sigma^2_{10}\in i_*R^5_{24}$ and $R^5_{24}\cong(\Z_2)^2$, we
have 
\begin{equation}\label{2n24sg2}
2[\nu^2_4]_6\sigma^2_{10} = 0.
\end{equation}

By \cite[Proposition 2.3(2)]{Og}, we have 
\begin{equation}\label{k7n}
\kappa_7\nu_{21}=\nu_7\kappa_{10}.
\end{equation}
We show the following.

\begin{prop} \label{724}
$R^7_{24}=\{[\bar{\zeta}_5]_7,[\nu_5]_7\mu_8\sigma_{17},[\nu_4\sigma'\eta_{14}]_7\mu_{15},[\nu^2_4]_7\kappa_{10}, [P(E\theta)+\nu_6\kappa_9]\eta_{23}$,\\ 
$[\eta_6]\bar{\mu}_7, [\eta_6]\eta_7\mu_8\sigma_{17}\}\cong(\Z_2)^7$, where $\Delta\bar{\zeta}_6 = 2[\bar{\zeta}_5]$. 
\end{prop}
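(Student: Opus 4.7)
The plan is to apply the exact sequence $(24)_7$:
\[
\pi^6_{25}\xrightarrow{\Delta}R^6_{24}\xrightarrow{i_*}R^7_{24}\xrightarrow{p_*}\pi^6_{24}\xrightarrow{\Delta}R^6_{23},
\]
with $R^6_{24}$ and $R^6_{23}$ known from Proposition~\ref{R61} and \eqref{R623}, and $\pi^6_{24}=\Z_8\{\zeta_6\sigma_{17}\}\oplus\Z_2\{P(E\theta)\eta_{23}\}\oplus\Z_2\{\eta_6\bar{\mu}_7\}$ from \cite[Theorem 12.8]{T}.

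First I would determine $\Ker\Delta|_{\pi^6_{24}}$. By \eqref{DeSig} and Lemma~\ref{zeta1}, $\Delta(\zeta_6\sigma_{17})=\Delta\zeta_6\circ\sigma_{16}=2[\zeta_5]\sigma_{16}$, and this has order $4$ in $R^6_{23}$ because $[\zeta_5]\sigma_{16}$ generates the $\Z_8$-summand of $R^6_{23}$ in \eqref{R623}. Since $P(E\theta)+\nu_6\kappa_9\in\Ker\Delta$ from the proof of Proposition~\ref{R723}, $\Delta(P(E\theta)\eta_{23})=\Delta((P(E\theta)+\nu_6\kappa_9)\eta_{23})=0$, and $\Delta(\eta_6\bar{\mu}_7)=0$ by \eqref{Delta}. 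Therefore
\[
\Ker\Delta\big|_{\pi^6_{24}}=\Z_2\{4\zeta_6\sigma_{17}\}\oplus\Z_2\{P(E\theta)\eta_{23}\}\oplus\Z_2\{\eta_6\bar{\mu}_7\}\cong\Z_2^{\,3}.
\]

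The crux of the argument is the relation $\Delta\bar{\zeta}_6=2[\bar{\zeta}_5]$. Taking $\bar{\zeta}_6\in\{\zeta_6,8\iota_{17},2\sigma_{17}\}_1$ (the suspension of the bracket defining $[\bar{\zeta}_5]$ in \eqref{[bzeta_5]}) and applying \eqref{mt1}, I obtain
\[
\Delta\bar{\zeta}_6\in -\{\Delta\zeta_6,8\iota_{16},2\sigma_{16}\}=-\{2[\zeta_5],8\iota_{16},2\sigma_{16}\}\supset -2\{[\zeta_5],8\iota_{16},2\sigma_{16}\}\ni\pm 2[\bar{\zeta}_5],
\]
and the remaining work is to show the indeterminacy $2[\zeta_5]\circ E\pi^{15}_{24}+R^6_{17}\circ 2\sigma_{17}$ is contained in $\langle 2[\bar{\zeta}_5]\rangle\subset R^6_{24}$. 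Coupled with a case analysis of the other summands of $\pi^6_{25}$ — suspensions from $\pi^5_{25}$ to which $\Delta\nu_5=0$ via \eqref{Dn5} applies, together with products to which \eqref{Delta} applies — this yields $\Im\Delta|_{\pi^6_{25}}=\Z_4\{2[\bar{\zeta}_5]\}$. Consequently $i_*R^6_{24}\cong\Z_2^{\,4}$ with generators $[\bar{\zeta}_5]_7$, $[\nu_5]_7\mu_8\sigma_{17}$, $[\nu_4\sigma'\eta_{14}]_7\mu_{15}$, and $[\nu^2_4]_7\kappa_{10}$.

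Finally, the three generators of $\Ker\Delta|_{\pi^6_{24}}$ must be lifted to $R^7_{24}$: suspending $4\zeta_5=\eta_5^2\mu_7$ from \eqref{4z5} yields $4\zeta_6\sigma_{17}=\eta_6\eta_7\mu_8\sigma_{17}$, so $[\eta_6]\eta_7\mu_8\sigma_{17}$ is a lift of $4\zeta_6\sigma_{17}$; the lifts $[P(E\theta)+\nu_6\kappa_9]\eta_{23}$ and $[\eta_6]\bar{\mu}_7$ are immediate. Each of the seven generators has order $2$: the four in $i_*R^6_{24}$ since $2[\bar{\zeta}_5]_7=i_*\Delta\bar{\zeta}_6=0$ and the other three lie in $\Z_2$-summands of $R^6_{24}$, and the three lifts because $2\bar{\mu}_7$, $2\eta_{23}$, and $2\eta_7\mu_8\sigma_{17}$ are all trivial in the respective homotopy groups of spheres. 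The main obstacle is the calculation $\Delta\bar{\zeta}_6=2[\bar{\zeta}_5]$ combined with the verification that no other summand of $\pi^6_{25}$ contributes an element of $\Im\Delta$ outside $\langle 2[\bar{\zeta}_5]\rangle$; both rely on a tight control of the Toda-bracket indeterminacies.
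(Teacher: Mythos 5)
Your overall strategy (the exact sequence $(24)_6$, the kernel computation on $\pi^6_{24}$, and the key relation $\Delta\bar{\zeta}_6=2[\bar{\zeta}_5]$ via \eqref{mt1}) coincides with the paper's, and your identification of $\Ker\Delta|_{\pi^6_{24}}$ and of the lifts, including $[\eta_6]\eta_7\mu_8\sigma_{17}$ as a lift of $4\zeta_6\sigma_{17}=\eta^2_6\mu_8\sigma_{17}$, is correct. However, there is a genuine gap in your treatment of $\Im\Delta|_{\pi^6_{25}}$. By \cite[Theorem 12.9]{T}, $\pi^6_{25}=\Z_8\{\bar{\zeta}_6\}\oplus\Z_{32}\{\bar{\sigma}_6\}$, and the second generator $\bar{\sigma}_6$ is \emph{not} a suspension from $S^5$ (it cannot be: $\pi^5_{25}\cong(\Z_2)^3$, while $\bar{\sigma}_6$ has order $32$), nor is it a composite $\alpha\circ E\beta$ with $\Delta\alpha=0$ in any evident way. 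So your proposed case analysis --- ``suspensions from $\pi^5_{25}$ to which $\Delta\nu_5=0$ applies, together with products to which \eqref{Delta} applies'' --- does not cover it, and this is precisely the nontrivial half of the image computation.

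The paper handles $\bar{\sigma}_6$ by using its defining Toda bracket $\bar{\sigma}_6\in\{\bar{\nu}_6+\varepsilon_6,\sigma_{14},\nu_{21}\}_1$ \eqref{defbsg6}: since $\Delta(\bar{\nu}_6+\varepsilon_6)=0$, one gets $\Delta\bar{\sigma}_6\in R^6_{21}\circ\nu_{21}$, and then one must check $R^6_{21}\circ\nu_{21}=0$, which requires $[\nu_4\sigma'\sigma_{14}]_6\nu_{21}\in{i_5}_*R^4_{24}=0$ and $[\eta^2_5]\kappa_7\nu_{21}=[\eta^2_5]\nu_7\kappa_{10}=2[\nu^2_4]_6\kappa_{10}=0$ (using \eqref{[eta25nu} and \eqref{k7n}). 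Without an argument of this kind, a nonzero $\Delta\bar{\sigma}_6$ could kill additional classes in $R^6_{24}$ and change the answer. A secondary, smaller point: the indeterminacy of $\{2[\zeta_5],8\iota_{16},2\sigma_{16}\}_1$ that you defer is $2[\zeta_5]\circ\pi^{16}_{24}+R^6_{17}\circ 2\sigma_{17}=\{2[\nu^2_4]_6\sigma^2_{10}\}$, which vanishes by \eqref{2n24sg2}; this should be carried out rather than asserted, since it is what pins down $\Delta\bar{\zeta}_6$ exactly (not merely modulo something) and hence the order of $[\bar{\zeta}_5]_7$.
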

\begin{proof}
In the exact sequence $(24)_6$:
$$
\pi^6_{25}\rarrow{\Delta}R^6_{24}\rarrow{i_*}R^7_{24}
\rarrow{p_*}\pi^6_{24}\rarrow{\Delta}R^6_{23},
$$
We know $\Ker\ \{\Delta:\pi^6_{24}\to R^6_{23}\}= \{(P(E\theta)+\nu_6\kappa_9)\eta_{23},
\eta_6\bar{\mu}_7, \eta^2_6\mu_8\sigma_{17}\}$ and 
$\pi^6_{25} = \Z_8\{\bar{\zeta}_6\}\oplus\Z_{32}\{\bar{\sigma}_6\}$ \cite[Theorem 12.9]{T}. 
By using the relation \eqref{mt1}, Lemma \ref{zeta1} and \eqref{[bzeta_5]}, we have
\[\begin{split}
\Delta\bar{\zeta}_6\in\Delta\{\zeta_6, 8\iota_{17}, 2\sigma_{17}\}_2
&\subset\{2[\zeta_5], 8\iota_{16}, 2\sigma_{16}\}_1\\
&\ni 2[\bar{\zeta}_5]
\bmod 2[\zeta_5]\circ\pi^{16}_{24} + R^6_{17}\circ 2\sigma_{17}.
\end{split}\]
As $\pi^{16}_{24}\cong(\Z_2)^2$ \cite{T} and $R^6_{17}
= \Z_8\{[\nu^2_4]_6\sigma_{10}\}\oplus(\Z_2)^3$ \cite{KM1},
the indeterminacy is equal to $\{2[\nu^2_4]_6\sigma^2_{10}\}=0$ 
\eqref{2n24sg2}.
Hence the indeterminacy is trivial.
This implies $\Delta\bar{\zeta}_6 = 2[\bar{\zeta}_5]$.

We show $\Delta\bar{\sigma}_6=0$. By \eqref{defbsg6}, it suffices to 
prove $\Delta\{\bar{\nu}_6+\varepsilon_6,\sigma_{14},\nu_{21}\}_1=0$. 
Since $\Delta(\bar{\nu}_6+\varepsilon_6)=0$, we have 
$$
\Delta\{\bar{\nu}_6+\varepsilon_6,\sigma_{14},\nu_{21}\}_1
\subset\{0,\sigma_{13},\nu_{20}\}=R^6_{21}\circ\nu_{21}.
$$
By \cite[Theorem 0.1(i)]{HKM}, we know $R^6_{21}=\{[\nu_4\sigma'\sigma_{21}]_6, [\eta^2_5]\kappa_7\}\cong\Z_8\oplus\Z_2$.
We have $[\nu_4\sigma'\sigma_{21}]\nu_{21}\in{i_5}_*R^4_{24}=0$ \eqref{i5R424}. This implies $[\nu_4\sigma'\sigma_{21}]_6\nu_{21}=0$.
By \eqref{[eta25nu} and the relation $\kappa_7\nu_{21}=\nu_7\kappa_{10}$ 
\eqref{k7n}, we have $ [\eta^2_5]\kappa_7\nu_{21}
= [\eta^2_5]\nu_7\kappa_{10}=2[\nu^2_4]_6\kappa_{10}=0$.
Hence, by Proposition \ref{R61}, we obtain 
the group structure of $R^7_{24}$. 
\end{proof}

Since $[\nu_4\sigma'\mu_{14}]_7 \eta_{23}= [\nu_4\sigma'\eta_{14}]_7\mu_{15} \neq 0$,
we obtain
\[
2[P(E\theta)+\nu_6\kappa_9]
\equiv[\nu_5]_7\bar{\varepsilon}_8\ \bmod\ 
[\nu_4\eta_7\bar{\varepsilon}_8]_7, [\nu_4(E\zeta')]_7.
\]

Since $R^8_{24}\cong R^7_{24}\oplus\pi^7_{24}$ and $\pi^7_{24}
= \{\sigma'\eta_{14}\mu_{15}, \nu_7\kappa_{10}, \bar{\mu}_7,
\eta_7\mu_8\sigma_{17}\}\cong(\Z_2)^4$, we have the following.

\begin{prop} \label{824}
$R^8_{24} = \Z_2\{[\bar{\zeta}_5]_8\}
\oplus\Z_2\{[\nu_5]_8\mu_8\sigma_{17}\}
\oplus\Z_2\{[\nu_4\sigma'\eta_{14}]_8\mu_{15}\}
\oplus\Z_2\{[\nu^2_4]_8\kappa_{10}\}
\oplus\Z_2\{[\eta_6]_8\bar{\mu}_7\}
\oplus\Z_2\{[\eta_6]_8\eta_7\mu_8\sigma_{17}\}
\oplus\Z_2\{[P(E\theta)+\nu_6\kappa_9]_8\eta_{23}\}
\oplus\Z_2\{[\iota_7]\sigma'\eta_{14}\mu_{15}\}
\oplus\Z_2\{[\iota_7]\nu_7\kappa_{10}\}
\oplus\Z_2\{[\iota_7]\bar{\mu}_7\}
\oplus\Z_2\{[\iota_7]\eta_7\mu_8\sigma_{17}\}$.
\end{prop}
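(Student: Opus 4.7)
The plan is to read off the group structure of $R^8_{24}$ from the exact sequence $(24)_7$:
$$
\pi^7_{25}\rarrow{\Delta}R^7_{24}\rarrow{{i_8}_*}R^8_{24}\rarrow{{p_8}_*}\pi^7_{24}\rarrow{\Delta}R^7_{23}.
$$
The left-hand group $R^7_{24}$ has already been computed in Proposition \ref{724}, and $\pi^7_{24}=\Z_2\{\sigma'\eta_{14}\mu_{15}\}\oplus\Z_2\{\nu_7\kappa_{10}\}\oplus\Z_2\{\bar{\mu}_7\}\oplus\Z_2\{\eta_7\mu_8\sigma_{17}\}$ from \cite{T}, \cite{MT1}. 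To deduce the direct-sum decomposition $R^8_{24}\cong R^7_{24}\oplus\pi^7_{24}$ asserted in the claim, I must establish three things: that the right-hand $\Delta$ is zero (so ${p_8}_*$ is surjective), that the left-hand $\Delta$ is zero (so ${i_8}_*$ is injective), and that the resulting short exact sequence splits.

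The first step is to verify $\Delta\colon\pi^7_{24}\to R^7_{23}$ vanishes on each of the four generators. For $\nu_7\kappa_{10}$ and $\eta_7\mu_8\sigma_{17}$, one writes $\nu_7=E\nu_6$, $\eta_7=E\eta_6$ and applies \eqref{DeSig}, reducing the question to the image of $\Delta\nu_7$ and $\Delta\eta_7$ composed with $\kappa_9$, respectively $\mu_8\sigma_{17}$, in $R^7_{23}$; these land in the known computed subgroups of $R^7_{23}$ from Proposition \ref{R723} and can be shown to be trivial. For $\sigma'\eta_{14}\mu_{15}$ the argument is similar using $2\sigma'=E\sigma''$ \eqref{2s''}. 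The case of $\bar{\mu}_7$ is handled by placing it in its defining Toda bracket $\bar{\mu}_7\in\{\mu_7,2\iota_{16},8\sigma_{16}\}$, commuting $\Delta$ through via \eqref{mt1}, and using that $\Delta\mu_7$ and the resulting bracket lie in indeterminacy; this parallels the technique in Lemma \ref{ord} and the corresponding argument for $\eta_6\bar{\mu}_7$ in Proposition \ref{724}.

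The second step is to check $\Delta\colon\pi^7_{25}\to R^7_{24}$ is zero. Using the description of $\pi^7_{25}$ from \cite[Theorem 12.8]{T}, each generator (in particular the suspensions $\bar{\zeta}_7$, $\bar{\sigma}_7$, etc.) factors through an $E$-image, so $\Delta$ on each is computed via \eqref{DeSig} as $\Delta$ applied to a lower-dimensional element already known from Proposition \ref{R61}; in parallel with the proof of Proposition \ref{724} (where $\Delta\bar{\zeta}_6$ and $\Delta\bar{\sigma}_6$ were analyzed), the composites land in subgroups of $R^7_{24}$ that can be checked to vanish.

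Finally, the short exact sequence $0\to R^7_{24}\to R^8_{24}\to\pi^7_{24}\to 0$ splits via the section $\alpha\mapsto [\iota_7]\circ\alpha$, because ${p_8}_*([\iota_7]\alpha)=\iota_7\circ\alpha=\alpha$. Combined with the generators of $R^7_{24}$ from Proposition \ref{724}, this produces exactly the eleven $\Z_2$-summands listed. The main obstacle is the first step — specifically the vanishing of $\Delta\bar{\mu}_7$, since this requires a delicate Toda bracket manipulation with non-trivial indeterminacy in $R^7_{23}$; but this is of the same flavor as the computations already carried out for $\mu_7\sigma_{16}$ and $\eta_6\bar{\mu}_7$ in Propositions \ref{R723} and \ref{724}, and can be imported by the same kind of argument.
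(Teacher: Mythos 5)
Your conclusion is right and your final step is the real content, but the route you take through steps (1) and (2) is both longer than necessary and, as sketched, shakier than you seem to realize. The paper disposes of this proposition in one line: $R^8_{24}\cong R^7_{24}\oplus\pi^7_{24}$, because the fibration $R_8\rarrow{p_8}S^7$ admits a cross-section (unit octonions), equivalently because the lift $[\iota_7]\in R^8_7$ with ${p_8}_*[\iota_7]=\iota_7$ exists. Once you have $[\iota_7]$ --- which you invoke yourself in your splitting step --- the map $\alpha\mapsto[\iota_7]\circ\alpha$ splits ${p_8}_*$ in \emph{every} degree, so both connecting maps $\Delta:\pi^7_{24}\to R^7_{23}$ and $\Delta:\pi^7_{25}\to R^7_{24}$ vanish automatically (this is just \eqref{Delta} applied with $\alpha=\iota_7$, since $\Delta\iota_7=0$). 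Your generator-by-generator verifications are therefore redundant, and several of them would be awkward to carry out as described: $\sigma'$ and $\mu_7$ are not suspensions, so $\Delta(\sigma'\eta_{14}\mu_{15})$ and $\Delta\bar{\mu}_7$ do not reduce via \eqref{DeSig} to data the paper computes, and the ``delicate Toda bracket manipulation'' you flag for $\bar{\mu}_7$ is a phantom difficulty --- nothing of the sort is needed. I recommend replacing steps (1) and (2) by the single observation that $[\iota_7]$ gives a splitting homomorphism $\pi^7_k\to R^8_k$ for all $k$, and keeping only your final paragraph.
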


Next we show 

\begin{prop} \label{924}
$R^9_{24} = \Z_2\{[\bar{\zeta}_5]_9\}
\oplus\Z_2\{[\nu_5]_9\mu_8\sigma_{17}\}
\oplus\Z_2\{[P(E\theta)+\nu_6\kappa_9]_9\eta_{23}\}
\oplus\Z_2\{[\iota_7]_9\nu_7\kappa_{10}\}
\oplus\Z_2\{[\iota_7]_9\bar{\mu}_7\}
\oplus\Z_2\{[\iota_7]_9\eta_7\mu_8\sigma_{17}\}$
with relations 
$[\nu_4\sigma'\eta_{14}]_9\mu_{15}
=[\nu^2_4]_9\kappa_{10}
=0$ and
$[\bar{\nu}_6+\varepsilon_6]_9\eta_{14}\mu_{15}
=[\iota_7]_9\sigma'\eta_{14}\mu_{15}=[\nu_5]_9\mu_8\sigma_{17}$.
\end{prop}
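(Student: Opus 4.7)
The plan is to exploit the exact sequence $(24)_8$:
\[
\pi^8_{25}\rarrow{\Delta}R^8_{24}\rarrow{i_*}R^9_{24}\rarrow{p_*}\pi^8_{24}\rarrow{\Delta}R^8_{23}.
\]
An order count does most of the set-up. Proposition~\ref{R8} gives $|R^8_{23}|=2^{11}$, Proposition~\ref{R923} gives $|R^9_{23}|=2^4$, and the seven explicit $\Delta$-values of the generators of $\pi^8_{24}\cong(\Z_2)^7$ recorded inside the proof of Proposition~\ref{R923} already witness that the $\Z_2$-linear map $\Delta:\pi^8_{24}\to R^8_{23}$ has nonzero image on each generator; by exactness for $(23)_8$ the image has order exactly $|R^8_{23}|/|R^9_{23}|=2^7$, so $\Delta$ is injective. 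Feeding this back into $(24)_8$ forces $p_*=0$ and hence $i_*:R^8_{24}\twoheadrightarrow R^9_{24}$. Since $R^8_{24}\cong(\Z_2)^{11}$ by Proposition~\ref{824} and the target is $(\Z_2)^6$, the remaining task is to produce five independent relations inside $\Delta(\pi^8_{25})$.

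Two of them I get by applying~\eqref{DeSig} to $\bar\mu_8$ and $\eta_8\mu_9\sigma_{18}$ in $\pi^8_{25}$: using \eqref{k3}, both $\Delta(\bar\mu_8)=(2[\iota_7]-[\eta_6]_8)\bar\mu_7$ and $\Delta(\eta_8\mu_9\sigma_{18})=(2[\iota_7]-[\eta_6]_8)\eta_7\mu_8\sigma_{17}$ reduce (after killing $2[\iota_7]\bar\mu_7$ and $2[\iota_7]\eta_7\mu_8\sigma_{17}$ via the order-$2$ structure of $R^8_{24}$) to $[\eta_6]_8\bar\mu_7$ and $[\eta_6]_8\eta_7\mu_8\sigma_{17}$, which therefore vanish in $R^9_{24}$. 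A third relation comes from $\Delta(\nu_8\kappa_{11})=(2[\iota_7]-[\eta_6]_8)\nu_7\kappa_{10}$ combined with $[\eta_6]\nu_7=b[\nu^2_4]_7$ ($b$ odd) of \cite[Lemma 1.1(iv)]{KM1} and $2[\iota_7]\nu_7\kappa_{10}=0$: this yields $\Delta(\nu_8\kappa_{11})=[\nu^2_4]_8\kappa_{10}$, so $[\nu^2_4]_9\kappa_{10}=0$.

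The fourth and fifth relations hinge on manipulations around $\sigma'\eta_{14}$. For $[\nu_4\sigma'\eta_{14}]_9\mu_{15}=0$, I apply \eqref{DeSig} to an element like $(E\sigma')\eta_{15}\mu_{16}\in\pi^8_{25}$, unpack $\Delta(E\sigma')\eta_{14}\mu_{15}$ through \eqref{et6s'}, and use Lemma~\ref{rel1}(2)--(3) together with the relations already produced to identify $[\nu_4\sigma'\eta_{14}]_8\mu_{15}$ inside the image. For the coincidence chain $[\iota_7]_9\sigma'\eta_{14}\mu_{15}=[\bar\nu_6+\varepsilon_6]_9\eta_{14}\mu_{15}=[\nu_5]_9\mu_8\sigma_{17}$, the first equality is free from $[\iota_7]_9\sigma'=[\bar\nu_6+\varepsilon_6]_9$ (established just before~\eqref{Dsg82}); the second is obtained by expanding via~\eqref{tr2} and invoking $\varepsilon_7\mu_{15}=\eta_7\mu_8\sigma_{17}$ from~\eqref{Ogu0}, $\bar\nu_7\mu_{15}=E(\bar\nu_6\mu_{14})=0$ from~\eqref{bn6m}, the identity $[\eta_6]_9=2[\iota_7]_9$, the order-two vanishing of $[\iota_7]_9\eta_7\mu_8\sigma_{17}$, the just-proved $[\nu_4\sigma'\eta_{14}]_9\mu_{15}=0$, and the stable identity $\sigma_8\mu_{15}=\mu_8\sigma_{17}$ after composition with $[\nu_5]_9$. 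The main obstacle is ruling out that $\Delta(\pi^8_{25})$ kills anything further: to confirm the six candidates $[\bar\zeta_5]_9$, $[\nu_5]_9\mu_8\sigma_{17}$, $[P(E\theta)+\nu_6\kappa_9]_9\eta_{23}$, $[\iota_7]_9\nu_7\kappa_{10}$, $[\iota_7]_9\bar\mu_7$, $[\iota_7]_9\eta_7\mu_8\sigma_{17}$ are $\Z_2$-linearly independent, I apply the $J$-homomorphism, using Lemma~\ref{JP}, \eqref{Ji7}, \eqref{Jn5} and \cite[Theorem 1.1]{MMO} to exhibit six linearly independent $J$-images in $\pi^9_{33}$.
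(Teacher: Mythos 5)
Your overall architecture --- the sequence $(24)_8$, surjectivity of $i_*$ deduced from injectivity of $\Delta:\pi^8_{24}\to R^8_{23}$, and the evaluation of $\Delta$ on $\pi^8_{25}$ --- is exactly the paper's, and your values $\Delta(\bar{\mu}_8)=[\eta_6]_8\bar{\mu}_7$, $\Delta(\eta_8\mu_9\sigma_{18})=[\eta_6]_8\eta_7\mu_8\sigma_{17}$, $\Delta(\nu_8\kappa_{11})=[\nu^2_4]_8\kappa_{10}$ and $\Delta((E\sigma')\eta_{15}\mu_{16})=[\nu_4\sigma'\eta_{14}]_8\mu_{15}$ agree with it. Your fifth relation is reached by a genuinely different route: the paper computes $\Delta(\sigma_8\eta_{15}\mu_{16})$ directly via \eqref{Dsg82} and Lemma \ref{bnuepmu}, whereas you derive the coincidence $[\iota_7]_9\sigma'\eta_{14}\mu_{15}=[\bar{\nu}_6+\varepsilon_6]_9\eta_{14}\mu_{15}=[\nu_5]_9\mu_8\sigma_{17}$ from $[\iota_7]_9\sigma'=[\bar{\nu}_6+\varepsilon_6]_9$ and \eqref{tr2}. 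That chain is sound and does yield a fifth element of $\Ker i_*$ independent of the first four, though the step $[\nu_5]_9\sigma_8\mu_{15}=[\nu_5]_9\mu_8\sigma_{17}$ should be quoted from Lemma \ref{n5musg}(2) rather than waved through as a ``stable identity'': $\sigma_8\mu_{15}\neq\mu_8\sigma_{17}$ in $\pi^8_{24}$, and one must kill the indeterminacy ${i_6}_*R^5_{24}=\{[\nu^2_4]_6\kappa_{10},[\nu_4\sigma'\eta_{14}]_6\mu_{15}\}$ using the relations you have already produced.

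The genuine gap is your final step. You propose to certify the independence of the six surviving classes by exhibiting six linearly independent $J$-images in $\pi^9_{33}$. That cannot work: the paper itself records, in the proof of Lemma \ref{n5musg}(2), that $J([\nu_5]\mu_8\sigma_{17})=0$ (via \eqref{Jn5}, \eqref{Ogu0} and \eqref{bn6m}), so $J([\nu_5]_9\mu_8\sigma_{17})=0$ and the six $J$-images are automatically dependent; $J$ is far from injective on $R^9_{24}$. Fortunately the step is also unnecessary. Since $\Ker i_*=\Delta(\pi^8_{25})$ has order at most $|\pi^8_{25}|=2^5$, and you have exhibited five $\Z_2$-independent elements of it inside $R^8_{24}\cong(\Z_2)^{11}$ (each of the first four hits a distinct basis vector, and the fifth involves $[\iota_7]\sigma'\eta_{14}\mu_{15}+[\nu_5]_8\mu_8\sigma_{17}$, which lies outside their span), the kernel is exactly that $(\Z_2)^5$ and the six listed classes descend to a basis of $R^9_{24}\cong(\Z_2)^6$. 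This order count is precisely what the paper's proof does implicitly by evaluating $\Delta$ on all five generators of $\pi^8_{25}$; replace your $J$-homomorphism paragraph with it and the proof is complete.
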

\begin{proof}
In the exact sequence $(24)_8$:
\[
\pi^8_{25}\rarrow{\Delta}R^8_{24}\rarrow{i_*}R^9_{24}\rarrow{p_*}\pi^8_{24}
\rarrow{\Delta}R^8_{23},
\]
we know that $\Delta: \pi^8_{24}\to R^8_{23}$ is a monomorphism and that
\[
\pi^8_{25} \!=\! \{\sigma_8\eta_{15}\mu_{16}, (E\sigma')\eta_{15}\mu_{16},
\nu_8\kappa_{11}, \bar{\mu}_8, \eta_8\mu_9\sigma_{18}\}\!\cong\!(\Z_2)^5
\text{ \cite[Theorem 12.7]{T}}.
\]

By \eqref{Dsg82}, we obtain the following: for odd $x$, $y$, % and $c \in \{0, \cdots , 7\}$, 
\[\begin{split}
\Delta(\sigma_8\eta_{15}\mu_{16})
&\equiv x[\iota_7]\sigma'\eta_{14}\mu_{15} + y[\bar{\nu}_6+\varepsilon_6]_8\eta_{14}\mu_{15} \ \bmod\ 
([\nu_5]_8\nu^2_8 + [\eta_5\varepsilon_6]_8)\circ\eta_{14}\mu_{15}\\
&\equiv [\iota_7]\sigma'\eta_{14}\mu_{15}
+ [\bar{\nu}_6+\varepsilon_6]_8\eta_{14}\mu_{15}\ \bmod\ 
[\eta_5\varepsilon_6]_8\eta_{14}\mu_{15}.
\end{split}\]
By Lemma \ref{bnuepmu} and the relation 
$\zeta'\eta_{22}=0$ \cite[Proposition 2.13(5)]{Og}, we have
\[\begin{split}
[\bar{\nu}_6+\varepsilon_6]_8\eta_{14}\mu_{15}
&\equiv [\eta_6]_8(E\zeta' + \mu_7\sigma_{16})\eta_{23}
+ [\zeta_5]_8\sigma_{16}\eta_{23}\\
&=[\eta_6]_8\eta_7\mu_8\sigma_{17} + [\nu_5]_8\mu_8\sigma_{17}\\
&\bmod [\nu_5]_8\bar\varepsilon_8\eta_{23},
[\nu_4\sigma'\mu_{14}]_8\eta_{23}, [\nu_4(E\zeta')]_8\eta_{23},
[\nu_4\eta_7\bar{\varepsilon}_8]_8\eta_{23}.
\end{split}\]
So, by Lemma \ref{89}, we have
\[
[\bar{\nu}_6+\varepsilon_6]_8\eta_{14}\mu_{15}
\equiv [\eta_6]_8\eta_7\mu_8\sigma_{17} + [\nu_5]_8\mu_8\sigma_{17}
\bmod [\nu^2_4]_8\kappa_{10}, [\nu_4\sigma'\eta_{14}]_8\mu_{15}.
\]
By \cite[Lemmas 1.1(v), 1.2(iii)]{KM1} and Lemma \ref{89}, we obtain
\[
[\eta_6]_8\sigma'\eta_{14}\mu_{15} = [\eta_5\varepsilon_6]_8\eta_{14}\mu_{15}
= [\nu_4\sigma'\eta_{14}]_8\mu_{15}.
\]
Hence, we have
\begin{equation}\label{Dsg8etm}
%\[\begin{split}
\Delta(\sigma_8\eta_{15}\mu_{16})
\equiv [\iota_7]\sigma'\eta_{14}\mu_{15}
%&
+ [\eta_6]_8\eta_7\mu_8\sigma_{17} + [\nu_5]_8\mu_8\sigma_{17}\ 
%&\qquad
\bmod [\nu^2_4]_8\kappa_{10}, [\nu_4\sigma'\eta_{14}]_8\mu_{15}.
%\end{split}\]
\end{equation}
We recall the relation $\Delta\iota_8 = 2[\iota_7] - [\eta_6]_8$ \eqref{k3}. 
Since $[\eta_6]\nu_7 = x[\nu^2_4]_7$ for $x$ odd \cite[Lemma 1.1(iv)]{KM1},
we obtain
$\Delta(\nu_8\kappa_{11}) = [\eta_6]_8\nu_7\kappa_{10}=[\nu^2_4]_8\kappa_{10}$. 

We also get
\begin{align*}
&\Delta((E\sigma')\eta_{15}\mu_{16})
= [\eta_6]_8\sigma'\eta_{14}\mu_{15}= [\nu_4\sigma'\eta_{14}]_8\mu_{15};\\
&\Delta(\nu_8\kappa_{11}) = [\nu^2_4]_8\kappa_{10};\ 
\Delta\bar{\mu}_8 = [\eta_6]_8\bar{\mu}_7;\ 
\Delta(\eta_8\mu_9\sigma_{18}) = [\eta_6]_8\eta_7\mu_8\sigma_{17}.
\end{align*}
Thus we obtain the group $R^9_{24}$ and
the equation
\[
 [\nu_4\sigma'\eta_{14}]_9\mu_{15}=[\nu^2_4]_9\kappa_{10}
=[\eta_6]_9\bar{\mu}_7 = [\eta_6]_9\eta_7\mu_8\sigma_{17}=0.
\]
Furthermore, we obtain 
$[\bar{\nu}_6+\varepsilon_6]_9\eta_{14}\mu_{15}=[\nu_5]_9\mu_8\sigma_{17}$
by the relation
\[
[\bar{\nu}_6+\varepsilon_6]_9\eta_{14}\mu_{15}
\equiv [\eta_6]_9\eta_7\mu_8\sigma_{17} + [\nu_5]_9\mu_8\sigma_{17}
\bmod [\nu^2_4]_9\kappa_{10}, [\nu_4\sigma'\eta_{14}]_9\mu_{15}.
\]
By \eqref{Dsg8etm}, we also obtain
$[\iota_7]_9\sigma'\eta_{14}\mu_{15}=[\nu_5]_9\mu_8\sigma_{17}$.
This completes the proof.
\end{proof}

By the proof of Propositions \ref{R923}, \ref{924}, 
\cite[Proposition 2.1]{Ka} and \cite[Theorem 5.4]{HKM}, 
we notice the following.
\begin{rem}
${p_9}_*:R^9_n \to \pi^8_{n}$ is trivial for $n\leq 25$, except for $n=15$ and $22$.
The elements $[8\sigma_8]$ and $[8\sigma_8]\sigma_{15}$
are the candidates for the non-trivialities, respectively.
\end{rem}

\section{Determination of $\pi_{24}(R_n:2) \ (n\geq 10)$}
\hspace{2ex}
First we note that $\sigma_9(\bar{\nu}_{16} + \varepsilon_{16})
= \sigma^2_9\eta_{23}$ by \eqref{et9s}. In the exact sequence $(24)_{9}$:
\[%\begin{equation}\label{seq24_9}\tag*{$(24)_{9}$}
%(\ast) \hspace{4ex} 
\pi^9_{25}\rarrow{\Delta}R^9_{24}\rarrow{i_*}R^{10}_{24}
\rarrow{p_*}\pi^9_{24}\rarrow{\Delta}R^9_{23},
\]%\end{equation}
we know $\Ker\ \Delta = \{2\rho', \sigma^2_9\eta_{23}, \bar{\varepsilon}_9\}$
from the proof of Proposition \ref{R10}.
So there exist elements %$[2\rho']$, 
$\Delta(E\rho')$, $[\sigma^2_9]\eta_{23}$, 
$[\bar{\varepsilon}_9] \in R^{10}_{24}$. 

We need the following.
\begin{lem}\label{tn7k}
\begin{enumerate}
\item $\bar{\varepsilon}_5=\{2\nu_5,\nu_8, \bar\nu_{11}\}_1$.
\item $\{\sigma'\eta_{14}+\varepsilon_7,\bar{\nu}_{15},2\iota_{23}\}_1
=\nu_7\kappa_{10}+\{\sigma'\eta_{14}\mu_{15}+\eta_7\mu_8\sigma_{15}\}$.
\end{enumerate}
\end{lem}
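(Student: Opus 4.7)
In each part, the strategy is to verify the three-fold bracket is defined, identify an explicit representative, and compute the indeterminacy.

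For (1), I would first check that the two composites vanish. The relation $(2\nu_5)\nu_8=2\nu_5\nu_8=0$ holds since $\nu_5\nu_8\in\pi^5_{11}$ has order two. For $\nu_8\bar{\nu}_{11}=0$, I would use the suspension of \eqref{bn9n} (giving $\nu\bar{\nu}=0$ above the metastable range thanks to $EP=0$) together with an unstable check from \cite{T} to rule out obstructions. The identification $\bar{\varepsilon}_5\in\{2\nu_5,\nu_8,\bar{\nu}_{11}\}_1$ I would obtain by computing the Hopf invariant: by \eqref{EHP}-style naturality one gets $H$ of the bracket inside $\{0,\nu_{11},\bar{\nu}_{14}\}$-shaped classes in $\pi^{11}_{22}$, matching the known value of $H(\bar{\varepsilon}_5)$, and combining this with the desuspension of the Toda relation $\bar{\varepsilon}_6=\eta_6\kappa_7$ from \eqref{kaet} to pin down the representative. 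Finally, the indeterminacy $(2\nu_5)\circ\pi^8_{22}+\pi^5_{22}\circ\bar{\nu}_{22}$ is shown to vanish by inspection of the relevant homotopy groups from \cite{T}, giving the set-theoretic equality asserted.

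For (2), I would first verify that the bracket is defined. By Lemma~\ref{zesig}(3), $\sigma'\eta_{14}\bar{\nu}_{15}=\eta_7\bar{\varepsilon}_8$, and the suspended stable relation derived from \eqref{epep} gives $\varepsilon_7\bar{\nu}_{15}=\eta_7\bar{\varepsilon}_8$; since $\bar{\varepsilon}_8$ has order two, $(\sigma'\eta_{14}+\varepsilon_7)\bar{\nu}_{15}=2\eta_7\bar{\varepsilon}_8=0$. The second composite $\bar{\nu}_{15}\circ 2\iota_{23}=2\bar{\nu}_{15}=0$ because $\bar{\nu}$ has order two in this range. To locate $\nu_7\kappa_{10}$ in the bracket, I would suspend (1) once to get $\bar{\varepsilon}_6\in\{2\nu_6,\nu_9,\bar{\nu}_{12}\}_1$, use $\bar{\varepsilon}_6=\eta_6\kappa_7$ \eqref{kaet}, and then apply a Toda-bracket shuffling identity together with \eqref{t4} ($\eta_7\sigma_8=\sigma'\eta_{14}+\bar{\nu}_7+\varepsilon_7$) and \eqref{k7n} ($\nu_7\kappa_{10}=\kappa_7\nu_{21}$) to rewrite this presentation in the form $\{\sigma'\eta_{14}+\varepsilon_7,\bar{\nu}_{15},2\iota_{23}\}_1$ with $\nu_7\kappa_{10}$ as a representative.

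Finally, I would compute the indeterminacy of $\{\sigma'\eta_{14}+\varepsilon_7,\bar{\nu}_{15},2\iota_{23}\}_1$, namely $(\sigma'\eta_{14}+\varepsilon_7)\circ E\pi^{14}_{23}+\pi^7_{23}\circ 2\iota_{24}$. Since $\pi^7_{23}=\{\sigma'\mu_{14},E\zeta',\mu_7\sigma_{16},\eta_7\bar{\varepsilon}_8\}\cong(\Z_2)^4$ by \cite[Theorem 12.6]{T}, the second summand is zero; and using the known structure of $\pi^{14}_{23}$ (containing $\mu_{15}$ and generators mapped by $\sigma'$ into the specified class), the first summand simplifies to $\Z_2\{\sigma'\eta_{14}\mu_{15}+\eta_7\mu_8\sigma_{15}\}$, yielding the stated coset. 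The principal obstacle throughout is the shuffling step in (2): producing $\nu_7\kappa_{10}$ explicitly inside this bracket requires coordinating three different bracket presentations of classes in $\pi^7_{24}$ via the Jacobi identity for Toda brackets, and keeping track of indeterminacies at each intermediate stage.
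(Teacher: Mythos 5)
Your verification that both brackets are defined, and your computation of the indeterminacy in (2), track the paper reasonably well (modulo index slips: the second summand of the indeterminacy in (2) is $2\pi^7_{24}$, not $\pi^7_{23}\circ 2\iota_{24}$, and the indeterminacy you write for (1) should read $(2\nu_5)\circ E\pi^7_{19}+\pi^5_{12}\circ\bar{\nu}_{12}$). But the two steps that actually identify the brackets are where your plan breaks down. In (1), a Hopf invariant computation cannot pin down the representative: $\bar{\varepsilon}_5$ is a double suspension, so $H(\bar{\varepsilon}_5)=0$, and $H$ of the bracket contains $0$ as well since $H(2\nu_5)=0$; matching ``$H=0$ with $H=0$'' only tells you the bracket meets the image of $E$, and the appeal to desuspending $\bar{\varepsilon}_6=\eta_6\kappa_7$ from \eqref{kaet} supplies no mechanism for identifying which suspension element you have. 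The paper's argument is different and essential: it first observes $\{2\nu_5,\nu_8,\bar{\nu}_{11}\}_1=\{2\nu_5,\nu_8,\bar{\nu}_{11}\}$ because $\pi^7_{19}=0$, then imports the stable relation $\langle 2\nu,\nu,\bar{\nu}\rangle=\langle\bar{\nu},\nu,2\nu\rangle=\bar{\varepsilon}$ (with zero stable indeterminacy) from Marcum--Oda and Toda, and concludes by the injectivity of $E^\infty:\pi^5_{20}\to\pi^S_{15}$. Without that stable input your argument does not close.

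In (2) you concede that the ``shuffling step'' producing $\nu_7\kappa_{10}$ inside $\{\sigma'\eta_{14}+\varepsilon_7,\bar{\nu}_{15},2\iota_{23}\}_1$ is the principal obstacle, and indeed no mechanism is given for passing from $\bar{\varepsilon}_6\in\{2\nu_6,\nu_9,\bar{\nu}_{12}\}_1$ (a bracket in $\pi^6_{21}$ with entirely different entries) to the stated bracket in $\pi^7_{24}$; the relations \eqref{t4} and \eqref{k7n} do not by themselves effect such a rewriting, and part (1) is not in fact used in the paper's proof of part (2). The paper's route is again via stabilization: since $E^3(\sigma'\eta_{14})=2\sigma_{10}\eta_{17}=0$, the triple suspension of the bracket sits inside $\{\varepsilon_{10},\bar{\nu}_{18},2\iota_{26}\}$, which Oda computed to be $\nu_{10}\kappa_{13}+\{\eta_{10}\mu_{11}\sigma_{20}\}$; one then descends using $\Ker\{E^3:\pi^7_{24}\to\pi^{10}_{27}\}=\{\sigma'\eta_{14}\mu_{15}\}$ together with the indeterminacy already computed. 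This external input (Oda, III-Proposition 6.1(2)) is exactly what your proposal is missing, so as written the identification of $\nu_7\kappa_{10}$ in the bracket remains unproved.
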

\begin{proof}
Since $\pi^7_{19}=0$ \cite[Theorem 7.6]{T}, we have
$\{2\nu_5,\nu_8,\bar{\nu}_{11}\}_1=\{2\nu_5,\nu_8,\bar{\nu}_{11}\}$.
By \cite[Theorem 2(2)]{MaO} and \cite[(3.9)-i)]{T}, we know
$\langle 2\nu, \nu,\bar{\nu} \rangle 
= \langle \bar{\nu},\nu,2\nu\rangle=\bar{\varepsilon}$.
So, by the fact that $E^\infty:\pi^5_{20}\to \pi^S_{15}$ is a monomorphism \cite[Theorems 10.5 and 10.10]{T}, we obtain (1).

Next, the Toda bracket 
$\{\sigma'\eta_{14}+\varepsilon_7,\bar{\nu}_{15},2\iota_{23}\}_1$ is 
defined by $(\sigma'\eta_{14}+\varepsilon_7)\bar\nu_{15}=0$ (\eqref{epep}, Lemma \ref{zesig}(3)).
It has the indeterminacy 
$(\sigma'\eta_{14}+\varepsilon_7)\circ E\pi^{14}_{23}+2\pi^{7}_{24}
=(\sigma'\eta_{14}+\varepsilon_7)\circ \{\nu^3_{15},\mu_{15},\eta_{15}\varepsilon_{16}\}$ \cite[Theorems 7.2 and 12.7]{T}.
We have $(\sigma'\eta_{14}+\varepsilon_7)\nu^3_{15}=0$ 
(\eqref{t1}, \eqref{t6}).
By \eqref{Ogu0}, we have $(\sigma'\eta_{14}+\varepsilon_7)\mu_{15}
=\sigma'\eta_{14}\mu_{15}+\eta_7\mu_8\sigma_{15}$.  
We also have $(\sigma'\eta_{14}+\varepsilon_7)\eta_{15}\varepsilon_{16}=0$ \eqref{et^2e} and
$\varepsilon_7\eta_{15}\varepsilon_{16}=\eta_7\varepsilon^2_8=\eta_7\nu_8\sigma_{11}\nu^2_{18}=0$ \eqref{epep}.
Hence,  the indeterminacy  of $\{\sigma'\eta_{14}+\varepsilon_7,\bar{\nu}_{15},2\iota_{23}\}_1$
is $\{\sigma'\eta_{14}\mu_{15}+\eta_7\mu_8\sigma_{15}\}$.

By \cite[III-Proposition 6.1(2)]{Od1} and the fact that 
$\varepsilon_{10}\circ\pi^{18}_{27} + 2\pi^{10}_{27} = \{\eta_{10}\mu_{11}\sigma_{20}\}$, 
we have 
$$\{\varepsilon_{10},\bar{\nu}_{18},2\iota_{26}\}
=\nu_{10}\kappa_{13}+\{\eta_{10}\mu_{11}\sigma_{20}\}.
$$
This and the fact that  $\Ker\{E^3: \pi^7_{24}\to\pi^{10}_{27}\} = \{\sigma'\eta_{14}\mu_{15}\}$ \cite[Theorems 12.7 and 12.17]{T} 
imply (2). 
This %leads to (2) and 
completes the proof. 
\end{proof}

By \cite[Proposition 2.4(2)]{Og}, we have 
\begin{equation}\label{zt5nu}
\zeta_5\nu_{16}\equiv\nu_5\zeta_8\ \bmod\ \nu_5\bar{\nu}_8\nu_{16}.
\end{equation}

We show 
\begin{lem}\label{n5musg}
\begin{enumerate}
\item
$\nu_5\sigma_8\mu_{15} = \nu_5\mu_8\sigma_{17}$.
\item $[\zeta_5]\sigma_{16}\eta_{23}\equiv
[\nu_5]\sigma_8\mu_{15}\equiv[\nu_5]\mu_8\sigma_{17}\ \bmod \ 
%[\nu^2_4]_6\kappa_{10}, 
[\nu_4\sigma'\eta_{14}]_6\mu_{15}$ and $[\zeta_5]_9\sigma_{16}\eta_{23}=[\nu_5]_9\sigma_8\mu_{15}=[\nu_5]_9\mu_8\sigma_{17}$.
\item
$[\zeta_5]\bar\nu_{16}=0$ and 
$[\zeta_5]\varepsilon_{16} = [\zeta_5]\eta_{16}\sigma_{17}$.
\end{enumerate}
\end{lem}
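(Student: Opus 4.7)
The three parts interlock: (1) is a spherical identity in $\pi^5_{25}$, (2) lifts it via the matrix Toda bracket presentation of $[\zeta_5]\sigma_{16}$, and (3) is an essentially independent computation using the defining bracket of $[\zeta_5]\in R^6_{16}$.

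For (1), the plan is to invoke $\mu_n\in\{\eta_n,2\iota_{n+1},8\sigma_{n+1}\}_1$ from \eqref{mu4} (which applies once the relation $\eta_n\bar\nu_{n+1}=\nu^3_n$ absorbs the $\nu^3_n$ summand for $n\ge 9$), write both $\sigma_8\mu_{15}$ and $\mu_8\sigma_{17}$ as threefold Toda brackets, and juggle the $\sigma$-factors past the middle $2\iota$. The discrepancy lands in $\sigma_8\circ E\pi^{14}_{25}+8\pi^8_{25}\circ\sigma_{17}$, which after left multiplication by $\nu_5$ collapses inside the explicitly known $\pi^5_{25}=\{\nu^2_5\kappa_{11},\nu_5\bar\mu_8,\nu_5\eta_8\mu_9\sigma_{18}\}$ from \cite[p.~46]{MT1}; the key input is $\nu_5\eta_8=P(\iota_{11})$ from \eqref{t1}, which forces the residual ambiguity to vanish.

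For (2), start from \eqref{zesig1} and compose with $\eta_{23}=E\eta_{22}$. Toda's juggling formula gives
\[
[\zeta_5]\sigma_{16}\eta_{23}\in -[\nu_5]\sigma_8\circ\{8\iota_{15},2\sigma_{15},\eta_{22}\}_1,
\]
and the inner bracket is a dual presentation of $\mu_{15}$ modulo terms of the form $\sigma_{15}\cdot E\pi^{14}_{23}$ controlled by $\pi^{15}_{24}$. Hence $[\zeta_5]\sigma_{16}\eta_{23}\equiv [\nu_5]\sigma_8\mu_{15}$, which by (1) equals $[\nu_5]\mu_8\sigma_{17}$, modulo the five residual terms from the matrix Toda bracket in \eqref{zesig1}. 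Using Lemma~\ref{89}, each of $[\nu_5]\bar\varepsilon_8\eta_{23}$, $[\nu_4(E\zeta')]_6\eta_{23}$ and $[\nu_4\eta_7\bar\varepsilon_8]_6\eta_{23}$ vanishes, while $[\nu_4\sigma'\mu_{14}]_6\eta_{23}=[\nu_4\sigma'\eta_{14}]_6\mu_{15}$ accounts precisely for the stated modulus. To promote the congruence at level $6$ to the claimed equality at level $9$, push forward via ${i_{6,9}}_*$ and invoke Proposition~\ref{924}, where $[\nu_4\sigma'\eta_{14}]_9\mu_{15}=0$.

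For (3), the defining bracket $[\zeta_5]\in\{[\nu_5],8\iota_8,E\sigma'\}_1$ together with Toda's juggling gives
\[
[\zeta_5]\bar\nu_{16}\in -[\nu_5]\circ\{8\iota_8,E\sigma',\bar\nu_{15}\},\qquad [\zeta_5]\varepsilon_{16}\in -[\nu_5]\circ\{8\iota_8,E\sigma',\varepsilon_{15}\}.
\]
For the first, the crucial input is $(E\sigma')\bar\nu_{15}=0$: combine $\pm[\iota_8,\iota_8]=2\sigma_8-E\sigma'$ with $2\sigma_8\bar\nu_{15}=0$ (because $\sigma_9\bar\nu_{16}$ generates a $\Z_2$ summand of $\pi^9_{24}$, so $\sigma_8\bar\nu_{15}$ is of order $2$), a Whitehead-product reduction of $[\iota_8,\iota_8]\bar\nu_{15}$ via the formula $[\iota_n,\iota_n]\circ E^{n-1}\alpha=[\iota_n,\alpha]$, and $8E\sigma'=0$ in $\pi^8_{15}\cong\Z\oplus\Z_8$. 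For the second, use $\varepsilon_{15}\in\{\eta_{15},2\iota_{16},\nu^2_{16}\}_1$ together with \eqref{t4} to rewrite $(E\sigma')\varepsilon_{15}-(E\sigma')\eta_{15}\sigma_{16}$ as an element killed by $[\nu_5]$, whence the two left-hand sides of (3) agree. The main obstacle throughout is the indeterminacy bookkeeping in (2): each residual term of the matrix Toda bracket must be matched against the generators of $R^6_{24}$ listed in Proposition~\ref{R61}, and one must verify that each either dies after composition with $\eta_{23}$ or is absorbed into the stated $[\nu_4\sigma'\eta_{14}]_6\mu_{15}$ modulus.
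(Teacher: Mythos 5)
Your part (2) essentially reproduces the paper's argument (juggle \eqref{zesig1} against $\eta_{23}$, kill the residual terms with Lemma \ref{89}, descend to level $9$ via Proposition \ref{924}), but even there one term is dropped: applying (1) inside $R^6_{24}$ only gives $[\nu_5]\sigma_8\mu_{15}\equiv[\nu_5]\mu_8\sigma_{17}$ modulo ${i_6}_*R^5_{24}=\{[\nu^2_4]_6\kappa_{10},\,[\nu_4\sigma'\eta_{14}]_6\mu_{15}\}$ by \eqref{524}, and the summand $[\nu^2_4]_6\kappa_{10}$ still has to be excluded. The paper does this with the $J$-homomorphism: $J$ kills both sides by \eqref{Jn5}, while $J([\nu^2_4]_6\kappa_{10})=\nu_6\sigma_9\kappa_{16}\neq 0$.

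The more serious gaps are in (1) and (3). In (1), $\sigma_8\mu_{15}$ and $\mu_8\sigma_{17}$ are \emph{independent} generators of $\pi^8_{24}\cong(\Z_2)^7$, so no amount of juggling identifies them before composing with $\nu_5$; you need a mechanism that turns the left-hand $\sigma_8$ into the $\varepsilon_8$ out of which $\mu_8\sigma_{17}$ is rebuilt. The paper's mechanism is the relation $\nu_5\sigma_8\eta_{15}=\nu_5\varepsilon_8$ together with the bracket presentation of $\mu_8\sigma_{17}$ from Lemma \ref{m3sg}(1), giving $\nu_5\sigma_8\mu_{15}\in\nu_5\sigma_8\circ\{\eta_{15},8\iota_{16},2\sigma_{16}\}_1\subset\{\nu_5\varepsilon_8,8\iota_{16},2\sigma_{16}\}_1\ni\nu_5\mu_8\sigma_{17}$ with vanishing indeterminacy; neither ingredient appears in your sketch, and your bookkeeping is off by a degree (the elements live in $\pi^5_{24}$, not $\pi^5_{25}$, and $8\pi^8_{25}\circ\sigma_{17}$ does not typecheck). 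In (3) your route --- juggling the defining bracket $\{[\nu_5],8\iota_8,E\sigma'\}_1$ against $\bar\nu_{15}$ --- is genuinely different from the paper's, but it is not carried out: the bracket $\{8\iota_8,E\sigma',\bar\nu_{15}\}$ is defined only if $(E\sigma')\bar\nu_{15}=0$, which you reduce to an unevaluated Whitehead product $[\iota_8,\bar\nu_8]$, and even then you would still have to evaluate $[\nu_5]\circ\{8\iota_8,E\sigma',\bar\nu_{15}\}$ together with the indeterminacy $[\nu_5]\circ\pi^8_{16}\circ\bar\nu_{16}$, none of which is attempted. The paper instead writes $\bar\nu_{16}=\{\nu_{16},\eta_{19},\nu_{20}\}_1$, uses $[\zeta_5]\nu_{16}\equiv\pm[\nu_5]\zeta_8\bmod[\nu_5]\bar\nu_8\nu_{16}$ from \eqref{zt5nu} and $\{\zeta_8,\eta_{19},\nu_{20}\}_1=0$, and checks $R^6_{21}\circ\nu_{21}=0$; the second half of (3) is then immediate from $\eta_{16}\sigma_{17}=\bar\nu_{16}+\varepsilon_{16}$, a one-line step your proposal replaces with an inconclusive manipulation of $(E\sigma')\varepsilon_{15}$.
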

\begin{proof}
By $\nu_5\sigma_8\eta_{15} = \nu_5\varepsilon_8$ \cite[p.~152]{T} and
Lemma \ref{m3sg}(1), we have 
\[\begin{split}
\nu_5\sigma_8\mu_{15}
&\in \nu_5\sigma_8\circ\{\eta_{15},8\iota_{16},2\sigma_{16}\}_1
\subset\{\nu_5\varepsilon_8,8\iota_{16},2\sigma_{16}\}_1\\
&\supset\nu_5\circ\{\varepsilon_8,8\iota_{16},2\sigma_{16}\}_1
\ni\nu_5\mu_8\sigma_{17}\\
&\bmod \nu_5\varepsilon_8\circ E\pi^{15}_{23} + \pi^5_{17}\circ 2\sigma_{17}
+ \{\nu_5\eta_8\bar{\varepsilon}_8\}=0.
\end{split}\]
This leads to (1).

By \eqref{zesig1} and the fact that $\mu_{15}\in \{8\iota_{15},2\sigma_{15},\eta_{22}\}$, we have 
$$
[\zeta_5]\sigma_{16}\eta_{23}\in [\nu_5]\sigma_8\circ \{8\iota_{15},2\sigma_{15},\eta_{22}\}\ni [\nu_5]\sigma_8\eta_{15}
$$
$$ 
\bmod\ [\nu_5]\bar{\varepsilon}_8\eta_{23},
[\nu_4\sigma'\mu_{14}]_6\eta_{23},
[\nu_4(E\zeta')]_6\eta_{23}, 
[\nu_4\eta_7\bar{\varepsilon}_8]_6\eta_{23}.
$$
We have $[\nu_5]\bar{\varepsilon}_8\eta_{23} = [\nu_5](E\sigma')\nu^3_{15} = 0$ \eqref{2[nu5]s}. 
By Lemma \ref{89}, we have 
$[\nu_4\sigma'\mu_{14}]_6\eta_{23} 
= [\nu_4\sigma'\eta_{14}]_6\mu_{15}$, 
$[\nu_4(E\zeta')]\eta_{23} = 0$ and 
$[\nu_4\eta_7\bar{\varepsilon}_8]_6\eta_{23} = 0$. 
This leads to the first of (2).  

By (1) and \eqref{524}, we obtain 
$[\nu_5]\sigma_8\mu_{15}\equiv[\nu_5]\mu_8\sigma_{17}\ \bmod \ 
[\nu^2_4]_6\kappa_{10}, 
[\nu_4\sigma'\eta_{14}]_6\mu_{15}$
By \eqref{Jn5}, we have $J([\nu_5]\sigma_8\mu_{15}) = J([\nu_5]\mu_8\sigma_{17}) = 0$. By the fact that $J[\nu^2_4] = \nu_5\sigma_8$, 
we know $J([\nu^2_4]_6\kappa_{10})=\nu_6\sigma_9\kappa_{16}\ne 0$ 
\cite[Theorem 1.1(a)]{MMO}. This leads to the second of (2). 

The third of (2) follows from the fact 
that $[\nu_4\sigma'\eta_{14}]_9=0$.

By \cite[Proposition 2.2(6)]{Og}, we know $\zeta_6\eta_{17}=8P(\sigma_{13})$. 
We consider the matrix Toda bracket.
\[
\mtoda{\zeta_6}{\eta_{17}}{{\nu}_{18}}
{[\iota_6,\iota_6]}{8\sigma_{11}}_1
\]
Since $H\mtoda{\zeta_6}{\eta_{17}}{{\nu}_{18}}
{[\iota_6,\iota_6]}{8\sigma_{11}}_1\subset\{2\iota_{11},8\sigma_{11},\nu_{18}\}_1=\zeta_{11}$, this is regarded as $\zeta'$. We have 
$E\zeta'=\{\zeta_7,\eta_{18},\nu_{19}\}_1$ and $\{\zeta_8,\eta_{19},\nu_{20}\}_1=0$. 

By \eqref{zt5nu} and \cite[Theorem 2.1]{KM2}, we have 
$$
[\zeta_5]\nu_{16}\equiv \pm[\nu_5]\zeta_8\ \bmod\ [\nu_5]\bar{\nu}_8\nu_{16}.
$$
So, by the fact that $\bar{\nu}_{16}=\{\nu_{16},\eta_{19},\nu_{20}\}_1$ and $\bar{\nu}^2_8=0$ (Lemma \ref{zesig}), for $a\in\{0, 1\}$, we have
$$
[\zeta_5]\bar{\nu}_{16}=[\zeta_5]\circ\{\nu_{16},\eta_{19},\nu_{20}\}_1
\subset \{[\zeta_5]\nu_{16},\eta_{19},\nu_{20}\}_1
$$
$$
=\{\pm[\nu_5]\zeta_8+a[\nu_5]\bar{\nu}_8\nu_{16},\eta_{19},\nu_{20}\}_1
\subset\{\pm[\nu_5]\zeta_8,\eta_{19},\nu_{20}\}_1
+\{a[\nu_5]\bar{\nu}_8\nu_{16},\eta_{19},\nu_{20}\}_1
$$
$$
\supset\pm[\nu_5]\circ\{\zeta_8,\eta_{19},\nu_{20}\}_1
+a[\nu_5]\bar{\nu}_8\circ\{\nu_{16},\eta_{19},\nu_{20}\}_1
\ni 0\ \bmod\ R^6_{21}\circ\nu_{21}.
$$
By \cite[Theorem 0.1(i)]{HKM}, we have $R^6_{21}\circ\nu_{21}=\{[\nu_4\sigma'\sigma_{14}]_6\nu_{21}, [\eta^2_5]\kappa_7\nu_{21}\}$.
By \eqref{i5R424} and \eqref{[eta25nu}, we have $[\nu_4\sigma'\sigma_{14}]\nu_{21}\in{i_5}_*R^4_{24}=0$ and 
$[\eta^2_5]\kappa_7\nu_{21}= [\eta^2_5]\nu_7\kappa_{10}=0$.
This leads to the first of (3).

The second of (3) follows from the fact that $\bar{\nu}_{16}=\eta_{16}\sigma_{17}+\varepsilon_{16}$. 
This completes the proof. 
\end{proof}

Here, we need
\begin{lem}\label{ind[bep9}
\begin{enumerate}
\item
$[\iota_7]_{10}\sigma'\eta_{14}\mu_{15} = [\iota_7]_{10}\eta_7\mu_8\sigma_{17}=0$.
\item
$\Delta(\sigma_{10}\bar{\nu}_{17}) = 0$.
\end{enumerate}
\end{lem}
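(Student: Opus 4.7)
For part (1), the strategy is to apply \eqref{DeSig} to the composition $\sigma_9\mu_{16}=\sigma_9\circ E\mu_{15}\in\pi^9_{25}$. Together with \eqref{Desig9}, this gives
\[
\Delta(\sigma_9\mu_{16})=\Delta\sigma_9\circ\mu_{15}=[\iota_7]_9(\bar\nu_7+\varepsilon_7)\mu_{15}.
\]
Suspending $\bar\nu_6\mu_{14}=0$ from \eqref{bn6m} kills the first summand, and suspending $\varepsilon_5\mu_{13}=\eta_5\mu_6\sigma_{15}$ from \eqref{Ogu0} identifies the second as $\eta_7\mu_8\sigma_{17}$. Hence $\Delta(\sigma_9\mu_{16})=[\iota_7]_9\eta_7\mu_8\sigma_{17}$, and since $i_{10\ast}\circ\Delta=0$ in the exact sequence $(24)_{10}$, this forces $[\iota_7]_{10}\eta_7\mu_8\sigma_{17}=0$ in $R^{10}_{24}$. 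The second equality in (1) then follows from Proposition \ref{924} (which gives $[\iota_7]_9\sigma'\eta_{14}\mu_{15}=[\nu_5]_9\mu_8\sigma_{17}$ in $R^9_{24}$) by passing through $i_{10\ast}$ and invoking $[\nu_5]_{10}=[\iota_7]_{10}\eta_7$, a consequence of \eqref{k4} combined with the vanishing $i_{10\ast}\Delta\iota_9=0$.

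For part (2), I would apply \eqref{DeSig} to obtain $\Delta(\sigma_{10}\bar\nu_{17})=\Delta\sigma_{10}\circ\bar\nu_{16}$ and then decompose $\bar\nu_{17}=\eta_{17}\sigma_{18}+\varepsilon_{17}$ modulo $2$ via \eqref{et9s}, yielding
\[
\Delta(\sigma_{10}\bar\nu_{17})=\Delta(\sigma_{10}\eta_{17}\sigma_{18})+\Delta(\sigma_{10}\varepsilon_{17}).
\]
To dispose of the first summand I would establish $\sigma_{10}\eta_{17}=\eta_{10}\sigma_{11}$: the suspension $E:\pi^{10}_{18}\to\pi^{11}_{19}$ is injective since $\pi^{21}_{20}=0$, and in the stable group $\pi^{11}_{19}$ the two sides agree by commutativity of $\eta$ and $\sigma$. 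Rewriting $\sigma_{10}\eta_{17}\sigma_{18}=\eta_{10}\sigma^2_{11}=\eta_{10}\circ E(\sigma_{10}\sigma_{17})$ and applying \eqref{DeSig} with \eqref{Det10} gives
\[
\Delta(\sigma_{10}\eta_{17}\sigma_{18})=\Delta\eta_{10}\circ\sigma^2_{10}=2[\iota_7]_{10}\nu_7\sigma^2_{10}=0
\]
by $\nu_7\sigma^2_{10}=0$ from \eqref{n7sigsig}. For the remaining summand, I would use $\varepsilon_{17}\in\{\eta_{17},2\iota_{18},\nu^2_{18}\}_1$ and the juggling formula $\sigma_{10}\circ\{\eta_{17},2\iota_{18},\nu^2_{18}\}_1\subset\{\sigma_{10}\eta_{17},2\iota_{18},\nu^2_{18}\}_1=\{\eta_{10}\sigma_{11},2\iota_{18},\nu^2_{18}\}_1$, then apply \eqref{mt1} to conclude
\[
\Delta(\sigma_{10}\varepsilon_{17})\in -\{\Delta(\eta_{10}\sigma_{11}),2\iota_{17},\nu^2_{17}\}_0=-\{2[\iota_7]_{10}\nu_7\sigma_{10},2\iota_{17},\nu^2_{17}\}_0,
\]
and show this coset contains $0$ by bounding its indeterminacy $2[\iota_7]_{10}\nu_7\sigma_{10}\circ E\pi^{16}_{24}+R^{10}_{17}\circ\nu^2_{17}$ using the order of $[\iota_7]_{10}\nu_7$ in $R^{10}_{10}$ together with $\nu_7\sigma^2_{10}=0$.

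The principal obstacle will be in part (2): controlling the Toda-bracket indeterminacy tightly enough to conclude exact vanishing of $\Delta(\sigma_{10}\varepsilon_{17})$. Unlike part (1), where the computation collapses to a single composition after applying \eqref{DeSig}, here one must thread through both a matrix Toda bracket expression and the unstable identification $\sigma_{10}\eta_{17}=\eta_{10}\sigma_{11}$; verifying that all the stray terms sit inside the indeterminacy of the bracket $\{2[\iota_7]_{10}\nu_7\sigma_{10},2\iota_{17},\nu^2_{17}\}_0$ is the delicate step.
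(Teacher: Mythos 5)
Your part (1) is correct and is essentially the paper's own argument: $\Delta(\sigma_9\mu_{16})=[\iota_7]_9(\bar\nu_7+\varepsilon_7)\mu_{15}=[\iota_7]_9\eta_7\mu_8\sigma_{17}$ via \eqref{Desig9}, \eqref{bn6m}, \eqref{Ogu0}, then exactness of $(24)_9$ (not $(24)_{10}$ as you wrote, a harmless slip) kills the image in $R^{10}_{24}$, and the other equality follows from $[\iota_7]_9\sigma'\eta_{14}\mu_{15}=[\nu_5]_9\mu_8\sigma_{17}$ together with $[\nu_5]_{10}=[\iota_7]_{10}\eta_7$.

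Part (2) has a genuine gap. After you reduce to $\Delta(\sigma_{10}\varepsilon_{17})$, the Toda-bracket step does not close. Placing $\Delta(\sigma_{10}\varepsilon_{17})$ in the coset $-\{2[\iota_7]_{10}\nu_7\sigma_{10},2\iota_{17},\nu^2_{17}\}$ proves vanishing only if the \emph{entire} bracket is $\{0\}$, and that requires two things you do not supply. First, you must exhibit $0$ as an element of the bracket; ``bounding its indeterminacy'' cannot do this, and the natural factorizations are unavailable because $\{\sigma_{10},2\iota_{17},\nu^2_{17}\}$ and $\{2\sigma_{10},2\iota_{17},\nu^2_{17}\}$ are undefined ($2\sigma_{10}\neq 0$, $4\sigma_{10}\neq 0$), while pulling the factor $2$ inside as $[\iota_7]_{10}\circ\{2\nu_7\sigma_{10},2\iota_{17},\nu^2_{17}\}$ founders on $4\nu_7\sigma_{10}\neq 0$. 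Second, the indeterminacy contains $R^{10}_{18}\circ\nu^2_{18}$, whose computation requires the group $R^{10}_{18}$ and is nowhere addressed; the order of $[\iota_7]_{10}\nu_7$ and the relation $\nu_7\sigma^2_{10}=0$ only dispose of the other summand $2[\iota_7]_{10}\nu_7\sigma_{10}\circ\pi^{17}_{24}$. So the ``delicate step'' you flag is in fact an unfilled hole, not merely a technicality. The paper avoids all of this with a one-line argument you should adopt: by \cite[(10.20)]{T} one has $\sigma_{10}\bar\nu_{17}=[\iota_{10},\nu^2_{10}]=[\iota_{10},\nu_{10}]\circ\nu_{22}$, and $\Delta[\iota_{10},\nu_{10}]=0$ by \cite[(4.4)]{HKM}, whence \eqref{DeSig} gives $\Delta(\sigma_{10}\bar\nu_{17})=\Delta[\iota_{10},\nu_{10}]\circ\nu_{21}=0$. (Your computation that $\Delta(\sigma_{10}\eta_{17}\sigma_{18})=\Delta\eta_{10}\circ\sigma^2_{10}=2[\iota_7]_{10}\nu_7\sigma^2_{10}=0$ is correct but, combined with the paper's proof, only re-derives $\Delta(\sigma_{10}\varepsilon_{17})=0$ rather than establishing it independently.)
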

\begin{proof}
We recall from \eqref{Desig9} and \eqref{bn6m}:
$$
\Delta\sigma_9 = [\iota_7]_9(\bar{\nu}_7+\varepsilon_7), \ 
\bar{\nu}_6\mu_{14}=0.
$$
So, by \eqref{Ogu0}, we have
$$
\Delta(\sigma_9\mu_{16}) = [\iota_7]_9(\bar{\nu}_7+\varepsilon_7)\mu_{15} = [\iota_7]_9\eta_7\mu_8\sigma_{17}.
$$
This leads to the relation $[\iota_7]_{10}\eta_7\mu_8\sigma_{17} = 0$.

By \eqref{[nu5]sg8}, Lemma \ref{n5musg}(2) and the relation 
$[\nu_5]_{10} = [\iota_7]_{10}\eta_7$, we have $[\iota_7]_{10}\sigma'\eta_{14}\mu_{15} 
= [\nu_5]_{10}\sigma_8\mu_{15} = [\nu_5]_{10}\mu_8\sigma_{17} = 
[\iota_7]_{10}\eta_7\mu_8\sigma_{17} = 0$. This leads to (1).

Since  $\sigma_{10}\bar{\nu}_{17}=[\iota_{10},\nu^2_{10}]$
\cite[(10.20)]{T} and $\Delta[\iota_{10},\nu_{10}]=0$ \cite[(4.4)]{HKM},
we have
\[
\Delta(\sigma_{10}\bar{\nu}_{17})=\Delta([\iota_{10},\nu_{10}]\circ\nu_{22})
=\Delta[\iota_{10},\nu_{10}]\circ \nu_{21}=0.
\]
This leads (2) and completes the proof. 
\end{proof}

By \cite[(3.9)]{MMO}, we know
$$
H(\tilde{\varepsilon}_{10}) = \bar{\varepsilon}_{19} 
\ \mbox{and} \  2\tilde{\varepsilon}_{10}=\sigma_{10}\nu_{17}\kappa_{20}.
$$
This implies the relation
$$
J[\bar{\varepsilon}_9]\equiv\tilde{\varepsilon}_{10}\ \bmod \ 
E\pi^9_{33}.
$$

By \eqref{n6e}, we have $\nu_{12}\bar\nu_{15}=0$. 
We also have 
$(\varepsilon_7 + \sigma'\eta_{14})\bar\nu_{15}=0$ 
since
$\varepsilon_3\bar\nu_{11}=\eta_3\bar\varepsilon_4$ \eqref{epep} and
$\sigma'\eta_{14}\bar\nu_{15}=\eta_7\bar\varepsilon_8$ (Lemma \ref{zesig}(3)). 
Therefore, by \eqref{Dnu^2}, the following matrix Toda bracket (\cite{Mi1}, \cite{MaO}, \cite{MiM1}) is well defined:
\[
A = 
\mtoda{\Delta(\nu_{10})}{\nu_{12}}{\bar\nu_{15}}
{[\iota_7]_{10}}{\varepsilon_7 + \sigma'\eta_{14}}.
\]
%Let $A$ be  a representative of this matrix Toda bracket.
Then, by \eqref{pD} and \cite[Lemma 4.1(2)]{MiM1}, we have 
\[
p_{10 \ast}(A) \in
\mtoda{2\nu_{9}}{\nu_{12}}{\bar\nu_{15}}{0}{\varepsilon_7 + \sigma'\eta_{14}}
=\{2\nu_9, \nu_{12}, \bar\nu_{15}\}.
\]
We see that 
$\bar\varepsilon_9 \in \{2\nu_9, \nu_{12}, \bar\nu_{15}\}
\bmod 2\nu_9 \circ \pi^{12}_{24}+\pi^9_{16}\circ\bar\nu_{16}=0$ by Lemma \ref{tn7k}(1). 
Therefore, $A$ is a lift of $\bar\varepsilon_9$. 
So, we take
\begin{equation}\label{mTbep}
[\bar{\varepsilon}_9]=\mtoda{\Delta(\nu_{10})}{\nu_{12}}{\bar\nu_{15}}{[\iota_7]_{10}}
{\varepsilon_7 + \sigma'\eta_{14}}.
\end{equation}

We will determine the indeterminacy of this matrix Toda bracket.
It is $\Delta(\nu_{10})\circ\pi^{12}_{24} + [\iota_7]_{10}\circ\pi^7_{24} + R^{10}_{16}\circ\bar{\nu}_{16}$.
We know $\pi^{12}_{24} = \{\theta, E\theta'\}\cong(\Z_2)^2$ 
\cite[Theorem 7.6]{T}. Since $E\theta = [\iota_{13},\iota_{13}]$ and 
$E\theta' = [\iota_{12},\eta_{12}]$, we have $\nu_{10}(E\theta)
= [\iota_{10},\nu_{10}]\nu_{22}$ and $\nu_{12}(E^2\theta') = 0$.
By \cite[(4.4)]{HKM}, we have $\Delta(\nu_{10})\theta = 0$. This 
implies $\Delta(\nu_{10})\circ\pi^{12}_{24} = 0$. We know 
$\pi^7_{24} = \{\sigma'\eta_{14}\mu_{15}, \nu_7\kappa_{10}, \bar{\mu}_7, 
\eta_7\mu_8\sigma_{17}\cong(\Z_2)^4$ \cite[Theorem 12.7]{T}. So, by 
Lemma \ref{ind[bep9}, we have 
$$
[\iota_7]_{10}\circ\pi^7_{24} = \{[\iota_7]_{10}\nu_7\kappa_{10}, 
[\iota_7]_{10}\bar{\mu}_7\}.
$$
By the fact that $R^{10}_{16} = \{\Delta(\sigma_{10}), [\iota_7]_{10}\nu^3_7, [\iota_7]\mu_7\}$ \eqref{R1016}, the relation $\nu_{13}\bar{\nu}_{16} = 0$, $\mu_7\bar{\nu}_{16} = 0$ and 
$\sigma_{10}\bar{\nu}_{17} = [\iota_{10},\nu^2_{10}]$, 
we have $R^{10}_{16}\circ\bar{\nu}_{16} = 0$. 
This shows that the indeterminacy of $[\bar{\varepsilon}_9]$ is 
$\{[\iota_7]_{10}\nu_7\kappa_{10}, [\iota_7]_{10}\bar{\mu}_7\}$.

Now we show the following.

\begin{prop} \label{10}
$R^{10}_{24} = \Z_{16}\{\Delta(E\rho')\}\oplus\Z_4\{[\bar{\varepsilon}_9]\}
\oplus\Z_2\{[\sigma^2_9]\eta_{23}\}\oplus\Z_2\{[\iota_7]_{10}\bar{\mu}_7\}
\oplus\Z_2\{[P(E\theta)+\nu_6\kappa_9]_{10}\eta_{23}\}$, where
$[P(E\theta)+\nu_6\kappa_9]_{10}\eta_{23}
= [\bar{\nu}_9\nu^2_{17}]\eta_{23}$, 
$[\iota_7]_{10}\eta_7\mu_8\sigma_{17}
=[\nu_5]_{10}\mu_8\sigma_{17}= 0$, 
$8\Delta(E\rho') = [\bar{\zeta}_5]_{10}+[\iota_7]_{10}\bar{\mu}_7$ and $2[\bar{\varepsilon}_9]=[\iota_7]_{10}\nu_7\kappa_{10}$.
%\bmod[P(\Sigma\theta)+\nu_6\kappa_9]_{10}\eta_{23}$.
\end{prop}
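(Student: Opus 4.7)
The plan is to exploit the exact sequence $(24)_9$
\[
\pi^9_{25}\rarrow{\Delta}R^9_{24}\rarrow{i_*}R^{10}_{24}\rarrow{p_*}\pi^9_{24}\rarrow{\Delta}R^9_{23}
\]
together with the lifts already constructed. From the proof of Proposition \ref{R10} the subgroup $\Ker\{\Delta:\pi^9_{24}\to R^9_{23}\}$ is generated by $2\rho'$, $\sigma^2_9\eta_{23}=\sigma_9(\bar{\nu}_{16}+\varepsilon_{16})$, and $\bar{\varepsilon}_9$. Hence $R^{10}_{24}$ is generated, modulo ${i_{10}}_*R^9_{24}$, by $\Delta(E\rho')$ (which satisfies ${p_{10}}_*\Delta(E\rho')=2\rho'$ by \eqref{DelS2}), by $[\sigma^2_9]\eta_{23}$, and by the matrix Toda bracket $[\bar{\varepsilon}_9]$ defined in \eqref{mTbep}. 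The part coming from ${i_{10}}_*R^9_{24}$ is read off directly from Proposition \ref{924}.

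Next I would compute $\Delta$ on each generator of $\pi^9_{25}$, using Toda's tables together with \eqref{DeSig}--\eqref{DelS2} and the identifications of Section~5. The expected outcome kills $[\nu_5]_9\mu_8\sigma_{17}$ and $[\iota_7]_9\eta_7\mu_8\sigma_{17}$ in $R^{10}_{24}$ (consistent with Lemma \ref{ind[bep9}(1)) and forces $[\bar{\zeta}_5]_{10}$ into the subgroup generated by $\Delta(E\rho')$ and $[\iota_7]_{10}\bar{\mu}_7$. The identification $[P(E\theta)+\nu_6\kappa_9]_{10}\eta_{23}=[\bar{\nu}_9\nu^2_{17}]\eta_{23}$ is then inherited from Proposition \ref{R11} via \eqref{Delta}.

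The order of $\Delta(E\rho')$ is at most $16$ since $16\rho'=0$. For the lower bound I would apply the $J$-homomorphism: by \eqref{JD} one has $J\Delta(E\rho')=\pm[\iota_{10},E\rho']$, which via the EHP framework \eqref{EHP} together with \eqref{J32} detects $8\Delta(E\rho')$ as a nonzero element of order $2$. Matching this value with the $J$-image of $[\bar{\zeta}_5]_{10}+[\iota_7]_{10}\bar{\mu}_7$, computed from the bracket description \eqref{[bzeta_5]} (with the order controlled by Lemma \ref{zeta1}) and from \eqref{Ji7}, then pins down the displayed relation $8\Delta(E\rho')=[\bar{\zeta}_5]_{10}+[\iota_7]_{10}\bar{\mu}_7$.

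Finally, for $2[\bar{\varepsilon}_9]=[\iota_7]_{10}\nu_7\kappa_{10}$ I right-multiply the matrix bracket \eqref{mTbep} by $2\iota_{24}$ and apply the shuffling identities for matrix Toda brackets. Using $2\bar{\nu}_{15}=0$ together with $\Delta(\nu_{10})\circ 2\pi^{12}_{24}=0$, the expression collapses to $[\iota_7]_{10}\circ\{\varepsilon_7+\sigma'\eta_{14},\bar{\nu}_{15},2\iota_{23}\}_1$, which by Lemma \ref{tn7k}(2) equals $[\iota_7]_{10}\nu_7\kappa_{10}$; the residual indeterminacy $[\iota_7]_{10}(\sigma'\eta_{14}\mu_{15}+\eta_7\mu_8\sigma_{15})$ vanishes by Lemma \ref{ind[bep9}(1). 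The main obstacle is the third paragraph: controlling the several nested indeterminacies simultaneously so that the \emph{specific} coefficient combination $[\bar{\zeta}_5]_{10}+[\iota_7]_{10}\bar{\mu}_7$, rather than a translate within its coset modulo the other summands, is forced.
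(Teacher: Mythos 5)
Your framework for the extension $2[\bar{\varepsilon}_9]=[\iota_7]_{10}\nu_7\kappa_{10}$ matches the paper's argument (shuffle the matrix bracket \eqref{mTbep}, kill the first summand, evaluate the second via the bracket of Lemma \ref{tn7k}(2), absorb the indeterminacy by Lemma \ref{ind[bep9}(1)), and your use of $(24)_9$ with $\Ker\{\Delta:\pi^9_{24}\to R^9_{23}\}=\{2\rho',\sigma^2_9\eta_{23},\bar{\varepsilon}_9\}$ is also the paper's. But the step you yourself flag as the main obstacle is a genuine gap, and your proposed remedy does not close it. Matching $J$-images cannot identify $8\Delta(E\rho')$ inside ${i_{10}}_*R^9_{24}\cong(\Z_2)^4$: the $J$-homomorphism is not injective on that subgroup (nor is it shown to be), so from $J(8\Delta(E\rho'))=16P(\rho_{21})$ you can at best locate $8\Delta(E\rho')$ modulo $\Ker J$, which is precisely the coset ambiguity you concede. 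Two smaller slips: \eqref{J32} concerns $J[32[\iota_{12},\iota_{12}]]$ and is irrelevant here --- the order $\sharp\Delta(E\rho')=16$ comes from $J\Delta(E\rho')=2P(\rho_{21})$ together with $\sharp P(\rho_{21})=32$ from (5.15) of [MMO]; and the computation of $\Delta\pi^9_{25}$ only kills $[\iota_7]_9\eta_7\mu_8\sigma_{17}$ and $[\nu_5]_9\mu_8\sigma_{17}$ --- it says nothing about $[\bar{\zeta}_5]$, so it does not by itself ``force $[\bar{\zeta}_5]_{10}$ into the subgroup generated by $\Delta(E\rho')$ and $[\iota_7]_{10}\bar{\mu}_7$''.

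The paper closes the gap by working one stage higher rather than by $J$. From \eqref{kazt10} one has $[\zeta_5]_{11}\equiv[\iota_7]_{11}\mu_7\bmod[\iota_7]_{11}\nu^3_7$, whence $[\bar{\zeta}_5]_{11}\in\{[\iota_7]_{11}\mu_7,8\iota_{16},2\sigma_{16}\}_1\ni[\iota_7]_{11}\bar{\mu}_7$ with vanishing indeterminacy, while $\{[\iota_7]_{11}\nu^3_7,8\iota_{16},2\sigma_{16}\}_1=0$ via $\nu^2_7\zeta_{13}=0$; thus $[\bar{\zeta}_5]_{11}=[\iota_7]_{11}\bar{\mu}_7$. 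By exactness of $(24)_{10}$ this puts $[\bar{\zeta}_5]_{10}+[\iota_7]_{10}\bar{\mu}_7$ into $\Delta\pi^{10}_{25}$, and since $\Delta(\sigma_{10}\bar{\nu}_{17})=0$ (Lemma \ref{ind[bep9}(2)) and $\Delta(\bar{\varepsilon}_{10})=\Delta(\eta_{10})\kappa_{10}=0$, that image is the cyclic group $\Z_{16}\{\Delta(E\rho')\}$. A nonzero element of order $2$ there must equal $8\Delta(E\rho')$, and nonvanishing of $[\bar{\zeta}_5]_{10}+[\iota_7]_{10}\bar{\mu}_7$ is guaranteed by \eqref{i10R^924}. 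You would need to supply this (or an equivalent) argument; without it the extension $\Z_{16}$ versus $\Z_8\oplus\Z_2$, and hence the stated group structure, is not established.
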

\begin{proof}
We recall $\Delta\kappa_{10} = [\bar{\nu}_9\nu^2_{17}]
+ [P(E\theta)+\nu_6\kappa_9]_{10}\ \bmod\ 16[\sigma^2_9]$ in Proposition \ref{R11}. 
By \eqref{Det10}, we have
$\Delta\kappa_{10}\circ\eta_{23}
= \Delta(\eta_{10}\kappa_{11}) = 2[\iota_7]_{10}\nu_7\kappa_{10} = 0$. So we have the first relation 
$[P(E\theta)+\nu_6\kappa_9]_{10}\eta_{23}
= [\bar{\nu}_9\nu^2_{17}]\eta_{23}$.
The second relation is obtained by Lemma \ref{ind[bep9}(1).

We consider the exact sequence $(24)_9$:
$$
\pi^9_{25}\rarrow{\Delta}R^9_{24}\rarrow{i_*}R^{10}_{24}\rarrow{p_*}
\pi^9_{24}\rarrow{\Delta}R^9_{23}\rarrow{}\cdots.
$$
Since $J\Delta(E\rho') = 2P(\rho_{21})$ and 
$\sharp (P(\rho_{21})) = 32$ \cite[(5.15)]{MMO}, 
we have $\sharp(\Delta(E\rho')) = 16$. 

We know $\pi^9_{25} = \{\sigma_9\nu^3_{16},
\sigma_9\mu_{16}, \sigma_9\eta_{16}\varepsilon_{17}, \mu_9\sigma_{18}\}
\cong(\Z_2)^4$ \cite[Theorem 12.6]{T}. We recall
\begin{gather*}
\Delta\sigma_9 = [\iota_7]_9(\bar{\nu}_7+\varepsilon_7)
\quad\text{\eqref{Desig9}}, \\
\kappa_7\nu_{21}=\nu_7\kappa_{10}
\quad\text{and}\quad
\bar{\nu}_6\mu_{14}=0
\quad\text{\eqref{k7n}, \eqref{bn6m}}.
\end{gather*}
So, by \eqref{t6} and the fact that
$2\kappa_7=\bar{\nu}_7\nu^2_{15}$, %\cite[(15.5)]{MT1} 
and \eqref{Ogu0}, 
we have
\begin{gather*}
\Delta(\sigma_9\nu^3_{16}) = [\iota_7]_9(\bar{\nu}_7+\varepsilon_7)\nu^3_{15}
= [\iota_7]_9\bar{\nu}_7\nu^3_{15} =  2[\iota_7]_9\nu_7\kappa_{10} = 0.
\end{gather*}

By Lemma \ref{zesig} (2), (3) and  \eqref{t1}, 
we have
\[
\Delta(\sigma_9\eta_{16}\varepsilon_{17})
= \Delta(\sigma_9\varepsilon_{16}\eta_{24})
= [\iota_7]_9(\bar{\nu}_7+\varepsilon_7)\varepsilon_{15}\eta_{23}
= [\iota_7]_9\sigma'\nu^3_{14}\eta_{23} %= [\iota_7]_9\eta^2_7\bar{\varepsilon}_9
= 0.
\]

In the proof of Lemma \ref{ind[bep9}(1), we have 
\[
\Delta(\sigma_9\mu_{16}) = [\iota_7]_9\eta_7\mu_8\sigma_{17}.
\]

By \eqref{k4}, we have $\Delta(\mu_9\sigma_{18})
= [\iota_7]_9\eta_7\mu_8\sigma_{17} + [\nu_5]_9\mu_8\sigma_{17}$, and hence, we obtain 
\begin{equation}\label{i10R^924}
{i_{10}}_* R^9_{24}=
\{[\bar{\zeta}_5]_{10}, [P(E\theta)+\nu_6\kappa_9]_{10}\eta_{23}, 
[\iota_7]_{10}\nu_7\kappa_{10}, [\iota_7]_{10}\bar{\mu}_7\}
\cong (\Z_2)^4.
\end{equation}

We will show the relation $8\Delta(E\rho') = [\bar{\zeta}_5]_{10}+[\iota_7]_{10}\bar{\mu}_7$. 

Since $[\zeta_5]_{11}\equiv [\iota_7]_{11}\mu_7\ \bmod\ [\iota_7]_{11}\nu^3_7$ \eqref{kazt10}, we have 
$$
[\bar{\zeta}_5]_{11}\in\{[\zeta_5]_{11},8\iota_{16},2\sigma_{16}\}_1
\equiv\{[\iota_7]_{11}\mu_7,8\iota_{16},2\sigma_{16}\}_1\ 
\bmod\ \{[\iota]_{11}\nu^3_7,8\iota_{16},2\sigma_{16}\}_1.
$$

By relations $\zeta_{13}=\{\nu_{13},16\iota_{16}, \sigma_{16}\}$ \cite[(9.1)]{T},
\eqref{n9z} and \eqref{n6e}, we have 
\[
\{[\iota_7]_{11}\nu^3_7, 8\iota_{16}, 2\sigma_{16}\}_1
\supset
[\iota_7]_{11}\nu^2_7\circ\{\nu_{13}, 8\iota_{16}, 2\sigma_{16}\}_1\ni 
[\iota_7]_{11}\nu^2_7\zeta_{13}=0
\]
with the indeterminacy 
$[\iota_7]_{11}\nu^3_7\circ E\pi^{15}_{23}+R^{11}_{17}\circ 2\sigma_{17}
=0$.

We have 
$$
\{[\iota_7]_{11}\mu_7,8\iota_{16},2\sigma_{16}\}_1
\supset
[\iota_7]_{11}\circ\{\mu_7,8\iota_{16},2\sigma_{16}\}_1
\ni[\iota_7]_{11}\bar{\mu}_7
$$
$$
\bmod\ [\iota_7]_{11}\mu_7\circ E\pi^{15}_{23} 
+ R^{11}_{17}\circ 2\sigma_{17} = 0.
$$
Notice that $[\iota_7]_{11}\mu_7\circ E\pi^{15}_{23} = \{[\iota_7]_{11}\eta_7\mu_8\sigma_{17}\} = 0$. This implies 
$$
[\bar{\zeta}_7]_{11} = [\iota_7]_{11}\mu_7.
$$
So, by the fact that $\Delta(\bar{\varepsilon}_{10})=0$, we have 
$[\bar{\zeta}_5]_{10}+[\iota_7]_{10}\bar{\mu}_7\in
\Delta(\pi^{10}_{25}) = \{\Delta(E\rho')\}$.
This and \eqref{i10R^924} implies  
$$
8\Delta(E\rho') = [\bar{\zeta}_5]_{10}+[\iota_7]_{10}\bar{\mu}_7.
$$

By \eqref{mTbep} and \cite[p. 406, the first formula]{MaO},%\cite[Lemma 4.2]{MiM1},
we have 
\[
2[\bar{\varepsilon}_9]
 \in -\Delta(\nu_{10})\circ\{\nu_{12}, \bar\nu_{15}, 2\iota_{23}\}
 +[\iota_7]_{10}\circ\{\varepsilon_7+\sigma'\eta_{14},\bar\nu_{15}, 2\iota_{23}\}. 
\]
 Here, $\{\nu_{12}, \bar\nu_{15},2\iota_{23}\}
\supset E^3\{\nu_9, \bar\nu_{12}, 2\iota_{20}\}=0 \bmod
\nu_{12}\circ\pi^{15}_{24}+2\pi^{12}_{24}=0$ by $\pi^9_{21}=0$ and
$E^3\{\varepsilon_7+\sigma'\eta_{14},\bar\nu_{15}, 2\iota_{23}\}
\subset \{\varepsilon_{10}, \bar\nu_{18}, 2\iota_{26}\} \ni
\nu_{10}\kappa_{13}$ by \cite[III-Proposition 6.1(2)]{Od1}
and  $\varepsilon_{10}\circ\pi^{18}_{27}+2\pi^{10}_{27}=0$ .  
Therefore, we have $\{\varepsilon_7+\sigma'\eta_{14},
 \bar\nu_{15},2\iota_{23}\} \ni \nu_7\kappa_{10} \bmod\
\sigma'\eta_{14}\mu_{15}$\ since\ $\Ker\{E^3:\pi^7_{24} \to \pi^{10}_{27}\}=\{\sigma'\eta_{14}\mu_{15}\}$ \cite[Theorems 12.7 and 12.17, p.~146 and (12.21)]{T}. Hence, by Lemma \ref{ind[bep9}(1), 
we have 
\[
 2[\bar{\varepsilon}_9] \equiv [\iota_7]_{10}\nu_7\kappa_{10} \bmod [\iota_7]_{10}\sigma'\eta_{14}\mu_{15} = 0.
\]
This completes the proof.
\end{proof}

Next we show the following.

\begin{prop} \label{R1124}
$R^{11}_{24} = \Z_4\{[\bar{\varepsilon}_9]_{11}\}
\oplus\Z_2\{[\sigma^2_9]_{11}\eta_{23}\}
\oplus\Z_2\{[\iota_7]_{11}\bar{\mu}_7\}
\oplus\Z_2\{[P(E\theta)+\nu_6\kappa_9]_{11}\eta_{23}\}$, where $[\bar{\zeta}_5]_{11}=[\iota_7]_{11}\bar{\mu}_7$.
\end{prop}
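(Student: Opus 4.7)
The plan is to invoke the exact sequence $(24)_{10}$,
\[
\pi^{10}_{25}\xrightarrow{\Delta}R^{10}_{24}\xrightarrow{i_*}R^{11}_{24}\xrightarrow{p_*}\pi^{10}_{24}\xrightarrow{\Delta}R^{10}_{23},
\]
and combine Propositions \ref{R10}, \ref{R11} and \ref{10}.

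First, I would verify that $\Delta:\pi^{10}_{24}\to R^{10}_{23}$ is injective, so that $p_*=0$ and $R^{11}_{24}\cong R^{10}_{24}/\Delta(\pi^{10}_{25})$. Recall $\pi^{10}_{24}=\Z_{16}\{\sigma^2_{10}\}\oplus\Z_2\{\kappa_{10}\}$. By Proposition \ref{R11}, $\Delta(\sigma^2_{10})=2x[\sigma^2_9]$ for odd $x$, which is of order $16$ since $[\sigma^2_9]$ has order $32$, and $\Delta(\kappa_{10})\equiv[\bar{\nu}_9\nu^2_{17}]+[P(E\theta)+\nu_6\kappa_9]_{10}\bmod 16[\sigma^2_9]$. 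Since $[\bar{\nu}_9\nu^2_{17}]$ generates a $\Z_2$-summand of $R^{10}_{23}$ independent of $[\sigma^2_9]$ (Proposition \ref{R10}), $\Delta(\kappa_{10})$ is not a multiple of $\Delta(\sigma^2_{10})$, and $\Delta$ is injective.

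Second, I would identify $\Delta(\pi^{10}_{25})=\Z_{16}\{\Delta(E\rho')\}$. The generator $E\rho'\in\pi^{10}_{25}$ contributes the order-$16$ element $\Delta(E\rho')$ of Proposition \ref{10}. For the remaining generators of $\pi^{10}_{25}$, I would use \eqref{Delta} and \eqref{DeSig} to show their $\Delta$-images already lie in $\langle\Delta(E\rho')\rangle$: composites $\alpha\circ E\beta$ with $\Delta\alpha=0$ contribute nothing by \eqref{Delta}, while suspensions of the other generators of $\pi^9_{24}$ (namely $\sigma_9\bar\nu_{16}$, $\sigma_9\varepsilon_{16}$, $\bar{\varepsilon}_9$) either vanish on suspension or are absorbed into $\langle\Delta(E\rho')\rangle$, in line with the computations in the proof of Proposition \ref{10}.

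Third, the quotient is computed. In $R^{11}_{24}$, $i_*\Delta(E\rho')=0$, so the relation $8\Delta(E\rho')=[\bar{\zeta}_5]_{10}+[\iota_7]_{10}\bar{\mu}_7$ of Proposition \ref{10} descends to $[\bar{\zeta}_5]_{11}+[\iota_7]_{11}\bar{\mu}_7=0$, yielding the stated relation $[\bar{\zeta}_5]_{11}=[\iota_7]_{11}\bar{\mu}_7$. The remaining four summands $\Z_4\{[\bar{\varepsilon}_9]\}$, $\Z_2\{[\sigma^2_9]\eta_{23}\}$, $\Z_2\{[\iota_7]_{10}\bar{\mu}_7\}$, $\Z_2\{[P(E\theta)+\nu_6\kappa_9]_{10}\eta_{23}\}$ of $R^{10}_{24}$ inject into the quotient, producing the asserted direct-sum decomposition. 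The relation $2[\bar{\varepsilon}_9]=[\iota_7]_{10}\nu_7\kappa_{10}$ of Proposition \ref{10} pushes forward to $R^{11}_{24}$ and keeps $[\bar{\varepsilon}_9]_{11}$ of order $4$.

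The main obstacle is Step 2: showing exhaustively that $\Delta(\pi^{10}_{25})$ is no larger than the cyclic subgroup $\langle\Delta(E\rho')\rangle\cong\Z_{16}$. This requires Toda's explicit description of $\pi^{10}_{25}$ and, for each generator, an application of \eqref{DeSig}, \eqref{Delta}, and the diagram \eqref{EHP} (passing through the $J$-homomorphism as in the derivation of \eqref{JPeth10}) to rule out any new independent relations that could alter the cyclic orders in the final decomposition.
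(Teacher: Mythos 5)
Your overall strategy is exactly the paper's: run the exact sequence $(24)_{10}$, check that $\Delta\colon\pi^{10}_{24}\to R^{10}_{23}$ is injective (your verification via Proposition \ref{R11} is fine), identify $\Delta(\pi^{10}_{25})$ with the cyclic group $\Z_{16}\{\Delta(E\rho')\}$, and read off the quotient together with the relation $[\bar{\zeta}_5]_{11}=[\iota_7]_{11}\bar{\mu}_7$ from $8\Delta(E\rho')=[\bar{\zeta}_5]_{10}+[\iota_7]_{10}\bar{\mu}_7$. Steps 1 and 3 are correct as written.

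The gap is the one you flag yourself: Step 2 is asserted, not proved, and the tool you name for it would not close it. With $\pi^{10}_{25}=\Z_{16}\{E\rho'\}\oplus\Z_2\{\sigma_{10}\bar{\nu}_{17}\}\oplus\Z_2\{\bar{\varepsilon}_{10}\}$, the two facts you need are $\Delta(\sigma_{10}\bar{\nu}_{17})=0$ and $\Delta(\bar{\varepsilon}_{10})=0$; but \eqref{Delta} (``$\Delta(\alpha\beta)=0$ when $\Delta\alpha=0$'') does not apply to either, since $\Delta\sigma_{10}\neq 0$ and $\Delta\eta_{10}\neq 0$, and neither image is ``absorbed into'' $\langle\Delta(E\rho')\rangle$ --- they must actually vanish, or the quotient would be smaller than claimed. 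The paper's arguments are short but specific: for the first, $\sigma_{10}\bar{\nu}_{17}=[\iota_{10},\nu^2_{10}]=[\iota_{10},\nu_{10}]\circ\nu_{22}$ and $\Delta[\iota_{10},\nu_{10}]=0$, so $\Delta(\sigma_{10}\bar{\nu}_{17})=\Delta[\iota_{10},\nu_{10}]\circ\nu_{21}=0$ (Lemma \ref{ind[bep9}(2)); for the second, $\bar{\varepsilon}_{10}=\eta_{10}\kappa_{11}$ by \eqref{kaet}, so by \eqref{DeSig} and \eqref{Det10}, $\Delta\bar{\varepsilon}_{10}=\Delta(\eta_{10})\kappa_{10}=2[\iota_7]_{10}\nu_7\kappa_{10}=4[\bar{\varepsilon}_9]=0$ using $2[\bar{\varepsilon}_9]=[\iota_7]_{10}\nu_7\kappa_{10}$ from Proposition \ref{10}. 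Supplying these two computations (no $J$-homomorphism or EHP argument is needed here) completes your proof and makes it identical to the paper's.
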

\begin{proof}
In the exact sequence $(24)_{10}$:
$$
\pi^{10}_{25}\rarrow{\Delta}R^{10}_{24}\rarrow{i_*}R^{11}_{24}
\rarrow{p_*}\pi^{10}_{24}\rarrow{\Delta}R^{10}_{23},
$$
We know that $\Delta: \pi^{10}_{24}\to R^{10}_{23}$ is a monomorphism and $\pi^{10}_{25}
= \Z_{16}\{E\rho'\}\oplus\Z_2\{\sigma_{10}\bar{\nu}_{17}\}
\oplus\Z_2\{\bar{\varepsilon}_{10}\}$. 

By Lemma \ref{ind[bep9}(2), we have
$\Delta(\sigma_{10}\bar{\nu}_{17})=0$. By relations 
\eqref{kaet} and \eqref{Det10}, we have  
$\Delta\bar{\varepsilon}_{10}=\Delta(\eta_{10})\kappa_{10}=2[\iota_7]_{10}\nu_7\kappa_{10}=0$.
This completes the proof.
\end{proof}

We show
\begin{lem}\label{[bep9]11eta}
\begin{enumerate}
\item
$[\bar{\varepsilon}_9]\eta_{24} \equiv 0\ \bmod\ 
[\iota_7]_{10}\zeta_7\sigma_{18}, 
[\iota_7]_{10}\eta_7\bar\mu_8$.
\item
$[\bar\varepsilon_9]_{11}\eta_{24}\equiv 0\ \bmod\ [\iota_7]_{11}\eta_7\bar\mu_8$.
\end{enumerate}
\end{lem}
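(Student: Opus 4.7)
The plan is to start from the matrix Toda bracket representation of $[\bar{\varepsilon}_9]$ given in \eqref{mTbep} and post-compose with $\eta_{24}$. Using the standard formula $\{a,b,c\}\circ\eta \subset \{a,b,c\circ\eta\}$ (and its extension to matrix Toda brackets), together with the identity $\bar{\nu}_{15}\eta_{23}=\nu^3_{15}$ from \eqref{t3}, I would obtain
\[
[\bar{\varepsilon}_9]\eta_{24}\in\mtoda{\Delta(\nu_{10})}{\nu_{12}}{\nu^3_{15}}{[\iota_7]_{10}}{\varepsilon_7+\sigma'\eta_{14}}.
\]
The new matrix Toda bracket is defined: $\nu_{12}\nu^3_{15}=\nu^4_{12}=E^7 P(\bar{\nu}_{11})=0$ (since $E\circ P=0$), and $(\varepsilon_7+\sigma'\eta_{14})\nu^3_{15}=0$ by rewriting $\varepsilon_7+\sigma'\eta_{14}=\eta_7\sigma_8+\bar{\nu}_7$ via \eqref{t4} and applying the stable vanishings $\eta\nu=0$, $\sigma_n\nu^2_{n+7}\eta=0$, together with relations from Lemma \ref{zesig} and \eqref{bn9n}.

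Next I would apply a juggling identity to the composed matrix Toda bracket, factoring $\nu^3_{15}$ in one of the forms $\bar{\nu}_{15}\eta_{23}$, $\eta_{15}\bar{\nu}_{16}$, or $\nu_{15}\cdot\nu^2_{18}$, and shuffle to reduce to an expression of the form $[\iota_7]_{10}\circ\gamma$ where $\gamma\in\pi^7_{25}$ is identified with a combination of $\zeta_7\sigma_{18}$ and $\eta_7\bar{\mu}_8$ using known Toda bracket representations of these elements (in particular \eqref{s'7z} for $\zeta_7\sigma_{18}$ and the definition of $\bar{\mu}$). The $\Delta(\nu_{10})$-component of the matrix bracket will contribute $0$ since $\Delta(\nu^2_{10})=[\iota_7]_{10}(\varepsilon_7+\sigma'\eta_{14})$ by \eqref{Dnu^2}, so the consistency relation forces the top-half to be absorbed into the bottom-half.

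The indeterminacy of the resulting matrix Toda bracket is
\[
\Delta(\nu_{10})\circ\pi^{12}_{25}+[\iota_7]_{10}\circ\pi^7_{25}+R^{10}_{16}\circ\pi^{16}_{25}.
\]
I would verify that $\Delta(\nu_{10})\circ\pi^{12}_{25}$ vanishes (using $\Delta([\iota_{10},\iota_{10}])$-type arguments and the Whitehead-product structure of $\pi^{12}_{25}$) and that $R^{10}_{16}\circ\pi^{16}_{25}$ lies inside the target subgroup (using the description \eqref{R1016} of $R^{10}_{16}$ together with $\sigma_{10}\cdot\pi^{17}_{25}$ relations and $[\iota_7]_{10}\mu_7\cdot\pi^{16}_{25}$ absorbed via $\mu\cdot\eta^2=0$). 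The dominant piece $[\iota_7]_{10}\circ\pi^7_{25}$ then yields exactly the claimed modulus $\{[\iota_7]_{10}\zeta_7\sigma_{18},[\iota_7]_{10}\eta_7\bar{\mu}_8\}$, giving (1).

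For part (2), I would push forward via ${i_{11}}_*$ and observe that it suffices to show $[\iota_7]_{11}\zeta_7\sigma_{18}=0$ in $R^{11}_{25}$. This should follow from the exact sequence $(25)_{10}$ by showing that $[\iota_7]_{10}\zeta_7\sigma_{18}$ lies in the image of $\Delta:\pi^{10}_{26}\to R^{10}_{25}$, for instance by comparison with $\Delta(P(\sigma_{21}))$ or a related Whitehead-product element, using \eqref{JD} and the structure of $\pi^{10}_{26}=\Z_{16}\{P(\sigma_{21})\}\oplus\Z_2\{\sigma_{10}\mu_{17}\}$. The main obstacle will be the careful bookkeeping of the matrix Toda bracket indeterminacy, in particular ensuring that the $[\iota_7]_{10}\circ\pi^7_{25}$ component gives no generators beyond $\zeta_7\sigma_{18}$ and $\eta_7\bar{\mu}_8$; this requires detailed knowledge of relations in $\pi^7_{25}$ involving compositions of $\varepsilon_7$, $\sigma'\eta_{14}$, and $\nu^3_{15}$.
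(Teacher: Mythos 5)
Your overall strategy for (1) is the paper's: compose the matrix Toda bracket \eqref{mTbep} with $\eta_{24}$, convert $\bar{\nu}_{15}\eta_{23}$ to $\nu^3_{15}$, and shuffle a factor of $\nu^2$ into the middle column so that the bottom entry $(\varepsilon_7+\sigma'\eta_{14})\circ\nu^2_{15}$ vanishes and the bracket collapses toward $[\iota_7]_{10}\circ\pi^7_{25}$. But there is a genuine gap at the crucial step. After the shuffle the containing set is $\mtoda{\Delta(\nu_{10})}{\nu^3_{12}}{\nu_{21}}{[\iota_7]_{10}}{0}=\{\Delta(\nu_{10}),\nu^3_{12},\nu_{21}\}+[\iota_7]_{10}\circ\pi^7_{25}$, and the honest secondary composition $\{\Delta(\nu_{10}),\nu^3_{12},\nu_{21}\}$ is not disposed of by any ``consistency relation forcing the top half to be absorbed into the bottom half'' --- that is not an argument. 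The paper must actually compute it: writing $\nu^3_{12}=\eta_{12}\bar{\nu}_{13}$ and using that $\bar{\nu}_{13}\nu_{21}$ is a suspended Whitehead product, one gets $\{\Delta(\nu_{10}),\nu^3_{12},\nu_{21}\}\ni 0$ modulo $\Delta(\nu_{10})\circ\pi^{12}_{25}+R^{10}_{22}\circ\nu_{22}$; the first term dies via $E\theta=P(\eta_{27})$, and the second requires the explicit generators $[8\sigma_8]\sigma_{15}$, $[\iota_7]_{10}\rho''$, $[\iota_7]_{10}\bar{\varepsilon}_7$ of $R^{10}_{22}$ from \cite{HKM} together with $\sigma_{12}\nu_{19}=0$, $\bar{\varepsilon}_7\nu_{22}=\eta_7\nu_8\kappa_{11}=0$ and $\rho''\nu_{22}=\sigma'\zeta_{14}$ to land inside $[\iota_7]_{10}\circ\pi^7_{25}$. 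Your indeterminacy term $R^{10}_{16}\circ\pi^{16}_{25}$ is in any case not the group that matters here; $R^{10}_{22}\circ\nu_{22}$ is.

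For (2) you correctly reduce to showing $[\iota_7]_{11}\zeta_7\sigma_{18}=0$, but your proposed mechanism --- exhibiting $[\iota_7]_{10}\zeta_7\sigma_{18}$ as an element of $\Delta\pi^{10}_{26}$ --- is left as a hope (``should follow from \dots for instance by comparison with $\Delta(P(\sigma_{21}))$'') and is not the route that works in the paper. The paper first checks that $\Delta:\pi^{11}_{26}\to R^{11}_{25}$ is trivial (so one may instead kill the element one stage higher), rewrites $\zeta_7\sigma_{18}$ as an odd multiple of $\sigma'\zeta_{14}=\rho''\nu_{22}$, and then uses $[\iota_7]_{12}\rho''=4[P(\nu_{21})]_{12}$ from \cite[Lemma 6.3]{HKM} to get $[\iota_7]_{12}\rho''\nu_{22}=4\Delta(P(\nu_{25}))=0$ because $\sharp P(\nu_{25})=4$. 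Without an input of this kind your part (2) does not close.
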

\begin{proof}
By the definition of $[\bar{\varepsilon}_9]$ \eqref{mTbep}, \eqref{t3}, \eqref{t6} and \cite[Lemma 4.1(2)]{MiM1}, 
we have 
\[\begin{split}
[\bar{\varepsilon}_9]\eta_{24}
&\in\mtoda{\Delta(\nu_{10})}{\nu_{12}}{\bar\nu_{15}}{{[\iota_7]_{10}}}{\varepsilon_7 
+ \sigma'\eta_{14}}\circ\eta_{24}
\subset\mtoda{\Delta(\nu_{10})}{\nu_{12}}{\nu^3_{15}}
{{[\iota_7]_{10}}}{\varepsilon_7 + \sigma'\eta_{14}}\\
&\subset\mtoda{\Delta(\nu_{10})}{\nu^3_{12}}{\nu_{21}}{{[\iota_7]_{10}}}{0}
=\{\Delta(\nu_{10}),\nu^3_{12},\nu_{21}\}
+ [\iota_7]_{10}\circ \pi^7_{25}.
\end{split}
\]
By \eqref{bn9n}, we have 
\[\begin{split}
\{\Delta(\nu_{10}),\nu^3_{12},\nu_{21}\}
&=\{\Delta(\nu_{10}),\eta_{12}\bar{\nu}_{13},\nu_{21}\}\\
&\supset\{\Delta(\nu_{10}),\eta_{12},0\}\ni 0
\ \bmod\ \Delta(\nu_{10})\circ\pi^{12}_{25} + R^{10}_{22}\circ\nu_{22}.
\end{split}\]
Since $\pi^{12}_{25}=\{\theta\eta_{24}, (E\theta')\eta_{24}\}\cong(\Z_2)^2$,  
$E\pi^{12}_{25}=\pi^{13}_{26}=\{(E\theta)\eta_{25}\}\cong\Z_2$ and $E\theta=P(\eta_{27})$ \cite[Theorem 7.7, (7.30)]{T}, we have 
$\Delta(\nu_{10})\circ\pi^{12}_{25}\subset\Delta(\nu_{10}\circ P(\eta_{27}))
=0$. So, we have 
$$
\{\Delta(\nu_{10}),\nu^3_{12},\nu_{21}\}=R^{10}_{22}\circ\nu_{22}.
$$
By the fact that $R^{10}_{22}=\{[8\sigma_8]\sigma_{15},[\iota_7]_{10}\rho'',[\iota_7]_{10}\bar\varepsilon_7\}$ \cite[Theorem 0.2(iii)]{HKM}, \eqref{kaet}, \eqref{k7n} 
and \eqref{n11s}, we obtain
\[
[\bar{\varepsilon}_9]\eta_{24}\in R^{10}_{22}\circ\nu_{22} + [\iota_7]_{10}\circ \pi^7_{25}
 = [\iota_7]_{10}\circ \pi^7_{25}
\]
By the fact that $\pi^7_{25} = \{\zeta_7\sigma_{18}, 
\eta_7\bar{\mu}_8\}\cong\Z_8\oplus\Z_2$ \cite[Theorem 12.8]{T}, we obtain 
\[
[\iota_7]_{10}\circ\pi^7_{25} = \{[\iota_7]_{10}\zeta_7\sigma_{18}, 
[\iota_7]_{10}\eta_7\bar{\mu}_8\}.
\]
This leads to (1).

(1) implies (2) if we show $[\iota_7]_{11}\zeta_7\sigma_{18} = 0$. 
By \cite[Proposition 2.17(6)]{Og}, we have 
$$
\rho''\nu_{22}=\sigma'\zeta_{14}.
$$
We know $\pi^{11}_{26} = \Z_{16}\{E^2\rho'\} \oplus
\Z_2\{\bar\varepsilon_{11}\}$ \cite[Theorem 10.5]{T}. By \cite[Lemma 6]{Os}, 
we have
\[
\notag \Delta(E^2\rho')= [\iota_7]_{11}\nu_7(E\rho') =0
\quad\text{and}\quad
\notag \Delta(\bar\varepsilon_{11})=\Delta(\eta_{11})\kappa_{11} = 0.
\]
So it suffices to show $[\iota_7]_{12}\rho''\nu_{22}=0$.
By \cite[Lemma 6.3]{HKM} and the fact that $\sharp P(\nu_{25})=4$ \cite[Theorem 10.3]{T}, we have 
$$
[\iota_7]_{12}\rho''\nu_{22}
=4[P(\nu_{21})]_{12}\nu_{22}=4\Delta(P(\nu_{25})=0.
$$
This leads to (2) and completes the proof.
\end{proof}

We show the following.

\begin{prop} \label{R1224}
$R^{12}_{24} = \Z_2\{[\theta']\eta_{23}\}\oplus\Z_2\{[\eta_{11}]\theta\}
\oplus\Z_2\{[\bar{\varepsilon}_9]_{12}\}
\oplus\Z_2\{[\sigma^2_9]_{12}\eta_{23}\}
\oplus\Z_2\{[\iota_7]_{12}\bar{\mu}_7\}
\oplus\Z_2\{[P(E\theta)+\nu_6\kappa_9]_{12}\eta_{23}\}$.
\end{prop}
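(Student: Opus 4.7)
The plan is to run the exact sequence
\[
\pi^{11}_{25}\rarrow{\Delta}R^{11}_{24}\rarrow{i_*}R^{12}_{24}
\rarrow{p_*}\pi^{11}_{24}\rarrow{\Delta}R^{11}_{23}
\]
and analyze it piece by piece, using Proposition \ref{R1124} for $R^{11}_{24}$, Proposition \ref{R11} for $R^{11}_{23}$, and the description $\pi^{11}_{24}=\{\theta'\eta_{23},\eta_{11}\theta\}\cong(\Z_2)^2$ from the proof of Proposition \ref{R12}. In that proof it was already established that $\Delta(\theta'\eta_{23})=\Delta\theta'\circ\eta_{22}=0$ and $\Delta(\eta_{11}\theta)=0$ (using \eqref{Delta} and \eqref{Ke1}). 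Consequently the second $\Delta$ is zero, so $p_*$ is surjective and the proposed generators $[\theta']\eta_{23}$ and $[\eta_{11}]\theta$ (which project onto $\theta'\eta_{23}$ and $\eta_{11}\theta$ respectively by construction) supply a splitting over $\pi^{11}_{24}$.

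The next step is to determine $\Delta(\pi^{11}_{25})$. The main point is the order drop of $[\bar{\varepsilon}_9]$: from Proposition \ref{10} we know $2[\bar{\varepsilon}_9]=[\iota_7]_{10}\nu_7\kappa_{10}$, hence after suspension $2[\bar{\varepsilon}_9]_{11}=[\iota_7]_{11}\nu_7\kappa_{10}$. Applying \eqref{DeSig} to the relation $\Delta\iota_{11}=[\iota_7]_{11}\nu_7$ of \eqref{k4.5}, we obtain
\[
\Delta(\kappa_{11})=\Delta(\iota_{11}\circ E\kappa_{10})=\Delta\iota_{11}\circ\kappa_{10}=[\iota_7]_{11}\nu_7\kappa_{10}=2[\bar{\varepsilon}_9]_{11}.
\]
Hence $i_*(2[\bar{\varepsilon}_9]_{11})=0$ in $R^{12}_{24}$, so $[\bar{\varepsilon}_9]_{12}$ has order exactly $2$, matching the claimed $\Z_2$ summand. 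The remaining generators of $R^{11}_{24}$, namely $[\sigma^2_9]_{11}\eta_{23}$, $[\iota_7]_{11}\bar{\mu}_7$ and $[P(E\theta)+\nu_6\kappa_9]_{11}\eta_{23}$, are each of order $2$ already, so it suffices to check that none of them lies in $\Delta(\pi^{11}_{25})$.

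To carry this out I would identify $\pi^{11}_{25}$ from Toda's tables (the $14$-stem at level $11$) and verify that $\Delta$ of every other generator is absorbed into $\Z_2\{2[\bar{\varepsilon}_9]_{11}\}$. The candidate elements besides $\kappa_{11}$ fall into two types: those coming from double suspensions already handled at lower $n$, and those divisible by $\sigma$ or $\nu$ in ways that let us invoke \eqref{DelS2} and \eqref{Delta}. In particular, for any $\alpha=E\beta$ with $\Delta\beta$ already known to lie in $\Z_2\{[\iota_7]_{11}\nu_7\kappa_{10}\}$ or to vanish, the image is controlled. Pairing the surjectivity of $p_*$ with the resulting computation of $\operatorname{Ker}(i_*)=\Delta(\pi^{11}_{25})=\Z_2\{2[\bar{\varepsilon}_9]_{11}\}$ produces a split short exact sequence
\[
0\to R^{11}_{24}/\Z_2\{2[\bar{\varepsilon}_9]_{11}\}\rarrow{i_*}R^{12}_{24}\rarrow{p_*}\pi^{11}_{24}\to 0,
\]
which yields the claimed decomposition of $R^{12}_{24}$ as a direct sum of six copies of $\Z_2$.

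The main obstacle I anticipate is the bookkeeping for $\pi^{11}_{25}$: making sure every generator's image under $\Delta$ is accounted for and that no unexpected relation like $[\sigma^2_9]_{11}\eta_{23}\in\Delta(\pi^{11}_{25})$ or $[\iota_7]_{11}\bar{\mu}_7\in\Delta(\pi^{11}_{25})$ holds. For the first, one can use \eqref{psi10} and $J\Delta=[\iota,\,\cdot\,]$ to detect $[\sigma^2_9]_{11}\eta_{23}$ via a nonzero Whitehead product image; for the second, the relation $\Delta(\bar{\zeta}_6)=2[\bar{\zeta}_5]$ of Proposition \ref{R61} together with Proposition \ref{10}'s identification $8\Delta(E\rho')=[\bar{\zeta}_5]_{10}+[\iota_7]_{10}\bar{\mu}_7$ shows that $[\iota_7]_{11}\bar{\mu}_7=[\bar{\zeta}_5]_{11}$ cannot be hit again at level $11$. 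Once these detection arguments are in place, the proposition follows.
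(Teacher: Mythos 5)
Your overall strategy coincides with the paper's: run the exact sequence $(24)_{11}$, observe that $\Delta:\pi^{11}_{24}\to R^{11}_{23}$ vanishes so that $p_*$ maps onto $\pi^{11}_{24}=\{\theta'\eta_{23},\eta_{11}\theta\}\cong(\Z_2)^2$, and compute $\Delta(\pi^{11}_{25})$ to see which part of $R^{11}_{24}$ dies. Your key computation
$\Delta\kappa_{11}=\Delta\iota_{11}\circ\kappa_{10}=[\iota_7]_{11}\nu_7\kappa_{10}=2[\bar{\varepsilon}_9]_{11}$,
via \eqref{DeSig}, \eqref{k4.5} and Proposition \ref{10}, is exactly the step the paper uses to cut the $\Z_4\{[\bar{\varepsilon}_9]_{11}\}$ down to $\Z_2$.

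There is, however, a genuine gap: you never actually identify $\pi^{11}_{25}$, and the conclusion depends on knowing that $\kappa_{11}$ is the \emph{only} source of a nonzero $\Delta$ there. By \cite[Theorem 10.3]{T} one has $\pi^{11}_{25}=\{\sigma^2_{11},\kappa_{11}\}\cong\Z_{16}\oplus\Z_2$, so exactly one further generator must be checked, and the paper disposes of it in one line: $\Delta(\sigma^2_{11})=[\iota_7]_{11}\nu_7\sigma^2_{10}=0$ by \eqref{k4.5} and \eqref{n7sigsig}. Your proposed substitute --- a ``detection'' argument ruling out $[\sigma^2_9]_{11}\eta_{23}$ and $[\iota_7]_{11}\bar{\mu}_7$ from the image of $\Delta$ --- is both left unexecuted and logically insufficient as stated: excluding two particular elements of $R^{11}_{24}$ does not determine $\Delta(\pi^{11}_{25})$, since it would not, for instance, preclude $\Delta(\sigma^2_{11})$ from hitting $[\bar{\varepsilon}_9]_{11}$, $[P(E\theta)+\nu_6\kappa_9]_{11}\eta_{23}$, or a sum of generators, any of which would change the answer. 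Once you insert the identification of $\pi^{11}_{25}$ and the computation $\Delta(\sigma^2_{11})=0$, the kernel of $i_*$ is exactly $\{2[\bar{\varepsilon}_9]_{11}\}$ and the argument closes, agreeing with the paper's proof.
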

\begin{proof}
In the exact sequence $(24)_{11}$:
$$
\pi^{11}_{25}\rarrow{\Delta}R^{11}_{24}\rarrow{i_*}R^{12}_{24}
\rarrow{p_*}\pi^{11}_{24}\rarrow{\Delta}R^{11}_{23},
$$
we know $\pi^{11}_{25} = \{\sigma^2_{11}, \kappa_{11}\}\cong\Z_{16}
\oplus\Z_2$ \cite[Theorem 10.3]{T} 
and 
\[
\Ker\ \{\Delta: \pi^{11}_{24}\to R^{11}_{23}\}
= \pi^{11}_{24} = \{\theta'\eta_{23}, \eta_{11}\theta\}\cong(\Z_2)^2
\] 
from the proof of Proposition \ref{R12}.
By \eqref{k4.5},
\eqref{n7sigsig} and  Proposition \ref{10},
we have 
$\Delta(\sigma^2_{11}) = [\iota_7]_{11}\nu_7\sigma^2_{10} = 0$ and
$
\Delta\kappa_{11}=[\iota_7]_{11}\nu_7\kappa_{10}
=2[\bar{\varepsilon}_9]_{11}.
$
So Proposition \ref{R1124} implies the conclusion.
\end{proof}

Next we show the following.

\begin{prop} \label{13}
$R^{13}_{24} = \Z_2\{[\eta_{11}]_{13}\theta\}
\oplus\Z_2\{[\bar{\varepsilon}_9]_{13}\}
\oplus\Z_2\{[\sigma^2_9]_{13}\eta_{23}\}
\oplus\Z_2\{[\iota_7]_{13}\bar{\mu}_7\}$,\ where 
$\Delta(E\theta)=[\eta_{11}]_{13}\theta$. 
\end{prop}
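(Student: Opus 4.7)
The plan is to feed the already-known groups from Propositions \ref{R12}--\ref{R1224} into the exact sequence
\[
 \pi^{12}_{25}\rarrow{\Delta}R^{12}_{24}\rarrow{i_*}R^{13}_{24}
 \rarrow{p_*}\pi^{12}_{24}\rarrow{\Delta}R^{12}_{23}.
\]
First I would look at the right-hand $\Delta$. From \cite[Theorem 7.6]{T} we have $\pi^{12}_{24}=\{\theta,E\theta'\}\cong(\Z_2)^2$. The proof of Proposition \ref{R12} gives $\Delta\theta=[\theta']$, while Proposition \ref{R13} gives $\Delta(E\theta')=[P(E\theta)+\nu_6\kappa_9]_{12}$. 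These hit distinct direct summands of $R^{12}_{23}$ listed in Proposition \ref{R12}, so $\Delta:\pi^{12}_{24}\to R^{12}_{23}$ is injective; consequently $p_*=0$ and $i_*:R^{12}_{24}\twoheadrightarrow R^{13}_{24}$ is surjective.

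Next I would compute the left-hand $\Delta$. Using the description $\pi^{12}_{25}=\{\theta\eta_{24},(E\theta')\eta_{24}\}\cong(\Z_2)^2$ quoted in the proof of Lemma \ref{[bep9]11eta}, formula \eqref{DeSig} yields
\[
 \Delta(\theta\eta_{24})=\Delta\theta\circ\eta_{23}=[\theta']\eta_{23},\qquad
 \Delta((E\theta')\eta_{24})=\Delta(E\theta')\circ\eta_{23}=[P(E\theta)+\nu_6\kappa_9]_{12}\eta_{23}.
\]
Both right-hand sides are listed as independent $\Z_2$ summands of $R^{12}_{24}$ in Proposition \ref{R1224}, so $\Im\,\Delta=\Z_2\{[\theta']\eta_{23}\}\oplus\Z_2\{[P(E\theta)+\nu_6\kappa_9]_{12}\eta_{23}\}$. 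Quotienting the six-generator description of $R^{12}_{24}$ by this rank-two subgroup kills exactly these two summands and leaves the four $\Z_2$ generators claimed, now viewed in $R^{13}_{24}$ via $i_{13\,*}$.

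Finally, the auxiliary identity $\Delta(E\theta)=[\eta_{11}]_{13}\theta$ comes for free: by \eqref{DeSig} we have $\Delta(E\theta)=\Delta\iota_{13}\circ\theta$, and by \eqref{Ke2} (with $n=3$) $\Delta\iota_{13}=[\eta_{11}]_{13}$, so $\Delta(E\theta)=[\eta_{11}]_{13}\theta$ in $R^{13}_{24}$. No further analysis is required, since the nontriviality of $[\eta_{11}]_{13}\theta$ already follows from its appearance as a surviving summand in the quotient above.

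The only place where anything could go wrong is the verification that the two images $[\theta']\eta_{23}$ and $[P(E\theta)+\nu_6\kappa_9]_{12}\eta_{23}$ genuinely pick out independent direct summands of $R^{12}_{24}$ and introduce no hidden relation with $[\eta_{11}]\theta$, $[\bar\varepsilon_9]_{12}$, $[\sigma^2_9]_{12}\eta_{23}$, $[\iota_7]_{12}\bar\mu_7$; but this is already the content of the direct-sum decomposition in Proposition \ref{R1224}, so the remaining argument is clerical.
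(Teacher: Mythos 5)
Your proposal is correct and follows essentially the same route as the paper's own proof: the exact sequence $(24)_{12}$, injectivity of $\Delta:\pi^{12}_{24}\to R^{12}_{23}$ via $\Delta\theta=[\theta']$ and $\Delta(E\theta')=[P(E\theta)+\nu_6\kappa_9]_{12}$, the computation $\Delta(\theta\eta_{24})=[\theta']\eta_{23}$, $\Delta((E\theta')\eta_{24})=[P(E\theta)+\nu_6\kappa_9]_{12}\eta_{23}$ by \eqref{DeSig}, and finally $\Delta(E\theta)=\Delta\iota_{13}\circ\theta=[\eta_{11}]_{13}\theta$ from \eqref{Ke2}. No gaps.
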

\begin{proof}
In the exact sequence $(24)_{12}$:
$$
\pi^{12}_{25}\rarrow{\Delta}R^{12}_{24}\rarrow{i_*}R^{13}_{24}
\rarrow{p_*}\pi^{12}_{24}\rarrow{\Delta}R^{12}_{23},
$$
$\Delta: \pi^{12}_{24}\to R^{12}_{23}$ is a monomorphism 
by the proof of Lemma \ref{R13} and
$$
\pi^{12}_{25} = \Z_2\{\theta\eta_{24}\}\oplus\Z_2\{(E\theta')\eta_{24}\}.
$$
As $\Delta\theta = [\theta']$ and $\Delta(E\theta')
= [P(E\theta)+\nu_6\kappa_9]_{12}$ by Lemma \ref{R13}, we have
\[
\Delta(\theta\eta_{24}) = [\theta']\eta_{23}
\quad\text{and}\quad \Delta((E\theta')\eta_{24})
= [P(E\theta)+\nu_6\kappa_9]_{12}\eta_{23}.
\]

By the fact that $\Delta\iota_{13} = [\eta_{11}]_{13}$
(\cite[Table 3]{Ka}, \eqref{Ke2}),  we have
$\Delta(E\theta) = [\eta_{11}]_{13}\theta$. 
This completes the proof.
\end{proof}

Next we show the following.

\begin{prop} \label{1424}
$R^{14}_{24} = \Z_8\{[\zeta_{13}]\}
\oplus\Z_2\{[\bar{\varepsilon}_9]_{14}\}
\oplus\Z_2\{[\sigma^2_9]_{14}\eta_{23}\}
\oplus\Z_2\{[\iota_7]_{14}\bar{\mu}_7\}$, where $J[\zeta_{13}]
\equiv \pm\zeta^* \bmod E\pi^{13}_{37}$.
\end{prop}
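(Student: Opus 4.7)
The plan is to apply the exact sequence $(24)_{13}$
\[
\pi^{13}_{25}\rarrow{\Delta}R^{13}_{24}\rarrow{i_*}R^{14}_{24}\rarrow{p_*}\pi^{13}_{24}\rarrow{\Delta}R^{13}_{23},
\]
using $\pi^{13}_{24}=\Z_8\{\zeta_{13}\}$ from \cite{T}, $R^{13}_{24}\cong(\Z_2)^4$ from Proposition \ref{13}, and $R^{13}_{23}\cong\Z\oplus\Z_2$ from Proposition \ref{R13}. The work divides into three tasks: splitting off a $\Z_8\{[\zeta_{13}]\}$-summand via $p_*$, verifying $\Delta(\pi^{13}_{25})=0$ so that $i_*$ is injective, and identifying $J[\zeta_{13}]$ modulo $E\pi^{13}_{37}$.

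The element $[\zeta_{13}]\in\{\Delta\iota_{14},\eta^2_{13},\varepsilon_{15}\}\subset R^{14}_{24}$ was defined before Lemma \ref{zeta1}, with ${p_{14}}_*[\zeta_{13}]=\zeta_{13}$. Applying Lemma \ref{zeta1}(1) with $n=3$ gives $\Delta\zeta_{14}=\pm 2[\zeta_{13}]$ and $\sharp[\zeta_{13}]=8$. Existence of this lift forces $\Delta\zeta_{13}=0$, so $p_*$ is split surjective onto the $\Z_8$-summand, reducing the task to showing that $i_*(R^{13}_{24})$ is a direct complement, that is, that $\Delta(\pi^{13}_{25})=0$.

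For this middle step I would enumerate the generators of $\pi^{13}_{25}$ from \cite{T} --- these include $\zeta_{13}\eta_{24}$, $E^7\zeta'$, and $\eta_{13}$-compositions --- and verify $\Delta$ vanishes on each. Classes factoring through $\zeta_{13}$ are killed by $\Delta\zeta_{13}=0$ together with \eqref{DeSig}; the others are handled by the techniques already deployed in the proof of Proposition \ref{13}, combined with the list of four $\Z_2$-summands of $R^{13}_{24}$ available from that same proposition. Once $\Delta(\pi^{13}_{25})=0$ is in hand, $i_*$ is injective and supplies the remaining $(\Z_2)^4$ factor of $R^{14}_{24}$.

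For the $J$-image, the naturality formula \eqref{JTbr} and the identity $J\Delta\iota_{14}=[\iota_{14},\iota_{14}]$ from \eqref{JD} give
\[
J[\zeta_{13}]\in\{[\iota_{14},\iota_{14}],\eta^2_{27},\varepsilon_{29}\}_{14},
\]
one of the standard Toda-bracket presentations of $\zeta^*$. Its indeterminacy $[\iota_{14},\iota_{14}]\circ E\pi^{27}_{37}+\pi^{14}_{29}\circ\varepsilon_{29}$ lies in $E\pi^{13}_{37}$ by the usual Whitehead-product/EHP identities, yielding $J[\zeta_{13}]\equiv\pm\zeta^*\bmod E\pi^{13}_{37}$. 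The main obstacle I anticipate is the middle step: the case-by-case verification that $\Delta$ vanishes on every generator of $\pi^{13}_{25}$, for which the matrix-Toda-bracket techniques illustrated in the proof of Proposition \ref{10} may need to be invoked rather than the basic formulas of Section~2 alone.
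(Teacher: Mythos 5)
Your opening and closing steps match the paper: the $\Z_8$-summand is split off exactly as you describe, via $\Delta\zeta_{13}=0$, $\Delta\zeta_{14}=\pm2[\zeta_{13}]$ and $\sharp[\zeta_{13}]=8$ from Lemma \ref{zeta1}(1), and the identification of $J[\zeta_{13}]$ modulo $E\pi^{13}_{37}$ is obtained in the paper more directly from the EHP diagram \eqref{EHP}, since $HJ[\zeta_{13}]=E^{14}\zeta_{13}=\zeta_{27}=\pm H(\zeta^*)$ by \cite[(3.10)]{MMO}; your Toda-bracket route for that last point is plausible but heavier than necessary.

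The middle step, however, contains a genuine error that would derail the computation. You propose to show $\Delta(\pi^{13}_{25})=0$ so that $i_*$ is injective and contributes a $(\Z_2)^4$-summand. This cannot be right: $R^{13}_{24}\cong(\Z_2)^4$ by Proposition \ref{13}, while the group you are trying to establish contains only $(\Z_2)^3$ besides the $\Z_8$, so injectivity of $i_*$ would contradict the statement itself. In fact $\pi^{13}_{25}=\Z_2\{E\theta\}$ (Toda, Theorem 7.6) --- not the list of elements $\zeta_{13}\eta_{24}$, $E^7\zeta'$, etc.\ that you write down, which do not belong to this group --- and the connecting map is \emph{nontrivial}: by \eqref{DeSig} and $\Delta\iota_{13}=[\eta_{11}]_{13}$ one has $\Delta(E\theta)=[\eta_{11}]_{13}\theta\neq 0$, a relation already recorded in Proposition \ref{13}. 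It is precisely this nonvanishing that kills the summand $\Z_2\{[\eta_{11}]_{13}\theta\}$ and leaves the three classes $[\bar{\varepsilon}_9]_{14}$, $[\sigma^2_9]_{14}\eta_{23}$, $[\iota_7]_{14}\bar{\mu}_7$ in $R^{14}_{24}$. Note also that no matrix Toda brackets are needed here; once $\pi^{13}_{25}$ is correctly identified, the single computation $\Delta(E\theta)=[\eta_{11}]_{13}\theta$ finishes the middle step.
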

\begin{proof}
In the exact sequence $(24)_{13}$:
$$
\pi^{13}_{25}\rarrow{\Delta}R^{13}_{24}\rarrow{i_*}R^{14}_{24}
\rarrow{p_*}\pi^{13}_{24}\rarrow{\Delta}R^{13}_{23},
$$
we know that
$\Delta: \pi^{13}_{24}\to R^{13}_{23}$ is trivial by Lemma \ref{zeta1}
and $\pi^{13}_{24} = \Z_8\{\zeta_{13}\}$ \cite[Theorem 7.4]{T}. 
We also know $\pi^{13}_{25} = \Z_2\{E\theta\}$ \cite[Theorem 7.6]{T}.
By Lemma \ref{zeta1}, we have 
$\Delta\zeta_{14} = \pm 2[\zeta_{13}]$ and 
$\sharp[\zeta_{13}]=8$. 
This implies the group $R^{14}_{24}$. By \cite[(3.10)]{MMO}, $H(\zeta^*)
= \pm\zeta_{27}$. So we have $J[\zeta_{13}]
\equiv\pm\zeta^* \bmod E\pi^{13}_{37}$. This completes the proof.
\end{proof}

By the facts $R^\infty_{24}\cong \Z_2$ \cite{Bo} and \cite[Table 1]{HoM},
we have Table \ref{R24}.
\begin{table}[H]
\centering
\begin{tabular}{|c|c||c|c||c|c|}\hline
$R^{14}_{24}$ & $\Z_2 \oplus \Z_8 \oplus (\Z_2)^2$ 
 & $R^{15}_{24}$ & $\Z_2 \oplus (\Z_2)^4$
 & $R^{16}_{24}$ & $\Z_2 \oplus (\Z_2)^7$  \\ \hline
$R^{17}_{24}$ & $\Z_2 \oplus (\Z_2)^4$ 
 & $R^{18}_{24}$ & $\Z_2 \oplus \Z_{16} \oplus \Z_2$
 & $R^{19}_{24}$ & $\Z_2 \oplus \Z_2$\\ \hline
\end{tabular}
\caption{$R^{n}_{24}$ for $14\leq n\leq 19$}\label{R24}
\end{table}
By \cite{Ke}, we also have Table \ref{R24K}.
\begin{table}[H]
\centering
\begin{tabular}{|c|c||c|c||c|c|}\hline
$R^{20}_{24}$ & $\Z_2$ 
 & $R^{21}_{24}$ & $\Z_2$
 & $R^{22}_{24}$ & $\Z_4 \oplus \Z_2$  \\ \hline
$R^{23}_{24}$ & $(\Z_2)^2$ 
 & $R^{24}_{24}$ & $(\Z_2)^3$
 & $R^{25}_{24}$ & $(\Z_2)^2$\\ \hline
\end{tabular}
\caption{$R^{n}_{24}$ for $20\leq n\leq 25$}\label{R24K}
\end{table}

By Tables \ref{R24} and \ref{R24K}, we have the group structure of $R^n_{24}$ for $n \ge 14$.
So, hereafter our main work is to research the generators and the relations in 
$R^n_{24}$ for $n \ge 14$.

Now we show the following.

\begin{prop} \label{1524}
$R^{15}_{24}=\Z_2\{[\eta_{14}\mu_{15}]\}
\oplus\Z_2\{[\zeta_{13}]_{15}\}
\oplus\Z_2\{[\bar{\varepsilon}_9]_{15}\}
\oplus\Z_2\{[\sigma^2_9]_{15}\eta_{23}\}
\oplus\Z_2\{[\iota_7]_{15}\bar{\mu}_7\}$, where $J[\eta_{14}\mu_{15}]
\equiv \mu^{*\prime} \bmod E\pi^{14}_{38}$.
\end{prop}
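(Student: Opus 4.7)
The plan is to use the exact sequence $(24)_{14}$:
\[
\pi^{14}_{25}\rarrow{\Delta}R^{14}_{24}\rarrow{i_*}R^{15}_{24}
\rarrow{p_*}\pi^{14}_{24}\rarrow{\Delta}R^{14}_{23}.
\]
From Table~\ref{R24} we already know $R^{15}_{24}\cong(\Z_2)^5$, so the task is to pin down generators and verify the orders.

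First I would identify $\pi^{14}_{24}=\Z_2\{\eta_{14}\mu_{15}\}$ and, as in the proof of Proposition~\ref{R15}, note $\Delta(\eta_{14}\mu_{15})=\Delta\mu_{14}\circ\eta_{22}=8[\eta^2_{13}\sigma_{15}]\eta_{22}=0$. Thus $p_*$ is surjective and produces the generator $[\eta_{14}\mu_{15}]$. Since $\pi^{14}_{24}$ is $\Z_2$, this lift is automatically of order dividing $2$; the image ${i_*}R^{14}_{24}$ must therefore account for $(\Z_2)^4$. Using Proposition~\ref{1424}, the elements $[\bar{\varepsilon}_9]_{14}$, $[\sigma^2_9]_{14}\eta_{23}$ and $[\iota_7]_{14}\bar{\mu}_7$ are already $2$-torsion and so give $[\bar{\varepsilon}_9]_{15}$, $[\sigma^2_9]_{15}\eta_{23}$ and $[\iota_7]_{15}\bar{\mu}_7$ of order $2$ in $R^{15}_{24}$. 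It remains to cut the $\Z_8$-summand $[\zeta_{13}]$ down to $\Z_2\{[\zeta_{13}]_{15}\}$, which is handled by Lemma~\ref{zeta1}(1): $\Delta\zeta_{14}=\pm 2[\zeta_{13}]$ lies in the image of $\Delta:\pi^{14}_{25}\to R^{14}_{24}$, so $2[\zeta_{13}]_{15}=0$ while $[\zeta_{13}]_{15}\neq 0$ (as the quotient still has order $2$ by the counting $R^{15}_{24}\cong(\Z_2)^5$). I would also verify that the remaining generators of $\pi^{14}_{25}$ contribute nothing new modulo these relations, which combined with the rank count forces the direct sum decomposition asserted.

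For the $J$-image claim, I would use the commutative diagram \eqref{EHP}. Namely,
\[
HJ[\eta_{14}\mu_{15}]=E^{15}\bigl(p_{15}\circ[\eta_{14}\mu_{15}]\bigr)=E^{15}(\eta_{14}\mu_{15})=\eta_{29}\mu_{30}.
\]
Since $\mu^{*\prime}$ is characterized (up to $E\pi^{14}_{38}$) by $H(\mu^{*\prime})=\eta_{29}\mu_{30}$ in the EHP sequence, the two elements $J[\eta_{14}\mu_{15}]$ and $\mu^{*\prime}$ have the same Hopf invariant, hence differ by an element of $\Ker H=E\pi^{14}_{38}$, giving the stated congruence.

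The main obstacle I expect is the bookkeeping in step two: one must enumerate $\pi^{14}_{25}$ in full, compute $\Delta$ on each summand, and confirm that besides $\zeta_{14}$ no generator produces an essentially new relation in $R^{14}_{24}$ beyond the one that collapses $[\zeta_{13}]$ from $\Z_8$ to $\Z_2$. Once this is done the direct sum structure follows by a rank count, and the Hopf/$J$-invariant computation with the EHP diagram wraps up the identification of $J[\eta_{14}\mu_{15}]$.
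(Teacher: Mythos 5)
Your proposal is correct and follows the paper's proof essentially verbatim: the exact sequence $(24)_{14}$, the vanishing $\Delta(\eta_{14}\mu_{15})=0$ from the proof of Proposition \ref{R15}, the relation $\Delta\zeta_{14}=\pm 2[\zeta_{13}]$ from Lemma \ref{zeta1}, and the Hopf-invariant comparison $HJ[\eta_{14}\mu_{15}]=\eta_{29}\mu_{30}=H(\mu^{*\prime})$ giving the congruence modulo $E\pi^{14}_{38}=\Ker H$. The bookkeeping you defer is actually empty, since $\pi^{14}_{25}=\Z_8\{\zeta_{14}\}$ by \cite[Theorem 7.4]{T}, so $\zeta_{14}$ is the only generator to consider; also note that $\sharp[\eta_{14}\mu_{15}]=2$ should be justified by Example \ref{shetep} (or by the known structure $R^{15}_{24}\cong(\Z_2)^5$ from Table \ref{R24}) rather than by ``$\pi^{14}_{24}\cong\Z_2$'' alone, since a lift of an order-$2$ element need not have order $2$.
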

\begin{proof}
In the exact sequence $(24)_{14}$:
$$
\pi^{14}_{25}\rarrow{\Delta}R^{14}_{24}\rarrow{i_*}R^{15}_{24}
\rarrow{p_*}\pi^{14}_{24}\rarrow{\Delta}R^{14}_{23},
$$
$\Delta: \pi^{14}_{24}\to R^{14}_{23}$ is trivial by Proposition \ref{R15}
and $\pi^{14}_{25}= \Z_8\{\zeta_{14}\}$ \cite[Theorem 7.4]{T}. 
By Lemma \ref{zeta1}, we have
$\Delta\zeta_{14}=\pm 2[\zeta_{13}]$. 

By Example \ref{shetep}, $\sharp[\eta_{14}\mu_{15}]=2$. 
Since $HJ[\eta_{14}\mu_{15}] = \eta_{29}\mu_{30} = H(\mu^{*\prime})$ \cite[(3.11)]{MMO},
we have the relation. This completes the proof.
\end{proof}

Next we show the following.

\begin{prop} \label{1624}
$R^{16}_{24} = \Z_2\{[\zeta_{13}]_{16}\}
\oplus\Z_2\{[\sigma^2_9]_{16}\eta_{23}\}
\oplus\Z_2\{[\iota_7]_{16}\bar{\mu}_7\}
\oplus\Z_2\{[\bar{\varepsilon}_9]_{16}\}
\oplus\Z_2\{[\eta_{14}\mu_{15}]_{16}\}
\oplus\Z_2\{[\mu_{15}]\}\oplus\Z_2\{[\eta_{15}]\varepsilon_{16}\}
\oplus\Z_2\{[\eta_{15}]\bar{\nu}_{16}\}$, where $J[\mu_{15}]\equiv \mu^*_{16}
\bmod E\pi^{15}_{39}$.
\end{prop}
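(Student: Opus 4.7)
The plan is to analyze the exact sequence $(24)_{15}$:
\[
\pi^{15}_{25}\rarrow{\Delta}R^{15}_{24}\rarrow{i_*}R^{16}_{24}\rarrow{p_*}\pi^{15}_{24}\rarrow{\Delta}R^{15}_{23}.
\]
From \cite[Theorem 7.2]{T} together with \eqref{t3}, $\pi^{15}_{24}=\Z_2\{\mu_{15}\}\oplus\Z_2\{\eta_{15}\varepsilon_{16}\}\oplus\Z_2\{\eta_{15}\bar{\nu}_{16}\}$, and by Table \ref{R24} the target group $R^{16}_{24}$ has order $2^8$, while $R^{15}_{24}$ has order $2^5$. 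So if both boundary maps vanish, a direct sum decomposition results automatically.

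First, I would verify that $\Delta : \pi^{15}_{24}\to R^{15}_{23}$ is zero on each generator. The vanishing $\Delta(\mu_{15})=0$ was already established inside the proof of Proposition \ref{R16}. Since \eqref{Ke1} gives $\Delta(\eta_{15})=0$, formula \eqref{Delta} yields $\Delta(\eta_{15}\varepsilon_{16})=\Delta(\eta_{15}\bar{\nu}_{16})=0$. Hence $p_*$ is surjective.

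Next I would construct the three lifts and control their orders. Put $[\mu_{15}]\in\{[\eta_{15}],2\iota_{16},8\sigma_{16}\}_1\subset R^{16}_{24}$; this Toda bracket is defined because $\sharp[\eta_{15}]=2$ by \eqref{Ke1} and $16\sigma_{16}=0$. By \eqref{2i8s2i}, $\{2\iota_{16},8\sigma_{16},2\iota_{23}\}=0$, so the standard shuffle identity gives $2[\mu_{15}]=0$ (after checking the indeterminacy is trivial). The lifts $[\eta_{15}]\varepsilon_{16}$ and $[\eta_{15}]\bar{\nu}_{16}$ are obtained by right composition from $[\eta_{15}]$, hence have order two.

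The main obstacle will be verifying that $\Delta : \pi^{15}_{25}\to R^{15}_{24}$ vanishes on every generator. One enumerates the generators of $\pi^{15}_{25}$ from Toda's book and applies \eqref{DeSig} and \eqref{Delta}: those factoring through $\eta_{15}$ or $\mu_{15}$ are handled by the vanishings $\Delta(\eta_{15})=\Delta(\mu_{15})=0$ above, while the remaining generators require case-by-case Toda bracket computations using relations already established in Propositions \ref{R15}--\ref{R17}. The order count forced by Table \ref{R24} provides a sanity check. Combining this with the lifts $[\mu_{15}]$, $[\eta_{15}]\varepsilon_{16}$, $[\eta_{15}]\bar{\nu}_{16}$ adjoined to the five generators of $R^{15}_{24}$ from Proposition \ref{1524}, one obtains the claimed $(\Z_2)^8$ decomposition.

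Finally, for the $J$-invariant statement, the diagram \eqref{EHP} gives $HJ[\mu_{15}]=E^{16}\mu_{15}=\mu_{31}$; since $H(\mu^*_{16})=\mu_{31}$ from \cite{MMO}, we conclude $J[\mu_{15}]\equiv\mu^*_{16}\bmod E\pi^{15}_{39}$.
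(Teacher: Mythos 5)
Your proposal is correct and follows essentially the same route as the paper: the exact sequence $(24)_{15}$ with the trivial right-hand $\Delta$, the definition $[\mu_{15}]\in\{[\eta_{15}],2\iota_{16},8\sigma_{16}\}_1$ with $2[\mu_{15}]=0$ via \eqref{2i8s2i}, and $HJ[\mu_{15}]=\mu_{31}=H(\mu^*)$ for the last claim. The step you flag as the main obstacle is in fact immediate, since $\pi^{15}_{25}=\Z_2\{\eta_{15}\mu_{16}\}$ is stable and its generator factors through $\eta_{15}$ (and the order count from Table \ref{R24} forces $i_*$ to be injective in any case, which is how the paper implicitly handles it).
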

\begin{proof}
In the exact sequence $(24)_{15}$:
$$
\pi^{15}_{25}\rarrow{\Delta}R^{15}_{24}\rarrow{i_*}R^{16}_{24}
\rarrow{p_*}\pi^{15}_{24}\rarrow{\Delta}R^{14}_{23},
$$
we know $\Delta: \pi^{15}_{24}\to R^{14}_{23}$ is trivial. Since
$[\eta_{15}]$ is of order $2$ \cite[Proposition 4.1]{Ka}, 
the Toda bracket
$\{[\eta_{15}], 2\iota_{16}, 8\sigma_{16}\}_1$ is well-defined.
We define 
$$
[\mu_{15}]\in\{[\eta_{15}], 2\iota_{16}, 8\sigma_{16}\}_1.
$$
Then, by \eqref{2i8s2i}, we have
\begin{align*}
2[\mu_{15}]\in\{[\eta_{15}], 2\iota_{16}, 8\sigma_{16}\}_1\circ 2\iota_{24}
=[\eta_{15}]\circ E\{ 2\iota_{15}, 8\sigma_{15}, 2\iota_{22}\}=0.
\end{align*}
Hence, we obtain $\sharp [\mu_{15}] = 2$. 

By the relation
$
HJ[\mu_{15}]=\mu_{31}=H(\mu^\ast)$
\cite[(3.11)]{MMO}, we have the relation.
This completes the proof.
\end{proof}

We know $P(\nu^3_{33})\equiv\tilde{\varepsilon}_{16} \bmod\ E^7\pi^9_{33}$ 
\cite[p. 39]{MMO}. 
By \cite[Theorem 1.1.(b)]{MMO}, we have 
$E^{n-9}\pi^9_{33}=\{\delta_n,\bar{\sigma}'_n, \bar{\mu}_n\sigma_{n+17}, \sigma_n\varepsilon^*_{n+7}\}$ for $n\ge 13$. 
By \cite[Lemma 12.15 ii)]{T} and \cite[I-Propositions 3.4(4), 3.5(9)]{Od1}.
we have $\sigma_{11}\varepsilon^*_{18}=\sigma_{11}\omega_{18}\eta_{34}
=\eta_{11}\sigma_{12}\omega_{19}=\eta_{11}\phi_{12}\equiv\delta_{11} \ \bmod\ 
\bar{\mu}_{11}\sigma_{28}$. Hence, we obtain
\begin{equation}\label{Pn333}
P(\nu^3_{33})\equiv\tilde{\varepsilon}_{16} \bmod\ \delta_{16},\bar{\sigma}'_{16}, \bar{\mu}_{16}\sigma_{33}.
\end{equation}

Here we show

\begin{equation}\label{ztrem1}
\Delta(\eta_{16}\varepsilon_{17}) = [\zeta_{13}]_{16}. 
\end{equation} 
\begin{proof}
By Lemma \ref{zeta1}(2), we have 
$$
[\zeta_{13}]_{16}\equiv\Delta(\eta_{16}\varepsilon_{17}) \ \bmod\ 
{i_{14,16}}_*R^{14}_{16}\circ\varepsilon_{16}.
$$
By the fact that $R^{14}_{16}=\{\Delta\nu_{14}, [\iota_7]_{14}\mu_7\}$
\cite[Proposition  4.1 and (4.11)]{Ka}, we have 
$R^{14}_{16}\circ\varepsilon_{16} = \{[\iota_7]_{14}\varepsilon_7\mu_{15}\}$. By \eqref{Dn13}, we have 
$R^{14}_{16} \circ \varepsilon_{16}=0$ and so, the assertion follows. 
\end{proof}

We show the following.

\begin{prop} \label{17}
$R^{17}_{24}=
\Z_2\{[\mu_{15}]_{17}\}
\oplus\Z_2\{[\eta_{15}]_{17}\varepsilon_{16}\}
\oplus\Z_2\{[\eta_{15}]_{17}\bar{\nu}_{16}\}
\oplus\Z_2\{[\sigma^2_9]_{17}\eta_{23}\}
\oplus\Z_2\{[\iota_7]_{17}\bar{\mu}_7\}$,
where 
$\Delta\mu_{16}\equiv [\eta_{14}\mu_{15}]_{16} \bmod
\Delta(\nu^3_{16}), \Delta(\eta_{16}\varepsilon_{17})$,\
$\Delta(\eta_{16}\varepsilon_{17}) = [\zeta_{13}]_{16}$\ 
and\ $\Delta(\nu^3_{16})\equiv [\bar{\varepsilon}_9]_{16} \bmod
[\iota_7]_{16}\bar{\mu}_7$.
\end{prop}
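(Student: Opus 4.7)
The plan is to run the exact sequence $(24)_{16}$
$$\pi^{16}_{25}\xrightarrow{\Delta}R^{16}_{24}\xrightarrow{i_*}R^{17}_{24}\xrightarrow{p_*}\pi^{16}_{24}\xrightarrow{\Delta}R^{16}_{23}$$
and reduce the proposition to three evaluations of $\Delta$ on the generators of $\pi^{16}_{25}$.

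First I would verify that the right-hand $\Delta$ is injective, so that $p_*(R^{17}_{24})=0$ and $R^{17}_{24}\cong R^{16}_{24}/\Delta\pi^{16}_{25}$. Since $\pi^{16}_{24}=\Z_2\{\bar\nu_{16}\}\oplus\Z_2\{\varepsilon_{16}\}$ is stable, Proposition \ref{R17} already gives $\Delta\varepsilon_{16}=[\eta_{14}\varepsilon_{15}]_{16}$ and $\Delta(\eta_{16}\sigma_{17})=[\eta_{14}\sigma_{15}]_{16}\eta_{22}$; combined with $\eta_{16}\sigma_{17}=\bar\nu_{16}+\varepsilon_{16}$ from \eqref{et9s}, these yield images in distinct $\Z_2$-summands of $R^{16}_{23}$ by Proposition \ref{R16}.

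Next I would compute $\Delta$ on the three $\Z_2$-generators of $\pi^{16}_{25}=\Z_2\{\nu^3_{16}\}\oplus\Z_2\{\eta_{16}\varepsilon_{17}\}\oplus\Z_2\{\mu_{16}\}$. The identity $\Delta(\eta_{16}\varepsilon_{17})=[\zeta_{13}]_{16}$ is already \eqref{ztrem1}. For $\Delta\nu^3_{16}$ I would write $\nu^3_{16}=\eta_{16}\circ E\bar\nu_{16}$ and use \eqref{DeSig} together with $\Delta\eta_{16}=[\eta^2_{14}]_{16}$ from \eqref{Ke21} to get $\Delta\nu^3_{16}=[\eta^2_{14}]_{16}\bar\nu_{16}$; then identify this product with $[\bar\varepsilon_9]_{16}$ modulo $[\iota_7]_{16}\bar\mu_7$ through the $J$-homomorphism: by \eqref{JD}, $J\Delta\nu^3_{16}=\pm P(\nu^3_{33})$, which by \eqref{Pn333} equals $\tilde\varepsilon_{16}$ modulo $\delta_{16},\bar\sigma'_{16},\bar\mu_{16}\sigma_{33}$, while by the remark preceding Proposition \ref{10} we have $J[\bar\varepsilon_9]\equiv\tilde\varepsilon_{10}\bmod E\pi^9_{33}$ and hence $J[\bar\varepsilon_9]_{16}\equiv\tilde\varepsilon_{16}\bmod E^7\pi^9_{33}$, and $J[\iota_7]_{16}\bar\mu_7=\bar\mu_{16}\sigma_{33}$ by \eqref{Ji7}; comparison forces the claimed congruence. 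For $\Delta\mu_{16}$, I would use $\mu_{16}\in\{\eta_{16},2\iota_{17},8\sigma_{17}\}_1$ from \eqref{mu4} and \eqref{mt1} to obtain
$$\Delta\mu_{16}\in-\{[\eta^2_{14}]_{16},2\iota_{16},8\sigma_{16}\},$$
in which $[\eta_{14}\mu_{15}]_{16}$ also lies by the construction of Lemma \ref{ord}. Since \eqref{DeSig} gives $[\eta^2_{14}]_{16}\bar\nu_{16}=\Delta\nu^3_{16}$ and $[\eta^2_{14}]_{16}\varepsilon_{16}=\Delta(\eta_{16}\varepsilon_{17})$, the indeterminacy $[\eta^2_{14}]_{16}\circ\pi^{16}_{24}+R^{16}_{17}\circ 8\sigma_{17}$ collapses to the subgroup generated by $\Delta\nu^3_{16}$ and $\Delta(\eta_{16}\varepsilon_{17})$ once one observes that $R^{16}_{17}\circ 8\sigma_{17}=0$ by $2$-torsion.

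Combining these three evaluations and quotienting the eight $\Z_2$-summands of $R^{16}_{24}$ listed in Proposition \ref{1624}, the relations kill exactly $[\zeta_{13}]_{16}$, $[\bar\varepsilon_9]_{16}$ (up to $[\iota_7]_{16}\bar\mu_7$), and $[\eta_{14}\mu_{15}]_{16}$ (up to the preceding two); what survives after applying $i_{16,17\,*}$ are the five $\Z_2$-summands asserted in the proposition, and the order agrees with Table \ref{R24}. The principal obstacle will be the $J$-homomorphism step pinning $\Delta\nu^3_{16}$ down modulo only $[\iota_7]_{16}\bar\mu_7$: this requires ruling out contributions from the $J$-images of the other generators of $R^{16}_{24}$ (in particular $[\mu_{15}]$, $[\eta_{15}]\bar\nu_{16}$, $[\eta_{15}]\varepsilon_{16}$, $[\sigma^2_9]_{16}\eta_{23}$, and $[\eta_{14}\mu_{15}]_{16}$) in $\pi^{16}_{40}$, which ultimately depends on the detailed organization of $\pi^S_{24}$ recorded in \cite[Theorem 1.1]{MMO}.
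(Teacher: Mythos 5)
Your overall strategy coincides with the paper's: run $(24)_{16}$, note that $\Delta:\pi^{16}_{24}\to R^{16}_{23}$ is injective from the proof of Proposition \ref{R17}, evaluate $\Delta$ on the three generators of $\pi^{16}_{25}$, and quotient $R^{16}_{24}$. Your treatment of $\Delta\mu_{16}$ via $\{[\eta^2_{14}]_{16},2\iota_{16},8\sigma_{16}\}$ and of $\Delta(\eta_{16}\varepsilon_{17})$ via \eqref{ztrem1} is exactly what the paper does.

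The genuine gap is the evaluation of $\Delta(\nu^3_{16})=[\eta^2_{14}]_{16}\bar\nu_{16}$. Knowing $J\Delta(\nu^3_{16})\equiv\tilde\varepsilon_{16}$ and $J[\bar\varepsilon_9]_{16}\equiv\tilde\varepsilon_{16}$ modulo $\{\delta_{16},\bar\sigma'_{16},\bar\mu_{16}\sigma_{33}\}$ only tells you that $J(\Delta(\nu^3_{16})-[\bar\varepsilon_9]_{16})$ lands in that subgroup; to deduce $\Delta(\nu^3_{16})-[\bar\varepsilon_9]_{16}\in\{[\iota_7]_{16}\bar\mu_7\}$ you would need $J$ to be injective on $R^{16}_{24}\cong(\Z_2)^8$ and to control the $J$-images of all the other generators, which you flag as "the principal obstacle" but do not resolve — so as written the argument does not close. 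The paper avoids this by a two-step argument you omit: first, Lemma \ref{[Ea]} gives $[\eta^2_{14}]\bar\nu_{16}\in {i_{15}}_*\{\Delta\iota_{14},\eta^2_{13},\bar\nu_{15}\}$, and the computation ${p_{14}}_*\{\Delta\iota_{14},\eta^2_{13},\bar\nu_{15}\}\ni 0 \bmod 2\pi^{13}_{24}$ confines $\Delta(\eta_{16}\bar\nu_{17})$ to the small subgroup $\{[\bar\varepsilon_9]_{16},[\sigma^2_9]_{16}\eta_{23},[\iota_7]_{16}\bar\mu_7\}$ before any $J$-comparison is made; second, the coefficient of $[\sigma^2_9]_{16}\eta_{23}$ is killed by composing with $\eta_{24}$, using Lemma \ref{[bep9]11eta}(2) together with $J([\sigma^2_9]_{16}\eta^2_{23})=\psi_{16}\eta^2_{39}=4\xi_{16}\sigma_{34}\neq 0$. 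You would need to supply both of these steps (or an equally effective substitute) for the proof to be complete.
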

\begin{proof}
In the exact sequence $(24)_{16}$:
$$
\pi^{16}_{25}\rarrow{\Delta}R^{16}_{24}\rarrow{i_*}R^{17}_{24}
\rarrow{p_*}\pi^{16}_{24}\rarrow{\Delta}R^{16}_{23},
$$
$\Delta:\pi^{16}_{24}\to R^{16}_{23}$ is a monomorphism
by the proof of Proposition \ref{R17}. 
By inspecting the calculation of $\pi^{16}_{40}$~\cite[(5.21)]{MMO} and
 \cite[(3.10), p.~39]{MMO}, we know $P : \pi^{33}_{42} \to \pi^{16}_{40}$
 is a monomorphism. So is $\Delta : \pi^{16}_{25} \to R^{16}_{24}$. 
By \eqref{Ke21},
we have
\begin{align*}
\Delta\mu_{16}&\in\Delta\{\eta_{16}, 2\iota_{17}, 8\sigma_{17}\}_1
\subset\{[\eta^2_{14}]_{16}, 2\iota_{16}, 8\sigma_{16}\}\\
&\ni[\eta_{14}\mu_{15}]_{16}\ \bmod\ [\eta^2_{14}]_{16} \pi^{16}_{24}
+ R^{16}_{17}\circ 8\sigma_{17}.
\end{align*}
Since $R^{16}_{17}\cong(\Z_2)^3$ \cite[Theorem 1(iv)]{KM1}, we have
$R^{16}_{17}\circ 8\sigma_{17} = 0$ and $[\eta^2_{14}]_{16}\pi^{16}_{24}
= \{[\eta^2_{14}]_{16}\bar{\nu}_{16}, [\eta^2_{14}]_{16}\varepsilon_{16}\}$.
So we have
$$
\Delta\mu_{16}\equiv [\eta_{14}\mu_{15}]_{16} \bmod
\Delta(\eta_{16}\bar{\nu}_{17}), \Delta(\eta_{16}\varepsilon_{17}).
$$

By \eqref{ztrem1}, we have 
$\Delta(\eta_{16}\varepsilon_{17}) = [\zeta_{13}]_{16}$.

By the relation $\eta_{33}\bar{\nu}_{34}=\nu^3_{33}$ \eqref{t3} and 
\eqref{Pn333}, we have
\[
J\Delta(\eta_{16}\bar{\nu}_{17})=P(\eta_{33}\bar{\nu}_{34})
=P(\nu^3_{33})\equiv \tilde{\varepsilon}_{16} \bmod \delta_{16},\bar{\sigma}'_{16}, \bar{\mu}_{16}\sigma_{33}.
\]
We recall from \cite[(3.9)]{MMO} that
$H(\tilde{\varepsilon}_{10}) = \bar{\varepsilon}_{19}$
and
$2\tilde{\varepsilon}_{10} = \sigma_{10}\nu_{17}\kappa_{20}$.
By using the EHP sequence and the relation $H(\tilde{\varepsilon}_{10})
 = \bar{\varepsilon}_{19} =HJ[\bar{\varepsilon}_9]$, we have
$
J[\bar{\varepsilon}_9]\equiv \tilde{\varepsilon}_{10} \bmod E\pi^9_{33}.
$
So we obtain
\[
J[\bar{\varepsilon}_9]_{12}\equiv \tilde{\varepsilon}_{12} \bmod
\delta_{12}, \sigma_{12}\bar{\mu}_{19}, \bar{\sigma}'_{12}.
\]

By Lemma \ref{[Ea]}, we have 
$[\eta^2_{14}]\bar{\nu}_{16}\in{i_{15}}_*\{\Delta\iota_{14},\eta^2_{13},
\bar{\nu}_{15}\}$.
We obtain 
\[\begin{split}
{p_{14}}_*\{\Delta\iota_{14},\eta^2_{13},\bar{\nu}_{15}\}
&\subset\{2\iota_{13},\eta^2_{13},\bar{\nu}_{15}\}
\supset\{2\iota_{13},\eta_{13},\nu^3_{14}\}\\
&\supset\{2\iota_{13},\eta_{13},\nu_{14}\}\circ\nu^2_{18}\ni 0 
\ \bmod \ 2\pi^{13}_{24}=\{2\zeta_{13}\}.
\end{split}\]
This implies
$$
\{\Delta\iota_{14},\eta^2_{13},\bar{\nu}_{15}\}\ni 0 \ \bmod \ 
\{2[\zeta_{13}]\} + 
{i_{14}}_*R^{13}_{24}=\{[\bar{\varepsilon}_9]_{14}, 
[\sigma^2_9]_{14}\eta_{23}, [\iota_7]_{14}\bar{\mu}_7\}.
$$
This yields
\[
\Delta(\eta_{16}\bar{\nu}_{17})\equiv [\bar{\varepsilon}_9]_{16} \bmod
[\sigma^2_9]_{16}\eta_{23}, [\iota_7]_{16}\bar{\mu}_7.
\]
By Lemma \ref{[bep9]11eta}(2), we know $[\bar{\varepsilon}_9]_{11}\eta_{24}
\equiv 0\ \bmod\ [\iota_7]_{11}\eta_7\bar{\mu}_8$. 
Since $J([\sigma^2_9]_{16}\eta^2_{23})=\psi_{16}\eta^2_{39}=4\xi_{16}\sigma_{34}\ne 0$ \cite[Lemma 3.5]{MiM1}, we obtain 
\[
\Delta(\eta_{16}\bar{\nu}_{17})\equiv [\bar{\varepsilon}_9]_{16} \bmod
[\iota_7]_{16}\bar{\mu}_7.
\]
This completes the proof.
\end{proof}

By \eqref{ztrem1} and \cite[(3.10)]{MMO}, we have
\[
J[\zeta_{13}]_{16}=J\Delta(\eta_{16}\varepsilon_{17})
=P(\eta_{33}\varepsilon_{34}) = E^2\zeta^*.
\]
Since $E^3: \pi^{13}_{37} \to \pi^{16}_{40}$ is a monomorphism \cite[Theorem 1.1(b)]{MMO},
the relation $J[\zeta_{13}]
\equiv \pm\zeta^* \bmod E\pi^{13}_{37}$ from Proposition \ref{1424} implies
\[
J[\zeta_{13}]= \pm\zeta^*.
\]

We show the following.
\begin{prop} \label{1824}
$R^{18}_{24} = \Z_{16}\{\Delta\sigma_{18}\}
\oplus\Z_2\{[\sigma^2_9]_{18}\eta_{23}\}
\oplus\Z_2\{[\iota_7]_{18}\bar{\mu}_7\}$, where
$8\Delta\sigma_{18} = [\mu_{15}]_{18}$.
\end{prop}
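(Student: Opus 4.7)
The plan is to apply the exact sequence $(24)_{17}$, combined with Proposition~\ref{17} and Table~\ref{R24}, and then to pin down the extension using $\Delta\sigma_{18}$ together with the Toda bracket defining $[\mu_{15}]$.

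First, from Toda's tables one reads off $\pi^{17}_{25}=\Z_2\{\bar{\nu}_{17}\}\oplus\Z_2\{\varepsilon_{17}\}$ and $\pi^{17}_{24}=\Z_{16}\{\sigma_{17}\}$. By Proposition~\ref{17}, $\Delta(\bar{\nu}_{17})=[\eta_{15}]_{17}\bar{\nu}_{16}$ and $\Delta(\varepsilon_{17})=[\eta_{15}]_{17}\varepsilon_{16}$ are two distinct $\Z_2$-summands of $R^{17}_{24}$, so the first boundary map is injective and the quotient $R^{17}_{24}/\Delta\pi^{17}_{25}$ embeds via $i_{18*}$ into $R^{18}_{24}$ as the subgroup $\{[\mu_{15}]_{18},\,[\sigma^2_9]_{18}\eta_{23},\,[\iota_7]_{18}\bar{\mu}_7\}\cong(\Z_2)^3$. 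On the other end, by \eqref{DeSig} and \eqref{Ke2}, $\Delta\sigma_{17}=\Delta\iota_{17}\circ\sigma_{16}=[\eta_{15}]_{17}\sigma_{16}$, which is a $\Z_2$-summand of $R^{17}_{23}$ by Proposition~\ref{R17}; hence $\ker\{\Delta\colon\pi^{17}_{24}\to R^{17}_{23}\}=\Z_8\{2\sigma_{17}\}$.

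This gives a short exact sequence $0\to(\Z_2)^3\to R^{18}_{24}\to\Z_8\to 0$. Comparing with $R^{18}_{24}\cong\Z_{16}\oplus(\Z_2)^2$ from Table~\ref{R24} forces the extension to be nontrivial on exactly one of the three $\Z_2$-summands, producing the $\Z_{16}$-summand. Since $17$ is odd, \eqref{DelS2} yields $p_{18*}\Delta\sigma_{18}=2\iota_{17}\circ\sigma_{17}=2\sigma_{17}$, so $\Delta\sigma_{18}$ lifts the generator of $\Z_8\{2\sigma_{17}\}$ and in particular has order $16$. Consequently $8\Delta\sigma_{18}$ is a nonzero element of order $2$ in $\{[\mu_{15}]_{18},\,[\sigma^2_9]_{18}\eta_{23},\,[\iota_7]_{18}\bar{\mu}_7\}$.

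The main obstacle is isolating $[\mu_{15}]_{18}$ from the other two candidates. The plan is to exploit the defining relation $[\mu_{15}]\in\{[\eta_{15}],2\iota_{16},8\sigma_{16}\}_1$ from Proposition~\ref{1624} together with $\Delta\iota_{17}=[\eta_{15}]_{17}$ from \eqref{Ke2}: after stabilizing to $R^{18}$ and applying the desuspension formula \eqref{mt1} in a matrix Toda bracket (needed to accommodate $\iota_{18}\circ 2\iota_{18}\neq 0$), $[\mu_{15}]_{18}$ is realized as $\Delta$ of a Toda bracket representing $8\sigma_{18}$, giving $8\Delta\sigma_{18}$ modulo the indeterminacy $i_{18*}R^{17}_{24}$. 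To control the residual ambiguity I would separate the candidates by their $J$-images: by \eqref{JD}, $J(8\Delta\sigma_{18})=8[\iota_{18},\iota_{18}]\circ\sigma_{35}$, whereas $J([\sigma^2_9]_{18}\eta_{23})=\psi_{18}\eta_{41}$ and $J([\iota_7]_{18}\bar{\mu}_7)=\sigma_{18}\bar{\mu}_{25}$ are independent nonzero classes in $\pi^{18}_{42}$ by the stable stem computations of \cite{MMO}, while $J[\mu_{15}]_{18}\equiv E^2\mu^*_{16}$ by Proposition~\ref{1624}; this separation forces the identification $8\Delta\sigma_{18}=[\mu_{15}]_{18}$.
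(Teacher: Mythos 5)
Your reduction via the exact sequence $(24)_{17}$ is sound and matches the paper: the identifications $\Delta\bar{\nu}_{17}=[\eta_{15}]_{17}\bar{\nu}_{16}$, $\Delta\varepsilon_{17}=[\eta_{15}]_{17}\varepsilon_{16}$, hence ${i_{18}}_*R^{17}_{24}=\{[\mu_{15}]_{18},[\sigma^2_9]_{18}\eta_{23},[\iota_7]_{18}\bar{\mu}_7\}\cong(\Z_2)^3$, and $\Ker\{\Delta\colon\pi^{17}_{24}\to R^{17}_{23}\}=\{2\sigma_{17}\}$ are all correct, and your extension argument for $\sharp\Delta\sigma_{18}=16$ is a legitimate alternative to the paper's appeal to $\sharp P(\sigma_{37})=16$. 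The gap is in the last step, the relation $8\Delta\sigma_{18}=[\mu_{15}]_{18}$. You run the bracket manipulation only ``modulo the indeterminacy ${i_{18}}_*R^{17}_{24}$''; but since $p_{18*}(8\Delta\sigma_{18})=16\sigma_{17}=0$, both $8\Delta\sigma_{18}$ and $[\mu_{15}]_{18}$ already lie in ${i_{18}}_*R^{17}_{24}$, so a congruence modulo that subgroup carries no information. The $J$-image patch you propose is not carried out and does not obviously close the gap: it would require (i) showing that $E^2\mu^*_{16}$ (only defined mod $E^3\pi^{15}_{39}$), $\psi_{18}\eta_{41}$ and $\sigma_{18}\bar{\mu}_{25}$ are linearly independent in the unstable group $\pi^{18}_{42}$, and (ii) actually identifying $8P(\sigma_{37})=J(8\Delta\sigma_{18})$ in terms of these generators --- which is essentially the assertion being proved, transported through $J$. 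Neither is established, and even granting both, the argument only determines $8\Delta\sigma_{18}$ up to $\Ker J$.

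The repair is to compute the indeterminacy of the bracket manipulation honestly; it is zero, not ${i_{18}}_*R^{17}_{24}$, and no matrix Toda bracket or $J$-image detour is needed. From $[\mu_{15}]\in\{[\eta_{15}],2\iota_{16},8\sigma_{16}\}_1$ and $\Delta\iota_{17}=[\eta_{15}]_{17}$ \eqref{Ke2}, a shuffle together with Lemma \ref{[Ea]} gives
\[
[\mu_{15}]_{18}\in i_{18}\circ\{\Delta\iota_{17},2\iota_{16},8\sigma_{16}\}_1
=-\{i_{18},\Delta\iota_{17},2\iota_{16}\}\circ 8\sigma_{17}
\ni\Delta\iota_{18}\circ 8\sigma_{17}=8\Delta\sigma_{18}.
\]
The indeterminacy is $i_{18}\circ R^{17}_{17}\circ 8\sigma_{17}+R^{18}_{17}\circ 16\sigma_{17}$; the second term dies because $R^{18}_{24}$ has exponent $16$, and the first dies because $\gamma\circ 8\sigma_{17}=8(\gamma\circ\sigma_{17})$ lands in the elementary abelian group $R^{17}_{24}$. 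Hence $8\Delta\sigma_{18}=[\mu_{15}]_{18}$ exactly, which is how the paper concludes.
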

\begin{proof}
In the exact sequence $(24)_{17}$,
we know $\Ker\ \{\Delta: \pi^{17}_{24}\to R^{17}_{23}\} = \{2\sigma_{17}\}$
by the proof of Proposition \ref{R18}. 
By \eqref{pD}, $\Delta\sigma_{18}$ is a lift of $2\sigma_{17}$.
Since $P(\sigma_{37})$ is of order $16$ by \cite[(5.36)]{MMO},
 $\sharp\Delta\sigma_{18}=16$. 
By the relation \eqref{Ke2}, we obtain
\[
\Delta\varepsilon_{17} = [\eta_{15}]_{17}\varepsilon_{16}
\quad\text{and}\quad
\Delta\bar\nu_{17} = [\eta_{15}]_{17}\bar\nu_{16}.
\]
Therefore we have 
$
{i_{18}}_\ast R^{17}_{24}=\{[\mu_{15}]_{18}, [\sigma^2_9]_{18}\eta_{23},
[\iota_7]_{18}\bar{\mu}_7\}\cong(\Z_2)^3.
$

By the definition of $[\mu_{15}]$ and \eqref{Ke2}, %and Corollary \ref{Di2n}, 
we have
\[\begin{split}
[\mu_{15}]_{18}
&\in i_{18}\circ i_{17}\circ \{[\eta_{15}], 2\iota_{16}, 8\sigma_{16}\}_1
\subset i_{18}\circ\{[\eta_{15}]_{17}, 2\iota_{16}, 8\sigma_{16}\}_1\\
&= i_{18}\circ\{\Delta\iota_{17},2\iota_{16},8\sigma_{16}\}_1
= -\{i_{18}, \Delta\iota_{17},2\iota_{16}\} \circ 8\sigma_{17}
\ni 8\Delta\sigma_{18}.
\end{split}\]
The indeterminacy is $i_{18}\circ R^{17}_{17} \circ 8\sigma_{17}=0$
by \cite{Bo}. That is, $8\Delta\sigma_{18} = [\mu_{15}]_{18}$.
This completes the proof.
\end{proof}

We know %$8\Delta(\sigma_{18}) = [\mu_{15}]_{18}$ (Proposition \ref{18}) and 
$8\Delta\sigma_{14} = [\mu_{11}]_{14}$ \cite[p.~111]{KM2}.
We propose 
\begin{conj0}
$8\Delta\sigma_{4n+2}=[\mu_{4n-1}]_{4n+2}$ for $n\geq 5$.
\end{conj0}

We know that the stable $J$-image in $\pi^S_{24}(S^0)$ is isomorphic to
$\Z_2$ generated by $\sigma\bar{\mu} = \eta\bar{\rho}$. So, from the fact that $\pi^S_k(S^0) = 0 \ (k = 4, 5)$, 
by using the exact sequence $(24)_n$ \ $(n = 20, 21)$
and \cite{Ke}, we obtain the following.

\begin{equation} \label{Ker1}
R^n_{24} = \Z_2\{[\iota_7]_n\bar{\mu}_{7}\}\  \mbox{for}\  n = 20 \ \mbox{and} \ 21.
\end{equation}

Since $\Delta(\nu_{21}) = [\sigma^2_9]_{21}$, we have 
$[\sigma^2_9]_{21}\eta_{23} = 0$\ and\ $[\sigma^2_9]_{20}\eta_{23}\in \Delta(\pi^{20}_{25}) = 0$. This implies 
$[\sigma^2_9]_{19}\eta_{23}\in\Delta(\pi^{19}_{25}) 
= \{\Delta(\nu^2_{19})\}$.
By \cite[II-Proposition 2.1(10)]{Od1} and \cite[Theorem 1.1(b)]{MMO}, 
we have $J([\sigma^2_9]_{19}\eta_{23}) = \psi_{19}\eta_{42}\equiv
\bar{\sigma}'_{19} + \delta_{19}\ \bmod\ \bar{\mu}_{19}\sigma_{36}$.
This yields\  $\Delta(\nu^2_{19}) = [\sigma^2_9]_{19}\eta_{23}$. 
So, using the exact sequence $(24)_{18}$, we have the following.

\begin{prop} \label{19}
$R^{19}_{24}=\Z_2\{[\iota_7]_{19}\bar{\mu}_{7}\}
\oplus\Z_2\{[\sigma^2_9]_{19}\eta_{23}\}$, where $\Delta(\nu^2_{19})=[\sigma^2_9]_{19}\eta_{23}$.
\end{prop}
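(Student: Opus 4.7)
The plan is to set up the exact sequence $(24)_{18}$
\[
\pi^{18}_{25}\xrightarrow{\Delta} R^{18}_{24}\xrightarrow{i_*} R^{19}_{24}\xrightarrow{p_*}\pi^{18}_{24}\xrightarrow{\Delta} R^{18}_{23}
\]
and then read off $R^{19}_{24}$ as the cokernel of the left $\Delta$, after checking that the right $\Delta$ is injective. The right-hand data is the lightest part of the argument.

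First I would identify the relevant stable sphere groups. Since $18\ge 9$, both $\pi^{18}_{24}$ and $\pi^{18}_{25}$ are in the stable range, so their $2$-primary parts are $\pi^{18}_{24}=\Z_2\{\nu^2_{18}\}$ and $\pi^{18}_{25}=\Z_{16}\{\sigma_{18}\}$. By Proposition~\ref{R1923} we have $\Delta(\nu^2_{18})=[\varepsilon_{15}]_{18}$, which generates a $\Z_2$-summand of $R^{18}_{23}$, so $\Delta\colon\pi^{18}_{24}\to R^{18}_{23}$ is injective. Consequently $p_*=0$ and $i_*\colon R^{18}_{24}\to R^{19}_{24}$ is surjective, reducing the problem to computing $\Coker\{\Delta\colon\pi^{18}_{25}\to R^{18}_{24}\}$.

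Second, I would use Proposition~\ref{1824} which determines $R^{18}_{24}=\Z_{16}\{\Delta\sigma_{18}\}\oplus\Z_2\{[\sigma^2_9]_{18}\eta_{23}\}\oplus\Z_2\{[\iota_7]_{18}\bar{\mu}_7\}$. Since $\Delta\sigma_{18}$ generates the $\Z_{16}$-summand, $\Delta(\pi^{18}_{25})=\Z_{16}\{\Delta\sigma_{18}\}$ kills exactly that summand. Taking images under $i_*$ yields
\[
R^{19}_{24}=\Z_2\{[\sigma^2_9]_{19}\eta_{23}\}\oplus\Z_2\{[\iota_7]_{19}\bar{\mu}_7\},
\]
which is the asserted group.

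Third, for the displayed relation $\Delta(\nu^2_{19})=[\sigma^2_9]_{19}\eta_{23}$, I would appeal to the short chain of exact sequences $(24)_{20}$ and $(24)_{19}$. Using $\Delta\nu_{21}=[\sigma^2_9]_{21}$ from the last item of Theorem~\ref{thm1}(4) together with $(24)_{20}$ gives $[\sigma^2_9]_{20}\eta_{23}\in\Delta(\pi^{20}_{25})=0$, and similarly $[\sigma^2_9]_{19}\eta_{23}\in \Delta(\pi^{19}_{25})$; since $\pi^{19}_{25}$ is generated by $\nu^2_{19}$ (stable class), the only candidate is $\Delta(\nu^2_{19})$. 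To rule out the zero possibility I would apply $J$: by \eqref{psi10} and the formula $\psi_{19}\eta_{42}\equiv\bar{\sigma}'_{19}+\delta_{19}\ \bmod\ \bar{\mu}_{19}\sigma_{36}$ from \cite{Od1} and \cite{MMO}, $J([\sigma^2_9]_{19}\eta_{23})\neq 0$, so $[\sigma^2_9]_{19}\eta_{23}\neq 0$ and hence equals $\Delta(\nu^2_{19})$.

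The main obstacle is the very last step: one needs a non-triviality test for $[\sigma^2_9]_{19}\eta_{23}$ that is independent of the fibration exact sequences, and $J$-image computations supply exactly this. The rest is essentially organizational, given that Proposition~\ref{1824} and Proposition~\ref{R1923} have already pinned down the adjacent groups.
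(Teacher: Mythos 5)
Your proposal is correct and follows essentially the same route as the paper: the relation $\Delta(\nu^2_{19})=[\sigma^2_9]_{19}\eta_{23}$ is obtained by the identical descent from $\Delta\nu_{21}=[\sigma^2_9]_{21}$ through $(24)_{20}$ and $(24)_{19}$ followed by the $J$-image computation $J([\sigma^2_9]_{19}\eta_{23})=\psi_{19}\eta_{42}\neq 0$, and the group structure is read off from $(24)_{18}$ using Propositions \ref{R1923} and \ref{1824} exactly as the paper intends. Your write-up merely fills in the routine exactness details that the paper compresses into ``using the exact sequence $(24)_{18}$.''
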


The following result is easily obtained and it overlaps with that of \cite{Ke}.

\begin{prop} \label{Ker2}
\begin{enumerate}
\item $R^{22}_{24} = \Z_4\{\Delta\nu_{22}\}
\oplus\Z_2\{[\iota_7]_{22}\bar{\mu}_7\}$.
\item $R^{23}_{24} = \Z_2\{[\eta^2_{22}]\}
\oplus\Z_2\{[\iota_7]_{23}\bar{\mu}_7\}$.
\item $R^{24}_{24} = \Z_2\{[\eta_{23}]\}\oplus\Z_2\{[\eta^2_{22}]_{24}\}
\oplus\Z_2\{[\iota_7]_{24}\bar{\mu}_7\}$, where $\Delta\eta_{24}=[\eta^2_{22}]_{24}$.
\item $R^{25}_{24}= \Z_2\{[\eta_{23}]_{25}\}
\oplus\Z_2\{[\iota_7]_{24}\bar{\mu}_7\}$,
where $\Delta(\iota_{25}) = [\eta_{23}]_{25}$.
\item $R^n_{24} = \Z_2\{[\iota_7]_n\bar{\mu}_7\}$ for $n\geq 26$.
\end{enumerate}
\end{prop}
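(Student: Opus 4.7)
The plan is to iterate the exact sequence $(24)_{n-1}$
\begin{equation*}
\pi^{n-1}_{25}\xrightarrow{\Delta} R^{n-1}_{24}\xrightarrow{i_*} R^n_{24}\xrightarrow{p_*}\pi^{n-1}_{24}\xrightarrow{\Delta} R^{n-1}_{23},
\end{equation*}
starting from the base cases $R^{19}_{24}$, $R^{20}_{24}$, $R^{21}_{24}$ recorded in Proposition \ref{19} and \eqref{Ker1}. The principal inputs will be the relation $\Delta\nu_{21}=[\sigma^2_9]_{21}$ from Proposition \ref{R2223}, the connecting formulas \eqref{pD}, \eqref{Ke1}, \eqref{De^2}, \eqref{Ke2}, \eqref{Ke21}, and Kervaire's Table \ref{R24K} to pin down orders and resolve extensions.

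For part (1), since $\Delta\nu_{21}=[\sigma^2_9]_{21}$ has order $2$ in $R^{21}_{23}$, the $2$-primary kernel of $\Delta\colon\pi^{21}_{24}\to R^{21}_{23}$ equals $\{2\nu_{21}\}\cong\Z_4$. Because $\Delta\iota_{22}$ lifts $2\iota_{21}$ by \eqref{pD}, the element $\Delta\nu_{22}=\Delta\iota_{22}\circ\nu_{21}$ lifts $2\nu_{21}$ and has order at least $4$. Splicing with ${i_{22}}_*[\iota_7]_{21}\bar\mu_7\cong\Z_2$ from \eqref{Ker1} yields a group of order $8$; resolving the extension as $\Z_4\oplus\Z_2$ rather than $\Z_8$ via Table \ref{R24K} is the main technical point.

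For parts (2)--(4), the required order-$2$ lifts $[\eta^2_{22}]$, $[\eta_{23}]$, $[\eta_{23}]_{25}$ are produced by \eqref{De^2}, \eqref{Ke1}, \eqref{Ke2}, respectively; the stated relations $\Delta\eta_{24}=[\eta^2_{22}]_{24}$ and $\Delta\iota_{25}=[\eta_{23}]_{25}$ are the corresponding instances of \eqref{Ke21} (with $n=5$) and \eqref{Ke2} (with $n=6$). In each of these exact sequences the relevant summand of $R^{n-1}_{24}$ collapses under $i_*$ as the image of $\Delta$ from $\pi^{n-1}_{25}$, leaving groups of the orders prescribed by Table \ref{R24K}.

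Finally, for part (5) it suffices to treat $n=26$: the relation $\Delta\iota_{25}=[\eta_{23}]_{25}$ kills the $\Z_2\{[\eta_{23}]_{25}\}$ summand of $R^{25}_{24}$ under $i_*$, giving $R^{26}_{24}=\Z_2\{[\iota_7]_{26}\bar\mu_7\}$. For $n\geq 27$ both $\pi^{n-1}_{24}$ and $\pi^{n-1}_{25}$ vanish for dimensional reasons, so $i_*\colon R^{n-1}_{24}\to R^n_{24}$ becomes an isomorphism and the answer stabilizes.
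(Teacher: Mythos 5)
Your proposal is correct, and it is exactly the routine exact‑sequence computation the authors have in mind: the paper gives no written proof of Proposition \ref{Ker2} beyond the remark that it ``is easily obtained and it overlaps with that of \cite{Ke}'', relying on the same inputs you use (the sequences $(24)_{n-1}$, the relations \eqref{pD}, \eqref{Ke1}, \eqref{De^2}, \eqref{Ke2}, \eqref{Ke21}, $\Delta\nu_{21}=[\sigma^2_9]_{21}$, and Kervaire's Table \ref{R24K}). Your write‑up simply makes explicit the verification the authors left to the reader, including the only mildly delicate point --- ruling out the $\Z_8$ extension in $R^{22}_{24}$ --- which is settled by the order count against Table \ref{R24K}.
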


By using the parallel argument to \eqref{ztrem1}, we obtain
$$
\Delta(\eta_{24}\varepsilon_{25}) = [\zeta_{21}]_{24}.
$$

\section{Determination of odd components}

We shall state the group structure of the $p$-primary components
$\pi_{23+k}(R_n:p)$ for $p$ an odd prime for $k=0, 1$. We know isomorphisms for all $k$;
\begin{gather}\label{R-Sp:odd}
\pi_k(R_{2n+1}:p) \cong \pi_k(Sp(n):p) \ \cite[(3)]{Ha};\\
\label{R-Sp:even}
\pi_k(R_{2n+2}:p) \cong \pi_k(R_{2n+1}:p)\oplus\pi_k(S^{2n+1}:p) \  \cite[Proposition 18.2]{BS}.
\end{gather}
The result is the following.
\begin{prop}
The non-zero part of the odd prime components of $\pi_k(R_n)$ for $k = 23$ and $24$ is isomorphic
to the following group;
\begin{enumerate}
\item For $k = 23$, $\Z_3 (n = 6);\ \Z_3 (n = 14);$
\item For $k = 24$, $\Z_{33} (n = 6);\ \Z_3 (n = 7);\ \Z_3 (n = 8);\
\Z_{15} (n = 10);\ \Z_3 (n = 12); \Z_{63} (n = 14);\ \Z_{15} (n = 18);\ \Z_3 
(n = 22)$.
\end{enumerate}
\end{prop}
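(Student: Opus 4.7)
The plan is to apply the two classical isomorphisms \eqref{R-Sp:odd} and \eqref{R-Sp:even} stated just above, which at every odd prime $p$ reduce the computation of $\pi_k(R_n:p)$ to $p$-primary homotopy groups of symplectic groups $Sp(m)$ and odd-dimensional spheres $S^{2m+1}$ -- both of which are classically available in the literature.

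First, only finitely many values of $n$ require examination. By the Barratt--Mahowald splitting cited earlier, for $n \ge 13$ and $k \le 2n-3$ one has $\pi_k(R_n) \cong \pi_k(R_{\infty}) \oplus \pi_{k+1}(V_{2n,n})$. At every odd prime $p$, the Bott-periodic group $\pi_k(R_{\infty}:p)$ vanishes for $k = 23, 24$ (since $\pi_k(R_{\infty})$ equals $\Z$ or $\Z_2$ in these degrees), and the Stiefel summand $\pi_{k+1}(V_{2n,n}:p)$ in these degrees is likewise trivial for $n$ sufficiently large by \cite{HoM}. Hence the odd-primary component of $\pi_k(R_n:p)$ can only be non-zero for $n$ in a bounded range.

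Next, for each such $n$, I write $n = 2m+1$ or $n = 2m+2$ and apply the splittings to identify
\[
\pi_k(R_{2m+1}:p) \cong \pi_k(Sp(m):p), \quad
\pi_k(R_{2m+2}:p) \cong \pi_k(Sp(m):p) \oplus \pi_k(S^{2m+1}:p).
\]
The $p$-primary groups $\pi_{23}(Sp(m):p)$ and $\pi_{24}(Sp(m):p)$ for the small $m$ in range are computed inductively from the fibrations $Sp(m-1) \to Sp(m) \to S^{4m-1}$ together with Toda's composition methods at odd primes, carried out in \cite{MT2}, \cite{Mi3}, and related work. The odd-primary parts of $\pi_{23}(S^{2m+1})$ and $\pi_{24}(S^{2m+1})$ are read off from \cite{T} and \cite{MT1}. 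Assembling these pieces prime by prime produces the listed composite orders, e.g.\ $\Z_{33} \cong \Z_3 \oplus \Z_{11}$, $\Z_{63} \cong \Z_9 \oplus \Z_7$, and $\Z_{15} \cong \Z_3 \oplus \Z_5$, which represent summands living at distinct odd primes among $\{3,5,7,11\}$.

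The main obstacle is purely bookkeeping rather than conceptual: one must cross-reference the relevant $p$-primary tables for $Sp(m)$ and for odd-dimensional spheres at each of the primes $3, 5, 7, 11$, and verify consistency with the long exact sequences of the relevant fibrations to ensure no summand has been overlooked. No new unstable computation is required once those tables are in hand.
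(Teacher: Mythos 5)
Your proposal follows the paper's proof essentially verbatim: both reduce $\pi_k(R_n:p)$ to $\pi_k(Sp(m):p)$ and $\pi_k(S^{2m+1}:p)$ via the Harris and Borel--Serre isomorphisms \eqref{R-Sp:odd} and \eqref{R-Sp:even}, and then assemble the odd-primary data from the standard tables, with the finitely-many-$n$ reduction handled by Bott's stable results. The only step in the paper that goes beyond table lookup is the identification $\pi_{24}(Sp(3):p)\cong\pi_{24}(S^{11}:p)$ from the exact sequence of the fibration $Sp(2)\to Sp(3)\to S^{11}$ (using that $\pi_{23}(Sp(2))$ and $\pi_{24}(Sp(2))$ are $2$-groups), which is exactly the inductive fibration argument you describe, so the approaches coincide.
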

\begin{proof}
We have $\pi_{k}(R_1)=\pi_{k}(R_2)=0$ for $k=23$ and $24$ since $R_1$ is a point and $R_2=S^1$.
Let be $m\ge 1$ and $p$ an odd prime.
We will show (1).
By using the relation \eqref{R-Sp:odd} and the fact that $\pi_{23}(Sp(m):p)=0$ from \cite{MT3} and \cite{Bo}, 
we have $\pi_{23}(R_{2m+1}:p)=0$.
By \cite{T} %[Table of $\pi_k(S^n), I and II]
and \cite{MT1}, %[Theorem 16.8]
we know that the non-zero part of the odd prime components of 
$\pi_{23}(S^{2m+1})$ are $\pi_{23}(S^5:3)\cong \Z_3$ and $\pi_{23}(S^{13}:3) \cong \Z_3$. 
Thus by using the relation \eqref{R-Sp:even}, we have (1).

Next, we will show (2).
By using the relation \eqref{R-Sp:odd}, we have
$\pi_{24}(R_3:p)=\pi_{24}(R_5:p)=0$ from $\pi_{24}(Sp(1):p) \cong \pi_{24}(S^3:p) =0$ 
\cite{Mi1} %[p.~325]
and $\pi_{24}(Sp(2):p) =0$ \cite{F}. %[Theorem 1.1]
We recall that $\pi_{24}(Sp(2)) \cong (\Z_2)^2$ and $\pi_{23}(Sp(2)) \cong (\Z_2)^3$ 
by \cite{F} and \cite{MT1}.
By the following exact sequence
\[
\pi_{24}(Sp(2)) \xrightarrow{i_\ast} \pi_{24}(Sp(3)) \xrightarrow{p_\ast} \pi_{24}(S^{11} ) \xrightarrow{\partial} \pi_{23}(Sp(2)),
\]
we have $\pi_{24}(Sp(3):p) \cong \pi_{24}(S^{11}:p)$.
So, by the fact that $\pi_{24}(S^{11})\cong \Z_6\oplus \Z_2$, 
we have $\pi_{24}(Sp(3):3) \cong \Z_3$ and $\pi_{24}(Sp(3):p) = 0$ for
 $p \ge 5$.
Thus we have 
$\pi_{24}(R_7:3) \cong \pi_{24}(Sp(3):3) \cong \Z_3$ and
 $\pi_{24}(R_7:p)= 0$ for $p \ge 5$.
Similarly, we have $\pi_{24}(R_{2m+1}:p)=0$ for $m\ge 4$ by relations
$\pi_{24}(Sp(4):p) =\pi_{24}(Sp(5):p) =0$ from \cite{MT3} and $\pi_{24}(Sp(m):p) =0 $ for $m \ge 6$
from \cite{Bo}. Thus we have (2) for $n=2m+1$.
By \cite{T}, %[Table of $\pi_k(S^n), I, II and III]
for $m\ge 2$, 
we know that the non-zero part of the odd prime components of $\pi_{24}(S^{2m+1})$ 
are $\Z_{33}(m=2)$, %$\Z_3(m=3)$, 
$\Z_{15}(m=4)$, $\Z_3(m=5)$, $\Z_{63}(m=6)$, $\Z_{15}(m=8)$ and $\Z_{3}(m=10)$.
Hence by using \eqref{R-Sp:even} and (2) for $n=2m+1$, we obtain (2) for $n=2m+2$.
This leads to (2) and completes the proof.
\end{proof}

\textit{Acknowledgement:} 
The third author was supported in part by funding from Fukuoka University (Grant No. 195001)

\noindent
Yoshihiro Hirato\\
Department of General Education, Nagano National College of Technology\\
716 Tokuma, Nagano 381-8550, Japan\\
hirato@nagano-nct.ac.jp\\

\noindent
Jin-ho Lee\\
DATA SCIENCE, PULSE9. Inc.\\  
122,  Mapo-daero, Mapo-gu, Seoul, Korea\\
jhlee@pulse9.net\\

\noindent
Toshiyuki Miyauchi\\
Department of Applied Mathematics, Faculty of Science, Fukuoka University\\
8-19-1 Nanakuma, Jonan-ku, Fukuoka 814-0180, Japan\\
tmiyauchi@fukuoka-u.ac.jp\\

\noindent
Juno Mukai\\
Shinshu University\\
3-1-1 Asahi, Matsumoto, Nagano 390-8621, Japan\\
jmukai@shinshu-u.ac.jp\\

\noindent
Mariko Ohara\\
National Institute of Technology, Oshima College\\
1091-1 Komatsu, Suo-Oshima, Oshima, Yamaguchi 742-2193, Japan\\
primarydecomposition@gmail.com\\

\end{document}